\tikzstyle{flag}=[]
\tikzstyle{vertex}=[circle, draw]
\tikzstyle{v}=[rectangle, draw, fill, minimum size=0.1cm, inner sep = 0cm]
\tikzstyle{b}=[circle, draw, minimum size=0.2cm, inner sep = 0cm]
\tikzstyle{c}=[circle, draw, minimum size=0.2cm, inner sep = 0cm, fill = gray]
\newcolumntype{C}{>{$}l<{$}} 
\newtheorem{samecounter}{samecounter}
\numberwithin{samecounter}{subsection}
\newtheorem{corollary}[samecounter]{Corollary}
\newtheorem{proposition}[samecounter]{Proposition}
\newtheorem{theorem}[samecounter]{Theorem}
\newtheorem{lemma}[samecounter]{Lemma} 
\newtheorem{construction}[samecounter]{Construction}
\newtheorem*{corollary*}{Corollary}
\newtheorem*{proposition*}{Proposition}
\newtheorem*{theorem*}{Theorem}
\newtheorem*{lemma*}{Lemma}
\theoremstyle{definition}
\newtheorem{example}[samecounter]{Example}
\newtheorem{definition}[samecounter]{Definition}
\newtheorem{remark}[samecounter]{Remark} 
\newcommand{\wheel}{\circlearrowright}
\newcommand{\fC}{\mathfrak{C}}
\newcommand{\uc}{\underline{c}}
\newcommand{\ud}{\underline{d}}
\newcommand{\dc}{\binom{\ud}{\uc}}
\newcommand{\cbprime}{\binom{\uc'}{\ub'}}
\newcommand{\dch}{(\uc;\ud)}
\newcommand{\ua}{\underline{a}}
\newcommand{\ub}{\underline{b}}
\newcommand{\ba}{\binom{\ub}{\ua}}
\newcommand{\ue}{\underline{e}}
\newcommand{\uf}{\underline{f}}
\newcommand{\fe}{\binom{\uf}{\ue}}
\newcommand{\andspace}{\quad\text{and}\quad}
\newcommand{\pofc}{\mathcal{P}(\fC)}
\newcommand{\pofcop}{\pofc^{op}}
\newcommand{\ptwoc}{\pofcop\times\pofc}
\newcommand{\SC}{\mathcal{S}(\fC)}
\newcommand{\SV}{\mathcal{S}(\mathbb{V})}
\newcommand{\fF}{\mathfrak{F}}
\newcommand{\vertex}{Vt}
\newcommand{\Flag}{Flag}
\newcommand{\inp}{in}
\newcommand{\out}{out}
\newcommand{\contra}[2]{\varepsilon_{#1}^{#2}}
\newcommand{\hcomp}{\otimes_h}
\newcommand{\vcomp}{\circ_V}
\newcommand{\properadic}[1]{\boxtimes_{#1}}
\newcommand{\eproperadic}[2]{\circ_{#1,#2}}
\newcommand{\eproperadicop}[2]{\hat{\circ}_{#1,#2}}
\newcommand{\eproperadicunspecified}{\circ_{v}}
\newcommand{\eproperadicopunspecified}{\hat{\circ}_{v}}
\newcommand{\symmetricTreesClasses}{\Sigma T}
\newcommand{\regularTreesClasses}{RT}
\newcommand{\shuffleTreesClasses}{ST}
\newcommand{\symmetricTreesClassesWithEmpty}{\overline{{\Sigma}T}}
\newcommand{\strict}{\overline{RT}}
\newcommand{\shuffleTreesClassesWithEmpty}{\overline{ST}}
\newcommand{\antishriek}{\text{\raisebox{\depth}{\textexclamdown}}}
\def\splicelist#1{
\StrCount{#1}{,}[\numofelem]
\ifnum\numofelem>0\relax
     \StrBehind[\numofelem]{#1}{,}[\mylast]%
\else
    \let\mylast#1%
\fi
}
\newcommand{\myroundpoly}[3][thin,color=black]{
\splicelist{#2}
\foreach \vertex [remember=\vertex as \succvertex
    (initially \mylast)] in {#2}{
    \coordinate (\succvertex-next) at ($(\succvertex)!#3!90:(\vertex)$);
    \coordinate (\vertex-previous) at ($(\vertex)!#3!-90:(\succvertex)$);
    \draw[#1] (\succvertex-next) --  (\vertex-previous);
}
\foreach \vertex in {#2}{
    \tkzDrawArc[#1](\vertex,\vertex-next)(\vertex-previous)
}
}
\begin{document}

\title{Koszul Operads Governing Props and Wheeled Props}

\author[Kurt Stoeckl]{Kurt Stoeckl}
\address[Kurt Stoeckl]{School of Mathematics and Statistics, The University of Melbourne, Victoria, Australia}
\email[Kurt Stoeckl]{kstoeckl@student.unimelb.edu.au}
\urladdr{\url{https://kstoeckl.github.io/}}

\maketitle

\begin{abstract}
In this paper, we construct groupoid coloured operads governing props and wheeled props, and show they are Koszul. This is accomplished by new biased definitions for (wheeled) props, and an extension of the theory of Groebner bases for operads to apply to groupoid coloured operads. Using the Koszul machine, we define homotopy (wheeled) props, and show they are not formed by polytope based models. Finally, using homotopy transfer theory, we construct Massey products for (wheeled) props, show these products characterise the formality of these structures, and re-obtain a theorem of Mac Lane on the existence of higher homotopies of (co)commutative Hopf algebras.
\end{abstract}

\DeclareRobustCommand{\SkipTocEntry}[5]{}
\setcounter{tocdepth}{2}
\def\l@section{\@tocline{1}{4pt}{4pt}{}{}}
\let\oldtocsection=\tocsection
\renewcommand{\tocsection}[2]{\bf\oldtocsection{#1}{#2}}
\let\oldtocsubsection=\tocsubsection
\renewcommand{\tocsubsection}[2]{\quad\oldtocsubsection{#1}{#2}}
\tableofcontents

\pagebreak

\section{Introduction} \label{introdution}

Props, introduced by Adams and Mac Lane (\cite{maclane1965categorical}), are a type of symmetric monoidal category. 
Their representations model various (mixed) algebraic and coalgebraic structures including associative, commutative, Lie and Hopf algebras. They arise naturally in the study of deformation theory \cite{andersson2022deformation}, \cite{vallette2009deformationI}, differential geometry \cite{markl2009wheeled}, knot theory \cite{dancso2021circuit}, \cite{dancso2023topological}, and topology \cite{boardman2006homotopy}, \cite{wahl2016hochschild}. Homotopy props arise when one wishes to study variants of the algebraic structures above, which are only weakly associative. 
For instance, Mac Lane introduced PACTs, a version of homotopy associative props, and used them to identify Massey products of (co)commutative Hopf algebras (\cref{Mac Lane theorem}).
Another example are the cobordism categories of Segal whose morphism spaces are moduli spaces of Riemann surfaces. In these cobordism categories, categorical composition is associative on the associated chain complex, but is only homotopy associative at the space level (e.g. Remark 3.31 of \cite{santander2015comparing}).
\\\\
Wheeled props are props with an additional non-degenerate bilinear form or ``trace'' operation. They were introduced by Markl, Merkulov and Shadrin \cite{markl2009wheeled}, where they used (homotopy) wheeled props to study Batalin–Vilkovisky quantisation formalism in theoretical physics. Wheeled props are also known to arise in invariant theory \cite{derksen2023invariant}, and they play a key role in the study of universal finite type invariants of virtual and welded tangles \cite{dancso2021circuit}. This is far from a spanning list, and we direct the interested reader to the more complete literature surveys of \cite{hackney2015infinity}, \cite{merkulov2010wheeled} and \cite{yau2015foundation}.  
\\\\
These examples of homotopy (wheeled) props motivate a more systematic study of homotopy associative, or $\infty$-props. While there are many known models for $\infty$-operads and $\infty$-properads in the literature (e.g. 
\cite{barkan2022equifibered}, 
\cite{cisinski2013dendroidal},
\cite{granaaker2007strong},
\cite{hackney2015infinity},
\cite{lurie2016higher}, 
\cite{MW_Kan},
\cite{vallette2009deformationI} etc.) there are currently only a few suggested models for $\infty$-props (including \cite{caviglia2015dwyer}, \cite{hackney2017sprop} and \cite{haugseng2024operads}), and $\infty$-wheeled props (including \cite{raynor2021brauer}).
This paper uses the machinery of Koszul duality to provide a algebraic model for homotopy, or $\infty$, (wheeled) props.  
\\\\
Over the last 20 years, the Koszul machine has been used to construct homotopy associative versions of many operadic structures. In brief, an operad $P$ is said to be Koszul if, and only if, it admits a minimal model $P_\infty$ with a quadratic differential (\cref{def:koszul}).
The minimal model $P_\infty$ is characterised by the property that algebras over $P_\infty$ are homotopy (associative) $P$-algebras. 
A coloured Koszul operad governing non-symmetric operads was first constructed in \cite{van2003coloured}.
Later in \cite{ward2022massey}, Ward constructed a groupoid coloured operad governing modular operads, and showed it was Koszul.
Most recently it has been shown by two different groups of authors (\cite{batanin2021koszul}, \cite{kaufmann2021koszul}) that the operads governing operadic families living on connected graphs (including operads, dioperads, wheeled properads, e.t.c) are all Koszul. 
A common thread unifying the last three papers, is that they all use the theory of convex polytopes, explicitly or implicitly, to interpret the minimal models of the operads.
\\\\
Our present work, builds on this story with three main results. Firstly, in Sections \ref{The Operad Governing Wheeled Props} and \ref{The Operad Governing Props}, we construct groupoid coloured operads $\mathbb{W}$ and $\mathbb{P}$, governing wheeled props and props respectively.
We then prove, in Sections \ref{proof The Operad Governing Wheeled Props is Koszul} and \ref{proof The Operad Governing Props is Koszul}, that 
\begin{theorem}\label{The operads governing (wheeled) props are Koszul} 
The groupoid coloured operads $\mathbb{W}$ and $\mathbb{P}$ are Koszul.
\end{theorem}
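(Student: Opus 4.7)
The plan is to invoke the Groebner basis criterion developed earlier in the paper: a quadratic groupoid coloured operad whose defining relations form a quadratic Groebner basis with respect to some admissible order is Koszul. The proof of \cref{The operads governing (wheeled) props are Koszul} then reduces to a single (lengthy) combinatorial verification for each of $\mathbb{W}$ and $\mathbb{P}$.

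First, I would fix the biased presentations of $\mathbb{W}$ and $\mathbb{P}$ coming from Sections \ref{The Operad Governing Wheeled Props} and \ref{The Operad Governing Props}. The generators correspond to horizontal composition, vertical composition, and (in the wheeled case) contraction; the relations are quadratic and encode unitality, associativity of each operation, the interchange law between horizontal and vertical composition, and (for $\mathbb{W}$) the compatibilities of contraction with the other two operations. Second, I would equip the free groupoid coloured operad on these generators with an admissible order on tree monomials, most likely a path-lexicographic style order enriched with data tracking the groupoid colours (i.e.\ the input/output profiles). With this order in place one reads off the leading monomials of all relations.

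Third, and this is the main content, one classifies all critical pairs, namely the quadratic overlaps between leading monomials, and verifies that each associated S-polynomial reduces to zero modulo the relations. Equivalently, one exhibits a PBW-type normal form for tree monomials in $\mathbb{W}$ and $\mathbb{P}$. Once this confluence check is complete the relations form a Groebner basis, and Koszulness follows from the standard implication in the extended Groebner framework.

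The principal obstacle will be the sheer combinatorial complexity of this confluence check. Props already carry two compositions whose interchange is nontrivial, and wheeled props add a contraction that interacts with both; the number of overlap patterns grows accordingly, and each must be analysed while simultaneously tracking the permutation actions through the groupoid colours. The art will lie in choosing an order that minimises ambiguities and in organising the case analysis cleanly by overlap shape — horizontal–horizontal, vertical–vertical, horizontal–vertical, and (for $\mathbb{W}$) contraction–contraction, contraction–horizontal, contraction–vertical. I would expect Sections \ref{proof The Operad Governing Wheeled Props is Koszul} and \ref{proof The Operad Governing Props is Koszul} to be structured along exactly these lines, with the $\mathbb{P}$ case recovered as the sub-analysis obtained by discarding every overlap involving a contraction.
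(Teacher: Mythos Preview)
Your overall strategy—prove Koszulness via a quadratic Groebner basis for the associated shuffle operad—is exactly the route the paper takes. However, there is a genuine gap in your proposed generators, and it matters.

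You describe the generators of $\mathbb{P}$ as horizontal and vertical composition, and those of $\mathbb{W}$ as these two plus contraction. In fact the paper's $\mathbb{W}$ is generated by horizontal composition and contraction only (no vertical composition at all), while $\mathbb{P}$ is generated by horizontal composition together with an \emph{extended properadic} composition $\eproperadicunspecified$ (and its opposite $\eproperadicopunspecified$), not the standard vertical composition $\vcomp$. The paper is explicit about why $\vcomp$ is avoided: the interchange axiom between $\hcomp$ and $\vcomp$ is a \emph{ternary} relation, so with your generating set $\mathbb{P}$ would not be quadratic and the Groebner/Koszul machinery would not apply. A nontrivial part of the paper (\cref{Alternate biased prop}, \cref{definitions of props are equivalent}) is precisely the construction of a new biased presentation of props whose axioms, apart from biequivariance and units, are all quadratic. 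Consequently your expectation that the $\mathbb{P}$ analysis is the $\mathbb{W}$ analysis with contraction overlaps discarded is also incorrect: the two operads have genuinely different generating operations and are treated separately.

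A secondary difference is methodological rather than a gap. You propose to classify critical pairs and reduce $S$-polynomials. The paper instead establishes confluence by producing, for every (wheel-free) graph $\gamma$, an explicit unique minimal shuffle tree monomial $UMF(\gamma)$ and then showing that any non-minimal tree monomial admits a rewrite via one of the relations (\cref{UMF Algorithm for Wheeled Props}, \cref{Non Minimal Wheeled Prop Shuffle Tree Monomials Admit Rewrites}, and the analogous \cref{UMF Algorithm for Props}, \cref{Non Minimal Prop Shuffle Tree Monomials Admit Rewrites}). By \cref{RS confluence iff GB} this is equivalent to checking $S$-polynomials, but it organises the verification around graphs rather than overlap patterns, which keeps the case analysis short despite the large number of relations (for $\mathbb{P}^f$ there are $38$ directed relations).
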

By virtue of the Koszul machine, the associated minimal models $\mathbb{P}_\infty$ (and $\mathbb{W}_\infty$) govern homotopy associative dg-(wheeled) props.
Then in \cref{homotopy (wheeled) props}, we provide simple counter examples showing the following.
\begin{theorem*}[\ref{nesting complexes of props not polytopes}]
There exist subcomplexes of $\mathbb{W}_\infty$ and $\mathbb{P}_\infty$ which are \textbf{not} isomorphic, as lattices, to the face poset of convex polytopes.
\end{theorem*}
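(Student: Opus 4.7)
The plan is to produce explicit, minimal counterexamples. Because $\mathbb{P}$ and $\mathbb{W}$ are quadratic Koszul groupoid coloured operads, the syzygy cells of $\mathbb{P}_\infty$ and $\mathbb{W}_\infty$ are indexed by rooted trees of generating (horizontal/vertical, and contraction in the wheeled case) compositions, modulo the quadratic relations; equivalently by ``nestings'' of such composite operations. The subcomplexes in question are the intervals of this nesting poset sitting below a fixed top-dimensional cell. For operads, properads, dioperads, and wheeled properads, the analogous intervals are face posets of associahedra, permutohedra, and the polytopes of Batanin--Markl / Kaufmann--Ward respectively. My approach is to select the smallest top cell in which horizontal and vertical composition genuinely interact (for $\mathbb{P}_\infty$), resp.\ in which contraction interacts with a non-trivial composition (for $\mathbb{W}_\infty$), and read off the induced interval.

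Concretely, I would first pin down the generators and relations of $\mathbb{P}$ and $\mathbb{W}$ produced by the biased definitions of the previous sections, then write down a top cell whose only operations are: one horizontal composition $\hcomp$ between two generators of disjoint biarity, and one vertical composition $\vcomp$ (for $\mathbb{W}_\infty$, also one contraction $\contra{i}{j}$). The rank-two interval of interest would be an interval $[\hat{0}, x]$ in which a composite operation admits several inequivalent refinements. The strategy for showing non-polytopality is to invoke the \emph{diamond property}: in the face lattice of any convex polytope, every length-two interval contains exactly four elements. If I can exhibit a length-two interval with five (or more) elements, it cannot be isomorphic, as a lattice, to the face poset of any convex polytope. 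The expectation is that in the prop case, a composite of the form $(a \hcomp b) \vcomp c$ admits three inequivalent ``single-refinement'' children (refining $\hcomp$, refining $\vcomp$, or splitting $c$ against a factor of the horizontal composition), producing the required defect; in the wheeled case, an analogous interaction involving $\contra{i}{j}$ around $\hcomp$ plays the same role.

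The main obstacle is verifying that the proposed five-element interval is genuinely an interval of $\mathbb{P}_\infty$ (resp.\ $\mathbb{W}_\infty$) and not an artefact of the presentation. This requires carefully applying the quadratic relations of the biased definition to check that two of the candidate coatoms which might a priori coincide under some interchange law are in fact distinct, and that no further relations identify them. For this I would use the groupoid coloured Gr\"obner basis machinery developed earlier in the paper: the reduced basis gives a normal form for the cells of the minimal model, and distinctness of the coatoms reduces to distinctness of two normal forms, which is a finite and mechanical check. A parallel Gr\"obner-basis verification handles the wheeled case, with the contraction generator added to the alphabet.

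Finally, I would record the conceptual takeaway: the failure of polytopality arises precisely because the horizontal composition of props is defined on \emph{disconnected} graphs, which is the structural feature distinguishing $\mathbb{P}$ and $\mathbb{W}$ from the connected-graph operads considered in \cite{batanin2021koszul} and \cite{kaufmann2021koszul}. This explains why the polytope-based interpretations of the minimal model found in those works cannot extend to our setting, and motivates seeking a different cellular or CW-combinatorial model for $\mathbb{P}_\infty$ and $\mathbb{W}_\infty$.
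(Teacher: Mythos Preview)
Your overall strategy via the diamond condition is exactly the paper's, and for $\mathbb{P}_\infty$ your intuition is correct: the failure comes from the lighthouse relation having three equal tree monomials but only a two-dimensional relation space, so a suitable weight-$3$ element of $\mathbb{P}^{\antishriek}$ has (at least) three coatoms sharing a common atom, giving a length-two interval with at least five elements. The paper exhibits this with a four-vertex graph (your $(a\hcomp b)\vcomp c$ is weight~$2$ and sits one level too low to be the top of such an interval, but the phenomenon you describe is the right one).

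There is, however, a genuine gap in your treatment of $\mathbb{W}_\infty$. You predict the same ``too many coatoms'' defect, but the paper's counterexample goes the \emph{other} way: the relevant length-two interval has only \emph{three} elements. The point is that the bar graph (two vertices joined by a single edge, no self-loops) has a \emph{unique} formation in $\mathbb{W}$---horizontal composition followed by one contraction---and therefore encodes \emph{no} relation in $\mathbb{W}^{\antishriek}$. Consequently, when one takes the weight-$3$ element corresponding to the graph ``$v_1$ with a self-loop, plus an edge from $v_1$ to $v_2$'', one of the expected coatoms (the one that would factor through the bar graph) is simply absent, leaving only two coatoms above a single atom. Your proposed mechanism of ``contraction interacting with $\hcomp$ to produce extra refinements'' does not occur here; the Gr\"obner basis check you outline would in fact reveal the missing cell rather than an extra one. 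So for the wheeled case you need to look for too few, not too many, elements in the interval.
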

Consequently, there are no polytope based minimal models for the operads $\mathbb{W}$ and $\mathbb{P}$.
This shows that the previous techniques for proving the operads governing operadic structure are Koszul do not readily extend to (wheeled) props (\cref{Nesting complex not convex obstructions}).
Indeed, in order to prove that $\mathbb{P}$ and $\mathbb{W}$ are Koszul we develop a more generally applicable tool. 
In \cref{Rewriting Systems and Groebner Bases},
we transfer the techniques of Groebner bases for operads (\cite{dotsenko2010grobner}, \cite{kharitonov2021grobner}) to groupoid coloured operads with the following result.

\begin{theorem*}[\ref{Koszul if corresponding shuffle operad has QGB}] 
Let $P$ be a groupoid coloured operad such that the associated coloured shuffle operad $(P^f)_*$ admits a quadratic Groebner basis, then $P$ is Koszul.
\end{theorem*}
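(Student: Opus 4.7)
My plan is to adapt the classical Dotsenko–Khoroshkin strategy for detecting Koszulness of a symmetric operad via its shuffle operad to the groupoid coloured setting. The argument factors as a three-step reduction
\[
P \;\leadsto\; P^f \;\leadsto\; (P^f)_* \;\leadsto\; \mathrm{lm}\bigl((P^f)_*\bigr),
\]
where the final term is the quadratic monomial operad determined by the leading monomials of the given Groebner basis. At each step I would verify that Koszulness propagates from right to left, and then chain the implications together.

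First I would handle the passage from $P$ to $P^f$. The groupoid coloured Koszul complex of $P$ decomposes as a direct sum indexed by isomorphism classes of colour profiles, and choosing a set of representatives identifies each summand with the corresponding Koszul complex of the set coloured operad $P^f$, twisted by the action of an isotropy group. Since this isotropy acts by chain automorphisms, the vanishing of higher homology is detected at the level of $P^f$, so Koszulness of $P^f$ implies Koszulness of $P$.

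Next, passing from $P^f$ to $(P^f)_*$: as in the classical case, the underlying chain complexes of the Koszul complexes of a set coloured operad and its shuffle operad agree after forgetting the symmetric group action, and since the $\Sigma$-action is free on a shuffle basis, Koszulness of $(P^f)_*$ is equivalent to Koszulness of $P^f$. Finally, the step from $(P^f)_*$ to its quadratic monomial operad is exactly what the extended Groebner basis machinery of \cref{Rewriting Systems and Groebner Bases} is designed to supply: a quadratic Groebner basis produces a PBW-type basis, the associated monomial operad is Koszul by a direct diamond/rewriting argument, and Koszulness of $(P^f)_*$ then follows by the usual Hilbert/Poincar\'e series comparison.

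The main obstacle I anticipate is the first step. Setting up a groupoid coloured Koszul complex whose orbit decomposition interacts cleanly with the fibre construction, and controlling the isotropy of colour objects so that the vanishing of homology upstairs is genuinely reflected downstairs, is the delicate feature absent in the classical symmetric operad story. Once this is in place, steps two and three are close in spirit to their uncoloured analogues and should go through with only mild bookkeeping in the colours.
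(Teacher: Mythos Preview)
Your three–step reduction is essentially the paper's strategy, but you have the two forgetful functors swapped. In the paper's notation, $-^f$ forgets the \emph{symmetric} action (producing the groupoid coloured \emph{shuffle} operad $P^f$, still $\mathbb{V}$-coloured), while $-_*$ encodes forgetting the \emph{groupoid} action (producing the discrete $ob(\mathbb{V})$-coloured shuffle operad $(P^f)_* \cong (P^f)^{f_{\mathbb{V}}}$). So your ``set coloured operad $P^f$'' in Step~1 and your ``symmetric-to-shuffle'' description in Step~2 are each attached to the wrong arrow. Once the labels are corrected, the chain of implications and their directions match the paper: the symmetric-to-shuffle passage $P\leadsto P^f$ is an \emph{equivalence} for Koszulness (this is \cref{def:koszul}(4), via the monoidality of $-^f$, \cref{Forgetful functor is monoidal}), the groupoid-forgetting passage $P^f\leadsto (P^f)_*$ is the one-way implication, and the final step quotes the discrete coloured Groebner basis result of Kharitonov--Khoroshkin.

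The substantive difference is in how the delicate groupoid-forgetting step is handled. The paper does \emph{not} proceed by an isotropy/direct-sum decomposition of the Koszul complex; instead it constructs an explicit epimorphism of dg cooperads $B((P^f)^{f_{\mathbb{V}}}) \twoheadrightarrow B(P^f)$ (\cref{epimorphism of bar constructions}), induced by the quotient map that reintroduces the coinvariants along internal tree edges, and observes that a surjection of complexes pushes the vanishing of higher syzygy cohomology forward (\cref{groupoid coloured shuffle operad is Koszul if its forgetful image is Koszul}). Your isotropy-splitting idea is plausible in characteristic~$0$ with finite $\mathrm{Aut}(v)$ (both assumed), but the groups that act are the automorphism groups of the \emph{internal} edges of each tree in the bar complex, not merely the isotropy of the external colour profile; the phrase ``twisted by the action of an isotropy group'' undersells this, and making it precise would essentially reproduce the paper's epimorphism as the projection onto coinvariants. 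The paper's route is cleaner because it avoids having to control all these edge-wise actions simultaneously and instead packages everything into a single map of cooperads.
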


The proof of this theorem, in \cref{The Groupoid Coloured Extension}, relies on the studying the extent that the forgetful functor $-^{f_{\mathbb{V}}}$, from groupoid coloured modules to (discrete-groupoid) coloured modules, fails to be monoidal with respect to the operadic monoidal products on both categories.
In particular, given a groupoid coloured shuffle operad $P$ we construct an epimorphism, of groupoid coloured shuffle cooperads, between the bar complexes $B(P^{f_{\mathbb{V}}}) \twoheadrightarrow B(P)$ (\cref{epimorphism of bar constructions}).
This lets us infer the following.
\begin{proposition*}[\ref{groupoid coloured shuffle operad is Koszul if its forgetful image is Koszul}]
{\small
A groupoid coloured shuffle operad $P$ is Koszul if the coloured shuffle operad $P^{f_{\mathbb{V}}}$ is Koszul.
}
\end{proposition*}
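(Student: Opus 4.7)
My plan is to detect Koszulness via concentration of bar-complex homology: in the (groupoid) coloured shuffle setting, an augmented connected operad $Q$ is Koszul iff $H_\bullet(B(Q))$ is concentrated on the diagonal where syzygy degree equals weight. Hence it suffices to leverage the hypothesis on $P^{f_{\mathbb{V}}}$ through the epimorphism $\pi : B(P^{f_{\mathbb{V}}}) \twoheadrightarrow B(P)$ provided by \cref{epimorphism of bar constructions}.

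First, I would verify that both bar complexes carry compatible bigradings by (syzygy degree, weight), that their differentials preserve weight while lowering syzygy degree by one, and that $\pi$ is a morphism of bigraded chain complexes. Next, I would unpack $\pi$ in order to identify $B(P)$ with the $\mathbb{V}$-coinvariants of $B(P^{f_{\mathbb{V}}})$ under the relabelling action on decorated shuffle trees; this should amount to observing that the underlying trees on both sides are the same, and that $\pi$ precisely quotients by the groupoid action of $\mathbb{V}$ identifying isomorphic colours.

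Working over a ground field of characteristic zero (as is standard for Koszul duality), the coinvariants functor is exact, so averaging over the $\mathbb{V}$-action yields a section of $\pi$ as bigraded chain complexes and exhibits $B(P)$ as a direct summand of $B(P^{f_{\mathbb{V}}})$. Taking homology, $H_\bullet(B(P))$ becomes a bigraded direct summand of $H_\bullet(B(P^{f_{\mathbb{V}}}))$. Koszulness of $P^{f_{\mathbb{V}}}$ concentrates the latter on the diagonal, forcing the former to be concentrated on the diagonal as well, which is exactly Koszulness of $P$.

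The main obstacle I anticipate is step three: showing that the averaging section is not just a splitting of the underlying coloured modules, but genuinely commutes with the bar differential, which involves shuffle composition along internal edges of trees and not merely pointwise operations. If a clean splitting at the chain level proves elusive, the fallback is to apply a weight-graded spectral sequence to the short exact sequence $0 \to \ker \pi \to B(P^{f_{\mathbb{V}}}) \to B(P) \to 0$, using the hypothesised concentration of $H_\bullet(B(P^{f_{\mathbb{V}}}))$ together with analogous control of the kernel (itself given by the ``non-identified'' summands under the $\mathbb{V}$-action) to transfer the diagonal concentration one bigrade at a time.
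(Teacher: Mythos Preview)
Your approach is correct and is essentially the same as the paper's, which is extremely terse: it simply cites the epimorphism of \cref{epimorphism of bar constructions} and asserts that $H^n(B(P^{f_{\mathbb{V}}}))=0$ for $n\geq 1$ forces $H^n(B(P))=0$ for $n\geq 1$, leaving the mechanism implicit. What you have spelled out---that $B(P)$ is the coinvariants of $B(P^{f_{\mathbb{V}}})$ under the action of $\prod_{E} Aut(v_E)$ on internal edge colours, and that in characteristic $0$ with finite automorphism groups (both standing assumptions in the paper) averaging gives a chain-level splitting---is exactly the content hiding behind that one-line proof.

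Your anticipated obstacle dissolves: the averaging section does commute with the bar differential, because the differential is induced by operadic composition in $P$, and that composition is by construction compatible with the groupoid action on internal edges (this is precisely the meaning of the coinvariants $[-]_{Edges(T)}$ in \cref{Trees decorated with groupoid coloured modules}). So the direct-summand argument goes through cleanly and your spectral-sequence fallback is not needed.
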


Thus by obtaining an explicit simple presentation for $P^{f_{\mathbb{V}}}$ which we denote $P_*$ (\cref{An alternate presentation after forgetting the groupoid}), we may apply known rewriting techniques for coloured operads (\cite{loday2012algebraic}, \cite{malbos2023shuffle}) to prove $P$ is Koszul (\cref{Rewriting Systems and Groebner Bases}). 
We note that \cref{Koszul if corresponding shuffle operad has QGB} can also be applied to prove that the operads governing connected operadic structures are Koszul (\cref{Koszul operads for other operadic}).
Two less dramatic results that may also be of independent interest, are new biased definitions of (wheeled) props (\ref{Alternate biased wheeled prop}, \ref{Alternate biased prop}).
Critically, for defining $\mathbb{W}$ and $\mathbb{P}$, the equivariance axioms of these structures enable a simple canonical form for every composite of operations.
Thus, when we translate these equivariance axioms into actions of the groupoid (in Sections \ref{biased presentation of the operad governing wheeled props} and \ref{biased presentation of the operad governing props}), the operads inherit these simple canonical forms.
\\\\
The general theory and constructions of this paper can be used for many purposes.
For instance, as we now know that $\mathbb{W}$ and $\mathbb{P}$ are Koszul, we can apply the technique of homotopy transfer theory (HTT) to these structures (\cref{homotopy transfer theory}).
As homology is an example of a homotopy retract (over a field of characteristic $0$), we are able to construct generalised Massey products.
This immediately provides an alternate characterisation of the formality of a wheeled prop (\cref{Massey products characterise formality}), and a means of reinterpreting a theorem of Mac Lane as a particular instance of HTT (\cref{Mac Lane theorem}).
In addition, HTT provides an alternate pathway to studying deformations of (wheeled) props (\cref{deformation theory link}).
\\\\
Currently, the author is working with Philip Hackney and Marcy Robertson to develop graphical and Segal models for infinity props.
It is expected that the nerve functor of \cite{le2017homotopy}, from strictly unital algebraic homotopy operads to dendroidal sets, will extend to homotopy props.
That is to say, a unital version of an algebraic homotopy prop (i.e. an algebra over $\mathbb{P}_\infty$), will be an example of graphical $\infty$-prop (i.e. a graphical set satisfying an inner horn condition).
Additionally, it is expected that graphical $\infty$-props will be cofibrant objects in the model category of simplicially enriched props \cite{caviglia2015dwyer}, \cite{hackney2017sprop}.
\\\\
Another area for future work is pursuing combinatorially simpler models for homotopy (wheeled) prop(erad)s. Although homotopy props do not admit a nesting model governed by polytopes, properads do.
In particular, the appropriate minimal model for the operad governing properads is given by the poset associahedra of \cite{galashin2021poset} (\cref{acyclic tubings provide minimal model of operad governing properads}).
This poset has been recently realised as a convex polytope in \cite{chiara2023cyclonestohedra} and \cite{sack2023realization}.
As such, one could define tensor products of homotopy properads by extending the program of \cite{laplante2022diagonal}.
Finally, it is plausible that 'simpler' combinatorial models for homotopy (wheeled) props exist.
Given the inclusion of the operad $Com$ into $\mathbb{W}$ and $\mathbb{P}$ (\cref{inclusion of com}), an extension of existing graphical models for the coLie cooperad (\cite{sinha2005pairing}, \cite{tourtchine2007other}) might bear fruit.

\subsection{Acknowledgements}

I would like to thank my supervisor, Marcy Robertson, for countless discussions and her careful review of this paper.
I'm very grateful for an early conversation with Ben Ward, in which he suggested that the techniques thus far developed for wheeled props, might also be applicable to props.
I would like to thank Guillaume Laplante-Anfossi for many useful conversations, and comments on this paper. 
I'm thankful to Andrew Sack for pointing out that a subcomplex of the minimal model for the operad governing props violated the diamond condition.
I would like to thank both Dominik Trnka and Bruno Vallette for catching errors in an earlier version.
Additionally, I would like to thank my referees for their detailed comments, and in particular, for spotting an error in my initial proof of \cref{Koszul if corresponding shuffle operad has QGB}.
Finally, I acknowledge the support of, an Australian Government Research Training Program (RTP) Scholarship, and the Australian Research Council Future Fellowship FT210100256.

\section{Preliminaries} \label{Preliminaries}

Throughout this paper, let $\mathcal{E}$ be a co-complete, closed symmetric monoidal category and let $\mathbb{V}$ be a small groupoid. 
Informally, we say that a coloured wheeled prop is a $\mathfrak{C}$-coloured bimodule in $\mathcal{E}$, together with composition operations that are described by wheeled graphs (11.33 of \cite{yau2015foundation} and \cref{Alternate biased wheeled prop}). As such, in order to define (wheeled) props, we first need to introduce some preliminaries on bimodules and wheeled graphs. Furthermore, as we will construct a groupoid coloured operad governing wheeled props, we will need to define groupoid coloured bimodules, and their morphisms.

\subsection{Bimodules}\label{Bimodules}

We first recall the definition of $\mathfrak{C}$-coloured profiles and introduce notation, which we will use throughout this paper. 

\begin{definition}[Section 1.1 of \cite{yau2015foundation}]
\label{def:colours}
Let $\fC$ be a non-empty set (of colours) and let $\Sigma_n$ denote the symmetric group on $n$ letters.
\begin{enumerate}
\item
An element in $\fC$ will be called a \textbf{colour}.
\item
A \textbf{$\fC$-profile of length $n$} is a finite sequence
\[
\uc = (c_1, \ldots , c_n) \label{note:profile}
\]
of colours.  We write $|\uc| = n$ for the length. The \textbf{empty profile}, with $n=0$, is denoted by $\varnothing$.
\item
Given two $\fC$-profiles $\uc, \ud$ and $i \in \{1,...,|\uc|\}$ we define the following $\fC$-profiles
\begin{align*}
    \uc \circ_i \ud &= (c_1,...,c_{i-1},d_1,...,d_{|\ud|},c_{i+1},...,c_{|\uc|})\\
    (\uc, \ud) &= (c_1,...,c_{|\uc|},d_1,...,d_{|\ud|})\\
    \uc \setminus c_i &= (c_1,...,c_{i-1},c_{i+1},...,c_{|\uc|})
\end{align*}
\item
For a $\fC$-profile $\uc$ of length $n$ and $\sigma \in \Sigma_n$, define the left and right actions
\[
\sigma\uc = \left(c_{\sigma(1)}, \ldots , c_{\sigma(n)}\right) \andspace 
\uc\sigma = \left(c_{\sigma^{-1}(1)}, \ldots , c_{\sigma^{-1}(n)}\right).
\]
\item
The groupoid of all $\fC$-profiles with left (resp., right) symmetric group actions as morphisms is denoted by $\pofc$\label{note:pc} (resp., $\pofcop$).
\item
Define the product category $\SC := \ptwoc$. Its elements are pairs of $\fC$-profiles and are written either horizontally as $\dch$ or vertically as $\dc$.\label{note:dc}
\end{enumerate}
\end{definition}

A \textbf{bimodule} is a functor $P \in \mathcal{E}^{\SC}$ (Definition 10.28 of \cite{yau2015foundation}). We may generalise this definition to a groupoid coloured bimodule as follows.

\begin{definition}\label{Groupoid Coloured Bimodule}
Let $\mathcal{W}_k(\mathbb{V}):= \mathbb{V}^k \rtimes \Sigma_k$, where we suppose that $\Sigma_k$ acts on $\mathbb{V}^k$ from the left, and let $\mathcal{W}(\mathbb{V}) := \coprod_{k\geq 0} \mathcal{W}_k(\mathbb{V})$.

\begin{itemize}
    \item A $\mathbb{V}_{\Sigma}$-\textbf{bimodule} is a functor $P \in \mathcal{E}^{\SV}$, where $\mathcal{S}(\mathbb{V}):= {\mathcal{W}(\mathbb{V})^{op}\times \mathcal{W}(\mathbb{V})}$.  
    \item A $\mathbb{V}_{\Sigma}$-\textbf{module} is a functor $P \in \mathcal{E}^{\mathcal{W}(\mathbb{V})^{op} \times \mathbb{V}}$.
    \item A \textbf{non-symmetric} groupoid coloured bimodule, or a $\mathbb{V}$-bimodule is a $\mathbb{V}_{\Sigma}$-bimodule in which $\Sigma_k$ acts trivially. Similarly, one can define a non-symmetric $\mathbb{V}$-module.
\end{itemize}

\end{definition}
In the case that $\mathbb{V}$ is the discrete category of colours, then we recover the definition of a bimodule. In the more general case, we can unpack the definition of a groupoid coloured bimodule to see that it consists of the following data.
\begin{itemize}
    \item For any pair of profiles, it has an object $P\dc \in \mathcal{E}$.
    \item A (iso)morphism $\binom{\delta}{\phi}:P\dc\to P\binom{\ud'}{\uc'}$ in $\mathcal{W}(\mathbb{V})$, is equivalently a pair of compatible permutations and isomorphisms $\binom{\delta}{\phi}=(\binom{\sigma}{\tau},\binom{g}{f})$, where $(\tau;\sigma) \in \Sigma_{|\uc|}\times \Sigma_{|\ud|}$ and $g:\sigma\ud \to \ud' \in \mathbb{V}^{|\ud|} , f:\uc'\to \uc\tau \in \mathbb{V}^{|\uc|}$.
    Thus, this morphism of $\mathcal{W}(\mathbb{V})$ induces an action of the groupoid coloured module 
    \begin{align}\label{Groupoid Coloured Bimodule Action}
        P \binom{\ud}{\uc} \xrightarrow{\binom{\delta}{\phi}=(\binom{\sigma}{\tau},\binom{g}{f})} P \binom{\ud'}{\uc'}.
    \end{align}
    We note in particular that, the identity of $\mathcal{W}(\mathbb{V})$ is  $(\binom{id}{id},\binom{id}{id})$ and composition satisfies
    \begin{align} \label{Composition in Groupoid Coloured Bimodule}
        (\binom{\sigma'}{\tau'},\binom{g'}{f'}) \circ (\binom{\sigma}{\tau},\binom{g}{f}) = (\binom{\sigma'\sigma}{\tau\tau'},\binom{g''}{f''}).
    \end{align}
    Here if $g':\sigma'\ud'\to \ud''$ and $f':\uc''\to \uc' \tau'$ then $g''$ and $f''$ are given by the clear composites
    \begin{align*}
        g''&:\sigma'\sigma \ud \to \sigma' \ud' \to \ud''\\
        f''&:\uc'' \to \uc' \tau' \to \uc \tau \tau'.
    \end{align*}
\end{itemize}

\begin{remark} \label{remark:groupoid coloured bimodules}
It is straightforward to confirm that our definition of a groupoid coloured module is dual (and isomorphic) to Definition $3.3$ of \cite{petersen2013operad} (when restricted to groupoids). We also note that our definition is equivalent to the dual of the definition of a $\mathbb{V}$-coloured sequence presented in \cite{ward2022massey} if a very minor error is corrected. In Definition 2.2.1, the (dual) definition of a $\mathbb{V}$-coloured module is explicitly unpacked (here it is called a $\mathbb{V}$-coloured sequence, and it is also restricted to the case in which $k>0$, see Remark 2.3.1 of \cite{ward2022massey}). However, in this unpacking of the definition, some necessary morphisms are omitted. 
There are more morphisms acting on $\mathbb{V}$-coloured modules than those given via the automorphisms of $\mathbb{V}$ (unless $\mathbb{V}$ is skeletal), in particular there are morphisms
\begin{align*}
    \binom{v_0}{v_1,...,v_r} \xrightarrow{(\sigma,f_1,...,f_r;f_0^{op})} \binom{v_0'}{v'_1,...,v'_r}
\end{align*}
where $\sigma \in \mathbb{S}_r$, $f_0^{op}:v_0\to v'_0$ and $f_i:v_i \to v'_{\sigma^{-1}(i)}$. This minor misidentification changes none of the results of the paper, this is because the groupoid still acts on a tree $T$ via the automorphism group of the internal edges of $T$ (using \cref{Composition in Groupoid Coloured Bimodule} of \cref{Groupoid Coloured Bimodule} to define \cref{Decorated Symmetric Tree} of \cref{Trees decorated with groupoid coloured modules}, instead of (2.1) to define (2.4) as in \cite{ward2022massey}).
\end{remark}

\begin{definition}\label{def:morphism of groupoid coloured bimodules}
Given two small groupoids $\mathbb{U}, \mathbb{V}$, and two symmetric monoidal categories $\mathcal{D}, \mathcal{E}$, let $P \in \mathcal{D}^{\mathcal{S}(\mathbb{U})}$ and $Q \in {\mathcal{E}}^{\mathcal{S}(\mathbb{V})}$ be two distinct bimodules.
A \textbf{morphism of bimodules} $h:P\to Q$ consists of a morphism of groupoids $h_0:\mathbb{U}\to \mathbb{V}$, and a sequence of set maps
\begin{center}
$\{P\dc \xrightarrow{{h_1^{\dc}}} Q\binom{h_0(\ud)}{(\uc)h_0}\}_{\dc \in ob(\SC)},\quad$ so that all squares $\quad$
\begin{tikzcd}
P\dc \arrow[d,"{(\binom{\sigma}{\tau},\binom{g}{f})}"'] \arrow[r, "{h_1^{\dc}}"] & Q\binom{h_0(\ud)}{(\uc)h_0} \arrow[d,"{(\binom{\sigma}{\tau},\binom{h_0(g)}{(f)h_0})}"]\\
P\binom{\ud'}{\uc'} \arrow[r, "{h_1^{\binom{\ud'}{\uc'}}}"] & Q\binom{h_0(\ud')}{(\uc')h_0} 
\end{tikzcd}
$\quad$ commute.
\end{center}
Above, $(\binom{\sigma}{\tau},\binom{g}{f})$ is any action of the $\mathbb{U}_\Sigma$-bimodule $P$ given by \cref{Groupoid Coloured Bimodule Action}, and in an abuse of notation we use $h_0(-), (-)h_0$ to denote the morphisms of groupoids $\mathbb{U}^{|\ud|}\to \mathbb{V}^{|\ud|}$, and respectively ${\mathbb{U}^{|\ud|}}^{op}\to {\mathbb{V}^{|\ud|}}^{op}$, induced by $h_0:\mathbb{U} \to \mathbb{V}$.
To be explicit $h_0(\ud):= (h_0(d_1),...,h_0(d_k))$, and if $g=(g_1,...,g_{\ud}) \in Hom_{\mathbb{V}^{|\ud|}}(\sigma \ud, \ud')$ then $h_0(g)= (h_0(g_1),...,h_0(g_{\ud}))\in  Hom_{\mathbb{U}^{|\ud|}}(h_0(\sigma \ud), h_0(\ud'))$.
\\\\
A morphism $h$ of groupoid coloured operads is said to be 
\begin{itemize}
    \item \textbf{full}, if the sequence of set maps are injective,
    \item \textbf{faithful}, if the sequence of set maps are surjective,
    \item an \textbf{epimorphism}, if it is faithful, and the underlying morphism of groupoids is essentially surjective,
    \item an \textbf{isomorphism}, if it is fully faithful and $h_0$ is an isomorphism of groupoids.
\end{itemize}
We similarly define and characterise the morphisms of (non-symmetric) groupoid coloured modules.
Other familiar types of morphisms are possible, but will not be needed in this text.
\end{definition}

We note this definition of a morphism of groupoid coloured bimodules allows for both differing groupoids and differing ambient symmetric monoidal categories.
This extension to allow differing groupoids will be essential in the study of particular morphisms of groupoid coloured operads (see \cref{The Groupoid Coloured Extension} and \cref{inclusion of com}).
If we specialise \cref{def:morphism of groupoid coloured bimodules} to a fixed groupoid $\mathbb{V}$ and fixed symmetric monoidal category $\mathcal{E}$, then a morphism of groupoid coloured bimodules is a natural transformation.

\subsection{Fundamental Graphical Definitions}

Operations in (wheeled) props are parameterised by wheeled graphs. 
In this section, we provide a formal definition of a wheeled graph (\cref{Wheeled Graph}) and describe when two such graphs are isomorphic (\cref{isomorphic graphs}). The definitions of this subsection all stem verbatim from \cite{hackney2015infinity} and \cite{yau2015foundation}, further related definitions and examples may be found there.

\subsubsection{Generalised Graphs}

Fix an infinite set $\fF$ once and for all.

\begin{definition}
\label{def:graph}
A \textbf{generalised graph} $G$ is a finite set $Flag(G) \subset \fF$ with
\begin{itemize}
\item
a partition $Flag(G)= \coprod_{\alpha \in A} F_\alpha$ with $A$ finite,
\item
a distinguished partition subset $F_\epsilon$ called the \textbf{exceptional cell},
\item
an involution $\iota$ satisfying $\iota F_\epsilon \subseteq F_\epsilon$, and
\item
a free involution $\pi$ on the set of $\iota$-fixed points in $F_\epsilon$.
\end{itemize}
\end{definition}
For a given generalised graph $G$, we unpack this definition and introduce some notation. The elements of $Flag(G)$ are called \textbf{flags} (or half edges). Every non-exceptional partition subset of $Flag(G)$ say $F_{\alpha}\neq F_{\epsilon}$ is a \textbf{vertex}, and if $f \in F_{\alpha}$ we say the flag $f$ is \textbf{adjacent} (or attached) to the vertex. We note it is possible for a vertex to have no adjacent flags, in which case $F_{\alpha} = \{\emptyset\}$. The exceptional cell $F_{\epsilon}$ then corresponds to flags which are 'free floating' in the graph, and are not associated to any vertices. We shall denote the vertices of a graph as $Vt(G)$.
\\\\
The non-trivial orbits of the involutions are called \textbf{edges} of the graph, and the fixed points of the involutions are called \textbf{legs} of the graph. Orbits of $\iota$ are called \textbf{internal edges}, whereas orbits of $\pi$ are called \textbf{exceptional edges}. Note that because the involution $\pi$ is free on the fixed points of $\iota$, every fixed point of $\iota$ is part of an exceptional edge. Non-trivial orbits of flags in $F_{\epsilon}$ are called \textbf{exceptional loops}. Finally, the fixed points of $\iota$ (there are no fixed points of $\pi$) are called \textbf{legs} of the graph, these can also be considered exceptional (or ordinary) if they are (not) in $F_{\epsilon}$.
\begin{example}[1.12 of \cite{yau2015foundation}]
Consider the following diagrammatic representation of a generalised graph $\gamma$
\begin{center}
\tiny{
\begin{tikzpicture}[node distance={8mm},thick] 
\node[vertex] (v2) {$v_2$};
\node[flag] [above left of=v2] (o1) {$o_1$};
\node[flag] [above right of=v2] (o2) {$o_2$};
\coordinate[below of=v2] (e);
\node[vertex] (v1) [below of=e]{$v_1$};
\node[flag] [below left of=v1] (i1) {$i_1$};
\node[flag] [below right of=v1] (i2) {$i_2$};
\draw [black] (v2) to (o1);
\draw [black] (v2) to (o2);
\draw (v2) edge["${e_{-1}}$"] (e);
\draw (v1) edge["${e_{1}}$"] (e);
\draw [black] (v1) to (i1);
\draw [black] (v1) to (i2);
\coordinate[right of=e] (gap1);
\node[vertex] (v3) [right of=gap1]{$v_3$};
\coordinate[right of=v3] (gap2);
\coordinate[right of=gap2] (f);
\coordinate[above of=f] (f-);
\coordinate[below of=f] (f+);
\draw (f-) edge["${f_{-1}}$"] (f);
\draw (f+) edge["${f_{1}}$"] (f);
\coordinate[right of=f] (gap3);
\coordinate[right of=gap3] (gbot);
\coordinate[above of=gbot] (gtop);
\draw (gbot) edge["${g_{-1}}$", bend left=90] (gtop);
\draw (gtop) edge["${g_{1}}$", bend left=90] (gbot);
\end{tikzpicture}
}
\end{center}
The underlying partition set of $\gamma$ is $\{\{i_1,i_2,e_1\}, \{o_1,o_2,e_{-1}\},\emptyset, \{f_{\mp1},g_{\mp1}\}\}$ with the last element being the exceptional cell. The $2$ orbits of the involution $\iota$ are $e_{\mp1}$ and $g_{\mp1}$, all other flags are fixed points. The involution $\pi$ has one $2$ orbit corresponding to $f_{\mp1}$ all other flags are fixed. The graph $\gamma$ has three vertices, a single exception edge with flags $f_{\mp1}$ and a single exceptional loop with flags $g_{\mp1}$.

\end{example}

\subsubsection{Structures on Generalised Graphs}

We now introduce further structure on generalised graphs in order to define wheeled props. The extra data we introduce is: a colour for each edge; an orientation for each edge; and a labelling of the incoming/outgoing flags of each vertex, as well as labelling of the generalised graph. 

\begin{definition}
Suppose $G$ is a generalised graph.
\begin{enumerate}
    \item A \textbf{colouring} for $G$ is a $\Flag(G) \xrightarrow{\kappa} \fC$
    that is constant on orbits of both involutions $\iota$ and $\pi$.
    \item A \textbf{direction} for $G$ is a function $\Flag(G) \xrightarrow{\delta} \{-1,1\}$
    such that
    \begin{itemize}
        \item if $\iota x \not= x$, then $\delta(\iota x) = -\delta(x)$, and
        \item if $x \in F_\epsilon$, then $\delta(\pi x) = -\delta(x)$.
    \end{itemize}
    \item For $G$ with direction, an \textbf{input} (resp. \textbf{output}) of a vertex $v$ is a flag $x \in v$ such that $\delta(x) = 1$ (resp. $\delta(x) = -1$).  An \textbf{input} (resp., \textbf{output}) of the graph $G$ is a leg $x$ such that $\delta(x) = 1$ (resp. $\delta(x) = -1$).  For $u \in \{G\} \cup \vertex(G)$, the set of inputs (resp. outputs) of $u$ is written as $\inp(u)$ (resp. $\out(u)$). \label{note:in}
    \item A \textbf{listing} for $G$ with direction is a choice for each $u \in \{G\} \cup \vertex(G)$ of a bijection of pairs of sets,
    $$
    \left(\inp(u), \out(u)\right)
    \xrightarrow{\ell_u} 
    \left(\{1,\ldots,|\inp(u)|\}, \{1, \ldots , |\out(u)|\}\right),
    $$
    where for a finite set $T$ the symbol $|T|$ denotes its cardinality.
\end{enumerate}
\end{definition}

\begin{definition}\label{Wheeled Graph}
A ($\fC$-coloured) \textbf{wheeled graph}, is a generalised graph together with a choice of a colouring, a direction, and a listing. For such a graph $G$, we may use the colouring, listing, and direction to speak of the profile of the graph and any of its vertices. If $u \in \{G\} \cup \vertex(G)$, with $in(u) = \uc$ and $out(u)=\ud$ (treating these as sets ordered by the listing) then we say that $u$ has profile $\dc$.
\\\\
A \textbf{wheel} of a (directed) wheeled graph is a directed cycle. A \textbf{wheel-free} graph is a wheeled graph without any wheels.
\end{definition}

\textbf{A convention:} All graphs in this paper will be drawn with their inputs on the bottom and outputs on the top, e.g. the two graphs in \cref{Isomorphic Graphs} each have one vertex with two inputs and no outputs.

\subsubsection{Isomorphic Graphs}

So having defined what a graph is, all that remains is the question of when should two graphs be considered the same.

\begin{definition}[4.1 and 4.15 of \cite{yau2015foundation}]\label{isomorphic graphs}
Let $G$ and $G'$ be wheeled graphs. 
\begin{itemize}
    \item A \textbf{weak isomorphism} $\varphi:G\to G'$ is a bijection of partitioned sets that commutes with both involutions and leaves invariant both the colouring and the directions.
    \item A \textbf{strict isomorphism} $\varphi: G\to G'$ is a weak isomorphism that also leaves invariant the listing.
\end{itemize}

\end{definition}

\begin{example}\label{Isomorphic Graphs}
The following two graphs whose listings are given by the planar embedding are weakly isomorphic, but not strongly isomorphic.
\begin{center}
\small{
\begin{tikzpicture}[node distance={10mm},thick] 
\node[vertex] (v1) {$v$};
\node[flag] [below left of=v1] (i1) {$i_1$};
\node[flag] [below right of=v1] (i2) {$i_2$};
\draw [black] (v1) to (i1);
\draw [black] (v1) to (i2);
\end{tikzpicture}
$\quad \quad \quad$
\begin{tikzpicture}[node distance={10mm},thick] 
\node[vertex] (v1) {$v$};
\node[flag] [below left of=v1] (i1) {$i_2$};
\node[flag] [below right of=v1] (i2) {$i_1$};
\draw [black] (v1) to (i1);
\draw [black] (v1) to (i2);
\end{tikzpicture}
}
\end{center}
\end{example}

\subsubsection{Trees}

Operations in operads are parameterised by trees. So, in this section, we specialise the prior definitions of wheeled graphs and their isomorphisms to describe this important type of graph.

\begin{definition} \label{tree definitions}
A \textbf{tree} is a simply connected graph where every vertex has a single output. We shall refer to the sole output of a tree as the root. We say a tree $T$ with profile $(\uc;d)$ is:
\begin{itemize}
    \item \textbf{reduced} if every vertex has at least one input.
    \item a \textbf{$\fC$-tree} if it is a reduced tree with a colouring $\fC$ (a key colouring being $ob(\mathbb{V})$).
    \item \textbf{level} if the height of every input leg (leaf) is the same. The height of a leg or vertex of a tree is defined to be the number of vertices on the directed path from it to the root. We say a tree with $0$ levels is \textbf{empty}.
    \item a \textbf{shuffle} tree if for every vertex $v$ of the tree say with profile $(v_1,...,v_n;v_0)$, that for $ 1\leq i<j \leq n$ we have that $min(\uc_i) < min(\uc_j)$ where $\uc_k = \{ c\in \uc:$ there exists a directed path from $c$ to $v_k$\}. 
    \item a \textbf{strongly regular} tree if the listing of $T$ is given by performing a depth first search on the tree using the listing of the vertices to determine the order of the exploration of the children. In other words, the inputs of the tree are ordered left to right through embedding the tree into the plane. 
\end{itemize}
Examples of shuffle trees and strongly regular trees are given in \cref{Reducing to Equivalent Definitions for Coloured Operads}.

\end{definition}
We note that this definition of a $ob(\mathbb{V})$-coloured tree is the same as Definition 2.3 of \cite{ward2022massey}, with the additional caveat that every internal vertex of the tree has an explicit listing as well. We explicitly refer to these trees as being \textbf{object-coloured} to stress that the morphisms of the groupoid are not (yet) involved. 
\\\\
Using the prior definitions of isomorphic graphs, we define the following classes of trees.
\begin{definition}\label{def:isomorphism classes of trees}
Let,
\begin{itemize}
    \item $\symmetricTreesClassesWithEmpty:=$ weak isomorphism classes of, possibly empty, coloured trees. 
    \item $\strict:=$ strict isomorphism classes of, possibly empty, coloured strongly regular trees.
    \item $\shuffleTreesClassesWithEmpty:=$ strict isomorphism classes of, possibly empty, coloured shuffle trees.
\end{itemize}
If we wish to exclude the empty trees, we shall drop the over-line.
The subscripts $-_{n,\vec{v}}$ refer to the restrictions of these classes to those trees with $n$ levels and profile $\vec{v}$.
\end{definition}
We note that the notion of an isomorphism of $ob(\mathbb{V})$ coloured trees in \cite{ward2022massey} (Definition 2.9) is equivalent to that of a weak isomorphism.

\section{Groupoid Coloured Operads} \label{groupoid coloured operads}

In this section, we present alternate definitions (monoidal and monadic) of variants (symmetric, non-symmetric and shuffle) of groupoid coloured operads, and define what it means for these operads to be Koszul.
The necessary theory is a natural amalgamation of shuffle operads (\cite{dotsenko2010grobner}, \cite{kharitonov2021grobner}) and groupoid coloured operads (\cite{petersen2013operad}, \cite{ward2022massey}).
We note that groupoid coloured operads are a specific instance of category coloured operads \cite{petersen2013operad}, and the latter is explicitly related to internal operads \cite{batanin2018regular}, and substitudes \cite{day2003abstract}, in \cite{trnka2023category}.
Furthermore, \cite{batanin2018regular} establishes the connection between substitudes \cite{day2003abstract}, regular patterns \cite{getzler2009operads}, and Feynman categories \cite{kaufmann2017feynman}.

\subsection{Monoidal Groupoid Coloured Operads}

We start by presenting monoidal definitions of the operads and providing fundamental examples. Our definition of a symmetric groupoid coloured operad is precisely that of Ward's (accounting for \cref{remark:groupoid coloured bimodules}). To define our monoids, we will decorate trees with groupoid coloured modules, but because our trees are reduced, we will need the following assumption.
\begin{definition} \label{reduced}
A groupoid coloured module $A:\mathcal{W}(\mathbb{V})^{op}\times \mathbb{V}\to \mathcal{E}$ is said to be \textbf{reduced} if, for all $v\in \mathbb{V}$, $A(\emptyset,v) = \emptyset$, the terminal object in $\mathcal{E}$. 
\end{definition}
For the remainder of this paper, we assume all groupoid coloured modules (symmetric or non-symmetric) are reduced. 
That is to say, all of our 'operations' have at least one input.
We make this assumption to align with the theory of Ward (see Definition 2.35 of \cite{ward2022massey}).
It is also required to establish a monoidal functor between symmetric and shuffle groupoid coloured operads (\cref{Forgetful functor is monoidal}).
\\\\
Recall from Definitions \ref{Wheeled Graph} and \ref{tree definitions} that every vertex $w$ of a tree $T$ has a listing/profile.

\begin{definition} [The objects $A_\Sigma(T)$ and $A(T)$] \label{Trees decorated with groupoid coloured modules}
Let $A_\Sigma$ be a $\mathbb{V}_{\Sigma}$-module, and let $A$ be a $\mathbb{V}$-module.
Let $w$ be a vertex of a $ob(\mathbb{V})$-tree with profile $(v_1,...,v_r;v_0)$ then set
\begin{align*}
    A_\Sigma(w) &:= [ \coprod_{\sigma \in S_r} A_\Sigma(v_{\sigma(1)},...,v_{\sigma(r)};v_0)]_{S_r}\\
    A(w) &:= A(v_1,...,v_r;v_0)
\end{align*}
Here $A_\Sigma(w)$ inherits a natural action of $(\prod_i^U Hom_{\mathbb{V}}(-,v_i))\times Hom_{\mathbb{V}}(v_0,-)$ where $\times_i^U$ is the unordered product, and $A(w)$ inherits a natural action of $(\prod_i Hom_{\mathbb{V}}(-,v_i))\times Hom_{\mathbb{V}}(v_0,-)$. We also explicitly define our objects for empty trees,
\begin{align*}
    A_{\Sigma}(|_v) = A(|_v):= \coprod_{Aut(v)} 1_{\mathcal{E}}
\end{align*}
with their induced action by $Hom(-,v)\times Hom(v,-)$. Let $T$ be a $ob(\mathbb{V})$-tree and form the unordered tensor products $\bigotimes_{w \in Vt(T)} A_\Sigma(w)$ and $\bigotimes_{w \in Vt(T)} A(w)$. Then any edge $E$ of $T$ with colour $v_E$, input flag $x$ and output flag $y$ provides the group $Aut(v_E)$ with an action on this tensor product by acting simultaneously on the vertex connected to $x$ on the left and the vertex connected to $y$ on the right. This action commutes across edges by \cref{Composition in Groupoid Coloured Bimodule}, and hence provides an action of the group $\prod^U_{E \in Edges(T)} Aut(v_E)$ on these tensor products. We let $[-]_{Edges(T)}$ denote the coinvariants of this action and define,
\begin{align}
    A_\Sigma(T) &:= [ \bigotimes_{w \in Vt(T)} A_\Sigma(w)]_{Edges(T)} \label{Decorated Symmetric Tree}\\
    A(T) &:= [ \bigotimes_{w \in Vt(T)} A(w)]_{Edges(T)} \label{Decorated Non-Symmetric Tree}
\end{align}
\end{definition}

\begin{definition}
For a tree $T$ with $n$ levels, $\mathbb{V}_\Sigma$-modules $A_{\Sigma,1},...,A_{\Sigma,n}$, and $\mathbb{V}$-modules $A_{1},...,A_{n}$, let
\begin{align*}
    (A_{\Sigma,1} \circ ... \circ A_{\Sigma,n})(T)&:=  [ \bigotimes_{w \in Vt(T)} A_{\Sigma,ht(w)}(w)]_{Edges(T)}\\
    (A_{1} \circ ... \circ A_{n})(T)&:=  [ \bigotimes_{w \in Vt(T)} A_{ht(w)}(w)]_{Edges(T)}
\end{align*}
We then define the following symmetric, non-symmetric, and shuffle monoidal products as,
\begin{align*}
    (A_{\Sigma,1} \circ ... \circ A_{\Sigma,n})(\vec{v})&:= \coprod_{T\in \symmetricTreesClassesWithEmpty_{n,\vec{v}}} (A_{\Sigma,1} \circ ... \circ A_{\Sigma,n})(T );\\
    (A_{1} \circ ... \circ A_{n})(\vec{v})&:= \coprod_{T \in \strict_{n,\vec{v}}} (A_{1} \circ ... \circ A_{n})(T);\\
    (A_{1} \circ_{sh} ... \circ_{sh} A_{n})(\vec{v})&:= \coprod_{T\in \shuffleTreesClassesWithEmpty_{n,\vec{v}}} (A_{1} \circ ... \circ A_{n})(T).
\end{align*}
\end{definition}

\begin{lemma} \label{Monoidal Product Lemma}
Each of the products above give the underlying category a strict monoidal structure with unit
\begin{align*}
    \mathcal{I}(\vec{v}):=\begin{cases}
        \coprod_{Aut(v)}1_{\mathcal{E}}, & \vec{v} = (v;v) \text{ for some } v\in Ob(\mathbb{V})\\
        \emptyset,& \text{else}
    \end{cases}
\end{align*}
\end{lemma}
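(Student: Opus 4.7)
The plan is to verify the three required properties (associativity, unitality, strictness) for each of the three products, reducing everything to combinatorial facts about trees with levels. The key observation unifying the three cases is that an $n$-level $ob(\mathbb{V})$-tree can be canonically sliced at any horizontal level $1 \leq k \leq n$: it decomposes into a tree with $k$ levels whose input legs are identified with the roots of trees with $n-k$ levels. This gives a bijection on isomorphism classes in each of the three categories $\symmetricTreesClassesWithEmpty$, $\strict$, $\shuffleTreesClassesWithEmpty$, and under this bijection the decorations $A_\Sigma(T)$ and $A(T)$ match, because the unordered tensor product over vertices is associative and the coinvariants by $\prod^U_{E}Aut(v_E)$ split into the coinvariants over the internal edges of each piece (the matching pairs of leaves/roots that get glued become the new internal edges).

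For associativity, given modules $A_1,\dots,A_n$ and any bracketing, the iterated product is indexed by trees whose vertex at height $h$ is decorated by $A_h$. Unfolding the inductive definition and applying the slicing bijection at each bracketing, one obtains the same coinvariant tensor product in each case. For the shuffle product, the extra step is to verify that grafting shuffle trees yields a shuffle tree: this follows because the min-of-leaves invariant at a vertex $v$ with subtrees $T_1,\dots,T_r$ is determined entirely by the leaves of each $T_i$, and grafting a shuffle tree $S$ onto a leaf $\ell$ of a shuffle tree $T$ leaves the min at every vertex of $T$ and $S$ unchanged. For the strongly regular product, the planar depth-first listing is likewise preserved under grafting.

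For unitality, observe that $\mathcal{I}$ is supported only on $(v;v)$, where it equals $\coprod_{Aut(v)} 1_{\mathcal{E}}$ with the canonical $Aut(v)\times Aut(v)$-action. Any $n$-level tree occurring in $A\circ \mathcal{I}\circ \cdots \circ \mathcal{I}$ must have $\mathcal{I}$ decorating all vertices at heights $> 1$; since $\mathcal{I}$ vanishes on non-trivial profiles, the only surviving trees are those where every height-$\geq 2$ vertex is the empty tree $|_v$. By \cref{Trees decorated with groupoid coloured modules}, each such empty tree contributes $\coprod_{Aut(v)} 1_{\mathcal{E}}$, and the $Aut(v)$-coinvariants from the edge gluing this $|_v$ to its parent vertex collapse this coproduct to a single copy of $1_{\mathcal{E}}$, reproducing $A$ on the nose. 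The symmetric argument handles $\mathcal{I}\circ A$, using that empty trees $|_v$ appearing above a vertex of $A$ similarly collapse. In the symmetric case one also must check compatibility with the $\Sigma_r$-coinvariants in $A_\Sigma(w)$; this is automatic since $\mathcal{I}$'s support being trivial means no nontrivial permutation of leaves is introduced.

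For strictness, the bijections of tree classes above are in fact equalities on $n$-level trees (both bracketings produce literally the same tree, not merely isomorphic ones), and similarly $A\circ \mathcal{I} = A = \mathcal{I}\circ A$ on the nose by the collapse above. Hence the structural natural transformations are identities. I expect the main obstacle to be bookkeeping in the symmetric case: one must show that the $\Sigma_r$-coinvariants at each vertex interact correctly with the $Aut(v_E)$-coinvariants over internal edges under grafting, since both actions are taken simultaneously on the overall tensor product. This is best handled by writing $A_\Sigma(T)$ as a colimit over the groupoid of weak automorphisms of the underlying $ob(\mathbb{V})$-tree (as in Remark \ref{remark:groupoid coloured bimodules}) and noting that grafting induces a coproduct decomposition of this automorphism groupoid compatible with the decoration.
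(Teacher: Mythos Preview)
Your proposal is correct and follows essentially the same route as the paper: both arguments reduce associativity to the bijection between an $n$-level tree and its slicing into a $k$-level tree with $(n-k)$-level trees grafted onto its leaves, then check that the decorated tensor products and edge coinvariants match under this bijection (the paper writes this out as an explicit chain of equalities for the shuffle case and invokes closedness of $\mathcal{E}$ to commute coproducts past tensor products, while you phrase it more abstractly). The paper defers the symmetric case to \cite{petersen2013operad} and \cite{ward2022massey} and simply declares unitality ``easily verified'', so your treatment of those points is more detailed than the paper's but not different in substance; one small terminological slip is that in your unitality argument the surviving height-$\geq 2$ pieces are unary vertices of profile $(v;v)$, not literally the empty tree $|_v$.
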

This (to be proven) lemma provides the following monoidal definitions of operads.
\begin{definition}\label{monoidal groupoid coloured operads def}
$\quad$
\begin{enumerate}
    \item A \textbf{symmetric groupoid coloured operad} is a monoid in the category of $\mathbb{V}_{\Sigma}$-modules with the monoidal product given by symmetric composition.
    \item A \textbf{non-symmetric groupoid coloured operad} is a monoid in the category of $\mathbb{V}$-modules with the monoidal product given by non-symmetric composition.
    \item A \textbf{shuffle groupoid coloured operad} is a monoid in the category of $\mathbb{V}$-modules with the monoidal product given by shuffle composition.
\end{enumerate}
\end{definition}
We now prove \cref{Monoidal Product Lemma}.

\begin{proof}
For the symmetric case, we refer the reader to Proposition 3.6 of \cite{petersen2013operad} and Lemma 2.22 of \cite{ward2022massey} (accounting for \cref{remark:groupoid coloured bimodules}). We shall only explicitly prove the shuffle case, and the other cases follow similarly. Let $Vt(T)_n = \{v \in Vt(T):ht(v)=n\}$,  then
\begingroup
\allowdisplaybreaks
\begin{align*}
    (A_{1} \circ_{sh}( A_{2} \circ_{sh} A_3))(\vec{v})=& \coprod_{T \in \shuffleTreesClassesWithEmpty_{2,\vec{v}}} (A_{1} \circ (A_{2} \circ_{sh} A_3))(T)\\
    =& \coprod_{T \in \shuffleTreesClassesWithEmpty_{2,\vec{v}}} 
    [ \bigotimes_{w_1 \in Vt(T)_1} A_1(w_1) \bigotimes_{w_2 \in Vt(T)_2} (A_{2} \circ_{sh} A_3)(w_2) ]_{Edges(T)} \\
    =& \coprod_{T \in \shuffleTreesClassesWithEmpty_{2,\vec{v}}} 
    [ \bigotimes_{w_1 \in Vt(T)_1} A_1(w_1) \bigotimes_{w_2 \in Vt(T)_2}
    (\coprod_{T_2 \in \shuffleTreesClassesWithEmpty_{2,\vec{w_2}}} (A_{2} \circ A_3)(T_2) ) ]_{Edges(T)} \\
    =& \coprod_{T \in \shuffleTreesClassesWithEmpty_{2,\vec{v}}} 
    [ \bigotimes_{w_1 \in Vt(T)_1} A_1(w_1) \bigotimes_{w_2 \in Vt(T)_2}
    (\\
    &\quad\quad\coprod_{T_2 \in \shuffleTreesClassesWithEmpty_{2,\vec{w_2}}} [\bigotimes_{w_1' \in Vt(T_2)_1} A_2(w_1') \bigotimes_{w_2' \in Vt(T_2)_2} A_3(w_2') ]_{Edges(T_2)} \\
    &)]_{Edges(T)} \\
    =& \coprod_{T' \in \shuffleTreesClassesWithEmpty_{3,\vec{v}}} 
    [ \bigotimes_{w_1 \in Vt(T')_1} A_1(w_1) \bigotimes_{w_2 \in Vt(T')_2} A_2(w_2)
    \bigotimes_{w_3 \in Vt(T')_3} A_3(w_3)
    ]_{Edges(T')} \\
    =&(A_{1} \circ_{sh} A_{2} \circ_{sh} A_3)(\vec{v})
\end{align*}
\endgroup
The second to last line follows as $\mathcal{E}$ is a closed monoidal category, so we can iteratively commute the colimits and monoidal products. 
The grouping of the two coproducts into the single coproduct $\coprod_{T' \in \shuffleTreesClassesWithEmpty_{3,\vec{v}}}$ is done by the obvious bijection. Similar working shows $((A_{1} \circ_{sh} A_{2}) \circ_{sh} A_3)(\vec{v}) = (A_{1} \circ_{sh} A_{2} \circ_{sh} A_3)(\vec{v})$ yielding the associativity of the product. The required unitality results are then easily verified, giving the result.
\end{proof}

\begin{example}\label{Reducing to Equivalent Definitions for Coloured Operads}
Groupoid coloured operads coloured by a discrete category are coloured operads. This can be seen in the case of $\mathcal{E}=Vect$, the category of vector spaces, by observing that the defined monoidal products are equivalent to those of Definition 1.5 of \cite{kharitonov2021grobner}. This follows from inspection, however we clarify the following bijections to structures used in their definition.
\begin{itemize}
    \item A non-decreasing surjection $f:[m]\to[n]$ is a linear tree of height $2$. In particular if $i_n = \min \{i:f(i)=n\}$ then the corresponding linear tree is 
    \begin{center}
    \begin{forest}
    fairly nice empty nodes,
    for tree={inner sep=0, l=0}
    [
    [
        [{\makebox[0pt][l]{$1$}\phantom{\dots}}]
        [{\makebox[0pt][l]{$\dots$}\phantom{\dots}},no edge]
        [{\makebox[0pt][l]{$i_2-1$}\phantom{\dots}}]
    ]
    [,no edge[{$\quad \dots\quad$},no edge]]
    [
        [{\makebox[0pt][l]{$i_n$}\phantom{\dots}}]
        [{\makebox[0pt][l]{$\dots$}\phantom{\dots}},no edge]
        [{\makebox[0pt][l]{$n$}\phantom{\dots}}]
    ]
    ]
    \end{forest}
    \end{center}
    \item A shuffling surjection $f:[m]\to[n]$ (i.e. one in which $\min f^{-1}(i)<\min f^{-1}(j)$ for all $i<j$) is a shuffle tree of height $2$. In particular if $i_{1,k}<i_{2,k}<...<i_{|f^{-1}(k)|,k}$ are the elements of $f^{-1}(k)$ ordered smallest to largest then the corresponding shuffle tree is 
    \begin{center}
    \begin{forest}
    fairly nice empty nodes,
    for tree={inner sep=0, l=0}
    [
    [
        [{\makebox[0pt][l]{$i_{1,1}=1$}\phantom{..\dots..}}]
        [{\phantom{..}\dots\phantom{..}},no edge]
        [{\makebox[0pt][l]{$i_{|f^{-1}(1)|,1}$}\phantom{..\dots..}}]
    ]
    [,no edge[{\phantom{.}\dots\phantom{.}},no edge]]
    [
        [{\makebox[0pt][l]{$i_{1,n}$}\phantom{..\dots..}}]
        [{\phantom{..}\dots\phantom{..}},no edge]
        [{\makebox[0pt][l]{$i_{|f^{-1}(n)|,n}$}\phantom{..\dots..}}]
    ]
    ]
    \end{forest}
    \end{center}
\end{itemize}
\end{example}

We make the following definitions in light of this example.
\begin{definition}\label{discrete coloured operads}
We refer to (non-groupoid) coloured operads as \textbf{discrete coloured operads}.
Given a groupoid $\mathbb{V}$, a \textbf{$ob(\mathbb{V})$-coloured operad} is a discrete coloured operad with objects/colours $ob(\mathbb{V})$.
\end{definition}

Another fundamental example of a groupoid coloured operad is the endomorphism operad.
\begin{definition} [Example 2.15 of \cite{ward2022massey}]
\label{Endomorphism Operad}

A functor $X:\mathbb{V}\to \mathcal{E}$ admits a natural extension into a groupoid coloured module $End_X$. For each profile $(v_1,...,v_n;v_0)$
\begin{align*}
    End_X(v_1,...,v_n;v_0):= Hom_{\mathcal{E}}(X(v_1),...,X(v_n);X(v_0))
\end{align*}
with inherited action of $(\times_i^U Hom_{\mathbb{V}}(-,v_i))\times Hom_{\mathbb{V}}(v_0,-)$, and the $\Sigma_n$ action given by permuting the labels. This groupoid coloured module admits the structure of a unital $\mathbb{V}$-coloured operad with composition given by composition of functions.
We call $End_X$ the \textbf{endomorphism operad}.
\end{definition}
The operad structure of $End_X$ is well-defined, as associativity of functions not only yields associativity of the operadic structure maps, but also compatibility of the structure maps with the action of groupoid on the internal edges.
A \textbf{morphism} of (groupoid coloured) operads is a morphism of the (groupoid coloured) modules (\cref{def:morphism of groupoid coloured bimodules}), which is also compatible with the operadic structure maps.

\begin{definition} [2.3.2 of \cite{ward2022massey}] \label{algebra over an operad}
Let $X:\mathbb{V}\to \mathcal{E}$ be a functor and $P$ be a $\mathbb{V}$-coloured operad. 
A \textbf{$P$-algebra structure on $X$} is a morphism of groupoid coloured operads $P\to End_X$.
\end{definition}

If this reader at this stage wants a more sophisticated example of a groupoid coloured operad, then it is possible to skip ahead and read \cref{The Operad Governing Wheeled Props}-\ref{biased presentation of the operad governing wheeled props} which includes a nice example of the equivalence classes induced by the action of the groupoid on internal edges of trees (\cref{Example of Action of the Groupoid}). This later section does use some terminology introduced in the remainder of this section, but should be relatively clear to the reader already familiar with operads. Other examples of groupoid coloured operads include the operad governing props (this is less approachable for technical reasons, but can be found \cref{The Operad Governing Props}), and the operad governing modular operads (see Section 3.1 of \cite{ward2022massey}).
\\\\
We close this section by noting that there is a natural forgetful functor $-^f$ from $\mathbb{V}_\Sigma$-modules to $\mathbb{V}$-modules, which forgets the action of the group of symmetries.
\begin{lemma} \label{Forgetful functor is monoidal}
Given $\mathbb{V}_{\Sigma}$-modules $P$ and $Q$ we have that 
\begin{align*}
    (P\circ Q)^f \cong P^f\circ_{sh} Q^f
\end{align*}
\end{lemma}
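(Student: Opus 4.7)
The plan is to produce an explicit term-by-term identification after unpacking both sides. By definition,
\[
(P\circ Q)^f(\vec{v}) \;=\; \coprod_{T\in \symmetricTreesClassesWithEmpty_{2,\vec v}} \Bigl[\,P_\Sigma(r_T)\otimes \bigotimes_{w\in Vt(T)_2} Q_\Sigma(w)\,\Bigr]_{\edge(T)}
\]
while
\[
(P^f\circ_{sh} Q^f)(\vec v) \;=\; \coprod_{T\in \shuffleTreesClassesWithEmpty_{2,\vec v}}\Bigl[\,P(r_T)\otimes \bigotimes_{w\in Vt(T)_2} Q(w)\,\Bigr]_{\edge(T)},
\]
so the task splits into (a) relating the indexing sets $\symmetricTreesClassesWithEmpty_{2,\vec v}$ and $\shuffleTreesClassesWithEmpty_{2,\vec v}$, and (b) matching the decorations.

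The combinatorial input I would use is the following standard fact: every two-level tree with profile $\vec v$ can be put into shuffle form by first sorting the inputs of each level-two vertex $w$ by their leaf label, and then sorting the children of the root by the minimum leaf label of their respective subtrees. This produces a strict iso class of shuffle tree that is unique up to strict iso, so there is a canonical forgetful map $\shuffleTreesClassesWithEmpty_{2,\vec v}\to \symmetricTreesClassesWithEmpty_{2,\vec v}$ which is surjective. The fibre over a weak iso class $[T]$ is exactly parameterized by the choice of listing at the root (an element of $\Sigma_k$, where $k$ is the number of level-two vertices) and a listing at each level-two vertex $w$ (an element of $\Sigma_{|\inp(w)|}$), modulo weak-iso equivalence.

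On the decoration side, the recipe $A_\Sigma(w)=[\coprod_{\sigma\in\Sigma_r} A_\Sigma(\sigma\vec v_w)]_{\Sigma_r}$ encodes precisely this listing-choice freedom: the coproduct ranges over orderings of the inputs of $w$, while the coinvariants identify tensor factors that differ only by a relabeling of those inputs. Thus distributing the vertex-level coproducts in the symmetric expression reorganizes the double sum ``(weak iso classes) $\times$ (orderings at vertices)'' into a single sum over shuffle trees, with the vertex $\Sigma$-coinvariants absorbed by the choice of shuffle representative. The edge-coinvariants $[-]_{\edge(T)}$, which quotient by the groupoid's action on internal edge colours, commute with the $\Sigma$-actions at the incident vertices by \cref{Composition in Groupoid Coloured Bimodule}, so they pass through the reorganization unchanged. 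The reduced hypothesis on $P,Q$ ensures no stray empty-input contributions appear on either side, and naturality in $P$ and $Q$ is immediate from the construction.

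The main obstacle will be the careful book-keeping of which $\Sigma$-actions are being quotiented where: the coinvariants in the definition of $A_\Sigma(w)$, the weak- versus strict-isomorphism equivalence on trees, and the edge-action coinvariants all act on overlapping data, and one must verify they combine in the expected way before collapsing the sum. Once this is done, the resulting bijection of indexing sets together with the matching of decorations provides the required natural isomorphism $(P\circ Q)^f \cong P^f\circ_{sh} Q^f$.
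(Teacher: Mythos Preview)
Your strategy is correct in outline, but it takes a different and more laborious route than the paper. The paper's proof is essentially a one-liner: the groupoid edge-coinvariants $[-]_{\edge(T)}$ are independent of, and commute with, the vertex $\Sigma$-actions (exactly the observation you make in your fourth paragraph), so the question reduces immediately to the discrete-coloured case, which is Proposition~3 of \cite{dotsenko2010grobner} (extended to colours in \cite{kharitonov2021grobner}). The reduced hypothesis is invoked for the same reason as there. You are instead reproving that cited bijection inline; the ``careful book-keeping'' you flag as the main obstacle is precisely the content of those references.

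One caution about your description of the fibres: once the listings at the level-two vertices are fixed, the shuffle condition \emph{determines} the listing at the root (children ordered by minimum leaf label), so the fibre is not a full $\Sigma_k\times\prod_w\Sigma_{|\inp(w)|}$ modulo weak iso as you write. Rather, the correct count is that shuffle two-level trees with leaf set $\{1,\dots,n\}$ biject with ordered set-partitions where the blocks are listed by minimum element, and the symmetric-side coinvariants at each vertex collapse the redundant orderings. This is exactly what makes the bijection work, and is where the reduced hypothesis enters (a nullary level-two vertex has no minimum leaf label, breaking the shuffle normalization). Your sketch gestures at this but doesn't get the parameterization quite right; the cited references handle it.
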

\begin{proof}
As the forgetful functor does not touch the underlying coinvariants given by the groupoid, this is an immediate corollary of the corresponding result for discrete coloured operads in \cite{kharitonov2021grobner} (Section 1.7, which is itself a corollary of Proposition 3 of \cite{dotsenko2010grobner}), given we have assumed our modules are reduced (\cref{reduced}). 
\end{proof}

The forgetful functor is not only straightforward to calculate in practice (for instance see \cref{gb assoc example}, and Section \ref{shuffle operad for wheeled props}), but also enables us to characterise when $O$ is Koszul by inspecting $O^f$ (\cref{def:koszul}).

\subsection{Monadic Groupoid Coloured Operads}

In this section, we introduce monadic definitions of our operads and prove their equivalence to the monoidal definitions.
In many cases it is more convenient to work with non-unital variants of operads (e.g. Sections \ref{The Operad Governing Wheeled Props}, \ref{The Operad Governing Props}), and monadic definitions provide a clear path to non-unital variants. We note that our monadic definition of a symmetric groupoid coloured operad is that of \cite{ward2022massey}, and the others are natural alterations.

\begin{definition}
We define the following endofunctors
\begin{itemize}
    \item Let $\mathbb{T}_{\mathbb{V},{\Sigma}},\overline{\mathbb{T}_{\mathbb{V},{\Sigma}}}:\mathbb{V}_{\Sigma}$-modules $\to \mathbb{V}_{\Sigma}$-modules be defined on objects as 
    \begin{align*}
        &\mathbb{T}_{\mathbb{V},\Sigma}(A)(\vec{v}):= \coprod_{T \in \symmetricTreesClasses} A_{\Sigma}(T), &\overline{\mathbb{T}_{\mathbb{V},\Sigma}}(A)(\vec{v}):= \coprod_{T \in \symmetricTreesClassesWithEmpty} A_{\Sigma}(T)
    \end{align*}
    \item Let $\mathbb{T}_{\mathbb{V}},\overline{\mathbb{T}_{\mathbb{V}}}:\mathbb{V}$-modules $\to \mathbb{V}$-modules be defined on objects as 
    \begin{align*}
        &\mathbb{T}_{\mathbb{V}}(A)(\vec{v}):= \coprod_{T \in \regularTreesClasses} A(T), &\overline{\mathbb{T}_{\mathbb{V}}}(A)(\vec{v}):= \coprod_{T \in \strict} A(T)
    \end{align*}
    \item Let $\mathbb{T}_{\mathbb{V},{sh}},\overline{\mathbb{T}_{\mathbb{V},{sh}}}:\mathbb{V}$-modules $\to \mathbb{V}$-modules be defined on objects as 
    \begin{align*}
        &\mathbb{T}_{\mathbb{V},sh}(A)(\vec{v}):= \coprod_{T \in \shuffleTreesClasses} A(T), &\overline{\mathbb{T}_{\mathbb{V},sh}}(A)(\vec{v}):= \coprod_{T \in \shuffleTreesClassesWithEmpty} A(T)
    \end{align*}
\end{itemize}
Morphisms of $\mathbb{V}_{\Sigma}$ (and $\mathbb{V}$)-modules are natural transformations, and so they are given by collections of equivariant maps from $A_{\Sigma}(\vec{v})\to B_{\Sigma}(\vec{v})$, which induce maps from $A_{\Sigma}(T)\to B_{\Sigma}(T)$ (and similarly for $A(-)$). Taking the coproduct of these maps specifies the endofunctors on morphisms.
\end{definition}

\begin{lemma}
Substitution of trees endows these functors with the structure of a monad.
\end{lemma}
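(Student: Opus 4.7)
The plan is to construct the unit $\eta$ and multiplication $\mu$ for each of the six endofunctors, and then verify the associativity and unit axioms. For the non-unital variants $\mathbb{T}_{\mathbb{V},\Sigma}$, $\mathbb{T}_{\mathbb{V}}$, $\mathbb{T}_{\mathbb{V},sh}$, the unit $\eta_A : A \to \mathbb{T}(A)$ is the inclusion of the corolla summand: for each profile $(v_1, \dots, v_n; v_0)$ the one-vertex tree $C_{v_1,\dots,v_n;v_0}$ gives $A(C_{v_1,\dots,v_n;v_0}) \cong A(v_1,\dots,v_n;v_0)$ since corollas have no internal edges to take coinvariants over. For the unital variants $\overline{\mathbb{T}_{\mathbb{V},\Sigma}}$, $\overline{\mathbb{T}_{\mathbb{V}}}$, $\overline{\mathbb{T}_{\mathbb{V},sh}}$, the same formula works, and additionally the empty-tree summand $A(|_v) = \coprod_{Aut(v)} 1_{\mathcal{E}}$ produces the monoidal unit $\mathcal{I}$ of \cref{Monoidal Product Lemma}, so that on these functors one has a natural map $\mathcal{I} \to \overline{\mathbb{T}}(A)$ matching the $\eta$-inclusion of corollas.

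The multiplication $\mu : \mathbb{T}\mathbb{T}(A) \to \mathbb{T}(A)$ is defined by tree substitution (grafting). An element of $\mathbb{T}\mathbb{T}(A)(\vec{v})$ is (the equivalence class of) a tree $T$ whose every vertex $w$, of profile $\vec{w}$, is decorated by an element of $\mathbb{T}(A)(\vec{w})$, i.e.\ by a choice of a tree $S_w$ of profile $\vec{w}$ together with an $A$-decoration of $S_w$. The substitution $\mu(T; (S_w)_w)$ is the tree $T \{S_w\}$ obtained by replacing each vertex $w$ of $T$ with the tree $S_w$ along its profile, inheriting its $A$-decoration vertex-by-vertex. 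This is well-defined on the $Edges(T)$-coinvariants because each old internal edge of $T$ becomes an internal edge of $T\{S_w\}$ joining the root of one $S_w$ to an input of another, and the $Aut(v_E)$-action across this new edge is compatible with the old action by associativity of groupoid composition (\cref{Composition in Groupoid Coloured Bimodule}).

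The main checks are then: (i) naturality in $A$, which is immediate since substitution is functorial in the vertex decorations; (ii) in the symmetric case, equivariance under the $\Sigma$-actions at each vertex, using the unordered tensor product and symmetric coinvariants in \cref{Decorated Symmetric Tree}; (iii) in the shuffle case, that grafting shuffle trees into a shuffle tree produces a shuffle tree, which is the standard combinatorial fact (as in \cite{dotsenko2010grobner}) that the minimum-leaf total order on subtrees is preserved under substitution; and (iv) that the strict-regularity / depth-first-search listing is preserved in the strongly regular case. Associativity $\mu \circ \mathbb{T}\mu = \mu \circ \mu_{\mathbb{T}}$ reduces to the tautology that substituting trees $R_{w'}$ into trees $S_w$ and then into $T$ is the same tree as doing the substitution in one step. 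The unit axioms say that substituting a corolla at every vertex of $T$ reproduces $T$ (trivial, as corolla substitution removes no edges), and that $T$ viewed as a corolla decorated by $T$ itself equals $T$ under $\mu$. For the unital variants, substituting the empty tree $|_v$ at a dummy corolla (or vice versa) leaves the $A$-decoration intact via the identification $A(|_v) = \coprod_{Aut(v)} 1_{\mathcal{E}}$.

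The main obstacle will be the careful bookkeeping of the edge-coinvariants: when one substitutes, old vertices disappear and their incident edges are re-routed through the new trees $S_w$, and one must check that the $\prod_{E}^U Aut(v_E)$-action commutes through the grafting. This is exactly the bookkeeping that drives the proof of \cref{Monoidal Product Lemma}, and indeed the cleanest way to organize the verification is to regard $\mathbb{T}(A)(\vec{v})$ as a filtered colimit over height, to observe that the height-$n$ summand is precisely a direct summand of the $n$-fold monoidal product of the lemma, and to extract $\mu$ and its associativity as a telescoping consequence of iterated applications of the associativity already established there. The shuffle/strongly-regular closure conditions, and the Aut-equivariance across grafted edges, are the only genuinely new content beyond \cref{Monoidal Product Lemma}.
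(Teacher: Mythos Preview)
Your proposal is correct and essentially unpacks what the paper defers to citation: the paper's own proof simply points to Ward's definition of tree substitution and Theorems 2.10--2.11 of \cite{ward2022massey} for the symmetric cases, noting that the non-symmetric and shuffle variants follow by straightforward modification. Your sketch of the unit as the corolla inclusion, the multiplication as tree substitution, and the verification of associativity and unit axioms via the bookkeeping of edge-coinvariants is precisely the content of those cited results, together with the standard closure of shuffle and strongly regular trees under substitution; so you are spelling out what the paper leaves implicit, but the underlying argument is the same.
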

\begin{proof}
See Definition 2.1.1 of \cite{ward2022massey} for a definition of substitution of trees, it is straightforward to modify this definition to also track the listings of the internal vertices (as required by \cref{tree definitions}). From here, the proofs for $\mathbb{T}_{\mathbb{V},\Sigma}$ and $\overline{\mathbb{T}_{\mathbb{V},{\Sigma}}}$ are Theorems $2.10$ and $2.11$ of \cite{ward2022massey}. The modifications of these proofs to yield the results for the other functors is straightforward. 
\end{proof}

\begin{definition} \label{monadic groupoid coloured operads def}
These yield the following monadic definitions of groupoid coloured operads,
\begin{itemize}
    \item A symmetric (non-)unital groupoid coloured operad is a algebra over the monad $\overline{\mathbb{T}_{\mathbb{V},\Sigma}}$ (resp. ${\mathbb{T}_{\mathbb{V},\Sigma}})$.
    \item A non-symmetric (non-)unital groupoid coloured operad is a algebra over the monad $\overline{\mathbb{T}_{\mathbb{V}}}$ (resp. ${\mathbb{T}_{\mathbb{V}}})$.
    \item A shuffle (non-)unital groupoid coloured operad is a algebra over the monad $\overline{\mathbb{T}_{\mathbb{V},sh}}$ (resp. ${\mathbb{T}_{\mathbb{V},sh}})$.
\end{itemize}
By this definition, each groupoid coloured operad (of any type) $P$ comes with a morphism $\eta_T:P(T) \to P(\vec{v})$ for every $ob(\mathbb{V})$-tree $T$ of profile $\vec{v}$, this morphism 'contracts' the tree.

\end{definition}

\begin{lemma}\label{equivalence of monodial and monadic defs for groupoid coloured operads}
The unital monadic definitions (\cref{monadic groupoid coloured operads def}) are equivalent to the monoidal definitions (\cref{monoidal groupoid coloured operads def}).
\end{lemma}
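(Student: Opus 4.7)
The plan is to show that the monad $\overline{\mathbb{T}_{\mathbb{V},\Sigma}}$ (and its non-symmetric and shuffle analogues) is precisely the free unital operad monad with respect to the corresponding monoidal product of \cref{Monoidal Product Lemma}, from which the equivalence of categories is a formal consequence of the general fact that algebras over a free monoid monad are equivalent to monoids for the underlying product. I would carry the argument out in detail for the symmetric case; the non-symmetric and shuffle cases proceed identically, using the analogous tree classes $\strict$ and $\shuffleTreesClassesWithEmpty$ in place of $\symmetricTreesClassesWithEmpty$.

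For the forward direction, given a monoid $(P,\gamma : P\circ P \to P, \eta : \mathcal{I} \to P)$, I would construct an algebra structure map $\mu_T : P_\Sigma(T) \to P(\vec{v})$ for each $ob(\mathbb{V})$-tree $T$ of profile $\vec{v}$ by induction on the number of levels of $T$. For an empty tree $|_v$ the map is induced by $\eta$, and for a single-vertex tree it is the identity. For $T$ of height greater than one, one groups the top two levels and identifies $P_\Sigma(T)$ with a summand of $(P\circ P')(\vec{v})$, where $P'$ is a coproduct of $P_\Sigma(T')$ ranging over the residual subtrees $T'$; applying $\gamma$ and invoking the inductive hypothesis on the resulting tree of smaller height yields $\mu_T$. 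Taking the coproduct over $T \in \symmetricTreesClassesWithEmpty_{\vec{v}}$ gives a structure map $\mu : \overline{\mathbb{T}_{\mathbb{V},\Sigma}}(P) \to P$. Conversely, given a monad algebra $(P,\mu)$, restricting $\mu$ to the summands corresponding to 2-level trees yields $\gamma$, and restricting to the empty trees $|_v$ yields $\eta$. That these constructions are mutually inverse on objects follows by unwinding the definitions.

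The main obstacle is the coherence argument verifying that the inductively defined maps $\mu_T$ are independent of the chosen leveling, since any tree of height greater than two admits multiple decompositions into 2-level subtrees. This is a higher-associativity statement that follows from the associativity of $\gamma$, combined with the fact that the substitution operation defining the monad multiplication commutes with contraction of 2-level subtrees. The only additional subtlety beyond the uncoloured case is the groupoid action on internal edges arising when identifying such summands, which is compatible with contraction precisely by \cref{Composition in Groupoid Coloured Bimodule}; this is also what makes the induced $\mu$ well-defined on the coinvariants $[-]_{Edges(T)}$ appearing in \cref{Decorated Symmetric Tree}. Compatibility of the equivalence with morphisms is immediate, since both a morphism of monoids and a morphism of algebras over the monad are determined by a natural transformation of underlying $\mathbb{V}_\Sigma$-modules that commutes with the respective structure maps.
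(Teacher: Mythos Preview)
Your proposal is correct and follows the standard approach for establishing such equivalences; it is in fact considerably more detailed than the paper's own proof, which simply cites Ward's Theorem 2.2.4 for the symmetric case and notes that the non-symmetric and shuffle cases follow by obvious modifications. Your inductive construction of the contraction maps $\mu_T$ and the coherence discussion (independence of the chosen leveling, compatibility with the edge coinvariants via \cref{Composition in Groupoid Coloured Bimodule}) is exactly what that cited argument would unpack to.
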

\begin{proof}
A sketch of the proof for symmetric operads is given in Theorem 2.2.4 of \cite{ward2022massey}. The proof is modified in the obvious ways for non-symmetric and shuffle operads.
\end{proof}

\begin{definition} [2.19 of \cite{ward2022massey}] \label{Augmented Operad}
An \textbf{augmentation} of a unital groupoid coloured operad $P$ is a morphism $P\to \mathcal{T}$ where $\mathcal{T}$ is the unique unital operad structure on the $\mathbb{V}$-module given by
\begin{align*}
    \mathcal{T}(\vec{v}) := \begin{cases}
        1_{\mathcal{E}}, & \vec{v}=(v;v)\text{ for some $v \in Ob(\mathbb{V})$}\\
        \text{terminal object in $\mathcal{E}$}, & \text{otherwise}
    \end{cases}
\end{align*}
An \textbf{augmented operad} is an operad with an augmentation. If $\mathcal{E}$ is an Abelian category, then the augmentation ideal is the kernel of these maps, and we denote it $\overline{P}$.
\end{definition}
\begin{corollary}\label{Augmented Operad Isomorphic to Kernel}
An augmented operad $P$ in an Abelian category is isomorphic to $\overline{P}$.
\end{corollary}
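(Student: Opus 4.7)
The plan is to verify that the augmentation-ideal construction $P \mapsto \overline{P}$ exhibits an equivalence between augmented unital groupoid coloured operads and non-unital groupoid coloured operads in the abelian category $\mathcal{E}$; the informal phrasing ``$P \cong \overline{P}$'' is to be read as recording this equivalence of data (the underlying $\mathbb{V}_\Sigma$-module $P$ splits off a distinguished copy of $\mathcal{T}$, after which the remaining non-unital operad structure lives entirely on $\overline{P}$).

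First, I would observe that $\mathcal{T}$ is canonically the terminal object in the category of unital groupoid coloured operads: any unital operad admits a unique morphism to $\mathcal{T}$ compatible with the unit, since every operation in every positive arity must be sent to the terminal object of $\mathcal{E}$. Consequently the augmentation $\epsilon : P \to \mathcal{T}$ of Definition~\ref{Augmented Operad} is the unique such map, and the composite $\mathcal{I} \xrightarrow{\eta_P} P \xrightarrow{\epsilon} \mathcal{T}$ must equal the canonical fold $\mathcal{I} \to \mathcal{T}$ that collapses $\coprod_{\mathrm{Aut}(v)} 1_{\mathcal{E}}$ onto $1_{\mathcal{E}}$ in each colour. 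In the abelian category $\mathcal{E}$ this fold is a split epimorphism colourwise, so a choice of right inverse assembles to a section $s : \mathcal{T} \to P$ of $\epsilon$.

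Next, I would apply the splitting lemma in the abelian category of $\mathbb{V}_\Sigma$-modules to the short exact sequence
$$ 0 \to \overline{P} \to P \xrightarrow{\epsilon} \mathcal{T} \to 0 $$
induced by the definition of $\overline{P}$ as the kernel of $\epsilon$. The section $s$ produces a decomposition $P \cong \overline{P} \oplus \mathcal{T}$. Because $\epsilon$ is an operad morphism, operadic compositions of elements of $\overline{P}$ lie in the kernel of $\epsilon$, so $\overline{P}$ inherits the structure of an algebra over the reduced monad $\mathbb{T}_{\mathbb{V},\Sigma}$ (Definition~\ref{monadic groupoid coloured operads def}); that is, $\overline{P}$ is a non-unital groupoid coloured operad.

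Finally, I would construct the inverse equivalence on objects by sending a non-unital operad $Q$ to $Q \oplus \mathcal{T}$, equipped with unit $\mathcal{I} \to \mathcal{T} \hookrightarrow Q \oplus \mathcal{T}$ (via the fold) and with augmentation given by the projection $Q \oplus \mathcal{T} \twoheadrightarrow \mathcal{T}$; both assignments are evidently functorial and mutually inverse on morphisms. The main subtle point is the compatibility of the section $s$ with the groupoid action — in particular the discrepancy between $\mathcal{I}(v;v)=\coprod_{\mathrm{Aut}(v)} 1_{\mathcal{E}}$ and $\mathcal{T}(v;v)= 1_{\mathcal{E}}$ — and it is precisely the abelian structure on $\mathcal{E}$ that guarantees the fold map admits the one-sided inverse needed to make the splitting operadic rather than merely modular.
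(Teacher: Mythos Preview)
The paper's own proof consists of a single sentence deferring to Proposition~21 of Markl's survey, so there is no substantive approach to compare against; your splitting-lemma argument is precisely the standard proof that reference encodes, and in that sense you have simply written out what the paper declines to.

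There is, however, a genuine error in your first paragraph: $\mathcal{T}$ is \emph{not} the terminal object in unital groupoid coloured operads. In the ambient abelian category (here $dgVect_{\mathbb{K}}$) the monoidal unit $1_{\mathcal{E}}=\mathbb{K}$ is not terminal, so for a profile $(v;v)$ there is no reason a map $P(v;v)\to \mathcal{T}(v;v)=\mathbb{K}$ should be unique. A unital operad with $P(v;v)=\mathbb{K}^2$ and all higher arities zero already admits infinitely many operad morphisms to $\mathcal{T}$. The augmentation is therefore honest extra data, not forced. Fortunately this claim is never used: the only fact you need about $\epsilon\circ\eta_P$ is that it equals the canonical fold $\mathcal{I}\to\mathcal{T}$, and that follows simply because operad morphisms preserve units. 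You should delete the terminality claim and start directly from unit preservation.

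One smaller point: the equivariance of your section $s$ (which you rightly flag at the end) is not guaranteed by the abelian structure alone. The natural one-sided inverse $\mathbb{K}\to\bigoplus_{\mathrm{Aut}(v)}\mathbb{K}$ picking out a single summand is not $\mathrm{Aut}(v)$-equivariant; you need the averaged section $1\mapsto |\mathrm{Aut}(v)|^{-1}\sum_g e_g$, which exists because the paper has already assumed characteristic~$0$ and finite automorphism groups. It would be cleaner to invoke those hypotheses explicitly rather than attribute the fix to abelianness.
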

\begin{proof}
Immediate, but see for instance \cite{markl2008operads}, Proposition 21.
\end{proof}

\subsection{Partial Compositions}

We will use partial operadic compositions throughout this paper (see \cite{markl2008operads} for the uncoloured case). 
Instead of defining partial groupoid coloured operads and proving the equivalence of this definition (which is possible but tedious), we will simply define a partial composition as monadic contractions (\ref{monadic groupoid coloured operads def}) of particular trees.
\begin{definition}
Let $P$ be a groupoid coloured shuffle operad we define the partial composition
\begin{align*}
   P\binom{d}{\underline{c}}\otimes P\binom{c_i}{\underline{b}} \xrightarrow{\circ_{i,\sigma}} P\binom{d}{(\underline{c} \circ_i \underline{b})\sigma}
\end{align*}
by the contraction map applied to a shuffle tree $T$ with $2$ vertices. In particular, for $\alpha \in P\binom{d}{\underline{c}}$ and $\beta \in P\binom{b}{\underline{a}}$ we define $\alpha \circ_{i,\sigma} \beta$ to be the contraction map $\eta_T$ applied to the shuffle tree $T$ with: $2$ vertices; the root vertex decorated by $\alpha$; the lower vertex decorated by $\beta$; the lower vertex connected to the $i$th input flag of the root; and the listing of the legs of the tree specified by $\sigma$. In other words, $\sigma$ is a \textbf{shuffle permutation}, which is to say that 
\begin{itemize}
    \item for $j\leq i$, $\sigma(j)=j$
    \item $\sigma(i+1)<\sigma(i+1)<...<\sigma(i+|\ub|)$
    \item $\sigma(i+|\ub|+1)<...<\sigma(|\uc|+|\ub|-1)$.
\end{itemize}
We will occasionally use the shorthand $\circ_{\varphi}$ to refer to a partial composition with $i$ and $\sigma$ suppressed. We also similarly define partial compositions for symmetric and non-symmetric groupoid coloured operads as the contractions of the obvious trees associated to 
\begin{align*}
   P\binom{d}{\underline{c}}\otimes P\binom{c_i}{\underline{b}} \xrightarrow{\circ_{i}} P\binom{d}{\underline{c} \circ_i \underline{b}}
\end{align*}

\end{definition}

\subsection{Some Assumptions}

We now mirror \cite{ward2022massey}  (Assumption 2.28 and above) and make two assumptions for the rest of this paper. Firstly, we assume that $Aut(v)$ is finite for all $v\in Ob(\mathbb{V})$.
Secondly, we assume that the co-complete and closed symmetric monoidal category used by our operads is $dgVect_{\mathbb{K}}$, the category of differential graded vector spaces over $\mathbb{K}$, a field of characteristic $0$.
The finiteness of $Aut(v)$ not only allows for summation over automorphism groups as in Ward, but also enables an isomorphism between groupoid coloured shuffle operads and discrete coloured shuffle operads developed in \cref{The Groupoid Coloured Extension}. We restrict to characteristic $0$ for the same reasons as Ward, i.e. homotopy transfer theory can be done in characteristic $0$ without resolving automorphisms, but this is not the case in characteristic $p>0$. In particular, the characteristic $0$ assumption is needed for a groupoid coloured module to admit its homology as a deformation retract (\cref{homology as deformation retract}).

\subsection{Tree Monomials and Presentations of Operads}

Tree monomials correspond to the basis elements of free operads, and an understanding of their form is needed for developing Groebner bases for operads. Groupoid coloured tree monomials are equivalence classes of discrete coloured tree monomials under the action of the groupoid, so this work naturally extends Sections 2.2-2.5 of \cite{kharitonov2021grobner}.
\\\\
By the general theory of monads, each of our operads admits a forgetful functor which maps to the underling $\mathbb{V}_{\Sigma}$ (or $\mathbb{V}$)-module, and this forgetful functor admits a left adjoint. We shall denote these left adjoints $\overline{F_{\Sigma}},\overline{F}, \overline{F_{sh}}$ for the unital variants and $F_{\Sigma},F, F_{sh}$ for the non-unital variants. If we wish to highlight the underlying groupoid we will use a superscript e.g. $F^{\mathbb{V}}$. A \textbf{free groupoid coloured operad} is the image of one of these left adjoints applied to an appropriate groupoid coloured module.

\begin{definition}\label{Groupoid Coloured Tree Monomials}
Let $E$ be a $\mathbb{V}_{\Sigma}$ (or $\mathbb{V}$)-module, a \textbf{groupoid coloured tree monomial} of $F_{\Sigma}(E)$ is of the form $[ \bigotimes_{w \in Vt(T)} e_w ]_{Edges(T)}$  (see \cref{Trees decorated with groupoid coloured modules}), where $T$ is a $ob(\mathbb{V})$-tree of suitable type for the free operad, and each $e_w$ is in $E(w)$. 
We can view a groupoid coloured tree monomial as an equivalence class of (discrete) coloured tree monomials (in the sense of Section 2.1 of \cite{kharitonov2021grobner}) under the action of the groupoid on the edges of the tree. 
A tree monomial has several natural degrees (conserved under this action)
\begin{itemize}
    \item Its \textbf{arity} corresponds to its number of leaves.
    \item Its \textbf{weight} corresponds to the number of internal edges $+1$, or equivalently the number of constituent generators from $E$. 
\end{itemize}
We say an element of a groupoid coloured operad is \textbf{homogeneous} with respect to a degree if it is a sum of groupoid coloured tree monomials with non-zero coefficients, all with the same degree. Furthermore, we shall use $F_{\Sigma}(E)^{(k)}$ to denote the weight $k$ subset of $F_{\Sigma}(E)$.

\end{definition}

We close this section by defining quotient operads, and what we mean by a quadratic presentation.

\begin{definition} [2.5.2 \cite{ward2022massey}] \label{operadic ideals}
Given two $\mathbb{V}$ (or $\mathbb{V}_{\Sigma}$)-modules $A$ and $I$, we say $I$ is a \textbf{submodule} of $A$ if it is a subcategory of $A$. If $P$ is a groupoid coloured operad (of any type) and $I \subset P$ a submodule of appropriate type, we say $I$ is an \textbf{ideal} of $P$ if the image of any of the contracting maps $\eta_T$ of the operad (\cref{monadic groupoid coloured operads def}) applied to a tensor with a factor of $I$ lands back in $I$.
\end{definition}

\begin{definition}
If $I$ is an ideal of $P$, we may form a \textbf{quotient operad}
$P/I$ with the quotient $\mathbb{V}$-module and the inherited structure maps. We say an operad $P$ admits a \textbf{quadratic presentation } if $P\cong F_{\Sigma}(E)/R$ and $R\subset F_{\Sigma}(E)^{(2)}$.
\end{definition}

\subsection{Koszul Groupoid Coloured Operads}

We now introduce what it means for a groupoid coloured operad to be Koszul using the theory of Ward \cite{ward2022massey}, before providing an alternate characterisation through shuffle operads. We briefly recall the necessary definitions of groupoid coloured cooperads, and the bar/cobar complexes, before presenting four equivalent characterisations of when a groupoid coloured operad is Koszul (\cref{def:koszul}). Through the theory of shuffle operads we will show that a groupoid coloured symmetric operad $O$ is Koszul when its groupoid coloured shuffle operad $O^f$ is Koszul. This result will enable the Groebner basis machinery developed in \cref{Koszul Groupoid Coloured Operads via Groebner Bases}. 

\subsubsection{Background: Cooperads and Bar/Cobar Complexes}

We first recall some supporting theory of \cite{ward2022massey} regarding cooperads and bar/cobar complexes. All definitions in this section are presented for symmetric groupoid coloured operads, but they can be modified in obvious ways to obtain analogous definitions for non-symmetric and shuffle groupoid coloured operads.

\begin{definition}[2.29 \cite{ward2022massey}]
A symmetric groupoid coloured non-unital conilpotent cooperad is a comonoid in the category of $\mathbb{V}_{\Sigma}$-modules with comonoidal product given in Section 2.4.1 of \cite{ward2022massey}.
\end{definition}

By the general theory of comonads, a cooperad admits a forgetful functor $F_{\Sigma,c}$ which maps to the underling $\mathbb{V}_{\Sigma}$-module, and this forgetful functor admits a left adjoint. This allows us to define cofree cooperads, which leads to the following definition of cooperads with a quadratic presentation.

\begin{definition} [2.5.3 \cite{ward2022massey}]
Let $R$ be a homogeneous quadratic ideal of $F_{\Sigma,c}(E)$, then we define a \textbf{quadratic cooperad} $Q(E,R)$ as the union of all sub-cooperads of $Q\subset F_{\Sigma,c}(E)$ such that the composite $Q \hookrightarrow F_{\Sigma,c}(E)=F_{\Sigma}(E)\twoheadrightarrow F_{\Sigma,c}(E)^{(2)}/R $ vanishes.
\end{definition}

A particularly important quadratic cooperad is the Koszul dual cooperad defined as follows.

\begin{definition} [2.5.4 \cite{ward2022massey}] \label{Koszul Dual Cooperad}
Let $P\cong F_{\Sigma}(E)/R$ be a quadratic $\mathbb{V}_{\Sigma}$ operad, then  $P^{\antishriek}:=Q(sE,s^2R)$.
\end{definition}
where $s$ is the suspension operator in $dgVect$. The suspension operator and its inverse $s^{-1}$ are also used in the bar/cobar constructions. This familiar adjoint pair (see for instance \cite{loday2012algebraic} for the uncoloured case) extends naturally to groupoid coloured operads as follows.

\begin{definition}[2.6.1 \cite{ward2022massey}]
\label{Bar construction}
Let $P$ be an operad, the \textbf{bar} construction of $P$ is $B(P):=(F_c(s P), \partial + d_P)$, i.e. it is the cofree operad generated by the suspension of $P$ whose differential is induced by the operadic composition of $P$ ($\partial$) and the differential of $P$ ($d_P$).
\end{definition}

\begin{definition}[2.6.2 \cite{ward2022massey}] \label{Cobar Construction}
Let $Q$ be a conilpotent cooperad, the \textbf{cobar} construction of $Q$ is $\Omega(Q):=(F(s^{-1} Q), \partial + d_Q)$, i.e. it is the free operad generated by the desuspension of $Q$ whose differential is induced by the cooperadic composition of $Q$ ($\partial$) and the differential of $Q$ ($d_Q$).
\end{definition}

There is a natural grading on these complexes, which simplifies some theory for Koszulness.

\begin{definition}\label{syzygy degree}
There is a natural grading on the bar (and cobar) complex called the \textbf{syzygy degree} induced by the weight gradings of $P$ and $F_c(-)$. In particular, if $a \in F_c(sP)^{(k)} = F(sP)^{(k)}$ then the syzygy degree of $a$ is $w_P(a_1)+...+w_P(a_k)-k$.
\end{definition}

\subsubsection{Koszul Characterisations}

We now present four equivalent definitions for when a quadratic groupoid coloured operad is Koszul, before making some comments and proving their equivalence.

\begin{definition} \label{def:koszul}
A quadratic $\mathbb{V}_{\Sigma}$-operad $P$ is \textbf{Koszul} if any of the following equivalent definitions hold,
\begin{enumerate}
    \item the inclusion $\zeta_{P^{\antishriek}}:P^{\antishriek}\to B(P)$ is a quasi-isomorphism.
    \item the composition $\Omega(P^{\antishriek})\to \Omega(B(P)) \to P$ is a quasi-isomorphism.
    \item using the syzygy degree $H^{n}(B(P))=0$ for $n\geq 1$.
    \item the $\mathbb{V}_{sh}$-operad $P^f$ is Koszul.
\end{enumerate}
Analogous versions of the first three characterisations also hold for non-symmetric and shuffle groupoid coloured operads.
\end{definition}
The first characterisation is Definition 2.45 of \cite{ward2022massey}, and the remaining
characterisations are ordered by their appearance in the proof below. For the purpose of this paper, the second and the fourth characterisation are the most useful. When $P$ is Koszul we will denote $P_\infty := \Omega(P^{\antishriek})$. By the second characterisation, $P_\infty$ is a model for $P$, which by construction is quadratic (and hence minimal). The fourth characterisation is precisely the reason we have developed the theory of groupoid coloured shuffle operads, and this is what will enable the theory of Groebner bases in the subsequent section. We note that \cite{ward2022massey} has further characterisations of groupoid coloured operads, and many characterisations in the uncoloured case (see for instance $7.9.2$ of \cite{loday2012algebraic}) admit natural generalisations. 
\\\\
The equivalence of $(1)$ and $(2)$ in \cref{def:koszul} is a consequence of the bar-cobar adjunction, see Lemma 2.44 of \cite{ward2022massey}. Then, $(1)$ and $(3)$ are equivalent in an obvious extension of the argument of Proposition 7.3.1 of \cite{loday2012algebraic}, i.e. the inclusion $\zeta_{P^{\antishriek}}:P^{\antishriek}\to B(P)$ induces an isomorphism $P^{\antishriek} \cong H^0(B(P))$. Finally, $(3)$ and $(4)$ are equivalent through the following proposition.
\begin{proposition}[Proposition 1.4 \cite{dotsenko2013quillen}]\label{isomorphic bar complexes}
The symmetric bar complex of a groupoid coloured operad $P$ is isomorphic, as a shuffle $dg$ groupoid coloured cooperad, to the shuffle bar complex of $P^f$, i.e. $B(P)^f \cong B(P^f)$.
\end{proposition}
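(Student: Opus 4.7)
The plan is to lift the monoidality result for the forgetful functor (\cref{Forgetful functor is monoidal}) from binary composites to the tree-indexed expressions underlying the cofree cooperad, and then check that the extra structure (cocomposition and differential) transports correctly. This mirrors the argument of Dotsenko--Shadrin--Vallette in the uncoloured case, with the groupoid colouring absorbed into the edge coinvariants of \cref{Trees decorated with groupoid coloured modules}.

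First, I would establish the isomorphism of underlying $\mathbb{V}$-modules $F_{\Sigma,c}(sP)^f \cong F_{sh,c}(sP^f)$. By \cref{Reducing to Equivalent Definitions for Coloured Operads} and the proof of \cref{Forgetful functor is monoidal}, the forgetful functor $-^f$ commutes with the relevant coinvariants, and an $n$-fold iteration of $-^f$ applied to $\circ$-products reorganises the sum $\coprod_{T \in \symmetricTreesClassesWithEmpty} (sP)_\Sigma(T)$ into $\coprod_{T \in \shuffleTreesClassesWithEmpty}(sP^f)(T)$, because passing from weak to strict isomorphism classes of trees precisely corresponds to fixing a shuffle representative in each $\Sigma$-orbit of leaf labellings (since our modules are reduced, \cref{reduced}). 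Writing the cofree cooperad as the tree-indexed colimit of its generating module then delivers the desired isomorphism.

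Next, I would verify compatibility with the cocomposition. The cooperadic cocomposition on $F_{\Sigma,c}(sP)$ is given by de-grafting a decorated tree into a pair of decorated subtrees along an admissible cut. Since $-^f$ is (iteratively) monoidal, the induced map on the shuffle side coincides with de-grafting of shuffle trees, which is the cocomposition of $F_{sh,c}(sP^f)$. Hence the isomorphism from step one is an isomorphism of shuffle groupoid coloured cooperads.

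Finally, I would match the differentials. The bar differential $\partial$ on $B(P)$ sums (signed) infinitesimal compositions $\circ_i$ of $P$ applied along each internal edge of a decorated tree, while $d_P$ acts componentwise. After applying $-^f$, the partial composition $\circ_i$ of the symmetric operad $P$ restricted to a shuffle tree is exactly the partial shuffle composition $\circ_{i,\sigma}$ of $P^f$ along the unique shuffle reordering fixed by the surrounding tree (this is the content of \cref{Forgetful functor is monoidal} at the level of a single edge contraction). The internal differential $d_P$ is untouched by $-^f$. Hence $(\partial+d_P)^f$ agrees with the shuffle bar differential on $B(P^f)$.

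The main obstacle will be step one: checking carefully that the coinvariants by the groupoid on internal edges commute with the reorganisation of the coproducts from $\symmetricTreesClassesWithEmpty$ to $\shuffleTreesClassesWithEmpty$. Because a weak tree isomorphism may nontrivially permute leaves while a strict shuffle representative is uniquely determined, one has to confirm that the groupoid-edge action in \cref{Decorated Symmetric Tree} descends to the edge action in \cref{Decorated Non-Symmetric Tree} without introducing extra identifications; this is where reducedness of $P$ (ensuring no vacuous leaves) and the assumption that $|Aut(v)|<\infty$ (ensuring clean coinvariants in characteristic zero) are used.
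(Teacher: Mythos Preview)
Your proposal is correct and follows the same approach as the paper: both rest entirely on the monoidality of $-^f$ established in \cref{Forgetful functor is monoidal}. The paper's own proof is a single sentence observing that, since $-^f$ is monoidal and does not touch the groupoid edge-coinvariants, the discrete-coloured result of Dotsenko--Shadrin--Vallette extends immediately; your three steps simply unpack what that monoidality delivers at the level of the underlying module, the cocomposition, and the differential. One minor point: your invocation of $|Aut(v)|<\infty$ and characteristic zero in the final paragraph is unnecessary here, as the monoidality statement and the edge-coinvariant bookkeeping hold without those hypotheses (they are needed elsewhere in the paper, not for this isomorphism).
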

\begin{proof}
This proposition immediately extends to the groupoid coloured case as a consequence of $f$ being monoidal (\cref{Forgetful functor is monoidal}). 
\end{proof}
So,
\begin{align*}
     B(P)^f \cong B(P^f) \implies \forall n, H^n(B(P)^f) \cong H^n(B(P^f))
\end{align*}
which tells us that,
\begin{align*}
    H^n(B(P))=0  \text{ for } n\geq 1  \iff H^n(B(P)^f)=0 \text{ for } n\geq 1 \iff H^n(B(P^f))=0 \text{ for } n\geq 1
\end{align*}
So $P$ is Koszul by \cref{def:koszul}.(3), if and only if, $P^f$ is Koszul by \cref{def:koszul}.(3).

\section{Koszul Groupoid Coloured Operads via Groebner Bases} \label{Koszul Groupoid Coloured Operads via Groebner Bases}

The theory of Groebner bases was first developed for (discrete one-coloured) operads in \cite{dotsenko2010grobner}, extending upon the PBW bases of \cite{hoffbeck2010poincare}, as a useful tool for proving that specific operads are Koszul. In particular, Dotsenko and Khoroshkin proved that if an operad $O$ has a shuffle operad $O^f$ with a quadratic Groebner basis, then $O$ is Koszul. This powerful tool was extended to discrete coloured operads in \cite{kharitonov2021grobner}, and is currently in the process of being extended to dioperads by Khoroshkin. In this section, we develop the theory of Groebner bases for groupoid coloured operads by proving the following theorem.
\begin{theorem*}[\ref{Koszul if corresponding shuffle operad has QGB}]
Let $P$ be a $\mathbb{V}$-coloured operad such that the associated $ob(\mathbb{V})$-coloured shuffle operad $(P^f)_*$ admits a quadratic Groebner basis, then $P$ is Koszul.
\end{theorem*}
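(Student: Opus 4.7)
The plan is to chain together the results already established in the excerpt. Starting from the hypothesis that $(P^f)_*$ admits a quadratic Groebner basis as a discrete $ob(\mathbb{V})$-coloured shuffle operad, I would first invoke the classical theory of Groebner bases for discrete coloured shuffle operads (as developed in \cite{loday2012algebraic} and \cite{malbos2023shuffle}, building on \cite{dotsenko2010grobner}) to conclude that $(P^f)_*$ is Koszul. This step is entirely external to the paper and is the reason we have reduced to a discrete coloured shuffle operad in the first place.

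Next, I would identify $(P^f)_*$ with $(P^f)^{f_{\mathbb{V}}}$ using \cref{An alternate presentation after forgetting the groupoid}, which says that $(-)_*$ is an alternate (and simpler) presentation of the image of the forgetful functor $(-)^{f_\mathbb{V}}$. Under this isomorphism, the Koszulness of $(P^f)_*$ transfers to the Koszulness of $(P^f)^{f_{\mathbb{V}}}$ as a discrete $ob(\mathbb{V})$-coloured shuffle operad.

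Having Koszulness at the forgetful level, I would then apply \cref{groupoid coloured shuffle operad is Koszul if its forgetful image is Koszul} to conclude that the groupoid coloured shuffle operad $P^f$ itself is Koszul. This is the key step: it is powered by the epimorphism of bar constructions $B((P^f)^{f_\mathbb{V}}) \twoheadrightarrow B(P^f)$ from \cref{epimorphism of bar constructions}, which transfers the vanishing of higher syzygy-degree cohomology from the forgetful image down to $P^f$ via \cref{def:koszul}.(3).

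Finally, I would close the argument by appealing to \cref{def:koszul}.(4), which identifies Koszulness of the symmetric groupoid coloured operad $P$ with Koszulness of its shuffle counterpart $P^f$; this equivalence itself rests on \cref{isomorphic bar complexes}, namely $B(P)^f \cong B(P^f)$. Since the substantive content has already been concentrated in \cref{epimorphism of bar constructions}, \cref{groupoid coloured shuffle operad is Koszul if its forgetful image is Koszul}, and \cref{An alternate presentation after forgetting the groupoid}, I expect no new obstacle here: the theorem is a synthesis that packages these earlier results into a single convenient criterion, and the hardest work has already been done upstream in proving that $(-)^{f_\mathbb{V}}$ interacts with the bar construction well enough to produce the required surjection of cooperads.
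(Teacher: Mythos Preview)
Your proposal is correct and follows essentially the same chain of implications as the paper's proof: quadratic Groebner basis $\Rightarrow$ $(P^f)_*$ Koszul (the paper cites Theorem 3.12 of \cite{kharitonov2021grobner} here), then $(P^f)_* \cong (P^f)^{f_{\mathbb{V}}}$ via \cref{An alternate presentation after forgetting the groupoid}, then $P^f$ Koszul via \cref{groupoid coloured shuffle operad is Koszul if its forgetful image is Koszul}, and finally $P$ Koszul via \cref{def:koszul}.(4).
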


We briefly revisit the theory of Groebner bases for discrete coloured operads (\cite{bremner2016algebraic}, \cite{dotsenko2010grobner}, \cite{hoffbeck2010poincare}, and \cite{kharitonov2021grobner}) before extending the theory to groupoid coloured operads in \cref{The Groupoid Coloured Extension}. We then close this section by explicitly establishing a well known equivalence between confluent terminating rewriting systems and Groebner bases. This provides alternate methods for proving that a discrete coloured operad admits a Groebner basis. Later, we use techniques developed in this section to prove that the operads governing wheeled props (\cref{proof The Operad Governing Wheeled Props is Koszul}) and props (\cref{proof The Operad Governing Props is Koszul}) are Koszul.

\subsection{Divisibility, Order and Groebner Bases for Discrete Coloured Shuffle Operads}

We start by recalling some necessary definitions and results.

\begin{definition}
Given a discrete coloured shuffle operad, a partial ordering $\leq$ of its underlying tree monomials is said to be \textbf{admissible} if
\begin{align*}
    (\alpha\leq \alpha' \text{ and } \beta \leq \beta') \implies \alpha \circ_\varphi \beta \leq \alpha' \circ_\varphi \beta'
\end{align*}
where $\alpha,\alpha'$ and $\beta,\beta'$ are pairs of arity homogeneous tree monomials and $\circ_{\varphi}$ is a shuffle composition. If the order is also total (i.e. all tree monomials are comparable) then we will call it a \textbf{total admissible order}.
\end{definition}

For the rest of this section, we assume all tree monomials are ordered by a total admissible order. Path lexicographic orders are an example of a total admissible order, and we will construct two such orders in this paper (\ref{Ordering the Object Coloured Tree Monomials for Wheeled Props}, \ref{Ordering the Object Coloured Tree Monomials for Props}), but see \cite{dotsenko2010grobner} for many additional examples. This total order on tree monomials allows us to identify the largest term with non-zero coefficients of any element of a given shuffle operad. For such an element $f$, we will denote the largest term by $lt(f)$, and it's coefficient by $c_f$.

\begin{definition}\label{divisibility}
Given two tree monomials $\alpha$ and $\beta$, we say that $\alpha$ is \textbf{divisible} by $\beta$ if there exists a subtree $T'$ of $\alpha$ whose corresponding tree monomial is $\beta$. As $\alpha$ is divisible by $\beta$, a composite of elementary shuffle compositions and generators may be used to form $\alpha$ from $\beta$ (Proposition 6 of \cite{dotsenko2010grobner}). This yields an operator in tree monomials $m_{\alpha,\beta}$ such that $m_{\alpha,\beta}(\beta) = \alpha$.
\end{definition}
Note that we can apply the operator $m_{\alpha,\beta}$ to other tree monomials with the same shape as $\beta$.
Furthermore, as we have a total admissible order $\leq$, it must be the case that if $\gamma<\beta$ then $m_{\alpha,\beta}(\gamma)< m_{\alpha,\beta}(\beta)=\alpha$.
Several examples of divisibility, and the induced operators, are provided in \cref{gb assoc example}. With the preceding definitions, we can now define Groebner bases for discrete coloured shuffle operads.
\begin{definition}\label{def Groebner basis} Let $F_{sh}(E)$ be a free discrete coloured shuffle operad, $I$ an operadic ideal of $F_{sh}(E)$, and $G$ a system of arity homogeneous generators for $I$. We say $G$ is \textbf{Groebner basis} for $I$ if, for all ideal elements $f\in I$, the leading term of $f$ is divisible by the leading term of an element of $G$. We say $G$ is \textbf{quadratic} if $G\subset F_{sh}(E)^{(2)}$.
\end{definition}

This is certainly a definition, but we need a practical way to characterise when we have a Groebner basis. To this end, we first discuss reductions and $S$-polynomials.

\begin{definition} \label{reduction}
If $f,g$ are two arity homogeneous elements of $F_{sh}(E)$ such that $lt(f)$ is divisible by $lt(g)$, then
\begin{align*}
    r_g(f) := f - \frac{c_f}{c_g}m_{lt(f),lt(g)}(g)
\end{align*}
is called the \textbf{reduction of $f$ modulo $g$}. 
\end{definition}
The total admissible order on tree monomials implies that $lt(r_g(f))<lt(f)$, and in \cref{Rewriting Systems and Groebner Bases}, we will interpret a reduction $f\to r_g(f)$ as a rewrite of $f$ to a smaller element. These reductions may be chained together to provide a unique normal form for every element of the discrete coloured shuffle operad through the following proposition.

\begin{proposition}[Proposition 7 \cite{dotsenko2010grobner}]\label{Unique normal form proposition}
If $G$ is a Groebner basis for $I$ and $f\in F_{sh}(E)$ then there exists a unique element $\bar{f} \in F_{sh}(E)$ such that 
\begin{enumerate}[label=(\roman*)]
    \item $f-\bar{f}\in I$
    \item $\bar{f}$ is a linear combination of tree monomials which have no non-trivial reductions modulo $G$ (i.e. for all $g$ in $G$, every term of $\bar{f}$ with a non-zero coefficient is not divisible by $lt(g)$). We denote this $f \equiv \bar{f}\ mod\ G$.
\end{enumerate}
We say that $\bar{f}$ is the \textbf{unique normal form} of $f$.
\end{proposition}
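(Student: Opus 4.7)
The plan is to establish existence of $\bar f$ by iterated reduction and uniqueness from the defining property of a Groebner basis (\cref{def Groebner basis}).

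For existence, I would iteratively eliminate the largest \emph{reducible} tree monomial. Precisely: if every tree monomial appearing in $f$ with non-zero coefficient is indivisible by $lt(g)$ for every $g \in G$, set $\bar f = f$; otherwise, let $m$ be the largest tree monomial in $f$ divisible by some $lt(g)$, say with coefficient $c$, and replace $f$ by $f - (c/c_g)\, m_{m,lt(g)}(g)$. By \cref{divisibility} this subtraction eliminates $m$ and, using that $m_{m,lt(g)}$ sends anything strictly below $lt(g)$ to something strictly below $m$, introduces only tree monomials strictly less than $m$. Tree monomials of $f$ that were larger than $m$ were unreducible to begin with, so the supremum of the reducible monomials drops strictly after each step. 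Well-foundedness of the total admissible order on tree monomials of fixed arity then forces the algorithm to terminate and produces $\bar f$. Property (i) holds because at each step one subtracts a scalar multiple of $m_{m,lt(g)}(g) \in I$; property (ii) holds by the termination condition.

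For uniqueness, suppose $\bar f_1$ and $\bar f_2$ both satisfy (i) and (ii). Then $\bar f_1 - \bar f_2 = (f - \bar f_2) - (f - \bar f_1) \in I$. If this difference is non-zero, \cref{def Groebner basis} implies $lt(\bar f_1 - \bar f_2)$ is divisible by some $lt(g)$, $g \in G$. But $lt(\bar f_1 - \bar f_2)$ must coincide with a tree monomial appearing with non-zero coefficient in either $\bar f_1$ or $\bar f_2$, and by (ii) no such monomial is divisible by any $lt(g)$. This contradiction forces $\bar f_1 = \bar f_2$.

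The principal obstacle is termination of the reduction algorithm, which rests on well-foundedness of the chosen total admissible order restricted to tree monomials of fixed arity. For the path-lexicographic orders used in practice (and constructed later in this paper in \cref{Ordering the Object Coloured Tree Monomials for Wheeled Props} and \cref{Ordering the Object Coloured Tree Monomials for Props}), this is routine, but it must be confirmed for any other admissible order one chooses to work with. Once termination is in hand, both existence and uniqueness reduce to direct applications of the Groebner basis axiom together with the order-preserving behaviour of the operators $m_{\alpha,\beta}$ recorded after \cref{divisibility}.
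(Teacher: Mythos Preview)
Your argument is correct and is the standard one. Note, however, that the paper does not give its own proof of this proposition: it is simply quoted from \cite{dotsenko2010grobner} (Proposition 7) as background, so there is nothing to compare against beyond observing that your existence-by-iterated-reduction and uniqueness-via-the-Groebner-axiom argument is exactly the proof one finds in that reference. Your flagging of well-foundedness as the one point requiring care is appropriate; in \cite{dotsenko2010grobner} (and in the orders constructed later in this paper) this is handled by building the admissible order as an extension of a well-ordering such as arity-then-degree, which guarantees termination.
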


We now slightly extend the notion of a reduction by defining a $S$-polynomial.

\begin{definition} 
We say that two tree monomials $\alpha,\beta$ have a \textbf{common multiple} if there exists a discrete coloured tree monomial $\gamma$ such that $\gamma$ is divisible by both $\alpha$ and $\beta$. We say that a common multiple is \textbf{small} if the corresponding subtrees of $\alpha$ and $\beta$ in $\gamma$ overlap.
\end{definition}

\begin{definition}
Let $f$ and $g$ be two arity homogeneous elements of $F_{sh}(E)$ whose leading terms admit a small common multiple $\gamma$. The \textbf{S-polynomial} corresponding to $\gamma$ is defined by
\begin{align*}
    s_{\gamma}(f,g):= m_{\gamma,lt(f))}(f)- \frac{c_f}{c_g} m_{\gamma,lt(g))}(g)
\end{align*}
\end{definition}

We can now give the following equivalent characterisations of Groebner bases for operads.

\begin{proposition}[Theorem 1 \cite{dotsenko2010grobner}] \label{Equivalent defs GB}
The following are equivalent.
\begin{enumerate}
    \item $G$ is a Groebner basis for $I$
    \item for all $f \in I$, $f\equiv 0\ mod\ G$
    \item for all pairs of elements from $ G$, all their $S$-polynomials (if defined) are congruent to $0\ mod\ G$.
\end{enumerate}
\end{proposition}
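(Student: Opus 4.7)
The plan is to follow the standard Buchberger-style scheme for Groebner bases in shuffle operads \cite{dotsenko2010grobner}, organised as the chain $(1)\Leftrightarrow(2)$, $(1)\Rightarrow(3)$, $(3)\Rightarrow(1)$. The first two blocks are essentially formal consequences of the framework already set up in the excerpt, whereas $(3)\Rightarrow(1)$ is Buchberger's criterion and is the real content.

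For $(1)\Rightarrow(2)$, I would iteratively apply reductions $r_g$ (\cref{reduction}) to $f\in I$; each step strictly decreases the leading term, and since arity-homogeneous tree monomials are well-ordered by the total admissible order, the process terminates. By \cref{Unique normal form proposition} the resulting normal form is unique; as $0$ is itself a valid normal form congruent to $f$ modulo $G$, it must equal $\bar f$. Conversely $(2)\Rightarrow(1)$ is immediate: if $f\in I$ and $f\equiv 0 \bmod G$, then the first step of any reducing chain forces $lt(f)$ to be divisible by some $lt(g)$, $g\in G$. Likewise $(1)\Rightarrow(3)$ is immediate: every $S$-polynomial $s_\gamma(f,g)$ is an element of $I$ by construction, hence reduces to $0$ modulo $G$ by $(2)$.

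The main obstacle is $(3)\Rightarrow(1)$, which I would prove by contradiction. Assume all $S$-polynomials reduce to $0$ modulo $G$ and that $G$ fails to be a Groebner basis, so pick $f\in I$ whose leading term is not divisible by any $lt(g)$, $g\in G$. Among all representations $f=\sum_i \lambda_i\, m_i(g_i)$ with $g_i\in G$ and $m_i$ a divisibility operator, choose one minimising $M:=\max_i lt(m_i(g_i))$; such a minimum exists by well-foundedness of the order within a fixed arity. Necessarily $lt(f)\leq M$. If the maximum is attained by a unique index $i_0$ then $lt(f)=M=m_{i_0}(lt(g_{i_0}))$, which is divisible by $lt(g_{i_0})$, contradicting the choice of $f$. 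Otherwise the maximum is attained by at least two indices $i,j$ whose contributions at $M$ cancel; the subtrees $m_i(lt(g_i))$ and $m_j(lt(g_j))$ both equal $M$, so they overlap in $M$ as a small common multiple $\gamma$. Hypothesis $(3)$ supplies a reduction chain $s_\gamma(g_i,g_j)\equiv 0 \bmod G$, and applying the operator $m_{M,\gamma}$ to this chain (using admissibility to propagate the strict inequality $<M$ at every step, cf.\ the remark following \cref{divisibility}) rewrites $\lambda_i m_i(g_i)+\lambda_j m_j(g_j)$ as a sum of terms $n_k(h_k)$ with $h_k\in G$ and $lt(n_k(h_k))<M$. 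This produces a representation of $f$ with strictly smaller maximum, contradicting minimality.

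The subtle points to double-check are, first, that in the discrete coloured setting the operator $m_{M,\gamma}$ respects the colouring at every grafting flag, which follows because $\gamma$ is a subtree of $M$ with matching edge colours, and second, that lifting the $S$-polynomial reduction chain through $m_{M,\gamma}$ really does yield terms all strictly below $M$; both follow directly from the admissibility observation $\gamma<\beta \Rightarrow m_{M,\gamma}(\gamma)<m_{M,\gamma}(\beta)=M$ recorded after \cref{divisibility}.
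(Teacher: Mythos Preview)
The paper does not supply its own proof of this proposition; it is simply quoted from \cite{dotsenko2010grobner}. Your outline follows the standard Buchberger scheme and the implications $(1)\Leftrightarrow(2)$ and $(1)\Rightarrow(3)$ are handled correctly.

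There is, however, a genuine gap in your argument for $(3)\Rightarrow(1)$. When the maximum $M$ is attained by two indices $i,j$, you assert that the copies of $lt(g_i)$ and $lt(g_j)$ sitting inside the tree monomial $M$ ``overlap in $M$ as a small common multiple $\gamma$''. This need not be true: the two subtrees may be disjoint inside $M$, in which case $M$ is a common multiple of $lt(g_i)$ and $lt(g_j)$ but \emph{not} a small one, and no $S$-polynomial is defined for this pair relative to $M$. Your hypothesis $(3)$ says nothing about such non-small common multiples, so you cannot invoke it here.

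In \cite{dotsenko2010grobner} this disjoint case is treated separately (their Lemma~4/Proposition~6 setup): when the occurrences of $lt(g_i)$ and $lt(g_j)$ in $M$ do not share a vertex, one can directly rewrite $\lambda_i m_i(g_i)+\lambda_j m_j(g_j)$ as a combination of $m'_k(h_k)$ with $h_k\in\{g_i,g_j\}$ and all leading terms strictly below $M$, by substituting $g_i$ and $g_j$ into the two disjoint slots simultaneously. Only once this ``trivial ambiguity'' case is disposed of can one reduce to the overlapping case, locate a small common multiple $\gamma$ inside $M$, and apply hypothesis $(3)$ via $m_{M,\gamma}$ as you describe. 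You should insert this missing case distinction.
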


In particular, the third characterisation means we can check we have a Groebner basis by a generalised version of Buchberger's algorithm \cite{dotsenko2010grobner}. This means we can apply this algorithm to check that a discrete coloured operad is Koszul through the following result.
\begin{proposition}[Theorem 3.12 \cite{kharitonov2021grobner}] 
A discrete coloured shuffle operad $P$ with a quadratic Groebner basis is Koszul.
\end{proposition}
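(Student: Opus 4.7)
The plan is to extend the standard argument from the one-coloured case (\cite{dotsenko2010grobner}) to the discrete coloured setting, following three steps: pass to an associated graded monomial operad, verify that quadratic monomial operads are Koszul, and transfer Koszulness back along a filtration.

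First I would use the quadratic Groebner basis $G$ to form an associated graded operad $\mathrm{gr}\, P$. By \cref{Unique normal form proposition}, the tree monomials which are not divisible by any leading term of $G$ form a basis of $P$. The admissible order induces a filtration on $F_{sh}(E)$ whose image gives a filtration on $P$; the associated graded $\mathrm{gr}\, P$ is then presented as $F_{sh}(E)/\langle lt(G)\rangle$, which is a quadratic \emph{monomial} operad since $G$ was assumed quadratic.

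Second, I would invoke (or briefly verify) the fact that quadratic monomial operads are Koszul. For a monomial operad the bar complex splits as a direct sum indexed by tree monomials, and each summand is either acyclic outside syzygy degree zero or contributes to the Koszul dual, exactly as in the one-coloured argument (see e.g.\ the monomial operad computation in \cite{loday2012algebraic}); the extension to multiple discrete colours is essentially formal, because the colouring only restricts which trees appear but does not interact with the differential. This shows $H^n(B(\mathrm{gr}\, P)) = 0$ for $n \geq 1$.

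Finally, the admissible order induces a compatible filtration on the bar complex $B(P)$ whose associated graded is $B(\mathrm{gr}\, P)$; here admissibility is exactly the condition needed to ensure that the partial compositions appearing in the bar differential do not raise filtration degree. The associated spectral sequence is bounded in each fixed arity and weight, hence converges to $H^*(B(P))$, and since $E_1 \cong H^*(B(\mathrm{gr}\, P))$ is concentrated in syzygy degree zero, so is $H^*(B(P))$; therefore $P$ is Koszul by \cref{def:koszul}(3). The main obstacle is the compatibility check between the filtration and the bar differential, which I expect to reduce cleanly to the admissibility axiom for the order; once this is in place the remainder is a standard spectral sequence collapse.
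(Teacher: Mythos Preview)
Your proposal is correct and follows the standard three-step strategy (associated graded monomial operad, Koszulness of monomial operads, spectral sequence transfer) originating in \cite{dotsenko2010grobner} and extended to the coloured setting in \cite{kharitonov2021grobner}. The paper itself does not supply a proof here at all: the proposition is simply imported as Theorem~3.12 of \cite{kharitonov2021grobner} and used as a black box. So you have in fact done more than the paper, by sketching the argument that the cited reference contains; your outline is accurate and the one point you flag as the main obstacle---compatibility of the filtration with the bar differential via admissibility of the order---is indeed the crux, and it does reduce cleanly as you expect.
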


\subsection{The Groupoid Coloured Extension} \label{The Groupoid Coloured Extension}

We now extend these techniques to the groupoid coloured case.
We point out that the constructions of this section will be clarified in \cref{ex:epimorphism example}.

\subsubsection{Forgetting the Action of the Groupoid}

We first show that the Koszulness of a $\mathbb{V}$-coloured operad $P$ is characterised by the Koszulness of an $Ob(\mathbb{V})$-coloured shuffle operad.
We note that the arguments of this section also apply to symmetric and non-symmetric operads with the obvious modifications to modules and operadic compositions.
\\\\
Recall from \cref{Monoidal Product Lemma}, that the forgetful functor from $\mathbb{V}_{\Sigma}$-modules to $\mathbb{V}$-modules is monoidal with respect to the symmetric operadic and shuffle operadic monoidal compositions (\cref{monoidal groupoid coloured operads def}).
One consequence of this is that the bar complex of a groupoid coloured operad $P$, and its corresponding shuffle operad $P^f$ are isomorphic as groupoid coloured shuffle cooperads (\cref{isomorphic bar complexes}),
\begin{align*}
    B(P^f)\cong B(P)^f.
\end{align*}
This result allowed us to infer that $P$ is Koszul, if, and only if, $P^f$ is Koszul (\cref{def:koszul}).
Thus, instead of studying the Koszulness of $\mathbb{V}$-coloured symmetric operads, it is sufficient to study the Koszulness of $\mathbb{V}$-coloured shuffle operads.
\\\\
There also exists a forgetful functor $-^{f_{\mathbb{V}}}$ from $\mathbb{V}$-modules to $ob(\mathbb{V})$-modules, which forgets the action of the groupoid. However, $-^{f_{\mathbb{V}}}$ is not monoidal.
Despite this failure, we will show in \cref{epimorphism of bar constructions}, that there exists an epimorphism $B(P^{f_{\mathbb{V}}}) \twoheadrightarrow B(P)$ of groupoid coloured shuffle cooperads.
This epimorphism lets us infer one of the two directions.
\begin{proposition} \label{groupoid coloured shuffle operad is Koszul if its forgetful image is Koszul}
A $\mathbb{V}$-coloured shuffle operad $P$ is Koszul if $P^{f_{\mathbb{V}}}$ is Koszul.
\end{proposition}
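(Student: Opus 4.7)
My plan is to apply the syzygy-degree characterisation of Koszulness from \cref{def:koszul}(3): it suffices to prove $H^n(B(P))=0$ for every $n\geq 1$, given the analogous vanishing for $B(P^{f_{\mathbb{V}}})$.

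The bridge is the epimorphism $\pi\colon B(P^{f_{\mathbb{V}}}) \twoheadrightarrow B(P)$ of groupoid coloured shuffle cooperads promised in \cref{epimorphism of bar constructions}. Both complexes decompose as sums over shuffle trees; the $T$-summand of $B(P^{f_{\mathbb{V}}})$ consists of decorations by operations of $P$ with no identifications on the internal edges of $T$, whereas the $T$-summand of $B(P)$ takes coinvariants under the finite group $G_T := \prod_{E\in\mathrm{int}(T)} Aut(v_E)$ acting on internal-edge decorations via the $\mathbb{V}$-structure of $P$ (recall \cref{Decorated Non-Symmetric Tree}). Thus $\pi$ exhibits $B(P)$ as the tree-wise coinvariants of $B(P^{f_{\mathbb{V}}})$ under these actions, and the $\mathbb{V}$-equivariance of the operadic composition of $P$ ensures that these actions are compatible with the bar differentials on both sides.

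Under the standing assumptions (characteristic zero and $Aut(v)$ finite for every $v \in ob(\mathbb{V})$), Maschke's theorem applies to each $G_T$, yielding a natural isomorphism between coinvariants and invariants together with exactness of both functors. Since the actions are compatible with the differentials, the averaging idempotents for each $G_T$ assemble into a chain-complex retraction of $B(P)$ into $B(P^{f_{\mathbb{V}}})$, giving a decomposition $B(P^{f_{\mathbb{V}}}) \cong B(P) \oplus \ker\pi$ of chain complexes. Hence
\[
H^n(B(P^{f_{\mathbb{V}}})) \;\cong\; H^n(B(P)) \oplus H^n(\ker\pi),
\]
so Koszulness of $P^{f_{\mathbb{V}}}$ forces both summands to vanish for $n\geq 1$; in particular $H^n(B(P))=0$ for $n\geq 1$, establishing Koszulness of $P$.

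The main obstacle is the structural claim underlying \cref{epimorphism of bar constructions}: that $B(P)$ is obtained from $B(P^{f_{\mathbb{V}}})$ precisely by quotienting under the tree-wise edge-automorphism actions, and that these actions are genuinely compatible with the bar differentials despite the fact that $-^{f_{\mathbb{V}}}$ fails to be monoidal (as flagged preceding this proposition). Verifying this demands a careful tree-by-tree analysis of how the forgetful functor interacts with the cofree cooperad construction; once that structural input is in hand, the averaging argument above closes the proof.
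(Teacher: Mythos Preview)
Your proposal is correct and follows essentially the same approach as the paper: invoke the epimorphism of \cref{epimorphism of bar constructions} and the syzygy-degree characterisation of \cref{def:koszul}(3). The paper's own proof is terse (it simply asserts that the epimorphism forces $H^n(B(P))=0$ for $n\geq 1$), whereas you spell out the reason this works---namely that the quotient is by finite edge-automorphism groups, so Maschke in characteristic zero yields a chain-level splitting making $B(P)$ a retract of $B(P^{f_{\mathbb{V}}})$.
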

\begin{proof}
The epimorphism $B(P^{f_{\mathbb{V}}}) \twoheadrightarrow B(P)$ implies (using syzygy degree, \cref{syzygy degree}) that,
\begin{align*}
    H^n(B(P^{f_{\mathbb{V}}}))=0 \text{ for }n\geq 1 \quad \implies \quad H^n(B(P))=0 \text{ for }n\geq 1.
\end{align*}
Hence by \cref{def:koszul}, if $P^{f_{\mathbb{V}}}$ is Koszul then $P$ is Koszul.
\end{proof}

The following epimorphism of groupoid coloured shuffle operads will not only let us construct $B(P^{f_{\mathbb{V}}}) \twoheadrightarrow B(P)$, but will eventually lead to a simpler presentation of $P^{f_{\mathbb{V}}}$ in the next subsection.
Recall that a morphism of groupoid coloured operads is a morphism of groupoid coloured modules, \cref{def:morphism of groupoid coloured bimodules}, which is compatible with the operadic structure maps.

\begin{definition} \label{The quotient map}
Let $E$ be a $\mathbb{V}$-coloured module.
We define an epimorphism of groupoid coloured operads,
\begin{align*}
    [-]:F_{sh}^{ob(\mathbb{V})}(E^{f_{\mathbb{V}}})\twoheadrightarrow  F_{sh}^{\mathbb{V}}(E).
\end{align*}
The underlying map of groupoids is the inclusion $[-]_0:ob(\mathbb{V}) \hookrightarrow \mathbb{V}$.
The map of groupoid coloured modules is defined on the basis elements of $F_{sh}^{ob(\mathbb{V})}(E^{f_{\mathbb{V}}})$, and sends each $ob(\mathbb{V})$-coloured tree monomial $T$ to a $\mathbb{V}$-coloured tree monomial by adding in the action of $\mathbb{V}$ on the internal edges of $T$ (\cref{Groupoid Coloured Tree Monomials}),
\begin{align*}
    [T]:= [ \bigotimes_{w \in Vt(T)} e_w ]_{Edges(T)}.
\end{align*}

\end{definition}

\begin{lemma}
The quotient map $[-]$ is indeed an epimorphism of groupoid coloured operads.
\end{lemma}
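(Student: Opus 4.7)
The plan is to verify the three conditions characterising an epimorphism of groupoid coloured operads from \cref{def:morphism of groupoid coloured bimodules}: that $[-]$ defines a morphism of the underlying non-symmetric groupoid coloured modules, that it is compatible with the operadic structure maps, that each induced set map is surjective, and that the underlying functor $[-]_0$ is essentially surjective.

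Essential surjectivity of $[-]_0$ is immediate, since the inclusion $ob(\mathbb{V}) \hookrightarrow \mathbb{V}$ hits every object of $\mathbb{V}$ by construction. For the module condition, I observe that the groupoid action on profiles in the source operad $F_{sh}^{ob(\mathbb{V})}(E^{f_{\mathbb{V}}})$ is through the discrete category $ob(\mathbb{V})$, so every groupoid morphism appearing in a data pair $\binom{g}{f}$ is an identity. Its image under $[-]_0$ is again an identity in $\mathbb{V}$, and the target basis element $[\bigotimes_{w \in Vt(T)} e_w]_{Edges(T)}$ from \cref{Decorated Symmetric Tree} already absorbs any action of $\mathbb{V}$ on internal edges into its equivalence class. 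Hence every commuting square demanded by \cref{def:morphism of groupoid coloured bimodules} reduces to a pair of identity actions on both the source and target and is automatically satisfied.

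For compatibility with the operadic structure, I use the monadic presentation of \cref{monadic groupoid coloured operads def}: both source and target free shuffle operads have their structure maps given by tree substitution of decorated trees. The map $[-]$ is defined on basis tree monomials by passing to the coinvariants for the $\mathbb{V}$-action on internal edges. Since tree substitution operates vertex-wise on decorations, and since the action of $\mathbb{V}$ propagates multiplicatively across the newly created internal edges by \cref{Composition in Groupoid Coloured Bimodule}, the coinvariant construction is natural with respect to substitution. Consequently $[T_1 \circ_{i,\sigma} T_2] = [T_1] \circ_{i,\sigma} [T_2]$ for every shuffle partial composition, which together with $\mathbb{K}$-linearity yields the full operadic compatibility.

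Finally, surjectivity of each set map is immediate by construction: every basis element $[\bigotimes_{w \in Vt(T)} e_w]_{Edges(T)}$ of $F_{sh}^{\mathbb{V}}(E)\dc$ has the unquotiented decorated tree $\bigotimes_{w \in Vt(T)} e_w$ in $F_{sh}^{ob(\mathbb{V})}(E^{f_{\mathbb{V}}})\dc$ as a preimage, and extending linearly exhausts $F_{sh}^{\mathbb{V}}(E)\dc$. I expect the only genuinely non-trivial step to be the naturality of coinvariants with respect to tree substitution; however, this amounts to a bookkeeping check that the action of $\mathbb{V}$ across a substituted edge agrees with the composed action on the two vertices it now connects, which is precisely the content of \cref{Composition in Groupoid Coloured Bimodule}.
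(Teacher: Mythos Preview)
Your proof is correct and follows essentially the same approach as the paper: verify that $[-]$ respects the (trivial) groupoid action of the source, that it commutes with operadic composition, and that every groupoid coloured tree monomial in the target has a representative in the source. The paper is more terse, phrasing the compatibility check via partial compositions $[\alpha \circ_\varphi \beta] = [\alpha] \circ_\varphi [\beta]$ rather than through the monadic tree-substitution description you use, but the content is the same.
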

\begin{proof}
We first observe that $[-]$ (trivially) respects the trivial action of the groupoid $ob(\mathbb{V})$.
Next, we observe that $[\alpha \circ_\varphi \beta] = [\alpha] \circ_\varphi [\beta]$, as both sides of the equation have the same action of $\mathbb{V}$ on the internal edge specified by the partial composition $\circ_\varphi$.
Finally, $[-]$ must be an epimorphism, as the equivalence class corresponding to any groupoid coloured tree monomial in $F_{sh}^{\mathbb{V}}(E)$ is represented by at least one non-groupoid coloured tree monomial in $F_{sh}^{ob(\mathbb{V})}(E^{f_{\mathbb{V}}})$.
(See \cref{lem:unique minimal representative of a groupoid coloured tree monomial} for one method of producing a particular representative.)
\end{proof}

\begin{proposition}\label{epimorphism of bar constructions}
The quotient map $[-]$ induces an epimorphism $B(P^{f_{\mathbb{V}}}) \twoheadrightarrow B(P)$ of groupoid coloured shuffle cooperads. 
\end{proposition}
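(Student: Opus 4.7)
The plan is to obtain the morphism of underlying graded modules directly from the quotient map $[-]$ of \cref{The quotient map}, then to verify that it respects the cofree shuffle cooperadic decomposition and the bar differential, and finally to conclude that it is an epimorphism. The underlying graded module of a cofree conilpotent cooperad coincides with that of the free operad on the same generators, so applying $[-]$ with $E = sP$ termwise yields a surjection
\begin{equation*}
    F_c(sP^{f_{\mathbb{V}}}) \twoheadrightarrow F_c(sP)
\end{equation*}
of underlying $\mathbb{V}$-modules, lying over the object inclusion $ob(\mathbb{V}) \hookrightarrow \mathbb{V}$, which is (trivially) essentially surjective.

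Next I would check that this is a morphism of shuffle cooperads. The cofree decomposition on a tree monomial splits the underlying tree along an internal edge, and this combinatorial operation is identical in the $ob(\mathbb{V})$- and $\mathbb{V}$-coloured settings; the extra $\mathbb{V}$-datum consists of the automorphism action on internal edges recorded by the coinvariants of \eqref{Decorated Non-Symmetric Tree}, and under any splitting every such edge either remains internal to one of the two subtrees (and retains its action) or becomes a matched pair of legs (and its action is absorbed into the colour matching). Hence $\Delta \circ [-] = ([-] \otimes [-]) \circ \Delta$ on representative tree monomials. For the bar differential $\partial + d_P$, the internal piece $d_P$ acts vertex-wise via the differential of $P$, which is also the differential of $P^{f_{\mathbb{V}}}$, and therefore commutes with $[-]$ on the nose. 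The composition piece $\partial$ is the sum over internal edges of contractions using partial composition; since $[-]$ is a morphism of groupoid coloured operads, it satisfies $[\alpha \circ_{i,\sigma} \beta] = [\alpha] \circ_{i,\sigma} [\beta]$, so these contractions are also preserved.

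Finally, surjectivity of $[-]$ on tree monomials together with the essential surjectivity of the underlying groupoid morphism shows that the resulting dg cooperad map satisfies the epimorphism criterion of \cref{def:morphism of groupoid coloured bimodules}. The principal obstacle I expect is keeping careful track of how the $\mathrm{Aut}(v)$-action on an edge of colour $v$ distributes after the edge is cut by the cooperadic decomposition; more precisely, that the simultaneous left-right action on the two adjacent vertex decorations separates into a well-defined contribution on each resulting subtree once the cut produces a matching pair of legs. Unwinding this reduces to an application of the composition law \eqref{Composition in Groupoid Coloured Bimodule} in $\mathcal{W}(\mathbb{V})$, after which the remainder of the argument is formal.
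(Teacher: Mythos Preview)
Your proposal is correct and follows essentially the same route as the paper's proof: obtain the module-level surjection by identifying the underlying graded modules of $F_c$ and $F$ and applying the quotient map of \cref{The quotient map} with generators $sP$, then check separately that the cofree decomposition and the two pieces $d_P$ and $\partial$ of the bar differential are preserved. The only minor imprecision is that when you invoke ``$[-]$ is a morphism of groupoid coloured operads'' for the $\partial$-compatibility, the relevant composition is that of $P$ itself (which induces $\partial$) rather than the free-operad composition; but since $P^{f_{\mathbb{V}}}$ and $P$ share the same underlying structure maps this is immediate, and the paper handles it the same way by displaying the commuting square of the inducing cooperad maps.
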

\begin{proof}
As in the prior definition, the underlying map of groupoids is the inclusion $ob(\mathbb{V}) \hookrightarrow \mathbb{V}$.
Now, recall from \cref{Bar construction}, that given a groupoid coloured shuffle operad $P$, its bar construction is $B(P) = (F_c(s P), \partial+ d_P)$, where the differential $d_P$ is the differential induced by the differential of $P$, and $\partial$ is the differential induced by the operadic composition of $P$.
Furthermore, the underlying groupoid coloured modules of the cofree cooperad $F_c(s P)$ and $F(sP)$ are equal.
Hence,
\begin{align*}
    F_c^{ob(\mathbb{V})}(s P^{f_{\mathbb{V}}}) = F^{ob(\mathbb{V})}(sP^{f_{\mathbb{V}}}) \twoheadrightarrow F^{\mathbb{V}}(sP) = F_c^{\mathbb{V}}(sP).
\end{align*}
This composite, which we also denote $[-]$ in a minor abuse of notation, respects the cooperadic structure, i.e. if we let $\triangle^{ob(\mathbb{V})}$ and $\triangle^{\mathbb{V}}$ denote the partial cooperadic products for $F_c^{ob(\mathbb{V})}(s P^{f_{\mathbb{V}}})$ and $F_c^{\mathbb{V}}(s P)$ respectively, then $([-],[-])\circ \triangle^{ob(\mathbb{V})} = \triangle^{\mathbb{V}} \circ [-]$.
This follows as, for $\alpha \in F_c^{ob(\mathbb{V})}(s P^{f_{\mathbb{V}}})$, we observe that $\triangle^{ob(\mathbb{V})}(\alpha) \in \triangle^{\mathbb{V}} ([\alpha])$.
\\\\
Thus, to conclude the proof, we need only show that $[-]$ commutes with the respective differentials.
This is immediate for the differentials induced by the differential of $P$.
For the $\partial$ (operadic) differentials, we recall from Section 2.6.1 of \cite{ward2022massey} that for a groupoid coloured operad $Q$, the differential $\partial$ of $B(Q)$ is the unique extension of the following cooperad map.
\begin{center}
\begin{tikzcd}
F_c(s Q) \arrow[r] &
F_c(s Q)^{(2)} = 
F(s Q)^{(2)} \cong 
s^2 F(Q)^{(2)} \arrow[r] &
s^2 Q \arrow[r] &
s Q
\end{tikzcd}
\end{center}
Where the first map is projection, the second is the operadic structure map, and the final is a shift in degree. 
Thus if we let $\partial_{ob(\mathbb{V})}$ and  $\partial_{\mathbb{V}}$ denote the $\partial$ differentials of $B(P^{f_{\mathbb{V}}})$ and $B(P)$ respectively.
Then the equation $[-]\circ \partial_{ob(\mathbb{V})} = \partial_{\mathbb{V}} \circ [-]$, is witnessed by the commutativity of their inducing cooperad maps,

\begin{center}
\begin{tikzcd}
F_c(s P^{f_{\mathbb{V}}}) \arrow[r] \arrow[d, twoheadrightarrow, "{[-]}"] &
F_c(s P^{f_{\mathbb{V}}})^{(2)} = F(s P^{f_{\mathbb{V}}})^{(2)} \cong 
s^2 F(P^{f_{\mathbb{V}}})^{(2)} \arrow[r] &
s^2 P^{f_{\mathbb{V}}} \arrow[r] &
s P^{f_{\mathbb{V}}} \arrow[d, twoheadrightarrow, "{[-]}"]\\
F_c(s P) \arrow[r] &
F_c(s P)^{(2)} = 
F(s P)^{(2)} \cong 
s^2 F(P)^{(2)} \arrow[r] &
s^2 P \arrow[r] &
s P
\end{tikzcd}
\end{center}
Thus $[-]$ is a morphism of groupoid coloured modules that also respects the shuffle cooperadic structure, and hence is a morphism of groupoid coloured shuffle cooperads.
\end{proof}

\subsubsection{A Simpler Presentation}

Given a quadratic $\mathbb{V}$-coloured operad $P$ we now seek a simple quadratic presentation for $P^{f_{\mathbb{V}}}$.
Given we have an epimorphism of groupoid coloured operads $[-]:F_{sh}^{ob(\mathbb{V})}(E^{f_{\mathbb{V}}})\twoheadrightarrow  F_{sh}^{\mathbb{V}}(E)$ (\cref{The quotient map}).
An analogue of the fundamental theorem of homomorphisms implies the composite $[-]^{f^{\mathbb{V}}}$ is an isomorphism of $ob(\mathbb{V})$-coloured shuffle operads,
\begin{align}\label{eq:kernel iso}
    F_{sh}^{ob(\mathbb{V})}(E^{f_{\mathbb{V}}}) / \langle  Ker([-]) \rangle \cong  F_{sh}^{\mathbb{V}}(E)^{f_{\mathbb{V}}}.
\end{align}
Thus, we seek to characterise $Ker([-])$.
Firstly, we show the kernel admits a quadratic presentation.
\begin{definition}
Let $[-]_2$ be the restriction to weight two elements (\cref{Groupoid Coloured Tree Monomials}), $[-]_2:= [-]|_{(F_{sh}^{ob(\mathbb{V})})^{(2)}}$.
\end{definition}

By direct inspection,
\begin{align*}
    Ker([-]_2) = \{ \alpha \circ_\phi \beta= \alpha' \circ_{\phi'} \beta' : \alpha,\beta,\alpha',\beta' \in E^{f_{\mathbb{V}}} \text{ such that } [\alpha \circ_\phi \beta] = [\alpha' \circ_{\phi'} \beta'] \}.
\end{align*}

\begin{lemma}\label{lem:kernel in weight two}
\begin{align*}
    \langle Ker([-]_2) \rangle = \langle Ker([-]) \rangle
\end{align*}
\end{lemma}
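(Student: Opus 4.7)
The inclusion $\langle Ker([-]_2) \rangle \subseteq \langle Ker([-]) \rangle$ is immediate since $Ker([-]_2) \subseteq Ker([-])$ by definition, so all the work is in the reverse inclusion. My plan is to reduce this to a statement about a pair of equivalent $ob(\mathbb{V})$-coloured tree monomials, then use that the groupoid action on internal edges factors edge-by-edge.

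First, I would take an arbitrary $x \in Ker([-])$ and expand it in the tree monomial basis as $x = \sum_i c_i T_i$. Since $[-]$ sends each $ob(\mathbb{V})$-coloured tree monomial to its equivalence class in $F_{sh}^{\mathbb{V}}(E)$, I can regroup by equivalence class,
\[
  x = \sum_{[T]} \Bigl( \sum_{T_i \in [T]} c_i\, T_i \Bigr),
\]
and the equation $[x] = 0$ together with linear independence of the groupoid coloured tree monomials forces $\sum_{T_i \in [T]} c_i = 0$ inside each block. Fixing a representative $T_{[T]}$ in each class, each block rewrites as $\sum_{T_i \in [T]} c_i (T_i - T_{[T]})$, so it suffices to show that $T - T' \in \langle Ker([-]_2) \rangle$ whenever $T$ and $T'$ are equivalent $ob(\mathbb{V})$-coloured tree monomials.

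The next step is to decompose the equivalence relation. By Definition \ref{Trees decorated with groupoid coloured modules}, two tree monomials $T$ and $T'$ represent the same element of $F_{sh}^{\mathbb{V}}(E)$ precisely when their decorations differ by the action of an element of $\prod^U_{E \in Edges(T)} Aut(v_E)$. Since this product acts independently at each internal edge, I can connect $T$ to $T'$ by a finite chain $T = T^{(0)}, T^{(1)}, \dots, T^{(N)} = T'$ in which each consecutive pair $T^{(k)}, T^{(k+1)}$ differs by the action of a single $\phi_k \in Aut(v_{E_k})$ on one internal edge $E_k$. By telescoping, it is enough to treat a single-edge step.

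For a single-edge step, the two vertices $v, w$ adjacent to $E_k$ are the only decorations that change, so the two-vertex subtree $S \subset T^{(k)}$ obtained by restricting to these vertices and the edge $E_k$ maps under $[-]_2$ to the same element as its counterpart $S'$ in $T^{(k+1)}$. Therefore $S - S' \in Ker([-]_2)$. Using Proposition 6 of \cite{dotsenko2010grobner} (which provides the divisibility operator $m_{T^{(k)}, S}$ of Definition \ref{divisibility}), the remainder of $T^{(k)}$ can be rebuilt from $S$ via a composite of shuffle partial compositions and generators; the same composite applied to $S'$ yields $T^{(k+1)}$ because the rest of the tree is unchanged. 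Hence
\[
  T^{(k)} - T^{(k+1)} = m_{T^{(k)}, S}(S) - m_{T^{(k)}, S}(S') = m_{T^{(k)}, S}(S - S') \in \langle Ker([-]_2) \rangle,
\]
and telescoping over $k$ gives $T - T' \in \langle Ker([-]_2) \rangle$, which completes the proof.

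The only delicate point I anticipate is verifying that single-edge modifications of the decorations genuinely produce a weight two relation of the form required by $Ker([-]_2)$; this will rely on the compatibility formula \eqref{Composition in Groupoid Coloured Bimodule}, which ensures that the action of $Aut(v_{E_k})$ on the pair of adjacent vertex decorations is exactly what gets quotiented out in the weight two coinvariant, and on the fact that partial shuffle compositions interact correctly with the reference listing so that $m_{T^{(k)}, S}$ applies identically to $S$ and $S'$.
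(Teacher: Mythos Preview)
Your proposal is correct and follows essentially the same approach as the paper: both arguments reduce to the observation that the action of $\prod^U_{E\in Edges(T)} Aut(v_E)$ factors edge-by-edge, so that any two equivalent tree monomials are connected by a chain of single-edge moves, each of which is witnessed by a weight-two relation in $Ker([-]_2)$. The paper's proof is a two-sentence sketch of exactly this idea, and you have simply filled in the details (the basis reduction, the telescoping chain, and the use of the divisibility operator to embed the weight-two relation back into the full tree).
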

\begin{proof}
Clearly $\langle Ker([-]_2) \rangle \subseteq \langle Ker([-]) \rangle$.
Conversely, by \cref{Trees decorated with groupoid coloured modules}, the action of $[-]_{Edges(T)}$ is applied to each internal edge via an unordered tensor product.
Thus, as $Ker([-]_2)$ encodes the action of a single internal edge, sequential applications of relations in $Ker([-]_2)$ encode the action across all internal edges.
\end{proof}

We now seek an explicit presentation of $Ker([-]_2)$.

\begin{lemma}\label{lem:unique minimal representative of a groupoid coloured tree monomial}
Let $P$ be a $\mathbb{V}$-coloured operad with a total order on its underlying $ob(\mathbb{V})$-coloured tree monomials.
Every $\mathbb{V}$-coloured tree monomial $T$ admits a unique minimal representative, which we denote $T_*$.
\end{lemma}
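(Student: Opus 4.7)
The plan is to unpack what a $\mathbb{V}$-coloured tree monomial really is, and then observe that the assumptions in place make each fiber of $[-]$ finite. Recall from \cref{Groupoid Coloured Tree Monomials} and \cref{Trees decorated with groupoid coloured modules} that a $\mathbb{V}$-coloured tree monomial of shape $T$ is an equivalence class of the unordered tensor $\bigotimes_{w \in Vt(T)} e_w$ under the action of the finite (by our standing assumption) group $\prod^{U}_{E \in Edges(T)} Aut(v_E)$ acting simultaneously on the flags at the endpoints of each internal edge. Thus, fixing the underlying tree shape, the set $[-]^{-1}(T)$ of $ob(\mathbb{V})$-coloured tree monomials mapping to $T$ is precisely the orbit of any representative under this finite group.

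First, I would argue that $[-]^{-1}(T)$ is a \emph{finite} set. Indeed, the tree shape (including listings and the $ob(\mathbb{V})$-colourings of flags) is determined by $T$, so representatives differ only in the decoration of vertices by elements of $E^{f_{\mathbb{V}}}$. These decorations form a single orbit under $\prod^{U}_{E \in Edges(T)} Aut(v_E)$; since $\mathbb{V}$ has finite automorphism groups, this product is finite, and so is the orbit.

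Next, the hypothesis gives us a total order on all $ob(\mathbb{V})$-coloured tree monomials. Restricting this total order to the finite nonempty set $[-]^{-1}(T)$, we obtain a total order on a finite set, which necessarily has a unique least element. Define $T_*$ to be this element. Uniqueness is immediate from totality, and existence is immediate from finiteness.

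The only step requiring care is confirming that two elements of $[-]^{-1}(T)$ genuinely share enough structural data for the total order to compare them sensibly; this is automatic since $[-]$ leaves the underlying tree shape, the flag $ob(\mathbb{V})$-colouring, and the listings untouched (the quotient $[-]_{Edges(T)}$ of \cref{Trees decorated with groupoid coloured modules} acts only on vertex decorations via the edge groupoid actions). There is no real obstacle here: the argument is essentially that a finite nonempty totally ordered set has a minimum, once finiteness of the fiber is justified by the standing assumption that $Aut(v)$ is finite for all $v\in Ob(\mathbb{V})$.
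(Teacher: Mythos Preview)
Your proof is correct and follows essentially the same approach as the paper: the equivalence class is finite because it is an orbit under $\prod^{U}_{E \in Edges(T)} Aut(v_E)$, which is finite by the standing assumption on $Aut(v)$, and a total order on a finite nonempty set has a unique minimum. The paper's version is more terse, but the substance is identical.
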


\begin{proof}
As $Aut(v)$ is finite for all $v\in ob(\mathbb{V})$, every equivalence class $T:=[ \bigotimes_{w \in Vt(T)} e_w ]_{Edges(T)}$ is finite.
Therefore, as a total order on a finite set is a well-order, a unique minimal element exists.
\end{proof}

\begin{remark}
It is possible to define a total order on groupoid coloured tree monomials using \cref{lem:unique minimal representative of a groupoid coloured tree monomial}.
In particular, given two groupoid coloured tree monomials $\alpha,\beta$ we define $\alpha\leq \beta$ if, and only if, $\alpha_*\leq \beta_*$.
One could then define an appropriate notion of an admissible order for groupoid coloured shuffle operads such that this order is admissible.
From here, one could directly redevelop the entire theory of Groebner bases for groupoid coloured shuffle operads. 
However, the entire point of this section is to circumvent the need to do this.
\end{remark}

\begin{corollary}
This defines an injective map $-_*$ from $\mathbb{V}$-coloured operads to $ob(\mathbb{V})$-coloured operads.
\end{corollary}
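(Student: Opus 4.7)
The plan is to first give an explicit definition of $-_*$ at the level of underlying modules, then extend it to the operadic structure, and finally verify injectivity by exploiting the fact that the groupoid $\mathbb{V}$ is fixed throughout. On underlying modules, $P_*$ will be the $ob(\mathbb{V})$-coloured module $\mathbb{K}$-linearly spanned by the minimal representatives $T_*$ provided by Lemma \ref{lem:unique minimal representative of a groupoid coloured tree monomial}, one for each $\mathbb{V}$-coloured tree monomial $T$ of $P$. The shuffle operadic structure is inherited from $P$ via
\[
\alpha_* \circ_\varphi \beta_* \; := \; ([\alpha_*] \circ_\varphi [\beta_*])_*,
\]
where $[\alpha_*]$ denotes the image of $\alpha_*$ under the quotient $[-]$ of Definition \ref{The quotient map}, so that $[\alpha_*] \in P$ is the $\mathbb{V}$-coloured tree monomial of which $\alpha_*$ is the minimal representative.

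Next I would verify that the above composition equips $P_*$ with the structure of an $ob(\mathbb{V})$-coloured shuffle operad: associativity and unitality are inherited directly from $P$ because choosing minimal representatives is a bijection between equivalence classes and a distinguished set of representatives. Equivalently, starting from a presentation $P \cong F_{sh}^{\mathbb{V}}(E)/R$, one can describe $P_*$ by the presentation
\[
P_* \;\cong\; F_{sh}^{ob(\mathbb{V})}(E^{f_{\mathbb{V}}}) \big/ \langle R_* \cup Ker([-]_2) \rangle,
\]
where $R_*$ consists of the minimal representatives of elements of $R$; this is compatible with the isomorphism in \eqref{eq:kernel iso} once specialised to minimal forms, and shows that $P_*$ is a genuine $ob(\mathbb{V})$-coloured shuffle operad.

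For injectivity, suppose that $P_* = Q_*$ as $ob(\mathbb{V})$-coloured shuffle operads. The crucial observation is that the relations $Ker([-])$ depend only on the groupoid $\mathbb{V}$ and the underlying $ob(\mathbb{V})$-coloured generators, not on any quadratic relations specific to $P$ or $Q$. Since $P_* = Q_*$, their presentations agree, and therefore the quotient by $\langle Ker([-])\rangle$ applied to both sides yields $P^{f_{\mathbb{V}}} \cong Q^{f_{\mathbb{V}}}$ via \eqref{eq:kernel iso}. Restoring the action of $\mathbb{V}$ on internal edges of tree monomials is canonical once $\mathbb{V}$ is fixed (cf.~Definition \ref{Trees decorated with groupoid coloured modules}), so we recover $P \cong Q$ as $\mathbb{V}$-coloured operads, proving injectivity.

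The main obstacle will be ensuring that the minimal-form composition rule $\alpha_* \circ_\varphi \beta_* := ([\alpha_*] \circ_\varphi [\beta_*])_*$ is truly intrinsic to $P_*$, rather than depending on the auxiliary embedding into $P^{f_{\mathbb{V}}}$; this amounts to checking that the bijection between equivalence classes of tree monomials and their minimal representatives is strictly compatible with the shuffle composition of $P$. Once this compatibility is established, both the operad structure on $P_*$ and the reconstruction of $P$ from $P_*$ become essentially formal.
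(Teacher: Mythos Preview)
You have misread the statement. The map $-_*$ here is not an assignment $P \mapsto P_*$ sending a $\mathbb{V}$-coloured operad to an $ob(\mathbb{V})$-coloured operad; it is a map on \emph{elements}. Given a $\mathbb{V}$-coloured operad $P$, every element $x \in P$ is a finite linear combination $x = \sum_i k_i T_i$ of $\mathbb{V}$-coloured tree monomials, and the corollary simply linearises the assignment $T \mapsto T_*$ of the preceding lemma to obtain $x_* := \sum_i k_i (T_i)_*$. The phrase ``from $\mathbb{V}$-coloured operads to $ob(\mathbb{V})$-coloured operads'' describes the source and target types of the elements, not a functor between categories of operads. The subsequent remark in the paper, observing that $-_*$ is generally not a morphism of groupoid coloured operads, confirms this reading: it is talking about a single map defined on a given $P$, not about functoriality in $P$.

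With the correct reading, the proof is immediate: injectivity holds because the equivalence class $T = [T_*]$ can be reconstructed from $T_*$ (any representative determines its class), so distinct tree monomials $T$ have distinct images $T_*$, and the linear extension is therefore injective on basis elements. Your construction of an operad structure on $P_*$, and the argument reconstructing $P$ from $P_*$ via $Ker([-])$, are unnecessary for this corollary; the operad $P_*$ only appears later in the paper (in the corollary identifying $P^{f_{\mathbb{V}}}$ with a quotient of the free $ob(\mathbb{V})$-coloured operad), and is built there directly rather than via the element-level map $-_*$.
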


\begin{proof}
Given a $\mathbb{V}$-coloured operad $P$, every $x\in P$ is of the form $x = \sum_i k_i T_i $, where each $k_i \in \mathbb{K}$ and $T_i$ is $\mathbb{V}$-coloured tree monomial.
Thus $x_*$ is given via linearisation, $x_* := \sum_i k_i {(T_i)}_*$.
The map is injective as the equivalence class of any groupoid coloured tree monomial can be reconstructed from any element.
\end{proof}

\begin{remark}
In general, $-_*$ will not be a morphism of groupoid coloured operads, as every morphism of the groupoid $\mathbb{V}$ is mapped to an identity morphism.
Thus, if $\alpha$ is an element of a $\mathbb{V}$-coloured operad which admits a non-trivial action of the groupoid $\mathbb{V}$, i.e. $\alpha \neq \alpha \cdot v$,
then $(\alpha \cdot v)_*\neq \alpha_* \cdot (v_*) = \alpha_* \cdot id = \alpha_*$.
\end{remark}

\begin{lemma}\label{lem: E2 presentation of degreee two kernel}
If $F_{sh}^{ob(\mathbb{V})}(E^{f_{\mathbb{V}}})$ is totally ordered, then $\langle Ker([-]_2) \rangle = \langle E_{*}^2 \rangle $  where
\begin{align*}
    E_{*}^2 := \{ \alpha \circ_\varphi \beta - [\alpha \circ_\varphi \beta]_* :\alpha, \beta \in E^{f_{\mathbb{V}}} \}.
\end{align*}
\end{lemma}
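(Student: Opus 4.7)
The plan is to prove the two inclusions $\langle E_*^2 \rangle \subseteq \langle Ker([-]_2) \rangle$ and $\langle Ker([-]_2) \rangle \subseteq \langle E_*^2 \rangle$ separately, both of which should be essentially formal once we unpack what it means to be a minimal representative.

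For the forward inclusion, I would check that each generator of $E_*^2$ already lies in $Ker([-]_2)$. By \cref{lem:unique minimal representative of a groupoid coloured tree monomial}, the element $[\alpha \circ_\varphi \beta]_*$ is a (specifically chosen) representative of the equivalence class $[\alpha \circ_\varphi \beta]$ under the action of the groupoid on the internal edge specified by $\varphi$. Hence $[[\alpha \circ_\varphi \beta]_*] = [\alpha \circ_\varphi \beta]$ in $F_{sh}^{\mathbb{V}}(E)$, so $[\alpha \circ_\varphi \beta - [\alpha \circ_\varphi \beta]_*] = 0$. Since these generators all have weight two, this gives $E_*^2 \subseteq Ker([-]_2)$ and thus $\langle E_*^2 \rangle \subseteq \langle Ker([-]_2) \rangle$.

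For the reverse inclusion, recall the explicit description
\begin{align*}
    Ker([-]_2) = \{\alpha \circ_\phi \beta - \alpha' \circ_{\phi'} \beta' : \alpha,\beta,\alpha',\beta' \in E^{f_{\mathbb{V}}}, \ [\alpha \circ_\phi \beta] = [\alpha' \circ_{\phi'} \beta']\}.
\end{align*}
Given such a generator, the key observation is that since $[\alpha \circ_\phi \beta]$ and $[\alpha' \circ_{\phi'} \beta']$ are the same equivalence class in $F_{sh}^{\mathbb{V}}(E)$, the uniqueness of the minimal representative forces $[\alpha \circ_\phi \beta]_* = [\alpha' \circ_{\phi'} \beta']_*$. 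Then the telescoping identity
\begin{align*}
    \alpha \circ_\phi \beta - \alpha' \circ_{\phi'} \beta' = \bigl(\alpha \circ_\phi \beta - [\alpha \circ_\phi \beta]_*\bigr) - \bigl(\alpha' \circ_{\phi'} \beta' - [\alpha' \circ_{\phi'} \beta']_*\bigr)
\end{align*}
exhibits the generator as a difference of two elements of $E_*^2$, hence as a member of $\langle E_*^2 \rangle$. Linearising over all such generators completes the inclusion.

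The argument is essentially a bookkeeping check and I do not expect any serious obstacle; the only subtle point is confirming that $[\alpha \circ_\phi \beta]_*$ is well-defined (which is exactly \cref{lem:unique minimal representative of a groupoid coloured tree monomial}) and that it depends only on the equivalence class rather than on the chosen representative, so that the telescoping step above is valid. Combined with \cref{lem:kernel in weight two}, this yields $\langle Ker([-]) \rangle = \langle E_*^2 \rangle$, which is exactly the simpler quadratic presentation of $P^{f_{\mathbb{V}}}$ sought via \cref{eq:kernel iso}.
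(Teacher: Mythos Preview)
Your proposal is correct and follows essentially the same argument as the paper: the forward inclusion is immediate from the definition of $[-]_*$, and the reverse inclusion uses exactly the telescoping identity through the common minimal representative $[\alpha \circ_\phi \beta]_* = [\alpha' \circ_{\phi'} \beta']_*$. The paper's proof is just a more compressed version of what you wrote.
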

\begin{proof}
By their respective definitions, it is clear that $\langle E_{*}^2 \rangle\subseteq \langle Ker([-]_2) \rangle$.
In the other direction, if  $ \alpha \circ_{\varphi} \beta - \alpha' \circ_{\varphi'} \beta' \in Ker([-]_2)$, then it must be the case that $\alpha \circ_{\varphi} \beta - [\alpha \circ_{\varphi} \beta]_*, \alpha' \circ_{\varphi'} \beta' - [\alpha' \circ_{\varphi'} \beta']_* \in E_*^2$ with $[\alpha \circ_{\varphi} \beta]_* = [\alpha' \circ_{\varphi'} \beta']_*$, and hence $\langle Ker([-]_2) \rangle \subseteq \langle E_{*}^2 \rangle $.

\end{proof}

Thus, from 
\cref{eq:kernel iso}, \cref{lem:kernel in weight two} and \cref{lem: E2 presentation of degreee two kernel}, we obtain the following simple presentation for $P^{f_{\mathbb{V}}}$.

\begin{corollary}\label{An alternate presentation after forgetting the groupoid}
Let $P$ be a free $\mathbb{V}$-coloured shuffle operad $P=F_{sh}^{\mathbb{V}}(E)$.
Then $P^{f_{\mathbb{V}}}$ is isomorphic as a $ob(\mathbb{V})$-coloured shuffle operad to, 
\begin{align*}
     P_*:=  F_{sh}^{ob(\mathbb{V})}(E^{f_{\mathbb{V}}}) / \langle E_*^2 \rangle.
\end{align*}
Let $P$ be a $\mathbb{V}$-coloured shuffle operad with presentation $P=F_{sh}^{\mathbb{V}}(E) / \langle R \rangle$.
Then $P^{f_{\mathbb{V}}}$ is isomorphic as a $ob(\mathbb{V})$-coloured shuffle operad to, 
\begin{align*}
    P_*:= F_{sh}^{ob(\mathbb{V})}(E^{f_{\mathbb{V}}}) / \langle E_*^2 \sqcup R_* \rangle, \text{ where } R_*:= \{ [r]_* : r\in R\}.
\end{align*}

\end{corollary}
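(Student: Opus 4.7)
The plan is to obtain the stated isomorphism by assembling the machinery already developed in this subsection, namely equation (\ref{eq:kernel iso}), Lemma \ref{lem:kernel in weight two}, and Lemma \ref{lem: E2 presentation of degreee two kernel}. Treat the two assertions (the free case and the presented case) in sequence, deriving the first directly and bootstrapping to the second via a quotient argument.

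For the free case $P = F_{sh}^{\mathbb{V}}(E)$, I would simply chain the three prior results. The analogue of the fundamental theorem of homomorphisms used to obtain (\ref{eq:kernel iso}) gives
\begin{equation*}
F_{sh}^{ob(\mathbb{V})}(E^{f_{\mathbb{V}}}) / \langle Ker([-]) \rangle \cong F_{sh}^{\mathbb{V}}(E)^{f_{\mathbb{V}}}.
\end{equation*}
Lemma \ref{lem:kernel in weight two} collapses the kernel to its weight-two part, $\langle Ker([-]) \rangle = \langle Ker([-]_2) \rangle$, and Lemma \ref{lem: E2 presentation of degreee two kernel} identifies that weight-two ideal with $\langle E_*^2 \rangle$ (after fixing a total order on tree monomials, so that the minimal representatives appearing in the definition of $E_*^2$ are well-defined). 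Substituting yields $F_{sh}^{\mathbb{V}}(E)^{f_{\mathbb{V}}} \cong F_{sh}^{ob(\mathbb{V})}(E^{f_{\mathbb{V}}}) / \langle E_*^2 \rangle$, which is the first claim.

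For the presented case $P = F_{sh}^{\mathbb{V}}(E) / \langle R \rangle$, I would use that the forgetful functor $-^{f_{\mathbb{V}}}$, acting at the level of underlying $ob(\mathbb{V})$-coloured modules, preserves quotients of groupoid coloured operads by operadic ideals (it merely discards the groupoid action on internal edges, so no relation among underlying elements is created or destroyed). Thus
\begin{equation*}
P^{f_{\mathbb{V}}} \cong F_{sh}^{\mathbb{V}}(E)^{f_{\mathbb{V}}} / \langle R \rangle^{f_{\mathbb{V}}} \cong \bigl(F_{sh}^{ob(\mathbb{V})}(E^{f_{\mathbb{V}}}) / \langle E_*^2 \rangle\bigr) / \overline{\langle R \rangle^{f_{\mathbb{V}}}},
\end{equation*}
where $\overline{\langle R \rangle^{f_{\mathbb{V}}}}$ denotes the transport of $\langle R \rangle^{f_{\mathbb{V}}}$ along the free-case isomorphism. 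The remaining step is to identify this with the ideal $\langle R_* \rangle$. This uses Lemma \ref{lem:unique minimal representative of a groupoid coloured tree monomial}: each tree monomial $T$ representing an element of $F_{sh}^{\mathbb{V}}(E)^{f_{\mathbb{V}}}$ corresponds, modulo $\langle E_*^2 \rangle$, to its unique minimal representative $T_*$, and the map $-_*$ extends linearly. Consequently, each $r \in R$ is identified with $[r]_* = r_*$ modulo $\langle E_*^2 \rangle$, and since both $\langle R \rangle^{f_{\mathbb{V}}}$ and $\langle R_* \rangle$ are the smallest operadic ideals containing these elements, they coincide in the quotient, yielding $P^{f_{\mathbb{V}}} \cong F_{sh}^{ob(\mathbb{V})}(E^{f_{\mathbb{V}}}) / \langle E_*^2 \sqcup R_* \rangle$ as required.

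The only subtlety worth checking carefully is the interaction between the forgetful functor and the formation of operadic ideals. Since $-^{f_{\mathbb{V}}}$ is the identity on underlying $ob(\mathbb{V})$-coloured modules and merely trivialises the groupoid action, the generated ideal $\langle R \rangle$ in $F_{sh}^{\mathbb{V}}(E)$ corresponds under forgetting exactly to the ideal in $F_{sh}^{\mathbb{V}}(E)^{f_{\mathbb{V}}}$ generated by the same elements viewed without their groupoid-coloured envelopes. No monoidality of $-^{f_{\mathbb{V}}}$ is required for this passage, only the compatibility of the shuffle partial compositions with the underlying module structure, which is immediate from the construction in \cref{monadic groupoid coloured operads def}. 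This is the one place the proof needs care; everything else is a direct assembly.
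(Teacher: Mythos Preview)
Your proposal is correct and follows the same approach as the paper, which simply states that the corollary is obtained from equation~(\ref{eq:kernel iso}), Lemma~\ref{lem:kernel in weight two}, and Lemma~\ref{lem: E2 presentation of degreee two kernel}. You have supplied more detail than the paper does for the presented case---in particular, the explicit identification of the transported ideal with $\langle R_* \rangle$ modulo $\langle E_*^2 \rangle$---but this is exactly the argument implicit in the paper's one-line citation of the three prior results.
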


This corollary tells us we can translate the actions of the groupoid into an explicitly computable set of quadratic relations $E_*^2$ (other non-quadratic encodings of the action of the groupoid are possible such as the invertible unary maps of \cite{batanin2018regular}).

\begin{theorem} \label{Koszul if corresponding shuffle operad has QGB}
Let $P$ be a $\mathbb{V}$-coloured operad such that the associated $ob(\mathbb{V})$-coloured shuffle operad $(P^f)_*$ admits a quadratic Groebner basis, then $P$ is Koszul.
\end{theorem}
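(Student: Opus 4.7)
The plan is to chain together the equivalences and implications already established in the preceding sections, so the proof is essentially a concatenation of four references. My route is:
\begin{align*}
(P^f)_* \text{ has a quadratic Groebner basis}
&\implies (P^f)_* \text{ is Koszul} \\
&\implies (P^f)^{f_{\mathbb{V}}} \text{ is Koszul}\\
&\implies P^f \text{ is Koszul}\\
&\implies P \text{ is Koszul.}
\end{align*}

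First I would invoke Theorem 3.12 of \cite{kharitonov2021grobner}, restated just above \cref{The Groupoid Coloured Extension}, which says that a discrete coloured shuffle operad admitting a quadratic Groebner basis is Koszul. Since $(P^f)_*$ is an $ob(\mathbb{V})$-coloured (and therefore discrete coloured) shuffle operad, the hypothesis gives at once that $(P^f)_*$ is Koszul. Second, I would invoke \cref{An alternate presentation after forgetting the groupoid}, which provides the isomorphism $(P^f)^{f_{\mathbb{V}}} \cong (P^f)_*$ of $ob(\mathbb{V})$-coloured shuffle operads; combined with step one this yields that $(P^f)^{f_{\mathbb{V}}}$ is Koszul.

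Third, I would apply \cref{groupoid coloured shuffle operad is Koszul if its forgetful image is Koszul} to the $\mathbb{V}$-coloured shuffle operad $P^f$: since its forgetful image $(P^f)^{f_{\mathbb{V}}}$ is Koszul, $P^f$ itself is Koszul. Finally, characterisation (4) of \cref{def:koszul} states that a quadratic $\mathbb{V}_{\Sigma}$-operad $P$ is Koszul if and only if its associated shuffle operad $P^f$ is Koszul, so we conclude that $P$ is Koszul.

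There is no real obstacle here once the intermediate results are in place; the substantive work has already been done in \cref{The Groupoid Coloured Extension}, particularly in the construction of the epimorphism $B(P^{f_{\mathbb{V}}}) \twoheadrightarrow B(P)$ of \cref{epimorphism of bar constructions} that powers \cref{groupoid coloured shuffle operad is Koszul if its forgetful image is Koszul}, and in the explicit quadratic presentation for $(P^f)_*$ supplied by \cref{An alternate presentation after forgetting the groupoid}. The only caveat I would flag in the write-up is to note that the presentation of $(P^f)_*$ one uses to verify the Groebner basis condition must indeed be quadratic in the $ob(\mathbb{V})$-coloured sense, which is guaranteed because the extra relations $E_*^2$ added by the forgetful construction are, by definition, quadratic.
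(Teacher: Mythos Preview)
Your proposal is correct and follows essentially the same route as the paper's own proof: invoke Theorem 3.12 of \cite{kharitonov2021grobner} to get that $(P^f)_*$ is Koszul, transfer this via the isomorphism of \cref{An alternate presentation after forgetting the groupoid} to $(P^f)^{f_{\mathbb{V}}}$, apply \cref{groupoid coloured shuffle operad is Koszul if its forgetful image is Koszul} to conclude $P^f$ is Koszul, and finish with \cref{def:koszul}.(4). The paper's proof is the same chain of references, just stated more tersely.
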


\begin{proof}
If the $ob(\mathbb{V})$-coloured operad $(P^f)_*$ admits a quadratic Groebner basis, then it is Koszul by Theorem 3.12 of \cite{kharitonov2021grobner}. 
This implies that $P^f$ is Koszul, through \cref{groupoid coloured shuffle operad is Koszul if its forgetful image is Koszul} and the isomorphism of $(P^f)_*$ with $(P^f)^{f_{\mathbb{V}}}$ given by \cref{An alternate presentation after forgetting the groupoid}.
This implies that $P$ is Koszul by \cref{def:koszul}.(4).
\end{proof}

We will later use \cref{Koszul if corresponding shuffle operad has QGB} to prove the operads governing wheeled props and props are Koszul (Sections \ref{proof The Operad Governing Wheeled Props is Koszul}, and \ref{proof The Operad Governing Props is Koszul}).
For now, we provide a more basic example, clarifying the constructions of this section.

\begin{example} \label{ex:epimorphism example}
Let $\mathbb{V}$ be the groupoid with three objects $a,b,c$ and a single non-identity isomorphism $f:b\to c$, and its inverse $f^{-1}:c\to b$.
Let $N$ be the non-symmetric $ob(\mathbb{V})$-coloured module spanned by a single binary operation
\begin{align*}
    N = N\binom{c}{b,c} = \langle
    \begin{tikzpicture}[node distance = 0.5cm] 
    \node[v] (1) {${}$}; 
    \node[c] (0) [above of=1] {${}$};
    \node[b] (2) [below left of=1] {${}$};
    \node[c] (3) [below right of=1] {${}$};
    \draw (1) to (0);
    \draw (2) to (1);
    \draw (3) to (1);
    \end{tikzpicture}    
    \rangle 
\end{align*}
Where we use a white/grey circle to denote a colouring by the object $b$/$c$ respectively.
Given these identifications, we represent the morphisms of $\mathbb{V}$ graphically,
the isomorphism $f:b \to c$ is denoted  
\begin{tikzpicture}
\node[b] (2) [] {${}$};
\node[c] (3) [right of=2] {${}$};
\draw [->] (2) to (3);
\end{tikzpicture},
and its inverse
$f^{-1}:c \to b$ is denoted  
\begin{tikzpicture}
\node[c] (2) [] {${}$};
\node[b] (3) [right of=2] {${}$};
\draw [->] (2) to (3);
\end{tikzpicture}.
Let $E$ be the free non-symmetric $\mathbb{V}$-module generated by $N$.
In particular, $E$ contains $2^3$ elements corresponding to the $\mathbb{V}$-action on the generator's three flags,
\begin{align*}
    \raisebox{0.6cm}{$E = \langle \ $}
    \begin{tikzpicture}[node distance = 0.5cm] 
    \node[v] (1) {${}$}; 
    \node[c] (0) [above of=1] {${}$};
    \node[b] (2) [below left of=1] {${}$};
    \node[c] (3) [below right of=1] {${}$};
    \node[flag] (o) [above of=0] {${}$};
    \node[flag] (i1) [below of=2] {${}$};
    \node[flag] (i2) [below of=3] {${}$};
    \draw (1) to (0);
    \draw (2) to (1);
    \draw (3) to (1);
    \draw [->, white] (0) to (o);
    \draw [->, white] (i1) to (2);
    \draw [->, white] (i2) to (3);
    \end{tikzpicture}
    \raisebox{0.6cm}{$\ ,\ $}
    \begin{tikzpicture}[node distance = 0.5cm] 
    \node[v] (1) {${}$}; 
    \node[c] (0) [above of=1] {${}$};
    \node[b] (2) [below left of=1] {${}$};
    \node[c] (3) [below right of=1] {${}$};
    \node[b] (o) [above of=0] {${}$};
    \node[flag] (i1) [below of=2] {${}$};
    \node[flag] (i2) [below of=3] {${}$};
    \draw (1) to (0);
    \draw (2) to (1);
    \draw (3) to (1);
    \draw [->] (0) to (o);
    \draw [->, white] (i1) to (2);
    \draw [->, white] (i2) to (3);
    \end{tikzpicture}
    \raisebox{0.6cm}{$\ ,\ $}
    \begin{tikzpicture}[node distance = 0.5cm] 
    \node[v] (1) {${}$}; 
    \node[c] (0) [above of=1] {${}$};
    \node[b] (2) [below left of=1] {${}$};
    \node[c] (3) [below right of=1] {${}$};
    \node[flag] (o) [above of=0] {${}$};
    \node[c] (i1) [below of=2] {${}$};
    \node[flag] (i2) [below of=3] {${}$};
    \draw (1) to (0);
    \draw (2) to (1);
    \draw (3) to (1);
    \draw [->, white] (0) to (o);
    \draw [->] (i1) to (2);
    \draw [->, white] (i2) to (3);
    \end{tikzpicture}
    \raisebox{0.6cm}{$\ ,\ $}
    \begin{tikzpicture}[node distance = 0.5cm] 
    \node[v] (1) {${}$}; 
    \node[c] (0) [above of=1] {${}$};
    \node[b] (2) [below left of=1] {${}$};
    \node[c] (3) [below right of=1] {${}$};
    \node[flag] (o) [above of=0] {${}$};
    \node[flag] (i1) [below of=2] {${}$};
    \node[b] (i2) [below of=3] {${}$};
    \draw (1) to (0);
    \draw (2) to (1);
    \draw (3) to (1);
    \draw [->, white] (0) to (o);
    \draw [->, white] (i1) to (2);
    \draw [->] (i2) to (3);
    \end{tikzpicture}
    \raisebox{0.6cm}{$\ ,\ $}
    \begin{tikzpicture}[node distance = 0.5cm] 
    \node[v] (1) {${}$}; 
    \node[c] (0) [above of=1] {${}$};
    \node[b] (2) [below left of=1] {${}$};
    \node[c] (3) [below right of=1] {${}$};
    \node[b] (o) [above of=0] {${}$};
    \node[c] (i1) [below of=2] {${}$};
    \node[flag] (i2) [below of=3] {${}$};
    \draw (1) to (0);
    \draw (2) to (1);
    \draw (3) to (1);
    \draw [->] (0) to (o);
    \draw [->] (i1) to (2);
    \draw [->, white] (i2) to (3);
    \end{tikzpicture}
    \raisebox{0.6cm}{$\ ,\ $}
    \begin{tikzpicture}[node distance = 0.5cm] 
    \node[v] (1) {${}$}; 
    \node[c] (0) [above of=1] {${}$};
    \node[b] (2) [below left of=1] {${}$};
    \node[c] (3) [below right of=1] {${}$};
    \node[b] (o) [above of=0] {${}$};
    \node[flag] (i1) [below of=2] {${}$};
    \node[b] (i2) [below of=3] {${}$};
    \draw (1) to (0);
    \draw (2) to (1);
    \draw (3) to (1);
    \draw [->] (0) to (o);
    \draw [->, white] (i1) to (2);
    \draw [->] (i2) to (3);
    \end{tikzpicture}
    \raisebox{0.6cm}{$\ ,\ $}
    \begin{tikzpicture}[node distance = 0.5cm] 
    \node[v] (1) {${}$}; 
    \node[c] (0) [above of=1] {${}$};
    \node[b] (2) [below left of=1] {${}$};
    \node[c] (3) [below right of=1] {${}$};
    \node[flag] (o) [above of=0] {${}$};
    \node[c] (i1) [below of=2] {${}$};
    \node[b] (i2) [below of=3] {${}$};
    \draw (1) to (0);
    \draw (2) to (1);
    \draw (3) to (1);
    \draw [->, white] (0) to (o);
    \draw [->] (i1) to (2);
    \draw [->] (i2) to (3);
    \end{tikzpicture}
    \raisebox{0.6cm}{$\ ,\ $}
    \begin{tikzpicture}[node distance = 0.5cm] 
    \node[v] (1) {${}$}; 
    \node[c] (0) [above of=1] {${}$};
    \node[b] (2) [below left of=1] {${}$};
    \node[c] (3) [below right of=1] {${}$};
    \node[b] (o) [above of=0] {${}$};
    \node[c] (i1) [below of=2] {${}$};
    \node[b] (i2) [below of=3] {${}$};
    \draw (1) to (0);
    \draw (2) to (1);
    \draw (3) to (1);
    \draw [->] (0) to (o);
    \draw [->] (i1) to (2);
    \draw [->] (i2) to (3);
    \end{tikzpicture}
    \raisebox{0.6cm}{$\ \rangle$}
\end{align*}
Note that the object $a \in ob(\mathbb{V})$ does not appear in $E$, as $a$ is disconnected from $b$ and $c$ in $\mathbb{V}$.
The $ob(\mathbb{V})$-coloured module $E^{f_{\mathbb{V}}}$ contains the same $2^3$ elements, but forgets the underlying action of $\mathbb{V}$.
We now consider the $ob(\mathbb{V})$-coloured shuffle operad $F_{sh}^{ob(\mathbb{V})}(E^{f_{\mathbb{V}}})$.
In particular, we assume it admits a total admissible order and compute $E_*^2$.
\begin{align*}
    \raisebox{0.6cm}{$E_*^2 =  \langle \ $}
    &
    \begin{tikzpicture}[node distance = 0.5cm] 
    \node[v] (1) {${}$}; 
    \node[c] (0) [above of=1] {${}$};
    \node[b] (2) [below left of=1] {${}$};
    \node[c] (3) [below right of=1] {${}$};
    \node[flag] (o) [above of=0] {${}$};
    \node[flag] (i1) [below of=2] {${}$};
    \node[flag] (i2) [below of=3] {${}$};
    \draw (1) to (0);
    \draw (2) to (1);
    \draw (3) to (1);
    \draw [->, white] (0) to (o);
    \draw [->, white] (i1) to (2);
    \draw [->, white] (i2) to (3);
    \end{tikzpicture}
    \raisebox{0.6cm}{$\ \circ_2 \ $}
    \begin{tikzpicture}[node distance = 0.5cm] 
    \node[v] (1) {${}$}; 
    \node[c] (0) [above of=1] {${}$};
    \node[b] (2) [below left of=1] {${}$};
    \node[c] (3) [below right of=1] {${}$};
    \node[flag] (o) [above of=0] {${}$};
    \node[flag] (i1) [below of=2] {${}$};
    \node[flag] (i2) [below of=3] {${}$};
    \draw (1) to (0);
    \draw (2) to (1);
    \draw (3) to (1);
    \draw [->, white] (0) to (o);
    \draw [->, white] (i1) to (2);
    \draw [->, white] (i2) to (3);
    \end{tikzpicture}
    \raisebox{0.6cm}{$\ = \ $}
    \begin{tikzpicture}[node distance = 0.5cm] 
    \node[v] (1) {${}$}; 
    \node[c] (0) [above of=1] {${}$};
    \node[b] (2) [below left of=1] {${}$};
    \node[c] (3) [below right of=1] {${}$};
    \node[flag] (o) [above of=0] {${}$};
    \node[flag] (i1) [below of=2] {${}$};
    \node[b] (i2) [below of=3] {${}$};
    \draw (1) to (0);
    \draw (2) to (1);
    \draw (3) to (1);
    \draw [->, white] (0) to (o);
    \draw [->, white] (i1) to (2);
    \draw [->] (i2) to (3);
    \end{tikzpicture}
    \raisebox{0.6cm}{$\ \circ_2 \ $}
    \begin{tikzpicture}[node distance = 0.5cm] 
    \node[v] (1) {${}$}; 
    \node[c] (0) [above of=1] {${}$};
    \node[b] (2) [below left of=1] {${}$};
    \node[c] (3) [below right of=1] {${}$};
    \node[b] (o) [above of=0] {${}$};
    \node[flag] (i1) [below of=2] {${}$};
    \node[flag] (i2) [below of=3] {${}$};
    \draw (1) to (0);
    \draw (2) to (1);
    \draw (3) to (1);
    \draw [->] (0) to (o);
    \draw [->, white] (i1) to (2);
    \draw [->, white] (i2) to (3);
    \end{tikzpicture}
    \raisebox{0.6cm}{$\ , \ ... \ , \ $}
    \begin{tikzpicture}[node distance = 0.5cm] 
    \node[v] (1) {${}$}; 
    \node[c] (0) [above of=1] {${}$};
    \node[b] (2) [below left of=1] {${}$};
    \node[c] (3) [below right of=1] {${}$};
    \node[b] (o) [above of=0] {${}$};
    \node[c] (i1) [below of=2] {${}$};
    \node[flag] (i2) [below of=3] {${}$};
    \draw (1) to (0);
    \draw (2) to (1);
    \draw (3) to (1);
    \draw [->] (0) to (o);
    \draw [->] (i1) to (2);
    \draw [->, white] (i2) to (3);
    \end{tikzpicture}
    \raisebox{0.6cm}{$\ \circ_2 \ $}
    \begin{tikzpicture}[node distance = 0.5cm] 
    \node[v] (1) {${}$}; 
    \node[c] (0) [above of=1] {${}$};
    \node[b] (2) [below left of=1] {${}$};
    \node[c] (3) [below right of=1] {${}$};
    \node[flag] (o) [above of=0] {${}$};
    \node[c] (i1) [below of=2] {${}$};
    \node[b] (i2) [below of=3] {${}$};
    \draw (1) to (0);
    \draw (2) to (1);
    \draw (3) to (1);
    \draw [->, white] (0) to (o);
    \draw [->] (i1) to (2);
    \draw [->] (i2) to (3);
    \end{tikzpicture}
    \raisebox{0.6cm}{$\ = \ $}
    \begin{tikzpicture}[node distance = 0.5cm] 
    \node[v] (1) {${}$}; 
    \node[c] (0) [above of=1] {${}$};
    \node[b] (2) [below left of=1] {${}$};
    \node[c] (3) [below right of=1] {${}$};
    \node[b] (o) [above of=0] {${}$};
    \node[c] (i1) [below of=2] {${}$};
    \node[b] (i2) [below of=3] {${}$};
    \draw (1) to (0);
    \draw (2) to (1);
    \draw (3) to (1);
    \draw [->] (0) to (o);
    \draw [->] (i1) to (2);
    \draw [->] (i2) to (3);
    \end{tikzpicture}
    \raisebox{0.6cm}{$\ \circ_2 \ $}
    \begin{tikzpicture}[node distance = 0.5cm] 
    \node[v] (1) {${}$}; 
    \node[c] (0) [above of=1] {${}$};
    \node[b] (2) [below left of=1] {${}$};
    \node[c] (3) [below right of=1] {${}$};
    \node[b] (o) [above of=0] {${}$};
    \node[c] (i1) [below of=2] {${}$};
    \node[b] (i2) [below of=3] {${}$};
    \draw (1) to (0);
    \draw (2) to (1);
    \draw (3) to (1);
    \draw [->] (0) to (o);
    \draw [->] (i1) to (2);
    \draw [->] (i2) to (3);
    \end{tikzpicture}
    \raisebox{0.6cm}{$\ , \ $}\\
    &
    \begin{tikzpicture}[node distance = 0.5cm] 
    \node[v] (1) {${}$}; 
    \node[c] (0) [above of=1] {${}$};
    \node[b] (2) [below left of=1] {${}$};
    \node[c] (3) [below right of=1] {${}$};
    \node[flag] (o) [above of=0] {${}$};
    \node[flag] (i1) [below of=2] {${}$};
    \node[flag] (i2) [below of=3] {${}$};
    \draw (1) to (0);
    \draw (2) to (1);
    \draw (3) to (1);
    \draw [->, white] (0) to (o);
    \draw [->, white] (i1) to (2);
    \draw [->, white] (i2) to (3);
    \end{tikzpicture}
    \raisebox{0.6cm}{$\ \circ_1 \ $}
    \begin{tikzpicture}[node distance = 0.5cm] 
    \node[v] (1) {${}$}; 
    \node[c] (0) [above of=1] {${}$};
    \node[b] (2) [below left of=1] {${}$};
    \node[c] (3) [below right of=1] {${}$};
    \node[b] (o) [above of=0] {${}$};
    \node[flag] (i1) [below of=2] {${}$};
    \node[flag] (i2) [below of=3] {${}$};
    \draw (1) to (0);
    \draw (2) to (1);
    \draw (3) to (1);
    \draw [->] (0) to (o);
    \draw [->, white] (i1) to (2);
    \draw [->, white] (i2) to (3);
    \end{tikzpicture}
    \raisebox{0.6cm}{$\ = \ $}
    \begin{tikzpicture}[node distance = 0.5cm] 
    \node[v] (1) {${}$}; 
    \node[c] (0) [above of=1] {${}$};
    \node[b] (2) [below left of=1] {${}$};
    \node[c] (3) [below right of=1] {${}$};
    \node[flag] (o) [above of=0] {${}$};
    \node[c] (i1) [below of=2] {${}$};
    \node[flag] (i2) [below of=3] {${}$};
    \draw (1) to (0);
    \draw (2) to (1);
    \draw (3) to (1);
    \draw [->, white] (0) to (o);
    \draw [->] (i1) to (2);
    \draw [->, white] (i2) to (3);
    \end{tikzpicture}
    \raisebox{0.6cm}{$\ \circ_1 \ $}
    \begin{tikzpicture}[node distance = 0.5cm] 
    \node[v] (1) {${}$}; 
    \node[c] (0) [above of=1] {${}$};
    \node[b] (2) [below left of=1] {${}$};
    \node[c] (3) [below right of=1] {${}$};
    \node[flag] (o) [above of=0] {${}$};
    \node[flag] (i1) [below of=2] {${}$};
    \node[flag] (i2) [below of=3] {${}$};
    \draw (1) to (0);
    \draw (2) to (1);
    \draw (3) to (1);
    \draw [->, white] (0) to (o);
    \draw [->, white] (i1) to (2);
    \draw [->, white] (i2) to (3);
    \end{tikzpicture}
    \raisebox{0.6cm}{$\ , \ ... \ , \ $}
    \begin{tikzpicture}[node distance = 0.5cm] 
    \node[v] (1) {${}$}; 
    \node[c] (0) [above of=1] {${}$};
    \node[b] (2) [below left of=1] {${}$};
    \node[c] (3) [below right of=1] {${}$};
    \node[b] (o) [above of=0] {${}$};
    \node[flag] (i1) [below of=2] {${}$};
    \node[b] (i2) [below of=3] {${}$};
    \draw (1) to (0);
    \draw (2) to (1);
    \draw (3) to (1);
    \draw [->] (0) to (o);
    \draw [->, white] (i1) to (2);
    \draw [->] (i2) to (3);
    \end{tikzpicture}
    \raisebox{0.6cm}{$\ \circ_1 \ $}
    \begin{tikzpicture}[node distance = 0.5cm] 
    \node[v] (1) {${}$}; 
    \node[c] (0) [above of=1] {${}$};
    \node[b] (2) [below left of=1] {${}$};
    \node[c] (3) [below right of=1] {${}$};
    \node[b] (o) [above of=0] {${}$};
    \node[c] (i1) [below of=2] {${}$};
    \node[b] (i2) [below of=3] {${}$};
    \draw (1) to (0);
    \draw (2) to (1);
    \draw (3) to (1);
    \draw [->] (0) to (o);
    \draw [->] (i1) to (2);
    \draw [->] (i2) to (3);
    \end{tikzpicture}
    \raisebox{0.6cm}{$\ = \ $}
    \begin{tikzpicture}[node distance = 0.5cm] 
    \node[v] (1) {${}$}; 
    \node[c] (0) [above of=1] {${}$};
    \node[b] (2) [below left of=1] {${}$};
    \node[c] (3) [below right of=1] {${}$};
    \node[b] (o) [above of=0] {${}$};
    \node[c] (i1) [below of=2] {${}$};
    \node[b] (i2) [below of=3] {${}$};
    \draw (1) to (0);
    \draw (2) to (1);
    \draw (3) to (1);
    \draw [->] (0) to (o);
    \draw [->] (i1) to (2);
    \draw [->] (i2) to (3);
    \end{tikzpicture}
    \raisebox{0.6cm}{$\ \circ_1 \ $}
    \begin{tikzpicture}[node distance = 0.5cm] 
    \node[v] (1) {${}$}; 
    \node[c] (0) [above of=1] {${}$};
    \node[b] (2) [below left of=1] {${}$};
    \node[c] (3) [below right of=1] {${}$};
    \node[flag] (o) [above of=0] {${}$};
    \node[c] (i1) [below of=2] {${}$};
    \node[b] (i2) [below of=3] {${}$};
    \draw (1) to (0);
    \draw (2) to (1);
    \draw (3) to (1);
    \draw [->, white] (0) to (o);
    \draw [->] (i1) to (2);
    \draw [->] (i2) to (3);
    \end{tikzpicture}
    \raisebox{0.6cm}{$\ , \ $}\\
    &
    \begin{tikzpicture}[node distance = 0.5cm] 
    \node[v] (1) {${}$}; 
    \node[c] (0) [above of=1] {${}$};
    \node[b] (2) [below left of=1] {${}$};
    \node[c] (3) [below right of=1] {${}$};
    \node[flag] (o) [above of=0] {${}$};
    \node[flag] (i1) [below of=2] {${}$};
    \node[flag] (i2) [below of=3] {${}$};
    \draw (1) to (0);
    \draw (2) to (1);
    \draw (3) to (1);
    \draw [->, white] (0) to (o);
    \draw [->, white] (i1) to (2);
    \draw [->, white] (i2) to (3);
    \end{tikzpicture}
    \raisebox{0.6cm}{$\ \circ_{1,(132)} \ $}
    \begin{tikzpicture}[node distance = 0.5cm] 
    \node[v] (1) {${}$}; 
    \node[c] (0) [above of=1] {${}$};
    \node[b] (2) [below left of=1] {${}$};
    \node[c] (3) [below right of=1] {${}$};
    \node[b] (o) [above of=0] {${}$};
    \node[flag] (i1) [below of=2] {${}$};
    \node[flag] (i2) [below of=3] {${}$};
    \draw (1) to (0);
    \draw (2) to (1);
    \draw (3) to (1);
    \draw [->] (0) to (o);
    \draw [->, white] (i1) to (2);
    \draw [->, white] (i2) to (3);
    \end{tikzpicture}
    \raisebox{0.6cm}{$\ = \ $}
    \begin{tikzpicture}[node distance = 0.5cm] 
    \node[v] (1) {${}$}; 
    \node[c] (0) [above of=1] {${}$};
    \node[b] (2) [below left of=1] {${}$};
    \node[c] (3) [below right of=1] {${}$};
    \node[flag] (o) [above of=0] {${}$};
    \node[c] (i1) [below of=2] {${}$};
    \node[flag] (i2) [below of=3] {${}$};
    \draw (1) to (0);
    \draw (2) to (1);
    \draw (3) to (1);
    \draw [->, white] (0) to (o);
    \draw [->] (i1) to (2);
    \draw [->, white] (i2) to (3);
    \end{tikzpicture}
    \raisebox{0.6cm}{$\ \circ_{1,(132)} \ $}
    \begin{tikzpicture}[node distance = 0.5cm] 
    \node[v] (1) {${}$}; 
    \node[c] (0) [above of=1] {${}$};
    \node[b] (2) [below left of=1] {${}$};
    \node[c] (3) [below right of=1] {${}$};
    \node[flag] (o) [above of=0] {${}$};
    \node[flag] (i1) [below of=2] {${}$};
    \node[flag] (i2) [below of=3] {${}$};
    \draw (1) to (0);
    \draw (2) to (1);
    \draw (3) to (1);
    \draw [->, white] (0) to (o);
    \draw [->, white] (i1) to (2);
    \draw [->, white] (i2) to (3);
    \end{tikzpicture}
    \raisebox{0.6cm}{$\ , \ ... \ , \ $}
    \begin{tikzpicture}[node distance = 0.5cm] 
    \node[v] (1) {${}$}; 
    \node[c] (0) [above of=1] {${}$};
    \node[b] (2) [below left of=1] {${}$};
    \node[c] (3) [below right of=1] {${}$};
    \node[b] (o) [above of=0] {${}$};
    \node[flag] (i1) [below of=2] {${}$};
    \node[b] (i2) [below of=3] {${}$};
    \draw (1) to (0);
    \draw (2) to (1);
    \draw (3) to (1);
    \draw [->] (0) to (o);
    \draw [->, white] (i1) to (2);
    \draw [->] (i2) to (3);
    \end{tikzpicture}
    \raisebox{0.6cm}{$\ \circ_{1,(132)} \ $}
    \begin{tikzpicture}[node distance = 0.5cm] 
    \node[v] (1) {${}$}; 
    \node[c] (0) [above of=1] {${}$};
    \node[b] (2) [below left of=1] {${}$};
    \node[c] (3) [below right of=1] {${}$};
    \node[b] (o) [above of=0] {${}$};
    \node[c] (i1) [below of=2] {${}$};
    \node[b] (i2) [below of=3] {${}$};
    \draw (1) to (0);
    \draw (2) to (1);
    \draw (3) to (1);
    \draw [->] (0) to (o);
    \draw [->] (i1) to (2);
    \draw [->] (i2) to (3);
    \end{tikzpicture}
    \raisebox{0.6cm}{$\ = \ $}
    \begin{tikzpicture}[node distance = 0.5cm] 
    \node[v] (1) {${}$}; 
    \node[c] (0) [above of=1] {${}$};
    \node[b] (2) [below left of=1] {${}$};
    \node[c] (3) [below right of=1] {${}$};
    \node[b] (o) [above of=0] {${}$};
    \node[c] (i1) [below of=2] {${}$};
    \node[b] (i2) [below of=3] {${}$};
    \draw (1) to (0);
    \draw (2) to (1);
    \draw (3) to (1);
    \draw [->] (0) to (o);
    \draw [->] (i1) to (2);
    \draw [->] (i2) to (3);
    \end{tikzpicture}
    \raisebox{0.6cm}{$\ \circ_{1,(132)} \ $}
    \begin{tikzpicture}[node distance = 0.5cm] 
    \node[v] (1) {${}$}; 
    \node[c] (0) [above of=1] {${}$};
    \node[b] (2) [below left of=1] {${}$};
    \node[c] (3) [below right of=1] {${}$};
    \node[flag] (o) [above of=0] {${}$};
    \node[c] (i1) [below of=2] {${}$};
    \node[b] (i2) [below of=3] {${}$};
    \draw (1) to (0);
    \draw (2) to (1);
    \draw (3) to (1);
    \draw [->, white] (0) to (o);
    \draw [->] (i1) to (2);
    \draw [->] (i2) to (3);
    \end{tikzpicture}
    \raisebox{0.6cm}{$\ \rangle \ $}
\end{align*}
Each row above contains $2^4$ equalities corresponding to the (prior) free action of the groupoid on the external flags.
Each equality corresponds to an action of $\mathbb{V}$ along an internal edge.
We note that this example is particularly simple to compute, because the equivalence class corresponding to each groupoid coloured tree monomial with a single internal edge, contains exactly two elements.
Thus, each equation in $E_*^2$ is independent of the rest, and contains the minimal representative $[-]_*$ with respect to any total admissible order on $F_{sh}^{ob(\mathbb{V})}(E^{f_{\mathbb{V}}})$.
If we assume that, 
\begin{align}\label{eq:ordering rels in E_*^2 example}
    \begin{tikzpicture}[node distance = 0.5cm] 
    \node[v] (1) {${}$}; 
    \node[c] (0) [above of=1] {${}$};
    \node[b] (2) [below left of=1] {${}$};
    \node[c] (3) [below right of=1] {${}$};
    \node[flag] (o) [above of=0] {${}$};
    \node[flag] (i1) [below of=2] {${}$};
    \node[flag] (i2) [below of=3] {${}$};
    \draw (1) to (0);
    \draw (2) to (1);
    \draw (3) to (1);
    \draw [->, white] (0) to (o);
    \draw [->, white] (i1) to (2);
    \draw [->, white] (i2) to (3);
    \end{tikzpicture}
    \raisebox{0.6cm}{$\ \circ_1 \ $}
    \begin{tikzpicture}[node distance = 0.5cm] 
    \node[v] (1) {${}$}; 
    \node[c] (0) [above of=1] {${}$};
    \node[b] (2) [below left of=1] {${}$};
    \node[c] (3) [below right of=1] {${}$};
    \node[b] (o) [above of=0] {${}$};
    \node[flag] (i1) [below of=2] {${}$};
    \node[flag] (i2) [below of=3] {${}$};
    \draw (1) to (0);
    \draw (2) to (1);
    \draw (3) to (1);
    \draw [->] (0) to (o);
    \draw [->, white] (i1) to (2);
    \draw [->, white] (i2) to (3);
    \end{tikzpicture}
    \raisebox{0.6cm}{$\ \leq \ $}
    \begin{tikzpicture}[node distance = 0.5cm] 
    \node[v] (1) {${}$}; 
    \node[c] (0) [above of=1] {${}$};
    \node[b] (2) [below left of=1] {${}$};
    \node[c] (3) [below right of=1] {${}$};
    \node[flag] (o) [above of=0] {${}$};
    \node[c] (i1) [below of=2] {${}$};
    \node[flag] (i2) [below of=3] {${}$};
    \draw (1) to (0);
    \draw (2) to (1);
    \draw (3) to (1);
    \draw [->, white] (0) to (o);
    \draw [->] (i1) to (2);
    \draw [->, white] (i2) to (3);
    \end{tikzpicture}
    \raisebox{0.6cm}{$\ \circ_1 \ $}
    \begin{tikzpicture}[node distance = 0.5cm] 
    \node[v] (1) {${}$}; 
    \node[c] (0) [above of=1] {${}$};
    \node[b] (2) [below left of=1] {${}$};
    \node[c] (3) [below right of=1] {${}$};
    \node[flag] (o) [above of=0] {${}$};
    \node[flag] (i1) [below of=2] {${}$};
    \node[flag] (i2) [below of=3] {${}$};
    \draw (1) to (0);
    \draw (2) to (1);
    \draw (3) to (1);
    \draw [->, white] (0) to (o);
    \draw [->, white] (i1) to (2);
    \draw [->, white] (i2) to (3);
    \end{tikzpicture}
    \raisebox{0.6cm}{$\quad$ and $\quad$}
    \begin{tikzpicture}[node distance = 0.5cm] 
    \node[v] (1) {${}$}; 
    \node[c] (0) [above of=1] {${}$};
    \node[b] (2) [below left of=1] {${}$};
    \node[c] (3) [below right of=1] {${}$};
    \node[flag] (o) [above of=0] {${}$};
    \node[flag] (i1) [below of=2] {${}$};
    \node[flag] (i2) [below of=3] {${}$};
    \draw (1) to (0);
    \draw (2) to (1);
    \draw (3) to (1);
    \draw [->, white] (0) to (o);
    \draw [->, white] (i1) to (2);
    \draw [->, white] (i2) to (3);
    \end{tikzpicture}
    \raisebox{0.6cm}{$\ \circ_2 \ $}
    \begin{tikzpicture}[node distance = 0.5cm] 
    \node[v] (1) {${}$}; 
    \node[c] (0) [above of=1] {${}$};
    \node[b] (2) [below left of=1] {${}$};
    \node[c] (3) [below right of=1] {${}$};
    \node[flag] (o) [above of=0] {${}$};
    \node[c] (i1) [below of=2] {${}$};
    \node[flag] (i2) [below of=3] {${}$};
    \draw (1) to (0);
    \draw (2) to (1);
    \draw (3) to (1);
    \draw [->, white] (0) to (o);
    \draw [->] (i1) to (2);
    \draw [->, white] (i2) to (3);
    \end{tikzpicture}
    \raisebox{0.6cm}{$\  \leq  \ $}
    \begin{tikzpicture}[node distance = 0.5cm] 
    \node[v] (1) {${}$}; 
    \node[c] (0) [above of=1] {${}$};
    \node[b] (2) [below left of=1] {${}$};
    \node[c] (3) [below right of=1] {${}$};
    \node[flag] (o) [above of=0] {${}$};
    \node[flag] (i1) [below of=2] {${}$};
    \node[b] (i2) [below of=3] {${}$};
    \draw (1) to (0);
    \draw (2) to (1);
    \draw (3) to (1);
    \draw [->, white] (0) to (o);
    \draw [->, white] (i1) to (2);
    \draw [->] (i2) to (3);
    \end{tikzpicture}
    \raisebox{0.6cm}{$\ \circ_2 \ $}
    \begin{tikzpicture}[node distance = 0.5cm] 
    \node[v] (1) {${}$}; 
    \node[c] (0) [above of=1] {${}$};
    \node[b] (2) [below left of=1] {${}$};
    \node[c] (3) [below right of=1] {${}$};
    \node[b] (o) [above of=0] {${}$};
    \node[c] (i1) [below of=2] {${}$};
    \node[flag] (i2) [below of=3] {${}$};
    \draw (1) to (0);
    \draw (2) to (1);
    \draw (3) to (1);
    \draw [->] (0) to (o);
    \draw [->] (i1) to (2);
    \draw [->, white] (i2) to (3);
    \end{tikzpicture}
\end{align}
Then the following relation $r$ between groupoid coloured tree monomials in $F_{sh}^{\mathbb{V}}(E)\binom{c}{b,c,c}$, 
\begin{align*}
    \raisebox{0.6cm}{$\ r:\quad \quad \ $}
    \begin{tikzpicture}[node distance = 0.5cm] 
    \node[v] (1) {${}$}; 
    \node[c] (0) [above of=1] {${}$};
    \node[b] (2) [below left of=1] {${}$};
    \node[c] (3) [below right of=1] {${}$};
    \node[flag] (o) [above of=0] {${}$};
    \node[flag] (i1) [below of=2] {${}$};
    \node[flag] (i2) [below of=3] {${}$};
    \draw (1) to (0);
    \draw (2) to (1);
    \draw (3) to (1);
    \draw [->, white] (0) to (o);
    \draw [->, white] (i1) to (2);
    \draw [->, white] (i2) to (3);
    \end{tikzpicture}
    \raisebox{0.6cm}{$\ \circ_1 \ $}
    \begin{tikzpicture}[node distance = 0.5cm] 
    \node[v] (1) {${}$}; 
    \node[c] (0) [above of=1] {${}$};
    \node[b] (2) [below left of=1] {${}$};
    \node[c] (3) [below right of=1] {${}$};
    \node[b] (o) [above of=0] {${}$};
    \node[flag] (i1) [below of=2] {${}$};
    \node[flag] (i2) [below of=3] {${}$};
    \draw (1) to (0);
    \draw (2) to (1);
    \draw (3) to (1);
    \draw [->] (0) to (o);
    \draw [->, white] (i1) to (2);
    \draw [->, white] (i2) to (3);
    \end{tikzpicture}
    \raisebox{0.6cm}{$\ = \ $}
    \begin{tikzpicture}[node distance = 0.5cm] 
    \node[v] (1) {${}$}; 
    \node[c] (0) [above of=1] {${}$};
    \node[b] (2) [below left of=1] {${}$};
    \node[c] (3) [below right of=1] {${}$};
    \node[flag] (o) [above of=0] {${}$};
    \node[flag] (i1) [below of=2] {${}$};
    \node[flag] (i2) [below of=3] {${}$};
    \draw (1) to (0);
    \draw (2) to (1);
    \draw (3) to (1);
    \draw [->, white] (0) to (o);
    \draw [->, white] (i1) to (2);
    \draw [->, white] (i2) to (3);
    \end{tikzpicture}
    \raisebox{0.6cm}{$\ \circ_2 \ $}
    \begin{tikzpicture}[node distance = 0.5cm] 
    \node[v] (1) {${}$}; 
    \node[c] (0) [above of=1] {${}$};
    \node[b] (2) [below left of=1] {${}$};
    \node[c] (3) [below right of=1] {${}$};
    \node[flag] (o) [above of=0] {${}$};
    \node[c] (i1) [below of=2] {${}$};
    \node[flag] (i2) [below of=3] {${}$};
    \draw (1) to (0);
    \draw (2) to (1);
    \draw (3) to (1);
    \draw [->, white] (0) to (o);
    \draw [->] (i1) to (2);
    \draw [->, white] (i2) to (3);
    \end{tikzpicture}
\end{align*}
can be represented by the same pair of elements in $F_{sh}^{ob(\mathbb{V})}(E^{f_{\mathbb{V}}})\binom{c}{b,c,c}$ (i.e. in an abuse of notation $r_* = r$).
So, if $R$ is the $\mathbb{V}$-coloured module generated by $r$, i.e. it contains $2^4$ basis elements corresponding to the free action of $\mathbb{V}$ on the external flags of $r$, then $R_*$ will also contain $2^4$ elements computed similarly.
So we have calculated the constituents of \cref{An alternate presentation after forgetting the groupoid}.
In particular, if  $P:=F_{sh}^{\mathbb{V}}(E)/ \langle R \rangle $ then $P^{f_{\mathbb{V}}}\cong P_*:=  F_{sh}^{ob(\mathbb{V})}(E^{f_{\mathbb{V}}}) / \langle E_*^2  \sqcup R_* \rangle$.
Note in particular, that whilst $P_*$ has no notion of external actions of $\mathbb{V}$, that all actions of $\mathbb{V}$ along internal edges are encoded through $E_*^2$.
\\\\
Finally, consider the following four elements of the bar constructions $B(P^{f_\mathbb{V}})$.
\begin{align*}
    \raisebox{1.5cm}{$\alpha:=$}
    \begin{tikzpicture}[node distance = 0.5cm]
    \node[v] (1) {${}$}; 
    \node[c] (0) [above of=1] {${}$};
    \node[b] (2) [below left of=1] {${}$};
    \node[c] (3) [below right of=1] {${}$};
    \node[b] (i1) [below of=2] {${}$};
    \node[flag] (i2) [below of=3] {${}$};
    \draw (1) to (0);
    \draw (2) to (1);
    \draw (3) to (1);
    \draw [dotted, thick] (i1) to (2);
    \draw [->, white] (i2) to (3);
    \node[c] (l0) [below of=i1] {${}$};
    \node[v] (l1) [below of=l0] {${}$}; 
    \node[b] (l2) [below left of=l1] {${}$};
    \node[c] (l3) [below right of=l1] {${}$};
    \node[flag] (lo) [above of=l0] {${}$};
    \draw (l1) to (l0);
    \draw (l2) to (l1);
    \draw (l3) to (l1);
    \draw [->] (l0) to (i1);
    \myroundpoly[blue, thick]{0,3,l3,l2,2}{0.2cm};
    \end{tikzpicture}
    \raisebox{1.5cm}{$,\quad \beta:=$}
    \begin{tikzpicture}[node distance = 0.5cm]
    \node[v] (1) {${}$}; 
    \node[c] (0) [above of=1] {${}$};
    \node[b] (2) [below left of=1] {${}$};
    \node[c] (3) [below right of=1] {${}$};
    \node[c] (i1) [below of=2] {${}$};
    \node[flag] (i2) [below of=3] {${}$};
    \draw (1) to (0);
    \draw (2) to (1);
    \draw (3) to (1);
    \draw [->] (i1) to (2);
    \draw [->, white] (i2) to (3);
    \node[c] (l0) [below of=i1] {${}$};
    \node[v] (l1) [below of=l0] {${}$}; 
    \node[b] (l2) [below left of=l1] {${}$};
    \node[c] (l3) [below right of=l1] {${}$};
    \node[flag] (lo) [above of=l0] {${}$};
    \draw (l1) to (l0);
    \draw (l2) to (l1);
    \draw (l3) to (l1);
    \draw [-, dotted, thick] (l0) to (i1);
    \myroundpoly[blue, thick]{0,3,l3,l2,2}{0.2cm};
    \end{tikzpicture}
    \raisebox{1.5cm}{$,\quad \gamma:=$}
    \begin{tikzpicture}[node distance = 0.5cm]
    \node[v] (1) {${}$}; 
    \node[c] (0) [above of=1] {${}$};
    \node[b] (2) [below left of=1] {${}$};
    \node[c] (3) [below right of=1] {${}$};
    \node[b] (i1) [below of=2] {${}$};
    \node[flag] (i2) [below of=3] {${}$};
    \draw (1) to (0);
    \draw (2) to (1);
    \draw (3) to (1);
    \draw [dotted, thick] (i1) to (2);
    \draw [->, white] (i2) to (3);
    \node[c] (l0) [below of=i1] {${}$};
    \node[v] (l1) [below of=l0] {${}$}; 
    \node[b] (l2) [below left of=l1] {${}$};
    \node[c] (l3) [below right of=l1] {${}$};
    \node[flag] (lo) [above of=l0] {${}$};
    \draw (l1) to (l0);
    \draw (l2) to (l1);
    \draw (l3) to (l1);
    \draw [->] (l0) to (i1);
    \myroundpoly[blue, thick]{0,3,2}{0.2cm};
    \myroundpoly[blue, thick]{i1,l3,l2}{0.2cm};
    \end{tikzpicture}
    \raisebox{1.5cm}{$,\quad \delta:=$}
    \begin{tikzpicture}[node distance = 0.5cm]
    \node[v] (1) {${}$}; 
    \node[c] (0) [above of=1] {${}$};
    \node[b] (2) [below left of=1] {${}$};
    \node[c] (3) [below right of=1] {${}$};
    \node[c] (i1) [below of=2] {${}$};
    \node[flag] (i2) [below of=3] {${}$};
    \draw (1) to (0);
    \draw (2) to (1);
    \draw (3) to (1);
    \draw [->] (i1) to (2);
    \draw [->, white] (i2) to (3);
    \node[c] (l0) [below of=i1] {${}$};
    \node[v] (l1) [below of=l0] {${}$}; 
    \node[b] (l2) [below left of=l1] {${}$};
    \node[c] (l3) [below right of=l1] {${}$};
    \node[flag] (lo) [above of=l0] {${}$};
    \draw (l1) to (l0);
    \draw (l2) to (l1);
    \draw (l3) to (l1);
    \draw [-, dotted, thick] (l0) to (i1);
    \myroundpoly[blue, thick]{0,3,i1,2}{0.2cm};
    \myroundpoly[blue, thick]{l0,l3,l2}{0.2cm};
    \end{tikzpicture}
\end{align*}
Here we denote,
\begin{itemize}
    \item vertices of bar constructions via blue brackets, which are labelled with elements of $P^{f_\mathbb{V}}$, 
    \item and internal edges of the operad $P^{f_\mathbb{V}}$, and the cooperad $B(P^{f_\mathbb{V}})$, with
    \begin{tikzpicture}
    \node[b] (2) [] {${}$};
    \node[b] (3) [right of=2] {${}$};
    \draw [dotted, thick] (2) to (3);
    \end{tikzpicture}, 
    \begin{tikzpicture}
    \node[c] (2) [] {${}$};
    \node[c] (3) [right of=2] {${}$};
    \draw [dotted, thick] (2) to (3);
    \end{tikzpicture}
    .
\end{itemize}
We observe through $P^{f_{\mathbb{V}}}\cong P_*$ and the relations of $E_*$, i.e. the left (ordered) relation of \cref{eq:ordering rels in E_*^2 example}, that $\alpha$ and $\beta$ are equal.
We also observe that $\gamma \neq \delta$ despite $\gamma$ and $\delta$ just being refinements of $\alpha$ and $\beta$ respectively.
We now consider the epimorphism of groupoid coloured shuffle cooperads $B(P^{f_{\mathbb{V}}}) \twoheadrightarrow B(P)$ of \cref{epimorphism of bar constructions}.
We observe that not only do we have $[\alpha] = [\beta]$, but we also have $[\gamma] = [\delta]$,
i.e. as $P$ is a groupoid coloured module, we can push and pull on internal edges of $B(-)$ through the action of $\mathbb{V}$ on $P$.
So, it is straightforward to see in this particular example that $\triangle(\alpha) \in \triangle([\alpha])$ (as needed by \cref{epimorphism of bar constructions}). 

\end{example}

\begin{remark}
We note it seems very likely that Groebner bases for dioperads could be extended to groupoid coloured dioperads by similar techniques to this section.
In addition, it seems likely that the argument of this section could be adapted to prove that a (discrete) coloured operad is Koszul if its corresponding (one coloured discrete or) uncoloured operad is Koszul (see 
Proposition 1.15 of \cite{kharitonov2021grobner}).
In particular, suppose we have a coloured operad $P$, and the forgetful functor $-^u$ from coloured operads to uncoloured operads, which forgets the underlying colouring.
Then there seems to be an epimorphism of the bar complexes $B(P^{u}) \twoheadrightarrow B(P)^u$.
However, whilst this moderately interesting, it is not computationally useful, as the Koszulness of coloured operads is already well understood.
\end{remark}

\subsection{Rewriting Systems and Groebner Bases} \label{Rewriting Systems and Groebner Bases}

We close this section by showing how a basis $G$ of an ideal of a discrete coloured shuffle operad defines a terminating rewriting system (RS), and relate the confluence of this system to $G$ being a Groebner basis (\cref{RS confluence iff GB}). 
This connection is known, and for instance, can be found for operads in Section 8 of \cite{loday2012algebraic}, we simply state it concisely here, so we may use these confluence ideas to prove the operads governing props and wheeled props are Koszul.
The following definitions are standard, and for instance can be found in \cite{dershowitz1990rewrite}.
\begin{definition}
A \textbf{rewriting system (RS)} is a binary relation $\rightarrow$ on a set $T$. The reflexive transitive closure of this relation is denoted $\rightarrow_*$. We say a given RS is \textbf{terminating}, if there exists no endless chain $t_1\rightarrow t_2 \rightarrow t_3 \rightarrow ...$ of elements in the set $T$. We say that a point $w\in T$ is,
\begin{itemize}
    \item \textbf{confluent}, if for all $x,y \in T$ such that $w\rightarrow_* x$ and $w\rightarrow_* y$, there exists a $z\in T$ such that $x\rightarrow_* z$ and $y\rightarrow_* z$. 
    \item \textbf{locally confluent}, if for all $x,y \in T$ such that $w\rightarrow x$ and $w\rightarrow y$, there exists a $z\in T$ such that $x\rightarrow_* z$ and $y\rightarrow_* z$.
\end{itemize}
If either of these properties are true for all $w \in T$, then we say the $RS$ itself is confluent, or locally confluent.
\end{definition}
These properties admit the following renowned equivalence. 

\begin{proposition}[Diamond Lemma, \cite{newman1942theories}]
A terminating RS is confluent if, and only if, it is locally confluent.
\end{proposition}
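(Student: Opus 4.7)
The forward direction is immediate: if the system is confluent then for any single-step pair $w\to x$, $w\to y$, viewing these single steps as length-one instances of $\to_*$ and applying confluence directly yields a common reduct, so the system is locally confluent. The content is in the reverse direction, and the plan is to prove it by well-founded (Noetherian) induction on $\to$, which is available precisely because the RS is terminating.

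More explicitly, let $\to^+$ denote the transitive closure. Termination means $\to^+$ is a well-founded partial order on $T$, so it suffices to show: if every proper $\to^+$-descendant of $w$ is confluent, then $w$ is confluent. Fix such a $w$ and any two reductions $w\to_* x$ and $w\to_* y$. If either reduction is the empty chain then the other serves as the common reduct and we are done, so assume both have length at least one, factoring them as $w\to w_1 \to_* x$ and $w\to w_2 \to_* y$.

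Apply local confluence to the one-step divergence $w_1 \leftarrow w \to w_2$ to obtain some $u$ with $w_1 \to_* u$ and $w_2 \to_* u$. Now $w_1$ is a proper $\to^+$-descendant of $w$, hence confluent by the inductive hypothesis; applying confluence of $w_1$ to the pair $w_1 \to_* x$ and $w_1 \to_* u$ yields $v$ with $x \to_* v$ and $u \to_* v$. Similarly $w_2$ is a proper descendant of $w$, hence confluent; applying confluence of $w_2$ to $w_2 \to_* y$ and $w_2 \to_* u \to_* v$ produces $z$ with $y\to_* z$ and $v\to_* z$. Concatenating, $x\to_* v \to_* z$ and $y\to_* z$, witnessing confluence at $w$.

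The only real subtlety is bookkeeping: one must be careful that after the first application of local confluence one invokes the inductive hypothesis twice, once at $w_1$ and once at $w_2$, rather than at $w$ itself, since $w$ is the very point whose confluence we are trying to establish. Provided this is done cleanly, the argument is a textbook instance of Noetherian induction, and no additional hypothesis beyond termination and local confluence is needed.
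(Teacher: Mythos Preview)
Your proof is correct and is the standard Noetherian-induction argument for Newman's Lemma. Note, however, that the paper does not supply its own proof of this proposition: it simply states the result with a citation to Newman's original paper and moves on. So there is nothing to compare against beyond observing that you have filled in the classical argument the paper chose to omit.
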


The confluence of the following terminating rewriting system associated to a basis of an ideal of a discrete coloured shuffle operad coincides with the basis being a Groebner basis.

\begin{definition}
Let $G$ be a basis for an ideal of a discrete coloured shuffle operad which admits an admissible order on its shuffle tree monomials. We define the RS associated to $G$, say $RS(G)$, to be the subset of $F_{sh}(E)^2$,
\begin{align*}
    RS(G):=\{(f,r_g(f)): f\in  F_{sh}(E), g\in G \text{ and $lt(g)$ divides $lt(f)$} \}
\end{align*}
\end{definition}

\begin{proposition}\label{RS confluence iff GB}
The following characterisations are equivalent.
\begin{enumerate}
    \item $RS(G)$ is confluent. 
    \item $RS(G)$ is confluent on small common multiples of $G$.
    \item $G$ is a Groebner basis.
\end{enumerate}
\end{proposition}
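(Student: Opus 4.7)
The plan is to deduce this three-way equivalence from the Diamond Lemma together with the known Groebner-basis characterisation of \cref{Equivalent defs GB}. The first observation is that $RS(G)$ is automatically terminating: every reduction step $f \mapsto r_g(f)$ strictly decreases the leading term with respect to the fixed total admissible order (this is exactly the content of \cref{reduction} and the comment that follows it), and since each $F_{sh}(E)(\uc;c)$ is finite-dimensional in each weight, there can be no infinite strictly decreasing chain. Termination is the hypothesis needed to invoke Newman's Diamond Lemma, which will let me upgrade local confluence to confluence.

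For the implication $(1) \Rightarrow (2)$ there is nothing to prove, as $(2)$ is a restriction of $(1)$. For $(2) \Rightarrow (1)$, by the Diamond Lemma it suffices to verify local confluence of $RS(G)$ at an arbitrary tree monomial $f$. Suppose $f \to r_g(f)$ and $f \to r_{g'}(f)$ via elements $g, g' \in G$ whose leading terms divide $\mathrm{lt}(f)$ at subtrees $T_g, T_{g'} \subset \mathrm{lt}(f)$. I would split into two cases by how the subtrees $T_g$ and $T_{g'}$ sit inside $\mathrm{lt}(f)$. If $T_g$ and $T_{g'}$ are disjoint, then the two reductions act on non-overlapping portions of $\mathrm{lt}(f)$ and therefore commute; applying the other reduction to each of $r_g(f)$ and $r_{g'}(f)$ produces the same element and gives the required common reduct. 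If $T_g$ and $T_{g'}$ overlap, then $\mathrm{lt}(f)$ is a common multiple of $\mathrm{lt}(g)$ and $\mathrm{lt}(g')$ containing $T_g \cup T_{g'}$ as a small common multiple $\gamma$, and the confluence of this local pattern at $\gamma$ is precisely the confluence on small common multiples assumed in $(2)$; the operator $m_{\mathrm{lt}(f),\gamma}$ then transports that local confluence back to $f$.

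For the equivalence $(2) \Leftrightarrow (3)$ I will directly relate the two reducts obtained at a small common multiple $\gamma$ of $\mathrm{lt}(g)$ and $\mathrm{lt}(g')$ to the $S$-polynomial $s_\gamma(g,g')$. Unpacking definitions,
\[
m_{\gamma,\mathrm{lt}(g)}(g) \;-\; \tfrac{c_g}{c_{g'}}\, m_{\gamma,\mathrm{lt}(g')}(g') \;=\; s_\gamma(g,g'),
\]
so the question of whether the two one-step reductions issuing from $\gamma$ admit a common reduct modulo $G$ is equivalent to whether $s_\gamma(g,g') \equiv 0 \bmod G$ (using \cref{Unique normal form proposition} to compare normal forms). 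By \cref{Equivalent defs GB}.(3), this holds for every pair $g,g' \in G$ and every small common multiple of their leading terms if and only if $G$ is a Groebner basis, yielding $(2) \Leftrightarrow (3)$.

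The most delicate point is the overlap case in $(2) \Rightarrow (1)$: one must be careful that the ``operator transport'' $m_{\mathrm{lt}(f),\gamma}$ really sends a pair of reductions converging at $\gamma$ to a pair of reductions converging at $f$, and that the remaining (lower) terms of $f$, $g$, $g'$ do not spoil confluence. This is handled by the admissibility of the order (so $m_{\mathrm{lt}(f),\gamma}$ is monotone on leading terms) combined with iterated application of reductions to the strictly smaller leftover terms, which terminate by the termination of $RS(G)$ established at the outset.
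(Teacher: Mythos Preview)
Your approach is correct in spirit but takes a longer route than the paper, and a couple of the steps you sketch need more care than you give them. The paper argues the cycle $(1)\Rightarrow(2)\Rightarrow(3)\Rightarrow(1)$: the first implication is trivial; $(2)\Rightarrow(3)$ is exactly your $S$-polynomial computation, rewriting $s_\gamma(f,g)$ as $\tfrac{c_f}{c_\gamma}\bigl(r_g(\gamma)-r_f(\gamma)\bigr)$ and using confluence at $\gamma$ to reduce this to $0$; and $(3)\Rightarrow(1)$ is a one-line appeal to \cref{Unique normal form proposition}: once $G$ is a Groebner basis every element has a unique normal form $\bar f$, so $f\to_* f'$ and $f\to_* f''$ both continue to $\bar f$. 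This last step completely bypasses the Diamond-Lemma case analysis you undertake for $(2)\Rightarrow(1)$.

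Your direct $(2)\Rightarrow(1)$ argument has two soft spots. First, the Diamond Lemma requires local confluence at every $f\in F_{sh}(E)$, not only at tree monomials; the reductions $r_g(f)$ and $r_{g'}(f)$ differ from the pure-monomial case by the lower-order part $f_<$, and showing that a common reduct of $r_g(\mathrm{lt}(f))$ and $r_{g'}(\mathrm{lt}(f))$ survives the addition of $f_<$ is an extra argument you have not supplied. Second, in the disjoint case you assert the two reductions commute, but after applying $r_g$ the leading term has changed and you must check that $T_{g'}$ still sits inside the new leading term and that the resulting composites agree term by term; this is true but not automatic. The transport in the overlap case has the same issue with lower-order terms. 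Finally, your termination argument via finite-dimensionality in each weight only works when $G$ is weight-homogeneous, which is not assumed here (only arity-homogeneity is); the correct justification is that admissible orders in this setting are well-founded on each arity component. None of these difficulties arise once you close the loop through $(3)\Rightarrow(1)$ as the paper does.
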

\begin{proof}
The first condition clearly implies the second. The second condition implies the third, as if $RS(G)$ is confluent on the small common multiples of $G$, then for any $S$-polynomial of $G$, say $s_{\gamma}(f,g)$, the confluence forces $s_{\gamma}(f,g) \equiv 0\ mod\ G$. In particular, we have that,
\begin{align*}
    s_{\gamma}(f,g)&= m_{\gamma,lt(f)}(f)- \frac{c_f}{c_g} m_{\gamma,lt(g)}(g)\\
    &=\frac{c_f}{c_{\gamma}}((\gamma- \frac{c_{\gamma}}{c_g} m_{\gamma,lt(g)}(g)) -(\gamma-\frac{c_{\gamma}}{c_f}m_{\gamma,lt(f)}(f))\\
    &=\frac{c_f}{c_{\gamma}}(r_g(\gamma)- r_f(\gamma))\\
    &\rightarrow_*\frac{c_f}{c_{\gamma}}(\gamma'-\gamma')=0
\end{align*}
where, $\gamma'$ is given by the confluence of small common multiples.
Finally, the third condition implies the first through the unique minimal form given by the Groebner basis. In particular, if $G$ is a Groebner basis, then $RS(G)$ is confluent as $f\rightarrow_* f'$ and $f\rightarrow_* f''$, implies $f',f''\rightarrow_*\bar{f}$.

\end{proof}
We note that the small common multiples must be a subset of the shuffle tree monomials, so showing that all shuffle tree monomials are confluent is sufficient to show we have a Groebner basis. 
We will use this technique to prove that the operads governing props and wheeled props are Koszul.
The more common technique for showing you have a Groebner basis (Buchberger's algorithm, see for instance \cite{dotsenko2010grobner}), amounts to checking the local confluence of the small common multiples (through $S$-polynomials).
Here is a closing example that illustrates the link between rewriting systems and Groebner bases.

\begin{example} [See Example 10 of \cite{dotsenko2010grobner}] \label{gb assoc example}

The associative operad admits the presentation,
{\small
\begin{align*}
    P=F_{\Sigma}(
\begin{forest}
fairly nice empty nodes,
for tree={inner sep=0, l=0}
[
    [{$1$}]
    [{$2$}] 
]
\end{forest}
)/
\langle 
\begin{forest}
fairly nice empty nodes,
for tree={inner sep=0, l=0}
[
    [
        [{$1$}]
        [{$2$}] 
    ]
    [{$3$}]
]
\end{forest}
-
\begin{forest}
fairly nice empty nodes,
for tree={inner sep=0, l=0}
[
    [{$1$}]
    [
        [{$2$}]
        [{$3$}] 
    ]
]
\end{forest}
\rangle
\end{align*}
}
The shuffle operad $P^f$ is calculated by taking the orbit of the generators and the relation.
In particular, if we let  
$f(1,2):=$
{\tiny
\begin{forest}
fairly nice empty nodes,
for tree={inner sep=0, l=0}
[
    [{$1$}]
    [{$2$}] 
]
\end{forest}
}
and 
$f(2,1):=$
{\tiny
\begin{forest}
fairly nice empty nodes,
for tree={inner sep=0, l=0}
[
    [{$2$}]
    [{$1$}] 
]
\end{forest}
}
then because we are forgetting $\Sigma$ we need to introduce a new generator to our shuffle operad
$h(1,2):= f(2,1)$. We then proceed through and orient the $3!$ orbit elements of our sole relation using $h(1,2)$ to replace $f(2,1)$.
\begin{align*}
    f(f(1,2),3) - f(1,f(2,3)) &\mapsto f(f(1,2),3) - f(1,f(2,3))\\
    f(f(1,3),2) - f(1,f(3,2)) &\mapsto f(f(1,3),2) - f(1,h(2,3))\\
    f(f(2,1),3) - f(2,f(1,3)) &\mapsto f(h(1,2),3) - h(f(1,3),2)\\
    f(f(2,3),1) - f(2,f(3,1)) &\mapsto h(1,f(2,3)) - h(h(1,3),2)\\
    f(f(3,1),2) - f(3,f(1,2)) &\mapsto f(h(1,3),2) - h(f(1,2),3)\\
    f(f(3,2),1) - f(3,f(2,1)) &\mapsto h(1,h(2,3)) - h(h(1,2),3)
\end{align*}
Hence, our shuffle operad is
$P^f = F_{sh}(f,h)/\langle G \rangle$ where $G$ is the six relations on the right. It turns out that there exists a total admissible order such that $G$ is a (quadratic) Groebner basis. This Groebner basis admits many small common multiples with $S$-polynomials witnessing their reduction. Here is one small common multiple whose two possible reduction paths generate the associative coherence pentagon.

{\footnotesize
\begin{center}
\begin{tikzcd}
&
\begin{forest}
fairly nice empty nodes,
for tree={inner sep=0, l=0}
[
    [
        [
            [{$1$}]
            [{$2$}] 
        ]
        [{$3$}]
    ]
[{$4$}]
]
\end{forest}
\arrow[rd]
\arrow[ld]
\\
\begin{forest}
fairly nice empty nodes,
for tree={inner sep=0, l=0}
[
    [
        [{$1$}]
        [
            [{$2$}]
            [{$3$}] 
        ]
    ]
[{$4$}]
]
\end{forest}
\arrow[d]
&
&
\begin{forest}
fairly nice empty nodes,
for tree={inner sep=0, l=0}
[
    [
        [{$1$}]
        [{$2$}] 
    ]
    [
        [{$3$}]
        [{$4$}] 
    ]
]
\end{forest}
\arrow[d]
\\
\begin{forest}
fairly nice empty nodes,
for tree={inner sep=0, l=0}
[
    [{$1$}]
    [
        [
            [{$2$}]
            [{$3$}] 
        ]
        [{$4$}]
    ]
]
\end{forest}
\arrow[rr, to path= (\tikztostart.318) -- (\tikztotarget.217)]
&
&
\begin{forest}
fairly nice empty nodes,
for tree={inner sep=0, l=0}
[
    [{$1$}]
    [
        [{$2$}]
        [
            [{$3$}]
            [{$4$}] 
        ]
    ]
]
\end{forest}
\end{tikzcd} 
\end{center}
}
In this example every single tree monomial except the bottom right corner is divisible by
{\small
\begin{center}
$lt(g):=lt($
\begin{forest}
fairly nice empty nodes,
for tree={inner sep=0, l=0}
[
    [
        [{$1$}]
        [{$2$}] 
    ]
    [{$3$}]
]
\end{forest}
-
\begin{forest}
fairly nice empty nodes,
for tree={inner sep=0, l=0}
[
    [{$1$}]
    [
        [{$2$}]
        [{$3$}] 
    ]
]
\end{forest}
$
)
=
$
\begin{forest}
fairly nice empty nodes,
for tree={inner sep=0, l=0}
[
    [
        [{$1$}]
        [{$2$}] 
    ]
    [{$3$}]
]
\end{forest}
, e.g.
$\quad$
\begin{forest}
fairly nice empty nodes,
for tree={inner sep=0, l=0}
[
    [{$1$}]
    [
        [
            [{$2$}]
            [{$3$}] 
        ]
        [{$4$}]
    ]
]
\end{forest}
$=$
\begin{forest}
fairly nice empty nodes,
for tree={inner sep=0, l=0}
[
    [{$1$}]
    [{$2$}] 
]
\end{forest}
$\circ_{2,id}$
\begin{forest}
fairly nice empty nodes,
for tree={inner sep=0, l=0}
[
    [
        [{$1$}]
        [{$2$}] 
    ]
    [{$3$}]
]
\end{forest}

\end{center}
}
Furthermore, each arrow corresponds to a reduction $f \mapsto r_g(f)$. The top tree monomial $\gamma$ is a small common multiple as it is divisible by $lt(g)$ in two different ways and the corresponding subtrees of the divisors overlap. We can calculate
{\small
\begin{center}
$s_{\gamma}=$
\begin{forest}
fairly nice empty nodes,
for tree={inner sep=0, l=0}
[
    [
        [{$1$}]
        [
            [{$2$}]
            [{$3$}] 
        ]
    ]
[{$4$}]
]
\end{forest}
$-$
\begin{forest}
fairly nice empty nodes,
for tree={inner sep=0, l=0}
[
    [
        [{$1$}]
        [{$2$}] 
    ]
    [
        [{$3$}]
        [{$4$}] 
    ]
]
\end{forest}
\end{center}
}
and the congruence of this $S$-polynomial to $0$ witnesses that the two reductions of $\gamma$ are (locally) confluent. Note, the unique normal form of all tree monomials in the diagram is the bottom right tree monomial.
\end{example}

\section{The Operad Governing Wheeled Props}
\label{The Operad Governing Wheeled Props}

We now seek to define a groupoid coloured operad governing wheeled props, which we can show is Koszul, through use of the preceding theory. 
The main ingredient needed is a new biased definition of a wheeled prop (\cref{Alternate biased wheeled prop}), whose axioms, apart from the unital and biequivariance axioms, are all quadratic.
By translating the biequivariance axioms into actions of a groupoid, we will define $\mathbb{W}$ to be a quadratic non-unital groupoid coloured operad whose algebras are non-unital wheeled props. 
We then prove this operad is Koszul, by constructing its shuffle operad, and applying the Groebner basis machinery.

\subsection{Wheeled Props} \label{section:An Alternate Biased Definition of a Wheeled Prop}

We introduce our alternate definition of a wheeled prop, which is precisely Definition 11.33 of \cite{yau2015foundation} with all compositions and axioms extended via actions of the bimodule (see \cref{Bimodules}). We then prove its equivalence to the original definition, and define augmented and non-unital versions of these structures.
A key property of the new definition is that its equivariance axioms yield the following result.
\begin{proposition} \label{simple canonical form of alternate wheeled props}
Every valid composite of operations in \cref{Alternate biased wheeled prop} admits a simple canonical form, where all non-identity permutations are pushed to the outermost operation.
\end{proposition}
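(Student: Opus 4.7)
My plan is to proceed by induction on the number of basic operations appearing in the composite, where by ``basic operation'' I mean any of the generating composition maps (horizontal, vertical, and contraction/wheel) of the biased definition $\ref{Alternate biased wheeled prop}$, together with the bimodule actions by pairs of permutations. The canonical form I aim for is
\[
   \sigma \cdot F(\alpha_1,\ldots,\alpha_k) \cdot \tau,
\]
where $F$ is a composite of basic composition operations with no internal bimodule actions, the $\alpha_i$ are generating operations (with identity bimodule actions), and $\sigma,\tau$ are the unique outermost input/output permutations.

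The base case is a single generator $\alpha$, possibly acted on by $\sigma,\tau$ from the bimodule structure; by definition this is already in canonical form. For the inductive step, take a composite which at the outermost level is a basic composition $\circ$ (horizontal, vertical, or wheel contraction) applied to two subcomposites $\alpha$ and $\beta$, each with strictly fewer basic operations. By the inductive hypothesis, write $\alpha = \sigma_\alpha \cdot \tilde\alpha \cdot \tau_\alpha$ and $\beta = \sigma_\beta \cdot \tilde\beta \cdot \tau_\beta$ with $\tilde\alpha,\tilde\beta$ containing no internal permutations. I then invoke the biequivariance axioms of $\ref{Alternate biased wheeled prop}$ (which by construction govern exactly these interactions) to pull each of $\sigma_\alpha,\sigma_\beta$ outward past $\circ$, absorbing them into a single outer output permutation, and symmetrically pull $\tau_\alpha,\tau_\beta$ outward into a single outer input permutation. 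The biequivariance axioms for horizontal and vertical composition provide the required rewrites essentially by block-permuting and by index-tracking along the matched flags, respectively, and these two cases are the standard prop/PROP computations.

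The main obstacle is the wheel contraction operation $\contra{i}{j}$, which identifies the $j$-th output of an operation with its $i$-th input. For a permutation on the inputs or outputs to be pulled through $\contra{i}{j}$, one must simultaneously relabel which flags are being contracted, and the resulting permutation on the remaining legs must be coherently described. I expect the corresponding biequivariance axiom in $\ref{Alternate biased wheeled prop}$ to take the shape
\[
\contra{i}{j}(\sigma \cdot \alpha \cdot \tau) \;=\; \sigma' \cdot \contra{\sigma^{-1}(i)}{\tau(j)}(\alpha) \cdot \tau',
\]
where $\sigma',\tau'$ are the induced permutations obtained by deleting the $i$-th and $j$-th letters respectively. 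Once I verify that iterated applications of such a rewrite are associative in the sense that the resulting outer $\sigma',\tau'$ are independent of the order in which permutations are extracted, the inductive step goes through for all three types of composition.

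Finally, to upgrade existence of a canonical form to the stated simplicity (i.e.\ that the process terminates in a unique normal form modulo the non-equivariance axioms), I would note that each rewrite strictly decreases the number of bimodule actions occurring strictly inside any composition operation, so the procedure terminates and the resulting expression has all non-identity permutations at the outermost level, as required.
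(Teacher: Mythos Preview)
Your approach is essentially the same as the paper's: both use the left and right compatibility (biequivariance) axioms to migrate permutations outward, with your inductive formulation being a more formal packaging of the paper's terse iterative description (``use right compatibility to push the permutations off it and left compatibility to push them back on to the next operation'').

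One factual slip: you list the basic operations as ``horizontal, vertical, and contraction/wheel'', but the alternate biased wheeled prop of Definition~\ref{Alternate biased wheeled prop} has only extended horizontal composition and extended contraction (plus units); there is no vertical composition generator here. This does not break your argument, since the biequivariance mechanism you invoke applies uniformly to whatever generating operations are present, but you should remove the vertical case and adjust your discussion accordingly. Also note that in this definition the permutations $\binom{\sigma}{\tau}$ are part of the data of each extended operation rather than separate bimodule actions, so the canonical form is more precisely ``every generator except the root carries $\binom{id}{id}$'' rather than an outer $\sigma\cdot(-)\cdot\tau$; this is only a cosmetic difference, since right compatibility identifies the two descriptions.
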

\begin{proof}
While we have an inner operation with non-identity permutations, use right compatibility to push the permutations off it and left compatibility to push them back on to the next operation.
\end{proof}
This not only allows us to simplify the remaining axioms, but also when we translate the equivariance axioms into actions of the  groupoid, it will induce a simple canonical representative of every groupoid coloured tree monomial in $\mathbb{W}$ (see \cref{Example of Action of the Groupoid}).
We note that all definitions and diagrams referenced in the following definition are sourced from \cite{yau2015foundation}. 

\begin{definition}\label{Alternate biased wheeled prop}
Let $\mathcal{E}$ be a symmetric monoidal category, a \textbf{wheeled prop} consists of the following.
\begin{enumerate}
    \item A bimodule $P:\ptwoc \to \mathcal{E}$.
    \item An extended horizontal composition,
    \begin{align*}
        P\dc \otimes P\ba \xrightarrow{\hcomp,\binom{\sigma}{\tau}}P\binom{\sigma(\ud,\ub)}{(\uc , \ua)\tau},
    \end{align*}
    where $(\hcomp,\binom{\sigma}{\tau})(\alpha,\beta) := \hcomp(\alpha,\beta)\cdot\binom{\sigma}{\tau}$, with $\hcomp(-,-)$ being the horizontal composition of 11.33.
    \item An extended contraction,
    \begin{align*}
        P\dc \xrightarrow{(\contra{j}{i},\binom{\sigma}{\tau})} P\binom{\sigma(\ud \setminus d_i)}{(\uc \setminus c_j)\tau},
    \end{align*}
    where $(\contra{j}{i},\binom{\sigma}{\tau})(\alpha) := \contra{j}{i}(\alpha)\cdot\binom{\sigma}{\tau}$, with $\contra{j}{i}(-)$ being the contraction of 11.33.
    \item The same units as 11.33 (or to be consistent, they may be extended with trivial permutations).
\end{enumerate}
In presenting the axioms, we assume all profiles, permutations and contraction indices are such that the diagrams make sense.
Before each diagram, we cite the corresponding diagram in \cite{yau2015foundation}.
We will not express the various extended unity diagrams (as we will quickly move to non-unital wheeled props) but the obvious extensions to ($11.21$ and $11.33$) are also present.
The extended horizontal composition operation satisfies the following right, left and switch compatibility axioms.
The right compatibility axiom is an immediate consequence of being a horizontal composition extended by an action of the bimodule.
The left and switch compatibility axioms correspond to diagrams 11.19 and 11.20.
\begin{center}
\begin{tikzcd}
P\dc \otimes P\ba
\arrow[d,"{(\hcomp,\binom{\sigma}{\tau})}"']
\arrow[rd,"{(\otimes_h , \binom{\sigma'\sigma}{\tau \tau'})}"]\\
P\binom{\sigma(\ud,\ub)}{(\uc , \ua)\tau}  \arrow[r,"{\binom{\sigma'}{\tau'}}"']& P\binom{\sigma'\sigma(\ud,\ub)}{(\uc , \ua)\tau\tau'}
\end{tikzcd}
$\quad$
\begin{tikzcd}
P\dc \otimes P\ba
\arrow[d,"{\binom{\sigma_1}{\tau_1}\otimes\binom{\sigma_2}{\tau_2} }"']
\arrow[rd,"{(\otimes_h , \binom{\sigma(\sigma_1\times\sigma_2)}{(\tau_1\times\tau_2)\tau})}"]\\
P\binom{\sigma_1\ud}{\uc \tau_1} \otimes P\binom{\sigma_2\ub}{\ua \tau_2}  \arrow[r,"{(\otimes_h , \binom{\sigma}{\tau})}"']& P\binom{\sigma(\sigma_1\ud,\sigma_2 \ub)}{(\uc \tau_1,\ua \tau_2)\tau)}
\end{tikzcd}
$\quad$
\begin{tikzcd}
P\ba \otimes P\dc
\arrow[d,"switch"']
\arrow[rd,"{(\otimes_h , \binom{\sigma'}{\tau'})}"]\\
P\dc \otimes P \ba \arrow[r,"{(\otimes_h , \binom{\sigma}{\tau})}"']& P\binom{\sigma(\ud,\ub)}{(\uc,\ua)\tau }
\end{tikzcd}
\end{center}
In the switch compatibility diagram, $\binom{\sigma'}{\tau'}$ is the unique permutation such that $\binom{\sigma(\ud,\ub)}{(\uc,\ua)\tau )} = \binom{\sigma'(\ub,\ud)}{ (\ua,\uc)\tau'}$. The extended contraction operator satisfies the following left and right compatibility axioms. The right compatibility axiom is an immediate consequence of being a contraction operation extended by an action of the bimodule, and the left compatibility axiom corresponds to (11.34). 
\begin{center}
\begin{tikzcd}
P\dc
\arrow[d,"{(\contra{j}{i},\binom{\sigma}{\tau})}"']
\arrow[rd,"{(\contra{j}{i},\binom{\sigma'\sigma}{\tau\tau'})}"]\\
P\binom{\sigma(\ud \setminus d_i)}{(\uc \setminus c_j)\tau} \arrow[r,"{\binom{\sigma'}{\tau'}}"']& P\binom{\sigma'\sigma(\ud \setminus d_i)}{(\uc \setminus c_j)\tau\tau'}
\end{tikzcd}
$\quad$
\begin{tikzcd}
P\dc
\arrow[d,"{\binom{\sigma'}{\tau'}}"']
\arrow[rd,"{(\contra{j}{i},\binom{\sigma(\sigma'^{(i)})}{(\tau'^{(j)})\tau})}"]\\
P\binom{\sigma'\ud}{\uc \tau'}
\arrow[r,"{(\contra{\tau'(j)}{\sigma'^{-1}(i)},\binom{\sigma}{\tau})}"']& P\binom{\sigma(\sigma'^{(i)})(\ud\setminus d_i)}{(\uc \setminus c_j)(\tau'^{(j)})\tau }
\end{tikzcd}
\end{center}
In the left compatibility diagram $\sigma'^{(i)}$ and $\tau'^{(j)}$ are the obvious permutations acting on $\ud \setminus d_i$ and $\uc \setminus c_j$ which are induced by the permutations
$\sigma'$ and $\tau'$ (acting on $\ud$, $\uc$).
\\\\
The language of
\cref{simple canonical form of alternate wheeled props} should now be clear, and we can use it to simplify the remaining diagrams (by pushing all permutations from the top/left arrows to the bottom/right arrows). We note that it is possible to encode the remaining diagrams entirely graphically (see \cref{The Relations of the Operad Governing Wheeled Props}). We will however write out the remaining axioms in full to make it clear they are precisely the axioms of \cite{yau2015foundation} extended by an action of the bimodule. This helps to clarify that the definitions are indeed equivalent.
\\\\
The associativity diagram of $\hcomp$ ($11.18$)
\begin{center}
\begin{tikzcd} [column sep=1in]
P\fe \otimes P\dc \otimes P\ba
\arrow[r,"{(\hcomp,\binom{id}{id})\otimes id}"]
\arrow[d,"{id\otimes(\hcomp,\binom{id}{id})}"']
&P \binom{\uf,\ud}{\ue,\uc}\otimes P\ba
\arrow[d,"{(\hcomp,\binom{\sigma}{\tau})}"]
\\
P \fe \otimes P\binom{\ud,\ub}{\uc,\ua}
\arrow[r,"{(\hcomp,\binom{\sigma}{\tau})}"']&
P\binom{\sigma(\uf,\ud,\ub)}{(\ue,\uc,\ua)\tau}
\end{tikzcd}
\end{center}
The commutativity diagrams of $\contra{}{}$ ($11.35, 11.36$). Suppose that $|d|,|c|\geq 2$, $d_i = c_j$ and $d_{i'} = c_{j'}$ for some $i\neq i' \leq |d|$ and $j\neq j' \leq |c|$. In the left diagram $i<i',j<j'$, and in the right diagram $i>i',j<j'$.
\begin{center}
\begin{tikzcd}
P\dc
\arrow[r,"{(\contra{j}{i},\binom{id}{id})}"]
\arrow[d,"{(\contra{j'}{i'},\binom{id}{id})}"']
&P\binom{\ud \setminus d_i}{\uc \setminus c_j}
\arrow[d,"{(\contra{j'-1}{i'-1},\binom{\sigma}{\tau})}"]
\\
P\binom{\ud \setminus d_{i'}}{\uc \setminus c_{j'}}
\arrow[r,"{(\contra{j}{i},\binom{\sigma}{\tau})}"']&
P\binom{\sigma(\ud \setminus \{d_{i},d_{i'}\})}{(\uc \setminus \{c_j,c_{j'}\})\tau}
\end{tikzcd}
$\quad\quad\quad$
\begin{tikzcd}
P\dc
\arrow[r,"{(\contra{j}{i},\binom{id}{id})}"]
\arrow[d,"{(\contra{j'}{i'},\binom{id}{id})}"']
&P\binom{\ud \setminus d_i}{\uc \setminus c_j}
\arrow[d,"{(\contra{j'-1}{i'},\binom{\sigma}{\tau})}"]
\\
P\binom{\ud \setminus d_{i'}}{\uc \setminus c_{j'}}
\arrow[r,"{(\contra{j}{i-1},\binom{\sigma}{\tau})}"']&
P\binom{\sigma(\ud \setminus \{d_{i},d_{i'}\})}{(\uc \setminus \{c_j,c_{j'}\})\tau}
\end{tikzcd}
\end{center}
Note the right diagram has slightly altered indexing from $11.36$, which uses $i<i'$ and $j>j'$ instead. This alteration arises from a later choice in \cref{Total Admissible Order for Shuffle Tree Monomials of Wheeled Props} where we choose to prioritise inputs over outputs, and making this (equivalent) alteration here slightly cleans up the relations. 
\\\\
The compatibility diagrams of $\hcomp, \contra{}{}$ ($11.37, 11.38$). In the left diagram $i\leq |d|,j\leq |c|$, and in the right diagram $i\leq |b|,j\leq |a|$

\begin{center}
\begin{tikzcd}
P\dc \otimes P\ba
\arrow[r,"{(\hcomp,\binom{id}{id})}"]
\arrow[d,"{(\contra{j}{i},\binom{id}{id})\otimes id}"']
&P\binom{\ud,\ub}{\uc,\ua}
\arrow[d,"{(\contra{j}{i},\binom{\sigma}{\tau})}"]
\\
P\binom{\ud \setminus d_{i}}{\uc \setminus c_{j}} \otimes P\ba
\arrow[r,"{(\hcomp,\binom{\sigma}{\tau})}"']&
P\binom{\sigma(\ud \setminus d_{i},\ub)}{(\uc \setminus c_{j},\ua)\tau}
\end{tikzcd}
$\quad\quad\quad$
\begin{tikzcd}
P\dc \otimes P\ba
\arrow[r,"{(\hcomp,\binom{id}{id})}"]
\arrow[d,"{id\otimes (\contra{j}{i},\binom{id}{id})}"']
&P\binom{\ud,\ub}{\uc,\ua}
\arrow[d,"{(\contra{|\uc|+j}{|\ud|+i},\binom{\sigma}{\tau})}"]
\\
P\dc \otimes P\binom{\ub \setminus b_{i}}{\ua \setminus a_{j}}
\arrow[r,"{(\hcomp,\binom{\sigma}{\tau})}"']&
P\binom{\sigma(\ud,\ub \setminus b_{i})}{(\uc ,\ua \setminus a_j)\tau}
\end{tikzcd}
\end{center}

\end{definition}

\begin{lemma}
\cref{Alternate biased wheeled prop}, and Definition 11.33 of \cite{yau2015foundation} are equivalent definitions of wheeled props.
\end{lemma}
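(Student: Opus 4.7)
The strategy is a direct translation in both directions, relying on the observation that the extended operations of \cref{Alternate biased wheeled prop} are by construction the composites of the original operations of 11.33 with the bimodule action. So the equivalence amounts to unpacking these definitions and reorganising the axioms; no nontrivial constructions are required.

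For the direction $11.33 \Rightarrow$ \cref{Alternate biased wheeled prop}, the plan is to start with a wheeled prop $(P, \otimes_h, \varepsilon_j^i)$ in the sense of 11.33 and define the extended operations $(\otimes_h, \binom{\sigma}{\tau})$ and $(\varepsilon_j^i, \binom{\sigma}{\tau})$ as the prescribed composites. The right compatibility diagrams will then follow immediately from the associativity of the bimodule action on $P$ (using \cref{Composition in Groupoid Coloured Bimodule} in the bimodule setting). The left and switch compatibility axioms are essentially a transcription of the biequivariance axioms of 11.33 (namely 11.19/11.20 for horizontal composition, and 11.34 for contraction), with the only work being to move the permutations from acting on the inputs to acting on the output. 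For the remaining diagrams (associativity of $\otimes_h$, commutativity of contractions, and the $\otimes_h / \varepsilon$ compatibilities), the plan is to reduce to the unextended versions in 11.33: by \cref{simple canonical form of alternate wheeled props} every composite can be brought to canonical form with permutations on the outermost operation, so the extended axioms reduce to their identity-permutation cases, which are precisely the axioms of 11.33.

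For the reverse direction $\cref{Alternate biased wheeled prop} \Rightarrow 11.33$, the plan is to recover the original operations by specialising the extended ones to trivial permutations $\binom{id}{id}$. Each axiom of 11.33 then appears as the instance of the corresponding extended axiom with appropriately chosen (mostly trivial) permutations; for example 11.34 is obtained from the left contraction compatibility by setting $\binom{\sigma}{\tau} = \binom{id}{id}$ and reading off the required equation. The bimodule action itself is of course already present as part of the data of $P$.

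The main obstacle I expect is the careful bookkeeping of permutations in the contraction compatibility diagrams (11.35--11.38): the induced permutations $\sigma'^{(i)}$ and $\tau'^{(j)}$ arising from deleting a flag interact with the ambient permutations in a nontrivial way, and small sign or indexing errors can easily creep in (as witnessed by the deliberate re-indexing in 11.36 noted in the statement). Verifying that the extended versions of these diagrams commute once all permutations are pushed to the outside, and conversely that specialising to identity permutations recovers the original diagrams verbatim, will be the only genuinely computational portion of the argument. The rest is routine.
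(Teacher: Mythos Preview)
Your proposal is correct and follows essentially the same approach as the paper: the paper's proof simply observes that the extended operations and axioms were already derived from 11.33 in the course of presenting \cref{Alternate biased wheeled prop}, and that the original operations and axioms are recovered by specialising to identity permutations. Your plan is a more detailed unpacking of exactly this two-sentence argument, and the bookkeeping concerns you flag are real but routine.
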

\begin{proof}
In presenting the alternate definition, we already established how the extended operations and axioms can be obtained from the original operations and axioms. We can similarly re-obtain the original operations and axioms from the alternate operations and axioms by taking specific cases of them.
\end{proof}
We will now refer to alternate wheeled props as wheeled props, but will continue to use alternate if we need to emphasise the use of this particular presentation.
We now define augmented and non-unital versions of this definition.

\begin{definition}\label{Trivial Wheeled Prop}

Let $\mathcal{K}$ be the trivial wheeled prop in $Vect_{\mathbb{K}}$ whose only constituents are the horizontal and vertical units, and the contractions of the vertical units (this is a technical requirement for wheeled props). 
\end{definition}

\begin{definition}\label{Augmented Wheeled Prop}
An \textbf{augmentation} of a wheeled prop $P$ in $dgVect_{\mathbb{K}}$ is a morphism (of wheeled props, see Corollary 11.36 of \cite{yau2015foundation}) $\epsilon:P\to \mathcal{K}$. Wheeled props with an augmentation are called \textbf{augmented wheeled props}.
\end{definition}

\begin{definition}\label{non-unital wheeled prop}
A \textbf{non-unital wheeled prop} consists of all the data of a wheeled prop, excluding the units and the corresponding axioms.
\end{definition}

As one would expect, we have the following isomorphism.

\begin{proposition}
Augmented wheeled props and non-unital wheeled props are isomorphic.
\end{proposition}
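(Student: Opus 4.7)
The plan is to exhibit mutually inverse constructions between augmented wheeled props and non-unital wheeled props. In one direction, given an augmented wheeled prop $(P,\epsilon)$, set $F(P) := \ker(\epsilon)$, viewed as a sub-bimodule of $P$. In the other direction, given a non-unital wheeled prop $Q$, set $G(Q) := Q \oplus \mathcal{K}$, where $\mathcal{K}$ is the trivial wheeled prop of \cref{Trivial Wheeled Prop}, with augmentation the projection onto $\mathcal{K}$; the operations on $G(Q)$ are to be defined below. The maps on morphisms are the obvious restrictions and extensions.

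To see $F$ is well-defined, observe that since $\epsilon$ is a morphism of wheeled props (\cref{Augmented Wheeled Prop}), for $\alpha,\beta \in \ker(\epsilon)$ we have $\epsilon(\hcomp(\alpha,\beta)) = \hcomp(\epsilon(\alpha),\epsilon(\beta)) = 0$, and similarly $\epsilon(\contra{j}{i}(\alpha))=0$, so the extended horizontal composition and contraction restrict to $\ker(\epsilon)$. The bimodule action restricts as well because $\epsilon$ is a natural transformation. All the non-unital axioms of \cref{Alternate biased wheeled prop} (associativity, commutativity of contractions, the switch, and the horizontal-contraction compatibilities, together with the equivariance axioms) then hold in $\ker(\epsilon)$ because they hold in $P$.

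For $G$, the key step is defining extended horizontal composition and contraction on $Q\oplus\mathcal{K}$. On $Q\otimes Q$ we use the operations of $Q$; on $\mathcal{K}\otimes\mathcal{K}$ we use the (trivial) operations of $\mathcal{K}$; and on the mixed summands $Q\otimes \mathcal{K}$ and $\mathcal{K}\otimes Q$, as well as for contractions applied to elements involving vertical units, we use the unit axioms of a wheeled prop (the extended versions of diagrams 11.21 and 11.33 of \cite{yau2015foundation} omitted from \cref{Alternate biased wheeled prop}) as the \emph{definition} of the operation. For example, horizontal composition with a horizontal unit acts as the appropriate bimodule action, and contraction of a vertical unit along its matched pair of flags is the corresponding unit contraction in $\mathcal{K}$. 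The quadratic compatibility diagrams of \cref{Alternate biased wheeled prop} then decompose, after splitting each tensor factor into its $Q$-part and $\mathcal{K}$-part, into: pure $Q$ cases, which hold by the non-unital structure on $Q$; pure $\mathcal{K}$ cases, which hold since $\mathcal{K}$ is a wheeled prop; and mixed cases, which reduce to instances of the unit axioms.

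Finally, the constructions are mutually inverse. The equality $F(G(Q)) = Q$ is immediate, as $\ker(Q\oplus\mathcal{K}\twoheadrightarrow\mathcal{K}) = Q$ with the induced operations being exactly the original operations of $Q$. For $G(F(P)) \cong P$, use the splitting of $\epsilon$ provided by the canonical inclusion $\iota:\mathcal{K}\hookrightarrow P$ that sends each unit of $\mathcal{K}$ to the corresponding unit of $P$; since $P$ is a wheeled prop, $\iota$ is a well-defined morphism of wheeled props, and $\epsilon\circ\iota = \mathrm{id}_{\mathcal{K}}$. This yields a decomposition $P \cong \ker(\epsilon)\oplus \mathcal{K}$ of bimodules whose induced operations agree with those defined on $G(F(P))$ precisely by the unit axioms of $P$. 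The main obstacle is the bookkeeping of the last step: verifying that the operations constructed on $G(Q)$ coincide under this splitting with the restricted operations of $P$ on $\ker(\epsilon)\oplus\mathcal{K}$; but this is forced component by component by the unit axioms of a wheeled prop, so no new argument is required.
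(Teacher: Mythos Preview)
Your proposal is correct and follows exactly the classical argument the paper alludes to: the paper's proof is simply a citation to Proposition~21 of \cite{markl2008operads}, and your mutually inverse constructions $P \mapsto \ker(\epsilon)$ and $Q \mapsto Q \oplus \mathcal{K}$ are precisely the wheeled-prop analogue of that standard result. You have filled in the details the paper chose to omit.
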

\begin{proof}
This is a clear analogue of the classical result for partial operads, see Proposition 21 of \cite{markl2008operads}.
\end{proof}

\subsection{A Groupoid Coloured Quadratic Operad}
\label{biased presentation of the operad governing wheeled props}

We now produce a quadratic non-unital groupoid coloured operad $\mathbb{W}:=F_{\Sigma}^{\SC}(E)/ \langle R \rangle$ whose algebras are non-unital wheeled props.
After we have presented the generators of our operad, we will explore in \cref{Example of Action of the Groupoid} how the groupoid actions enable a simple canonical representative of each groupoid coloured tree monomial of $F_{\Sigma}^{\SC}(E)$.
Finally, we use this canonical form to provide a diagrammatic representation of the quadratic ideal $R$.
\\\\
The underlying groupoid of our operad is $\SC:=\ptwoc$, and the groupoid coloured module $E$ (\cref{Groupoid Coloured Bimodule}) is constructed from the compositions of the wheeled prop as follows. Let $\ba,\dc,\fe \in \ptwoc$, then,
\begin{align*}
    E(\dc;\ba):=& \{(\contra{j}{i},\binom{\sigma}{\tau})(-): d_i = c_j \text{, and } \binom{\sigma(\ud \setminus \{d_i\})}{(\uc \setminus \{c_j\})\tau} =  \ba \}\\
    E(\dc,\ba; \fe):=&\{(\hcomp,\binom{\sigma}{\tau})(-,-): \binom{\sigma(\ud,\ub)}{(\uc , \ua)\tau} = \fe\}
\end{align*}
where $i\in \{1,...,|\ud|\},j\in \{1,...,|\uc|\}$ and the $\binom{\tau}{\sigma}$'s are morphisms in $\ptwoc$. 
The necessary left, right and $\Sigma_2$ actions on $\hcomp$ are directly given by the left, right and switch biequivariance/compatibility axioms of $\hcomp$ (\cref{Alternate biased wheeled prop}).
Similarly, the necessary left and right actions on $\contra{}{}$ are directly given by the left and right compatibility axioms of $\contra{}{}$.
The necessary $\Sigma_1$ action on $\contra{}{}$ is the trivial action.
To clarify by example, we now unpack the right action of $\hcomp$.
Let $(\hcomp,\binom{\sigma}{\tau}) \in E(\dc,\ba; \fe)$ and $\binom{\sigma'}{\tau'} \in Iso_{\SC}(\fe,\binom{\sigma' \uf}{\ue \tau'})$ then,
\begin{align*}
    (\hcomp,\binom{\sigma}{\tau})\binom{\sigma'}{\tau'} = (\hcomp,\binom{\sigma'\sigma}{\tau\tau'})\in E(\dc,\ba; \binom{\sigma' \uf}{\ue \tau'})
\end{align*}
i.e. the bottom-left and diagonal path for the right compatibility diagram of $\hcomp$ are equal (\cref{Alternate biased wheeled prop}).

We now provide an example of composition in this operad, and these actions.
\begin{example}\label{Example of Action of the Groupoid}
Suppose we have the following profiles,
\begin{align*}
    p_x = \binom{d_1,d_2}{c_1,c_2,c_3}, p_y = \binom{c_3}{\emptyset}, p=\binom{d_2,d_1,c_3}{c_2,c_3,c_1},p_{\gamma} = \binom{d_2,d_1}{c_2,c_1}
\end{align*}
then using the inline notation for permutations (here and in the rest of the paper)
\begin{align*}
    (\hcomp,\binom{213}{231}) \in E\binom{p}{p_x,p_y} \quad \text{and} \quad (\contra{2}{3},\binom{id}{id}) \in E\binom{p_{\gamma}}{p} 
\end{align*}
We can compose these two generators to form a groupoid coloured tree monomial with at least two distinct representatives.
{\small
\begin{center}
$(\contra{2}{3},\binom{id}{id}) \circ_1 (\hcomp,\binom{213}{231})=[$
\begin{forest}
for tree={parent anchor=south}
[{$(\contra{2}{3},\binom{id}{id})$}
    [{$(\hcomp,\binom{213}{231})$}
        [$p_x$]
        [$p_y$] 
    ]
]
\end{forest}
$]=[$
\begin{forest}
for tree={parent anchor=south}
[{$(\contra{3}{3},\binom{21}{21})$}
    [{$(\hcomp,\binom{id}{id})$}
        [$p_x$]
        [$p_y$] 
    ]
]
\end{forest}
$]\in E\binom{p_\gamma}{p_x,p_y}$
\end{center}
}
We can explicitly compute that these two tree monomials are part of the same equivalence class as follows. The output edge of $(\hcomp,\binom{213}{231})$
is coloured by the profile $p$ (in this example). 
The identity morphism is an automorphism of $p$ and splits as $id = p \xrightarrow{\binom{213}{231}^{-1}} p'  \xrightarrow{\binom{213}{231}} p \in Aut(p)$ where $p' = \binom{d_1,d_2,c_3}{c_1,c_2,c_3}$. Hence by \cref{Decorated Symmetric Tree}, contraction left action, and horizontal right action we have the following equalities.
{\small
\begin{center}
$[$
\begin{forest}
for tree={parent anchor=south}
[{$(\contra{2}{3},\binom{id}{id})$}
    [{$(\hcomp,\binom{213}{231})$}
        [$p_x$]
        [$p_y$] 
    ]
]
\end{forest}
$]\stackrel{id}{=}[$
\begin{forest}
for tree={parent anchor=south}
[{$\binom{213}{231}\cdot(\contra{2}{3},\binom{id}{id})$}
    [{$(\hcomp,\binom{213}{231})\cdot \binom{213}{231}^{-1}$}
        [$p_x$]
        [$p_y$] 
    ]
]
\end{forest}
$]\stackrel{\text{left+right actions}}{=}[$
\begin{forest}
for tree={parent anchor=south}
[{$(\contra{3}{3},\binom{21}{21})$}
    [{$(\hcomp,\binom{id}{id})$}
        [$p_x$]
        [$p_y$] 
    ]
]
\end{forest}
$]$
\end{center}
}
By construction, every shuffle tree monomial of the operad can be seen as providing instructions for forming a wheeled graph via horizontal compositions and contractions. 
The fact that these two tree monomials are in the same equivalence class can be visually verified by seeing they provide two different ways of forming the following graph $\gamma$ from two corollas $*_{x}$ and $*_{y}$ through different horizontal compositions and contractions.
(Recall that all directed graphs in this paper are drawn with inputs on the bottom, and outputs on the top).
{\small
\begin{center}
$*_{x}=$
{\scriptsize
\begin{tikzpicture}[node distance={7.5mm},thick] 
\node[flag] (v1o1) {$d_1$};
\node[flag][right of=v1o1] (v1o2) {$d_2$};
\node[vertex] (v1) [below of=v1o2]{$v_1$};
\node[flag] [below of=v1] (v1i2) {$c_2$};
\node[flag] [left of=v1i2] (v1i1) {$c_1$};
\node[flag] [right of=v1i2] (v1i3) {$c_3$};
\draw [black] (v1) to (v1o1);
\draw [black] (v1) to (v1o2);
\draw [black] (v1) to (v1i1);
\draw [black] (v1) to (v1i2);
\draw [black] (v1) to (v1i3);
\end{tikzpicture}
}
$,\quad *_{y}=$
{\scriptsize
\begin{tikzpicture}[node distance={7.5mm},thick] 
\node[flag] (v2o1) {$c_3$};
\node[vertex] (v2) [below of=v2o1] {$v_1$};
\draw [thick] (v2) to (v2o1);
\end{tikzpicture}
}
$,\quad$ 
\begin{forest}
for tree={parent anchor=south}
[{$(\hcomp,\binom{213}{231})$}
    [$p_x$]
    [$p_y$] 
]
\end{forest}
=
\begin{tikzpicture}[node distance={10mm},thick] 
\node[flag] (go1) {$d_2$};
\node[flag] (go2) [right of=go1] {$d_1$};
\node[flag] (go3) [right of=go2] {$c_3$};
\coordinate[below of=go1] (v1o1);
\coordinate[right of=v1o1] (v1o2);
\coordinate[right of=v1o2] (v2o1);
\node[vertex] (v1) [below of=v1o2]{$v_1$}; 
\node[vertex] (v2) [right of=v1] {$v_2$};
\coordinate[below of=v1] (v1i2) ;
\coordinate[left of=v1i2] (v1i1) ;
\coordinate[right of=v1i2] (v1i3) ;
\node[flag] (gi1) [below of=v1i1] {$c_2$};
\node[flag] (gi2) [right of=gi1] {$c_3$};
\node[flag] (gi3) [right of=gi2] {$c_1$};
\draw [black] (go1) to (v1o2);
\draw [black] (go2) to (v1o1);
\draw [black] (go3) to (v2o1);
\draw [black] (v1) to (v1o1);
\draw [black] (v1) to (v1o2);
\draw [black] (v2) to (v2o1);
\draw [black] (v1) to (v1i1);
\draw [black] (v1) to (v1i2);
\draw [black] (v1) to (v1i3);
\draw [black] (v1i1) to (gi3);
\draw [black] (v1i2) to (gi1);
\draw [black] (v1i3) to (gi2);
\end{tikzpicture}
$,\quad \gamma=$
\begin{tikzpicture}[node distance={10mm},thick] 
\node[flag] (go1) {$d_2$};
\node[flag] (go2) [right of=go1] {$d_1$};
\coordinate[below of=go1] (v1o1);
\coordinate[right of=v1o1] (v1o2);
\coordinate[right of=v1o2] (v2o1);
\node[vertex] (v1) [below of=v1o2]{$v_1$}; 
\node[vertex] (v2) [right of=v1] {$v_2$};
\coordinate[below of=v1] (v1i2) ;
\coordinate[left of=v1i2] (v1i1) ;
\coordinate[right of=v1i2] (v1i3) ;
\node[flag] (gi1) [below of=v1i1] {$c_2$};
\node[flag] (gi3) [right of=gi1] {$c_1$};
\draw [black] (go1) to (v1o2);
\draw [black] (go2) to (v1o1);
\draw [black] (v1) to (v1o1);
\draw [black] (v1) to (v1o2);
\draw [thick] (v2) to [out=120,in=-60] (v1);
\draw [black] (v1) to (v1i1);
\draw [black] (v1) to (v1i2);
\draw [black] (v1i1) to (gi3);
\draw [black] (v1i2) to (gi1);
\end{tikzpicture}
\end{center}
}
We note that the vertex labelling of $\gamma$ (i.e. $v_1,v_2$), is encoded by the order of the inputs of the tree monomial, i.e. the corolla $*_x$ is labelled $v_1$ and the corolla $*_y$ is labelled $v_2$ because $(\contra{2}{3},\binom{id}{id}) \circ_1 (\hcomp,\binom{213}{231}) \in E\binom{p_\gamma}{p_x,p_y}$.
Thus, we can also interpret the symmetric action of the operad as switching the labels of the vertices.
For example, here is the graphical interpretation of the $\Sigma_2$ action on $(\hcomp,\binom{213}{231}) \in E\binom{p}{p_x,p_y}$.
\begin{center}
\begin{forest}
for tree={parent anchor=south}
[{$(\hcomp,\binom{213}{231})$}
    [$p_x$]
    [$p_y$] 
]
\end{forest}
$\cdot (21)
=$
\begin{tikzpicture}[node distance={10mm},thick] 
\node[flag] (go1) {$d_2$};
\node[flag] (go2) [right of=go1] {$d_1$};
\node[flag] (go3) [right of=go2] {$c_3$};
\coordinate[below of=go1] (v1o1);
\coordinate[right of=v1o1] (v1o2);
\coordinate[right of=v1o2] (v2o1);
\node[vertex] (v1) [below of=v1o2]{$v_1$}; 
\node[vertex] (v2) [right of=v1] {$v_2$};
\coordinate[below of=v1] (v1i2) ;
\coordinate[left of=v1i2] (v1i1) ;
\coordinate[right of=v1i2] (v1i3) ;
\node[flag] (gi1) [below of=v1i1] {$c_2$};
\node[flag] (gi2) [right of=gi1] {$c_3$};
\node[flag] (gi3) [right of=gi2] {$c_1$};
\draw [black] (go1) to (v1o2);
\draw [black] (go2) to (v1o1);
\draw [black] (go3) to (v2o1);
\draw [black] (v1) to (v1o1);
\draw [black] (v1) to (v1o2);
\draw [black] (v2) to (v2o1);
\draw [black] (v1) to (v1i1);
\draw [black] (v1) to (v1i2);
\draw [black] (v1) to (v1i3);
\draw [black] (v1i1) to (gi3);
\draw [black] (v1i2) to (gi1);
\draw [black] (v1i3) to (gi2);
\end{tikzpicture}
$\cdot (21)
=$
\begin{tikzpicture}[node distance={10mm},thick] 
\node[flag] (go1) {$d_2$};
\node[flag] (go2) [right of=go1] {$d_1$};
\node[flag] (go3) [right of=go2] {$c_3$};
\coordinate[below of=go1] (v1o1);
\coordinate[right of=v1o1] (v1o2);
\coordinate[right of=v1o2] (v2o1);
\node[vertex] (v1) [below of=v1o2]{$v_2$}; 
\node[vertex] (v2) [right of=v1] {$v_1$};
\coordinate[below of=v1] (v1i2) ;
\coordinate[left of=v1i2] (v1i1) ;
\coordinate[right of=v1i2] (v1i3) ;
\node[flag] (gi1) [below of=v1i1] {$c_2$};
\node[flag] (gi2) [right of=gi1] {$c_3$};
\node[flag] (gi3) [right of=gi2] {$c_1$};
\draw [black] (go1) to (v1o2);
\draw [black] (go2) to (v1o1);
\draw [black] (go3) to (v2o1);
\draw [black] (v1) to (v1o1);
\draw [black] (v1) to (v1o2);
\draw [black] (v2) to (v2o1);
\draw [black] (v1) to (v1i1);
\draw [black] (v1) to (v1i2);
\draw [black] (v1) to (v1i3);
\draw [black] (v1i1) to (gi3);
\draw [black] (v1i2) to (gi1);
\draw [black] (v1i3) to (gi2);
\end{tikzpicture}
$=$
\begin{tikzpicture}[node distance={10mm},thick] 
\node[flag] (go1) {$d_2$};
\node[flag] (go2) [right of=go1] {$d_1$};
\node[flag] (go3) [right of=go2] {$c_3$};
\coordinate[below of=go1] (v2o1);
\coordinate[below of=go2] (v1o1);
\coordinate[below of=go3] (v1o2);
\node[vertex] (v2) [below of=v2o1] {$v_1$};
\node[vertex] (v1) [right of=v2]{$v_2$}; 
\coordinate[below of=v1] (v1i2) ;
\coordinate[left of=v1i2] (v1i1) ;
\coordinate[right of=v1i2] (v1i3) ;
\node[flag] (gi1) [below of=v1i1] {$c_2$};
\node[flag] (gi2) [right of=gi1] {$c_3$};
\node[flag] (gi3) [right of=gi2] {$c_1$};
\draw [black] (go1) to (v1o2);
\draw [black] (go2) to (v1o1);
\draw [black] (go3) to (v2o1);
\draw [black] (v1) to (v1o1);
\draw [black] (v1) to (v1o2);
\draw [black] (v2) to (v2o1);
\draw [black] (v1) to (v1i1);
\draw [black] (v1) to (v1i2);
\draw [black] (v1) to (v1i3);
\draw [black] (v1i1) to (gi3);
\draw [black] (v1i2) to (gi1);
\draw [black] (v1i3) to (gi2);
\end{tikzpicture}
$=$
\begin{forest}
for tree={parent anchor=south}
[{$(\hcomp,\binom{321}{231})$}
    [$p_y$]
    [$p_x$] 
]
\end{forest}
\end{center}
Here the outermost tree monomials are equal under the definition of the $\Sigma_2$ action.
The inner equalities provide a graphical interpretation of this result, where the second equality uses the $\Sigma_2$ action to switch the vertex labels, and the third equality is an equality of graphs (these graphs are indistinguishable under \cref{Wheeled Graph}).
This example is also sufficient to illustrate that horizontal composition is not necessarily strictly commutative (see \cref{inclusion of com} for an important case when it is commutative), as 
\begin{center}
\begin{forest}
for tree={parent anchor=south}
[{$(\hcomp,\binom{213}{231})$}
    [$p_x$]
    [$p_y$] 
]
\end{forest}
$=$
\begin{tikzpicture}[node distance={10mm},thick] 
\node[flag] (go1) {$d_2$};
\node[flag] (go2) [right of=go1] {$d_1$};
\node[flag] (go3) [right of=go2] {$c_3$};
\coordinate[below of=go1] (v1o1);
\coordinate[right of=v1o1] (v1o2);
\coordinate[right of=v1o2] (v2o1);
\node[vertex] (v1) [below of=v1o2]{$v_1$}; 
\node[vertex] (v2) [right of=v1] {$v_2$};
\coordinate[below of=v1] (v1i2) ;
\coordinate[left of=v1i2] (v1i1) ;
\coordinate[right of=v1i2] (v1i3) ;
\node[flag] (gi1) [below of=v1i1] {$c_2$};
\node[flag] (gi2) [right of=gi1] {$c_3$};
\node[flag] (gi3) [right of=gi2] {$c_1$};
\draw [black] (go1) to (v1o2);
\draw [black] (go2) to (v1o1);
\draw [black] (go3) to (v2o1);
\draw [black] (v1) to (v1o1);
\draw [black] (v1) to (v1o2);
\draw [black] (v2) to (v2o1);
\draw [black] (v1) to (v1i1);
\draw [black] (v1) to (v1i2);
\draw [black] (v1) to (v1i3);
\draw [black] (v1i1) to (gi3);
\draw [black] (v1i2) to (gi1);
\draw [black] (v1i3) to (gi2);
\end{tikzpicture}
$\neq$
\begin{tikzpicture}[node distance={10mm},thick] 
\node[flag] (go1) {$d_2$};
\node[flag] (go2) [right of=go1] {$d_1$};
\node[flag] (go3) [right of=go2] {$c_3$};
\coordinate[below of=go1] (v1o1);
\coordinate[right of=v1o1] (v1o2);
\coordinate[right of=v1o2] (v2o1);
\node[vertex] (v1) [below of=v1o2]{$v_2$}; 
\node[vertex] (v2) [right of=v1] {$v_1$};
\coordinate[below of=v1] (v1i2) ;
\coordinate[left of=v1i2] (v1i1) ;
\coordinate[right of=v1i2] (v1i3) ;
\node[flag] (gi1) [below of=v1i1] {$c_2$};
\node[flag] (gi2) [right of=gi1] {$c_3$};
\node[flag] (gi3) [right of=gi2] {$c_1$};
\draw [black] (go1) to (v1o2);
\draw [black] (go2) to (v1o1);
\draw [black] (go3) to (v2o1);
\draw [black] (v1) to (v1o1);
\draw [black] (v1) to (v1o2);
\draw [black] (v2) to (v2o1);
\draw [black] (v1) to (v1i1);
\draw [black] (v1) to (v1i2);
\draw [black] (v1) to (v1i3);
\draw [black] (v1i1) to (gi3);
\draw [black] (v1i2) to (gi1);
\draw [black] (v1i3) to (gi2);
\end{tikzpicture}
$=$
\begin{forest}
for tree={parent anchor=south}
[{$(\hcomp,\binom{213}{231})$}
    [$p_x$]
    [$p_y$] 
]
\end{forest}
$\cdot (21)$.
\end{center}

\end{example}

Observe that in the prior example, the action of the groupoid was used to push all permutations to the top (i.e. the only non-identity permutation is that used by the generator at the root). This trick can be exploited to produce a canonical element of the equivalence class of any groupoid coloured tree monomial, i.e. whilst there is a non-identity permutation on a generator below the root, we use the action of the groupoid via the internal edge above it to push the permutation up the tree. So, the canonical element of any $\mathbb{V}$-coloured tree monomial can be taken as the $ob(\mathbb{V})$-coloured tree monomial whose (potentially) only non-identity permutations are those at the root most generator. 
\\\\
The relations $R$ of our operad $\mathbb{W}$ are given by the five remaining quadratic axioms of the alternate definition of a wheeled prop. These can be explicitly written out if desired, but we will use the canonical form to provide a concise graphical representation of the relations in \cref{The Relations of the Operad Governing Wheeled Props} (see the souls of \cite{batanin2021koszul} for a similar graphical encoding of relations). Note, by construction this table precisely corresponds to the non-unital and non-biequivariance axioms of \cref{Alternate biased wheeled prop}. The table makes use of a suppressed notation, which we clarify by example. On the second row,
\begin{itemize}
    \item The diagram (of the graph) represents a graph with two vertices in which one of the output flags of the first vertex is connected to one of the input flags of the first vertex, forming an edge. There are no other edges present in the graph, but there may be other flags (which are suppressed) and these flags may have an arbitrary listing in the graph (i.e. the graph may have any permutations of its inputs and outputs).
    \item The groupoid coloured tree monomials to the right are then taken under the implicit understanding that they output the graph on the left. 
    Consequently, the profile of the $i$th input of the tree monomial monomial must correspond to the profile of the $i$th vertex of the graph, see \cref{Example of Action of the Groupoid}.
    (We are actually using a slightly abusive notation that we will clarify in \cref{Succinct Notation for Shuffle Permutations}).
    The unary vertices of the tree monomials correspond to contractions, and the binary vertices correspond to horizontal compositions. 
    There is no need to explicitly include the permutations or the indices used by these compositions. 
    A canonical element of the equivalence class of each groupoid coloured tree monomial can be recovered by pushing all permutations to the top.
    From here, the necessary indices to contract by to form the edge are uniquely specified by the graph; similarly, the topmost permutations $\binom{\sigma}{\tau}$ are uniquely determined by needing to match the listings of the open flags of the graph.
\end{itemize}

\begin{table}[ht]
\centering
\begin{tabular}{ |c|c| } 
\hline
Graphs & Relations \\
\hline
\noindent\parbox[c]{3cm}{
\begin{tikzpicture}[node distance={10mm}, thick,
v/.style={circle, draw}] 
\node[v] (1) {$v_1$}; 
\node[v] (2) [right of=1] {$v_2$};
\node[v] (3) [right of=2] {$v_3$};
\end{tikzpicture}
}
&
\noindent\parbox[c]{3.3cm}{
\begin{forest}
fairly nice empty nodes,
for tree={inner sep=0, l=0}
[
    [{$v_1$}]
    [
        [{$v_2$}]
        [{$v_3$}] 
    ]
]
\end{forest}
$=$
\begin{forest}
fairly nice empty nodes,
for tree={inner sep=0, l=0}
[
    [
        [{$v_1$}]
        [{$v_2$}] 
    ]
    [{$v_3$}]
]
\end{forest}
}
\\
\hline
\noindent\parbox[c]{2cm}{
\vspace{-0.25cm}
\begin{tikzpicture}[node distance={10mm}, thick,
v/.style={circle, draw}] 
\node[v] (1) {$v_1$}; 
\node[v] (2) [right of=1] {$v_2$};
\draw [thick] (1) to [out=120,in=240,looseness =3] (1);
\end{tikzpicture}
\vspace{-0.75cm}
}
& 
\noindent\parbox[c]{2.4cm}{
\begin{forest}
fairly nice empty nodes,
for tree={inner sep=0, l=0}
[
[[{$v_1$}]]
[{$v_2$}] 
]
\end{forest}
$=$
\begin{forest}
fairly nice empty nodes,
for tree={inner sep=0, l=0}
[
[
    [{$v_1$}]
    [{$v_2$}] 
]
]
\end{forest}
}
\\
\hline
\noindent\parbox[c]{2cm}{
\vspace{-0.25cm}
\begin{tikzpicture}[node distance={10mm}, thick,
v/.style={circle, draw}] 
\node[v] (1) {$v_1$}; 
\node[v] (2) [right of=1] {$v_2$};
\draw [thick] (2) to [out=60,in=-60,looseness =3] (2);
\end{tikzpicture}
\vspace{-0.75cm}
}
& 
\noindent\parbox[c]{2.4cm}{
\begin{forest}
fairly nice empty nodes,
for tree={inner sep=0, l=0}
[
[{$v_1$}]
[[{$v_2$}]]
]
\end{forest}
$=$
\begin{forest}
fairly nice empty nodes,
for tree={inner sep=0, l=0}
[
[
    [{$v_1$}]
    [{$v_2$}] 
]
]
\end{forest}
}
\\
\hline
\noindent\parbox[c]{1.6cm}{
\vspace{-0.25cm}
\begin{tikzpicture}[node distance={10mm}, thick,
v/.style={circle, draw}] 
\node[v] (1) {$v_1$};
\draw [thick,red] (1) to [out=120,in=240,looseness =3] (1);
\draw [thick,blue] (1) to [out=60,in=-60,looseness =3] (1);
\end{tikzpicture}
\vspace{-0.75cm}
}
,
\noindent\parbox[c]{1.6cm}{
\vspace{-0.25cm}
\begin{tikzpicture}[node distance={10mm}, thick,
v/.style={circle, draw}] 
\node[v] (1) {$v_1$};
\draw [thick,red] (1) to [out=75,in=220,looseness=4] (1);
\draw [thick,blue] (1) to [out=105,in=-40,looseness=4] (1);
\end{tikzpicture}
\vspace{-0.75cm}
}
& 
\noindent\parbox[c]{1.2cm}{
\begin{forest}
fairly nice empty nodes,
for tree={inner sep=0, l=0}
[ 
[ , edge={thick,blue}
[{$v_1$}, edge={thick,red}
]
]
]
\end{forest}
$=$
\begin{forest}
fairly nice empty nodes,
for tree={inner sep=0, l=0}
[ 
[ , edge={thick,red}
[{$v_1$}, edge={thick,blue}
]
]
]
\end{forest}
}
\\
\hline
\end{tabular}    
\caption{The relations of the operad governing wheeled props, and dually, the non-unital and non-biequivariance relations of an alternate wheeled prop.}
\label{The Relations of the Operad Governing Wheeled Props}
\end{table}

By the construction of this section, the following is immediate.

\begin{corollary}
A algebra over $\mathbb{W}$ is a non-unital wheeled prop.
\end{corollary}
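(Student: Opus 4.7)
The plan is to unpack the definition of a $\mathbb{W}$-algebra and verify, piece by piece, that the resulting data matches the defining data and axioms of a non-unital wheeled prop (\cref{non-unital wheeled prop}). By \cref{algebra over an operad}, a $\mathbb{W}$-algebra is a functor $X : \SC \to \mathcal{E}$ together with a morphism of groupoid coloured operads $\phi : \mathbb{W} \to \text{End}_X$. The functor $X$ is tautologically a $\fC$-coloured bimodule in $\mathcal{E}$, providing the underlying bimodule of a non-unital wheeled prop.

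Since $\mathbb{W} = F_{\Sigma}^{\SC}(E)/\langle R \rangle$ is quadratic, the universal property of the quotient of a free operad shows that $\phi$ corresponds bijectively to a morphism of groupoid coloured $\SC$-modules $\phi|_E : E \to \text{End}_X$ whose induced map on $F_{\Sigma}^{\SC}(E)$ annihilates $R$. On generators, $\phi|_E$ sends each $(\hcomp,\binom{\sigma}{\tau}) \in E(\dc,\ba;\fe)$ to a morphism $X\dc \otimes X\ba \to X\fe$ and each $(\contra{j}{i},\binom{\sigma}{\tau}) \in E(\dc;\ba)$ to a morphism $X\dc \to X\ba$. This is precisely the datum of an extended horizontal composition and extended contraction in the sense of \cref{Alternate biased wheeled prop}. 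Since the left, right, and $\Sigma_2$-actions on $E$ were defined to encode the left, right, and switch biequivariance axioms of \cref{Alternate biased wheeled prop}, naturality of $\phi|_E$ with respect to the $\SC$-action is equivalent to the validity of those biequivariance axioms for $X$. Meanwhile, annihilation of $R$, whose elements are displayed graphically in \cref{The Relations of the Operad Governing Wheeled Props}, is equivalent to the remaining quadratic axioms, namely associativity of $\hcomp$, commutativity of contractions (cases 11.35 and 11.36), and compatibility between $\hcomp$ and $\contra{j}{i}$ (cases 11.37 and 11.38).

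The reverse direction is obtained by reading the argument above backwards: a non-unital wheeled prop structure on $X$ determines a morphism $\phi|_E$ that respects the groupoid action (by biequivariance) and extends uniquely to a morphism $\phi : \mathbb{W} \to \text{End}_X$ annihilating $R$ (by the remaining quadratic axioms). The only point requiring any care is the verification that the $\SC$-action on $E$ matches, verbatim, the biequivariance extensions of \cref{Alternate biased wheeled prop}; but this is true by construction in the paragraph defining $E$ immediately before \cref{Example of Action of the Groupoid}, which is precisely why the corollary reduces to a bookkeeping exercise.
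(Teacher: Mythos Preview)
Your proof is correct and is precisely the argument the paper intends: the paper simply writes ``By the construction of this section, the following is immediate'' before the corollary, and your proposal is a careful unpacking of that construction. The identification of the functor $X:\SC\to\mathcal{E}$ with a $\fC$-coloured bimodule, of the $\SC$-module structure on $E$ with the biequivariance axioms, and of $R$ with the remaining quadratic axioms is exactly how the operad was set up in \cref{biased presentation of the operad governing wheeled props}, so nothing further is needed.
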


\subsection{The Shuffle Operad} \label{shuffle operad for wheeled props}

So having defined a quadratic groupoid coloured operad governing wheeled props $\mathbb{W}$, we must now prove that it is Koszul. We will do this by showing $(\mathbb{W}^f)_*$ admits a quadratic Groebner basis, but first we must calculate $\mathbb{W}^f=( F_{\Sigma}^{\SC}(E)/\langle R \rangle)^f \cong F_{sh}^{\SC}(E^f)/\langle R^f \rangle$. It may be helpful to review the calculation of $-^f$ in the (discrete) coloured case prior to stepping through this section, see for instance Section $2.6$ of \cite{kharitonov2021grobner} or \cref{gb assoc example}.
\\\\
We first observe how $-^f$ acts on the generators. Every unary generator has only a trivial orbit under $\Sigma_1$. The binary generators have non-trivial orbits, however the $\Sigma_2$ action on $\hcomp$ sends every generator to another generator (see \cref{Example of Action of the Groupoid}), so there is no need to introduce further generators to account for the orbits (this is comparable to the shuffle Lie operad only needing a single generator, see Example 9 of \cite{dotsenko2010grobner}). This tells us that $E^f = E$.
\\\\
We now consider how $-^f$ acts on the relations, we do so by calculating the orbit or each relation of $\mathbb{W}$ then orienting each element of the orbit to use shuffle compositions. Let $A$ be the first relation of \cref{The Relations of the Operad Governing Wheeled Props} written down for a particular graph via the following notation, $\hcomp(\hcomp(*_{v_1},*_{v_2}),*_{v_3}) =  \hcomp(*_{v_1},\hcomp(*_{v_2},*_{v_3}))$, where $*_{v_i}$ is the profile of the $i$th vertex of the graph.
We now orient each element of the $\Sigma_3$ orbit of $A$,
\begin{align*}
    A \cdot (123)&\mapsto \quad  \hcomp(\hcomp(*_{v_1},*_{v_2}),*_{v_3})\cdot (123) = \hcomp(*_{v_1},\hcomp(*_{v_2},*_{v_3}))\cdot (123)\\
    A \cdot (132)&\mapsto \quad \hcomp(\hcomp(*_{v_1},*_{v_2}),*_{v_3})\cdot (132) = \hcomp(*_{v_1},\hcomp(*_{v_3},*_{v_2})) \cdot (123)\\
    A \cdot (213)&\mapsto \quad \hcomp(\hcomp(*_{v_2},*_{v_1}),*_{v_3})\cdot (123) = \hcomp(\hcomp(*_{v_2},*_{v_2}),*_{v_3}) \cdot (132)\\
    A \cdot (231)&\mapsto \quad \hcomp(*_{v_3},\hcomp(*_{v_1},*_{v_2}))\cdot (123) = \hcomp(\hcomp(*_{v_3},*_{v_2}),*_{v_1}) \cdot (132)\\
    A \cdot (312)&\mapsto \quad \hcomp(\hcomp(*_{v_2},*_{v_1}),*_{v_3})\cdot (132)= \hcomp(\hcomp(*_{v_2},*_{v_3}),*_{v_1}) \cdot (123)\\
    A \cdot (321)&\mapsto \quad \hcomp(*_{v_3},\hcomp(*_{v_2},*_{v_1}))\cdot (123) = \hcomp(\hcomp(*_{v_3},*_{v_2}),*_{v_1}) \cdot (123)
\end{align*}
One might expect that every time we orient an element of the orbit we would obtain a new relation in the shuffle operad, however observe that the orientation of the orbit $A\cdot (321)$ is precisely $A'=\hcomp(\hcomp(*_{v'_1},*_{v'_2}),*_{v'_3}) =  \hcomp(*_{v'_1},\hcomp(*_{v'_2},*_{v'_3}))$ where $*_{v'_1} = *_{v_3}$, $*_{v'_2} = *_{v_2}$ and $*_{v'_3}=*_{v_1}$, so it is a redundant relation. In fact, it is straightforward to show that each orientation arises as a consequence of the following two families of equations (the families are all graphs of this form with all possible graph and vertex profiles)
\begin{center}
\begin{tikzpicture}[node distance={10mm}, thick,
v/.style={circle, draw}] 
\node[v] (1) {$v_1$}; 
\node[v] (2) [right of=1] {$v_2$};
\node[v] (3) [right of=2] {$v_3$};
\end{tikzpicture}
,\quad
\begin{forest}
fairly nice empty nodes,
for tree={inner sep=0, l=0}
[
    [{$v_1$}]
    [
        [{$v_2$}]
        [{$v_3$}] 
    ]
]
\end{forest}
$=$
\begin{forest}
fairly nice empty nodes,
for tree={inner sep=0, l=0}
[
    [
        [{$v_1$}]
        [{$v_2$}] 
    ]
    [{$v_3$}]
]
\end{forest}
,\quad
\begin{forest}
fairly nice empty nodes,
for tree={inner sep=0, l=0}
[
    [
        [{$v_1$}]
        [{$v_3$}] 
    ]
    [{$v_2$}]
]
\end{forest}
$=$
\begin{forest}
fairly nice empty nodes,
for tree={inner sep=0, l=0}
[
    [
        [{$v_1$}]
        [{$v_2$}] 
    ]
    [{$v_3$}]
]
\end{forest}

\end{center}
Here we have used the notation,
\begin{align}\label{Succinct Notation for Shuffle Permutations}
\begin{forest}
fairly nice empty nodes,
for tree={inner sep=0, l=0}
[
    [
        [{$v_1$}]
        [{$v_2$}] 
    ]
    [{$v_3$}]
]
\end{forest}
:=
\begin{forest}
fairly nice empty nodes,
for tree={inner sep=0, l=0}
[
    [
        [{$*_{v_1}$}]
        [{$*_{v_2}$}] 
    ]
    [{$*_{v_3}$}]
]
\end{forest}
\cdot (123)
,\quad \quad
\begin{forest}
fairly nice empty nodes,
for tree={inner sep=0, l=0}
[
    [
        [{$v_1$}]
        [{$v_3$}] 
    ]
    [{$v_2$}]
]
\end{forest}
:=
\begin{forest}
fairly nice empty nodes,
for tree={inner sep=0, l=0}
[
    [
        [{$*_{v_1}$}]
        [{$*_{v_3}$}] 
    ]
    [{$*_{v_2}$}]
]
\end{forest}
\cdot(132)
\end{align}
The permutation $(132)$ is ensuring that the profile of the corolla $*_{v_3}$ (which is just a element of $\ptwoc$ and has no notion of order with other profiles, see the end of \cref{Example of Action of the Groupoid}) is the third input of the shuffle tree monomial.
So, this orientation of the orbit has revealed a new way to form this graph with $3$ vertices via shuffle tree monomials (given we have broken the symmetry). If we proceed to calculate the orbit of the remaining relations of the operad governing wheeled props (each with arity $\leq 2$) then we will find that no new relations are required. 
\\\\
We once again encode the relations of the shuffle operad graphically in \cref{The Relations of the Shuffle Operad Governing Wheeled Props}. This table uses the same conventions as \cref{The Relations of the Operad Governing Wheeled Props}, and the notation regarding shuffle permutations defined in \cref{Succinct Notation for Shuffle Permutations}. Furthermore, each equation is now numbered for referencing and directed using a total order given in the next section, the larger element is on the left-hand side of each equation.

\begin{table}[ht]
\begin{tabular}{ |c|c| } 
\hline
Graphs & Relations \\
\hline
\noindent\parbox[c]{3cm}{
\begin{tikzpicture}[node distance={10mm}, thick,
v/.style={circle, draw}] 
\node[v] (1) {$v_1$}; 
\node[v] (2) [right of=1] {$v_2$};
\node[v] (3) [right of=2] {$v_3$};
\end{tikzpicture}
}
&
\noindent\parbox[c]{7.4cm}{
\begin{forest}
fairly nice empty nodes,
for tree={inner sep=0, l=0}
[
    [{$v_1$}]
    [
        [{$v_2$}]
        [{$v_3$}] 
    ]
]
\end{forest}
$\xrightarrow{1}$
\begin{forest}
fairly nice empty nodes,
for tree={inner sep=0, l=0}
[
    [
        [{$v_1$}]
        [{$v_2$}] 
    ]
    [{$v_3$}]
]
\end{forest}
,
\begin{forest}
fairly nice empty nodes,
for tree={inner sep=0, l=0}
[
    [
        [{$v_1$}]
        [{$v_3$}] 
    ]
    [{$v_2$}]
]
\end{forest}
$\xrightarrow{2}$
\begin{forest}
fairly nice empty nodes,
for tree={inner sep=0, l=0}
[
    [
        [{$v_1$}]
        [{$v_2$}] 
    ]
    [{$v_3$}]
]
\end{forest}
}
\\
\hline
\noindent\parbox[c]{2cm}{
\vspace{-0.25cm}
\begin{tikzpicture}[node distance={10mm}, thick,
v/.style={circle, draw}] 
\node[v] (1) {$v_1$}; 
\node[v] (2) [right of=1] {$v_2$};
\draw [thick] (1) to [out=120,in=240,looseness =3] (1);
\end{tikzpicture}
\vspace{-0.75cm}
}
& 
\noindent\parbox[c]{3cm}{
\begin{forest}
fairly nice empty nodes,
for tree={inner sep=0, l=0}
[
[[{$v_1$}]]
[{$v_2$}] 
]
\end{forest}
$\xrightarrow{3}$
\begin{forest}
fairly nice empty nodes,
for tree={inner sep=0, l=0}
[
[
    [{$v_1$}]
    [{$v_2$}] 
]
]
\end{forest}
}
\\
\hline
\noindent\parbox[c]{2cm}{
\vspace{-0.25cm}
\begin{tikzpicture}[node distance={10mm}, thick,
v/.style={circle, draw}] 
\node[v] (1) {$v_1$}; 
\node[v] (2) [right of=1] {$v_2$};
\draw [thick] (2) to [out=60,in=-60,looseness =3] (2);
\end{tikzpicture}
\vspace{-0.75cm}
}
& 
\noindent\parbox[c]{3cm}{
\begin{forest}
fairly nice empty nodes,
for tree={inner sep=0, l=0}
[
[{$v_1$}]
[[{$v_2$}]]
]
\end{forest}
$\xrightarrow{4}$
\begin{forest}
fairly nice empty nodes,
for tree={inner sep=0, l=0}
[
[
    [{$v_1$}]
    [{$v_2$}] 
]
]
\end{forest}
}
\\
\hline
\noindent\parbox[c]{1.6cm}{
\vspace{-0.25cm}
\begin{tikzpicture}[node distance={10mm}, thick,
v/.style={circle, draw}] 
\node[v] (1) {$v_1$};
\draw [thick,red] (1) to [out=120,in=240,looseness =3] (1);
\draw [thick,blue] (1) to [out=60,in=-60,looseness =3] (1);
\end{tikzpicture}
\vspace{-0.75cm}
}
,
\noindent\parbox[c]{1.6cm}{
\vspace{-0.25cm}
\begin{tikzpicture}[node distance={10mm}, thick,
v/.style={circle, draw}] 
\node[v] (1) {$v_1$};
\draw [thick,red] (1) to [out=75,in=220,looseness=4] (1);
\draw [thick,blue] (1) to [out=105,in=-40,looseness=4] (1);
\end{tikzpicture}
\vspace{-0.75cm}
}
& 
\noindent\parbox[c]{2cm}{
\begin{forest}
fairly nice empty nodes,
for tree={inner sep=0, l=0}
[ 
[ , edge={thick,blue}
[{$v_1$}, edge={thick,red}
]
]
]
\end{forest}
$\xrightarrow{5,6}$
\begin{forest}
fairly nice empty nodes,
for tree={inner sep=0, l=0}
[ 
[ , edge={thick,red}
[{$v_1$}, edge={thick,blue}
]
]
]
\end{forest}
}
\\
\hline
\end{tabular}
\captionsetup{justification=centering}
\caption{The relations of the shuffle operad governing wheeled props}
\label{The Relations of the Shuffle Operad Governing Wheeled Props}
\end{table}

\subsection{Ordering the Object Coloured Tree Monomials} \label{Ordering the Object Coloured Tree Monomials for Wheeled Props}

In order to show that $(\mathbb{W}_f)^*$ admits a Groebner basis, we must define a total admissible order on the underlying $ob(\SC)$-coloured tree monomials of the shuffle operad.
In this section we define an order on the $ob(\SC)$-coloured tree monomials of $F_{sh}(E)$ and show that this order is total and admissible. The order is a variant of a path lexicographic order of Section 3.2 of \cite{dotsenko2010grobner} (for extending path lexicographic orders to discrete coloured operads see \cite{kharitonov2021grobner}), and although it looks complicated, we will show that it is a composite of simpler admissible orders. The underlying motivation for the order is that it provides a simple unique minimal shuffle tree monomial encoding any wheeled graph (\cref{UMF Algorithm for Wheeled Props}). 
As such, the key features of the order that the reader should observe are the following, the order prefers
\begin{itemize}
    \item contractions being above horizontal compositions.
    \item horizontal compositions being composed to the left, i.e. $\hcomp(\hcomp(v_1,v_2),v_3) < \hcomp(v_1,\hcomp(v_2,v_3))$.
    \item tree monomials whose inputs are ordered linearly, i.e. $\hcomp(\hcomp(v_1,v_2),v_3)<\hcomp(\hcomp(v_1,v_3),v_2)$.
    \item tree monomials where the only non-identity permutations used by a generator are at the root.
    \item contractions where larger input indices are used.
\end{itemize}
Here is the formal definition of our needed order, and we prove it is total and admissible in \cref{Nested Admissible is Admissible}.
\begin{definition}[Total Admissible Order] \label{Total Admissible Order for Shuffle Tree Monomials of Wheeled Props}
Let $\alpha,\beta$ be two tree monomials of $F_{sh}(E)$, we define $\alpha\leq \beta$ if:
\begin{enumerate}
    \item The arity of $\alpha < \beta$.
    \item Or, if all the prior are equal, then compare $P^{\alpha}<P^{\beta}$ where:
    \begin{itemize}
        \item if $\alpha$ and $\beta$ have arity $n$ then $P^{\alpha} := (P^{\alpha}_1,...,P^{\alpha}_n)$, where $P^{\alpha}_i$ is the word formed out of the generators when stepping from the $i$th input to the root in the tree monomial $\alpha$.
        \item $P^{\alpha}$ and $P^{\beta}$ are compared lexicographically. Two paths from the same input are compared 
        \begin{itemize}
            \item First by degree, the \textbf{longer} path is smaller.
            \item Next by a partial lexicographic order where two generators are compared using the partial order $\hcomp<\contra{}{}$ (i.e. indices and permutations are ignored).
        \end{itemize}
    \end{itemize}
    \item Or, if all the prior are equal, then compare the \textbf{input permutations} of the shuffle tree monomials (not the permutations of the generators) via lexicographic order.
    \item Or, if all the prior are equal, then compare the permutations of all the generators as follows
    \begin{itemize}
        \item For a given generator $s$ (e.g. $(\hcomp,\binom{\sigma}{\tau})$ let $p(s)$ be its permutations (e.g. $p((\hcomp,\binom{\sigma}{\tau})) = \binom{\sigma}{\tau}$).
        \item For any given tree monomial there is a natural total order of its generators/vertices, where any vertex with $k$ vertices between it and the root is smaller than any vertex with $j<k$ vertices between it and the root, and any two vertices at the same depth are ordered left to right using the planar embedding of the tree monomial.
        \item We use this order and the map $p$ to write out the permutations of all generators of each tree monomial as a single word (the letters correspond to $\binom{\sigma}{\tau}$).
        \item We compare the words lexicographically, where $\binom{\sigma}{\tau}\leq \binom{\sigma'}{\tau'}$ if $\sigma < \sigma'$, or $\sigma=\sigma'$ and $\tau<\tau'$. Here, two individual permutations are compared lexicographically.
    \end{itemize}
    \item Or, if all the prior are equal, then we again compare $P^{\alpha}<P^{\beta}$ lexicographically, this time with a total order on generators.
    \begin{itemize}
        \item $\hcomp<\contra{}{}$
        \item $\contra{j}{i}\leq\contra{j'}{i'}$ if $j>j'$, or $j=j'$ and $i>i'$.
        \item If the composition type is the same then the permutations are compared with, where we define $\binom{\sigma}{\tau}\leq \binom{\sigma'}{\tau'}$ if $\sigma < \sigma'$, or $\sigma=\sigma'$ and $\tau<\tau'$
        \item if $s$ and $s'$ are identical on all prior checks then compare their input colour(s) (left then right input if $\hcomp$) then their output colour. Input and output colours are profiles $\dc,\ba \in \ptwoc$, we say $\dc\leq \ba$ if $\ud< \ub$, or $\ud=\ub$ and $\uc\leq \ua$. We compare two sequences of colours using a degree lexicographic order induced by a total order on $\mathfrak{C}$.
    \end{itemize}
\end{enumerate}
\end{definition}
\begin{remark}
The assumption of having a total order on the underlying set of colours $\mathfrak{C}$ (which is implied by the axiom of choice) could be relaxed. We can define the order of \cref{Total Admissible Order for Shuffle Tree Monomials of Wheeled Props} as a partial order, where we do not compare generators via their input and output colours. This order will still be total on the equivalence class corresponding to each groupoid coloured tree monomial, and will still enable the confluence of the rewriting system which is equivalent to the Groebner basis to be computed.
\end{remark}

We note that several choices were made in the order, such as favouring inputs over outputs or larger indices over smaller indices.
Alternate orders exist, and this particular order was constructed as it provides a simple unique minimal form.

\begin{example} \label{Total Order for Wheeled Props Example}
The following ordered tree monomials all output the graph depicted on the left.
{\small
\begin{center}
\begin{tikzpicture}[node distance={10mm},thick] 
\node[flag] (go1) {$d_2$};
\node[flag] (go2) [right of=go1] {$d_1$};
\coordinate[below of=go1] (v1o1);
\coordinate[right of=v1o1] (v1o2);
\coordinate[right of=v1o2] (v2o1);
\node[vertex] (v1) [below of=v1o2]{$v_1$}; 
\node[vertex] (v2) [right of=v1] {$v_2$};
\coordinate[below of=v1] (v1i2) ;
\coordinate[left of=v1i2] (v1i1) ;
\coordinate[right of=v1i2] (v1i3) ;
\node[flag] (gi1) [below of=v1i1] {$c_2$};
\node[flag] (gi3) [right of=gi1] {$c_1$};
\draw [black] (go1) to (v1o2);
\draw [black] (go2) to (v1o1);
\draw [black] (v1) to (v1o1);
\draw [black] (v1) to (v1o2);
\draw [thick] (v2) to [out=120,in=-60] (v1);
\draw [black] (v1) to (v1i1);
\draw [black] (v1) to (v1i2);
\draw [black] (v1i1) to (gi3);
\draw [black] (v1i2) to (gi1);
\draw [thick] (v2) to [out=60,in=-60,looseness=3] (v2);
\end{tikzpicture}
,
\begin{forest} for tree={parent anchor=south}
[{$(\contra{3}{3},\binom{21}{21})$}
    [{$(\contra{4}{4},\binom{id}{id})$}
        [{$(\hcomp,\binom{id}{id})$}
            [$*_{v_1}$]
            [$*_{v_2}$] 
        ]
    ]
]
\end{forest}
$\stackrel{5}{\leq}$
\begin{forest} for tree={parent anchor=south}
[{$(\contra{3}{3},\binom{21}{21})$}
    [{$(\contra{3}{3},\binom{id}{id})$}
        [{$(\hcomp,\binom{id}{id})$}
            [$*_{v_1}$]
            [$*_{v_2}$] 
        ]
    ]
]
\end{forest}
$\stackrel{4}{\leq}$
\begin{forest} for tree={parent anchor=south}
[{$(\contra{3}{3},\binom{id}{id})$}
    [{$(\contra{4}{4},\binom{213}{213})$}
        [{$(\hcomp,\binom{id}{id})$}
            [$*_{v_1}$]
            [$*_{v_2}$] 
        ]
    ]
]
\end{forest}
$\stackrel{2}{\leq}$
\begin{forest} for tree={parent anchor=south}
[{$(\contra{3}{3},\binom{21}{21})$}
    [{$(\hcomp,\binom{id}{id})$}
        [$*_{v_1}$]
        [{$(\contra{1}{2},\binom{id}{id})$}
            [$*_{v_2}$] 
        ]
    ]
]
\end{forest}
\end{center}
}
We have labelled each $\leq$ with the step of \cref{Total Admissible Order for Shuffle Tree Monomials of Wheeled Props} that orders each pair of tree monomials.
\end{example}

Before we prove this order is admissible, we need a simple helper lemma. 
\begin{lemma} \label{Nested Admissible is Admissible}
If $\leq_1,\leq_2$ are admissible orders, then the order $\leq$ defined by $\alpha \leq \beta$ if $\alpha<_1 \beta$ or $\alpha =_1 \beta$ and $\alpha\leq_2 \beta$, is also admissible.
\end{lemma}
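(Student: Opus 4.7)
The plan is to verify the two conditions that make $\leq$ an admissible partial order: first that it is a partial order at all (reflexive, antisymmetric, transitive, which is the standard verification for a lexicographic combination), and then that operadic shuffle composition is monotonic with respect to $\leq$. Since the reader can be assumed to know that a lexicographic combination of two partial orders inherits all three partial-order properties, I would spend essentially no space on this, and instead focus entirely on admissibility.

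To prove admissibility, suppose $\alpha \leq \alpha'$ and $\beta \leq \beta'$, and fix a shuffle partial composition $\circ_\varphi$. The goal is to show $\alpha \circ_\varphi \beta \leq \alpha' \circ_\varphi \beta'$. I would split into two cases according to how $\leq_1$ behaves. In the first case, assume $\alpha =_1 \alpha'$ and $\beta =_1 \beta'$. Because $\leq_1$ is a partial order (in particular antisymmetric), equality in $\leq_1$ is genuine equality of tree monomials, so $\alpha \circ_\varphi \beta = \alpha' \circ_\varphi \beta'$ as tree monomials and in particular under $\leq_1$; moreover the definition of $\leq$ forces $\alpha \leq_2 \alpha'$ and $\beta \leq_2 \beta'$, so admissibility of $\leq_2$ yields $\alpha \circ_\varphi \beta \leq_2 \alpha' \circ_\varphi \beta'$, and hence $\alpha \circ_\varphi \beta \leq \alpha' \circ_\varphi \beta'$.

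In the second case, at least one of the inequalities $\alpha \leq_1 \alpha'$ or $\beta \leq_1 \beta'$ is strict. Admissibility of $\leq_1$ applied to the (non-strict) pair immediately gives $\alpha \circ_\varphi \beta \leq_1 \alpha' \circ_\varphi \beta'$. The key step is to upgrade this to a strict inequality, which then implies $\alpha \circ_\varphi \beta \leq \alpha' \circ_\varphi \beta'$ by the definition of $\leq$ without needing to check anything about $\leq_2$. To do so, observe that partial shuffle composition of tree monomials is injective in each argument: the grafted tree monomial $\alpha \circ_\varphi \beta$ determines both $\alpha$ and $\beta$ (they are recovered by cutting at the internal edge created by the composition, which is itself determined by $\varphi$ and the shapes involved). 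Hence if one of $\alpha \neq \alpha'$ or $\beta \neq \beta'$ holds as tree monomials, then $\alpha \circ_\varphi \beta \neq \alpha' \circ_\varphi \beta'$, and antisymmetry of $\leq_1$ then upgrades $\leq_1$ to $<_1$.

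The only mildly delicate point is the injectivity of $\circ_\varphi$ on tree monomials used in the second case; everything else is bookkeeping about lexicographic combinations. I do not expect this to be a real obstacle, since the shuffle data $\varphi$ together with the shapes of $\alpha$ and $\beta$ is exactly what records where and how the two trees are grafted.
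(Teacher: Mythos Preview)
Your proof is correct and follows the same case-analysis approach as the paper, whose own proof is the single sentence ``each of the four possibilities (in terms of $\leq_1,\leq_2$) lead to $\alpha \circ_\varphi \beta \leq \alpha' \circ_\varphi \beta'$.''

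One remark: your detour through injectivity of $\circ_\varphi$ in the second case is unnecessary, and the paper does not invoke it. Once you have $\alpha\leq_1\alpha'$ and $\beta\leq_1\beta'$ (which holds in all four of the paper's cases, and in both of yours), admissibility of $\leq_1$ gives $\alpha\circ_\varphi\beta \leq_1 \alpha'\circ_\varphi\beta'$. Now either this is a strict inequality under $\leq_1$, and you are done, or it is an equality under $\leq_1$, which by antisymmetry means the two composites are literally the same element, whence $\leq_2$ holds by reflexivity and again you are done. This uniform argument covers every case at once and never needs to know that distinct inputs yield distinct composites. Your argument is not wrong, just longer than required; the injectivity fact you cite is true for tree monomials in a free shuffle operad, but it is extra structure the lemma does not depend on.
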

\begin{proof}
For all $\alpha,\alpha',\beta,\beta'$ if $\alpha \leq \alpha'$ and $\beta \leq \beta'$ then each of the four possibilities (in terms of $\leq_1,\leq_2$) lead to $\alpha \circ_\varphi \beta \leq \alpha' \circ_\varphi \beta'$.
\end{proof}
With this result in hand, we now prove the following.
\begin{lemma}\label{The order for shuffle tree monomials of wheeled props is admissible}
The order of \cref{Total Admissible Order for Shuffle Tree Monomials of Wheeled Props} is total and admissible.
\end{lemma}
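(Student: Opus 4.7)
The plan is to exhibit the order as a nested composite of five simpler partial orders $\leq_1,\ldots,\leq_5$, each of which is admissible, and then invoke \cref{Nested Admissible is Admissible} iteratively. Totality is the easy half: whenever two tree monomials cannot be separated by steps (1)–(4), they necessarily have the same arity, the same underlying tree shape (recoverable from the path word in step (2)), the same input permutation, and the same string of generator permutations, so they differ only in the data (composition type, contraction indices, input/output colours) of their constituent generators; step (5) compares these with a genuine total order on generators built from the chosen total order on $\mathfrak{C}$, which therefore resolves any remaining ties.

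For admissibility, I would handle the five sub-orders in turn. Step (1) is trivially admissible since arities add under shuffle composition. Step (4) is admissible because the concatenated permutation word of $\alpha \circ_\varphi \beta$, read in the depth-then-planar order of vertices specified in the definition, is exactly the word of $\alpha$ followed by that of $\beta$ inserted at the position of the grafted input; it is therefore monotone in each argument separately. Step (3) is similar: for a fixed shuffle permutation $\varphi$, the input permutation of $\alpha \circ_\varphi \beta$ is a determined (order-preserving) merge of those of $\alpha$ and of $\beta$, so a lexicographic comparison of the merged words is monotone in both. The colour comparison that sits at the bottom of step (5) is the standard degree-lexicographic order on profiles and is admissible for the usual reason.

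The only real work is showing that the path-lexicographic comparisons in (2) and (5) are admissible. Here I would follow the template of Section 3.2 of \cite{dotsenko2010grobner}, adapted to groupoid-coloured generators. The key observation is that under a shuffle composition $\alpha \circ_{i,\sigma} \beta$, the path word of the composite is obtained by (a) replacing $P^\alpha_i$ with the paths of $\beta$ each suffixed by $P^\alpha_i$, and (b) shuffling these new positions into the remaining paths $P^\alpha_j$ according to $\sigma$. Thus each leaf of $\alpha \circ_{i,\sigma} \beta$ inherits its path from either $\alpha$ alone or from a leaf of $\beta$ followed by a fixed common suffix. A strict inequality $\alpha <_2 \alpha'$ is witnessed by some leftmost differing path; after composition this witness persists at the same (shuffled) position, because the common suffix contributes only equal length and equal letters on both sides. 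The ``longer path is smaller'' convention survives this argument unchanged, because composition with a common $\beta$ on the right adds equal length to the relevant leaves of $\alpha$ and $\alpha'$. A symmetric argument handles composition in the second slot, and the refinement to (5) changes only the alphabet on which the lexicographic comparison is performed, not the monotonicity argument.

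The main obstacle I anticipate is book-keeping rather than conceptual: making the "shuffled merge of paths" precise enough to verify that both the length and the letter-by-letter comparisons behave monotonically, and in particular that the tie-breaking convention in (2) (partial order $\hcomp<\contra{}{}$ only) does not accidentally compare incomparable letters in a non-monotone way. Once this is verified for (2), the identical argument with a genuine total order on generators delivers admissibility of (5), and a fourfold application of \cref{Nested Admissible is Admissible} then finishes the proof.
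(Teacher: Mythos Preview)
Your overall strategy (decompose into sub-orders and iterate \cref{Nested Admissible is Admissible}) matches the paper's in spirit, but your finer decomposition introduces a genuine gap at step~(4). The lemma requires each constituent order to be admissible \emph{on its own}, and step~(4) is not. Your claim that the permutation word of $\alpha\circ_\varphi\beta$ is the word of $\alpha$ with the word of $\beta$ inserted as a contiguous block is false: when $\beta$ is grafted at a leaf of $\alpha$ whose depth is strictly smaller than the maximal vertex depth of $\alpha$, the vertices of $\beta$ interleave with the deeper vertices of $\alpha$ coming from other branches. Concretely, let $\alpha$ have vertices $v_1,v_2,v_3,v_4$ at depths $0,1,1,2$ with $v_4$ in the left branch, give $v_4$ a large permutation $P$ and all other vertices the identity, and graft at the depth-$2$ leaf below $v_3$. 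For $\beta'$ a single identity-decorated vertex and $\beta$ a chain of two identity-decorated vertices one has $\beta'<_4\beta$ (prefix), yet the composite words are $(P,\mathrm{id},\mathrm{id},\mathrm{id},\mathrm{id})$ and $(\mathrm{id},P,\mathrm{id},\mathrm{id},\mathrm{id},\mathrm{id})$ respectively, giving $\alpha\circ_\varphi\beta<_4\alpha\circ_\varphi\beta'$. So step~(4) alone violates admissibility; the obstruction is precisely that equal arity does not force equal shape (the number of unary contraction vertices can differ).

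The paper avoids this by using a coarser decomposition. Steps (1)--(3) are bundled together and identified as a path-lexicographic order from \cite{dotsenko2010grobner}, hence admissible by citation. Admissibility of the combined order (1)--(4) is then argued \emph{directly}, not via the lemma: if the first difference between $\beta$ and $\beta'$ occurs at step~(4), then they already agree under (1)--(3), which forces the same underlying tree shape; the depth-then-planar vertex orderings of $\alpha\circ_\varphi\beta$ and $\alpha\circ_\varphi\beta'$ then coincide, and the distinguishing vertex pair survives composition. Finally the paper observes that performing steps $(1,5,3)$ is again a path-lexicographic order, and that $(1\text{--}4)$ followed by $(1,5,3)$ equals $(1\text{--}5)$; only at this last stage is \cref{Nested Admissible is Admissible} invoked. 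Your argument for totality and your treatment of steps~(2) and~(5) as path-lexicographic comparisons are fine and parallel the paper; the fix is to abandon the attempt to certify step~(4) in isolation and instead argue for $(1\text{--}4)$ directly using the same-shape observation.
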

\begin{proof}
It is straightforward to see the order is total, as eventually the order compares all the data of the two shuffle tree monomials. It suffices to consider its admissibility. The comparisons of the order $\leq$ of steps $1-3$ is a path lexicographic order of \cite{dotsenko2010grobner} and hence is an admissible (partial) order. We now argue that tacking on step $4$ still results in admissible order. It is slightly easier to work with an alternate characterisation of admissibility,
\begin{align*}
    \forall \alpha,\alpha',\beta,\beta',\quad  (\alpha\leq \alpha' \text{ and } \beta \leq \beta' )&\implies \alpha \circ_\varphi \beta \leq \alpha' \circ_\varphi \beta'\\
    &\iff\\
    (\forall \alpha,\alpha',\beta,\quad \alpha\leq \alpha' \implies  \alpha \circ_\varphi \beta \leq \alpha' \circ_\varphi \beta )\quad &\text{ and } \quad(\forall \alpha,\beta,\beta'\quad
    \beta \leq \beta ' \implies \alpha \circ_\varphi \beta \leq \alpha \circ_\varphi \beta').
\end{align*}
This variant of the definition allows us to vary one side of the composition at a time.
Suppose that $\beta\leq\beta'$, and we wish to show that $\alpha \circ_\varphi \beta \leq \alpha \circ_\varphi \beta'$. Suppose that the first difference between $\beta$ and $\beta'$ occurs as a result of step $4$. That is to say the tree monomials $\beta$ and $\beta'$ have the same underlying 'shape' (as unary binary trees) but when comparing the permutations of the generators (bottom to top, left to right), all permutations are equal until we reach some particular vertex pair which satisfies $\binom{\sigma}{\tau}<\binom{\sigma'}{\tau'}$. 
\\\\
As $\beta$ and $\beta'$ have the same underlying 'shape' (as unary binary trees) this implies that $\alpha \circ_\varphi \beta$ and $\alpha \circ_\varphi \beta'$ have the same underlying shape. As such, we can see that the order on the generators of $\beta,\beta'$ (bottom to top, left to right) will naturally inject into the order on the generators of $\alpha \circ_\varphi \beta$ and $\alpha \circ_\varphi \beta'$. This means that the vertex pair of $\beta,\beta'$ causing $\binom{\sigma}{\tau}<\binom{\sigma'}{\tau'}$ will also induce $\alpha \circ_\varphi \beta<\alpha \circ_\varphi \beta'$ via step $4$ via the same vertex pair. A similar argument yields $\alpha\leq \alpha' \implies  \alpha \circ_\varphi \beta \leq \alpha' \circ_\varphi \beta$.
\\\\
Finally, to add in condition $5$ of \cref{Total Admissible Order for Shuffle Tree Monomials of Wheeled Props}, we first observe that performing the steps $1,5,3$ in order is a path lexicographic order of \cite{dotsenko2010grobner}, and hence is admissible. We then notice that performing steps $1-4$ followed by steps $1,5,3$ is just equivalent to performing steps $1-5$ (i.e. performing steps $1$ and $3$ again provides no new order information). As such steps $1-5$ are the composite of two admissible orders, and hence by \cref{Nested Admissible is Admissible}, is also an admissible order.
\end{proof}

We close this section by noting that, as observed in \cref{The Groupoid Coloured Extension}, we may use this total order on the underlying $ob(\SC)$-coloured tree monomials to order the relations of the groupoid coloured shuffle operad governing wheeled props. This induced order is precisely what has been used in \cref{The Relations of the Shuffle Operad Governing Wheeled Props}.

\subsection{The Operad Governing Wheeled Props Is Koszul} 
\label{proof The Operad Governing Wheeled Props is Koszul}

This section is devoted to the proof that the groupoid coloured operad $\mathbb{W}$ is Koszul.
In the prior section, we calculated an explicit presentation of the shuffle operad $\mathbb{W}^f= F_{sh}^{\SC}(E) / \langle R^f \rangle $.
We now apply \cref{Koszul if corresponding shuffle operad has QGB}, to show that $\mathbb{W}$ is Koszul, if $G$ is a quadratic Groebner basis for the $ob(\SC)$-coloured shuffle operad $F_{sh}^{ob(\SC)}(E^{f_{\SC}}) / \langle G \rangle$, where $G:=E_*^2 \sqcup (R^f)_*$.
We note that in our particular case that the relations of $(R^f)_*$ are precisely the pairs of canonical elements of the corresponding relations for the shuffle operad, and we will use $i_*$ to refer to the 'groupoid minimised' version of the $i$th relation in \cref{The Relations of the Shuffle Operad Governing Wheeled Props}.
We note that the relations of $E_*^2=\{ \alpha \circ_\varphi \beta \to [\alpha \circ_\varphi \beta]_* :\alpha, \beta \in E^{f_{\mathbb{V}}} \}$ are all by definition directed towards the minimal element. As an explicit example, from \cref{Example of Action of the Groupoid}, we see that $(\contra{2}{3},\binom{id}{id}) \circ_1 (\hcomp,\binom{213}{231}) \to (\contra{3}{3},\binom{21}{21}) \circ_1 (\hcomp,\binom{id}{id}) \in E_*^2$.

\begin{lemma} \label{Groebner Basis for Wheeled Props}
Under the order of \cref{Total Admissible Order for Shuffle Tree Monomials of Wheeled Props}, $G$ is a quadratic Groebner basis for $F_{sh}^{ob(\SC)}(E^{f_{\SC}}) / \langle G \rangle$.
\end{lemma}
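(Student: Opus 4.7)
The plan is to apply Proposition \ref{RS confluence iff GB} and show that the rewriting system $RS(G)$ is confluent on the small common multiples of the generators in $G$. Since every relation in $G$ is quadratic (both $E_*^2$ and $(R^f)_*$ consist of weight-two relations), each small common multiple has weight three, so the check is finite in each bidegree. Before enumerating critical pairs, I would first verify that the rewriting system is terminating; this is immediate from \cref{The order for shuffle tree monomials of wheeled props is admissible}, since every reduction strictly decreases the leading tree monomial in an admissible (hence well-founded) total order.

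Next I would partition the small common multiples into three families, determined by which subsets of $G$ the overlapping relations come from: (i) overlaps of two $E_*^2$-rules (a ``purely groupoid'' overlap), (ii) overlaps of an $E_*^2$-rule with an $(R^f)_*$-rule (a ``mixed'' overlap), and (iii) overlaps of two $(R^f)_*$-rules (a ``purely operadic'' overlap). For family (i), the local confluence is essentially automatic: a relation in $E_*^2$ acts along a single internal edge (as noted in \cref{lem:kernel in weight two}), and two such actions on a common tree monomial either touch disjoint internal edges (and hence commute trivially) or touch the same internal edge, in which case both reductions produce the unique minimal representative $[-]_*$ of the resulting equivalence class. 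For family (ii), the key observation is that every non-$E_*^2$ reduction preserves the underlying wheeled graph, so both reduction paths end at two tree monomials representing the same graph; applying further $E_*^2$-reductions then pushes both down to the unique minimal representative of that graph and the overlap is confluent.

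Family (iii) is the main source of genuine case work and where I expect the main obstacle to lie. The relations of $(R^f)_*$ are encoded in \cref{The Relations of the Shuffle Operad Governing Wheeled Props} as the six rules (1)--(6), and the small common multiples are precisely the (finitely many up to symmetry) arity-three shuffle trees on which two leading terms from rules (1)--(6) overlap. The overlaps of type (1,1) and (1,2) correspond to the classical associative pentagon and its shuffled variants (as in \cref{gb assoc example}); the overlaps involving rules (3),(4) correspond to pushing a contraction past a horizontal composition on either side; and the overlaps involving (5),(6) correspond to two simultaneous contractions that commute. In every case, both reduction paths encode distinct ways of building the same wheeled graph out of corollas via horizontal compositions and contractions, and the canonical form produced by \cref{Total Admissible Order for Shuffle Tree Monomials of Wheeled Props} — contractions pushed above horizontal compositions, horizontal compositions associated to the left, permutations pushed to the root — is unique for each wheeled graph. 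Hence both reductions terminate at the same minimal shuffle tree monomial.

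The hard part will be handling the ``mixed'' overlaps (family (ii)) cleanly, because the $E_*^2$-rules do not respect the graph-combinatorial bookkeeping as transparently as the $(R^f)_*$-rules do: an $E_*^2$ relation can reshuffle the indices $(i,j)$ labelling a contraction $\contra{j}{i}$ or the permutation $\binom{\sigma}{\tau}$ on a horizontal composition, which in turn alters which $(R^f)_*$-rules can be applied next. To deal with this uniformly, I would promote the observation implicit in \cref{Example of Action of the Groupoid} into a lemma: every shuffle tree monomial is $G$-equivalent to its canonical representative with all non-identity permutations at the root vertex, and on canonical representatives the application of $(R^f)_*$-rules is graph-combinatorial in the strict sense above. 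Once this lemma is in place, confluence on all three families reduces to the assertion that the minimal shuffle tree monomial associated to each wheeled graph (under the order of \cref{Total Admissible Order for Shuffle Tree Monomials of Wheeled Props}) is unique, which is a direct consequence of totality of the order. This finishes the proof, and combined with \cref{Koszul if corresponding shuffle operad has QGB} will yield the Koszulness of $\mathbb{W}$.
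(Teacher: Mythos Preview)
Your approach differs from the paper's in organisation: you aim to verify local confluence on small common multiples (Buchberger's criterion, item (3) of \cref{RS confluence iff GB}), partitioned into families (i)--(iii), whereas the paper establishes confluence on \emph{all} shuffle tree monomials (item (1) of \cref{RS confluence iff GB}) by exhibiting an explicit unique-minimal-form algorithm (\cref{UMF Algorithm for Wheeled Props}) and then proving in \cref{Non Minimal Wheeled Prop Shuffle Tree Monomials Admit Rewrites} that any tree monomial $T \neq UMF_\wheel(\gamma)$ admits a rewrite. The paper's route avoids enumerating critical pairs entirely.

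There is a genuine gap in your argument. Your final reduction --- ``confluence on all three families reduces to the assertion that the minimal shuffle tree monomial associated to each wheeled graph is unique, which is a direct consequence of totality of the order'' --- conflates two distinct statements. Totality of the order gives only that each wheeled graph has a unique \emph{minimum} among the tree monomials encoding it; it does \emph{not} give that this minimum is the unique \emph{irreducible} element, i.e.\ that every non-minimal tree monomial admits a rewrite in $RS(G)$. Without the latter, two reduction paths from a small common multiple could a priori terminate at different irreducible (but non-minimal) tree monomials, and confluence would fail. Your case sketch for family (iii) (``both reductions terminate at the same minimal shuffle tree monomial'') already assumes the conclusion.

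To close the gap you must prove exactly what the paper does in \cref{Non Minimal Wheeled Prop Shuffle Tree Monomials Admit Rewrites}: if $T$ is not in canonical form (permutations at the root, contractions above horizontal compositions, $\hcomp$ left-associated, inputs in shuffle order, contractions by decreasing input index), then exhibit an explicit rule in $G$ whose leading term divides $T$. This is the substantive combinatorial content, and it does not follow from the order being total.
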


\begin{proof}
We prove this lemma as follows.
\begin{itemize}
    \item In \cref{UMF Algorithm for Wheeled Props} we describe an algorithm that produces a unique minimal shuffle tree monomial encoding every graph. We show this algorithm is well-defined in \cref{UMF alg for wheeled props is well-defined}.
    \item We then prove every shuffle tree monomial which is not the unique minimal shuffle tree monomial of a wheeled graph admits a rewrite using $RS(G)$ (see \cref{Rewriting Systems and Groebner Bases}), establishing the confluence of the rewriting system on all shuffle tree monomials. This is done in \cref{Non Minimal Wheeled Prop Shuffle Tree Monomials Admit Rewrites}.
    \item Hence, through \cref{RS confluence iff GB}, this proves $G$ is a Groebner basis.
\end{itemize}
\end{proof}
The remainder of this section outlines the construction and the two needed lemmas.

\begin{construction} \label{UMF Algorithm for Wheeled Props}
The following algorithm produces the unique smallest shuffle tree monomial encoding a wheeled graph $\gamma$ with at least one vertex, we denote the outputted tree $UMF_{\wheel}(\gamma)$.
\begin{itemize}
    \item If $\gamma$ has just one vertex then let $T=v_1$, otherwise if $\gamma$ has $n>1$ vertices then let
    
    {\small
    \begin{center}
    $T =$ 
    \begin{forest}
    fairly nice empty nodes,
    for tree={inner sep=0, l=0}
    [{$(\hcomp,\binom{id}{id})$}
        [\dots
            [{$(\hcomp,\binom{id}{id})$}
                [{$(\hcomp,\binom{id}{id})$}
                    [{$v_1$}]
                    [{$v_2$}] 
                ]
                [{$v_3$}]
            ]
            [{\dots}]
        ]
        [{$v_n$}]
    ]
    \end{forest}    
    \end{center}
    }
     
    \item Let $\gamma_R = \gamma$.
    \item While there is an edge in $\gamma_R$:
    \begin{itemize}
        \item Pick the edge $e \in \gamma_R$ that uses the largest input in the graph outputted by $T$, and identify the necessary contraction indices $(i,j)$ to form this edge (in the graph outputted by $T$).
        \item Update $T = (\contra{j}{i},\binom{id}{id})(T)$.
        \item Remove the edge $e$ from $\gamma_R$.
    \end{itemize}
    \item If $T$ is not a vertex, update the permutations of the root-most operation of $T$ so that the corresponding graph outputted by $T$ has the same ordering on its open flags as $\gamma$.
    \item Return $T$.
\end{itemize}
\end{construction}

\begin{example} \label{UMF Wheeled Prop Example}
For the graph $\gamma$ in \cref{Total Order for Wheeled Props Example}, $UMF_{\wheel}(\gamma)$ is the smallest tree monomial of that example.
\end{example}

\begin{lemma} \label{UMF alg for wheeled props is well-defined}
The unique minimal form algorithm of \cref{UMF Algorithm for Wheeled Props} is well-defined.
\end{lemma}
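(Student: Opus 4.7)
The strategy is to verify that every choice made in \cref{UMF Algorithm for Wheeled Props} is uniquely determined, every operadic operation applied is valid, and the output is indeed a shuffle tree monomial encoding $\gamma$.

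First, I would verify the initial tree $T$ is a valid shuffle tree monomial. When $\gamma$ has $n\geq 2$ vertices, $T$ is a left-combed tree of horizontal compositions whose leaves are $*_{v_1},\ldots,*_{v_n}$ in left-to-right order. At every binary vertex, the left subtree contains leaves indexed by $\{1,\ldots,k\}$ while the right subtree is the single leaf $*_{v_{k+1}}$, so the shuffle condition of \cref{tree definitions} holds. The wheeled graph encoded by $T$ is the disjoint union of the corollas $*_{v_1},\ldots,*_{v_n}$ with the naturally concatenated listing of open flags; the case $n=1$ is trivial.

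Next, I would argue each iteration of the while loop is well-defined. At each stage, let $G_T$ denote the intermediate wheeled graph encoded by $T$. I claim that every uncontracted edge of $\gamma_R$ corresponds to a unique ordered pair $(i,j)$, where $i$ is an output flag index and $j$ an input flag index of $G_T$ with matching colours. This bijective correspondence holds at initialisation (where each flag of each corolla either is a leg of $\gamma$ or is the endpoint of exactly one edge of $\gamma$) and is preserved by each contraction, which removes precisely the pair of flags it uses. Since the input flag indices of $G_T$ are pairwise distinct, the edge $e$ maximising $j$ is unique, and the corresponding generator $(\contra{j}{i},\binom{id}{id})$ is valid because the colours of the contracted flags agree. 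Grafting this unary operation onto the root of $T$ preserves the shuffle condition vacuously. Termination is clear since $|\gamma_R|$ strictly decreases at each iteration.

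After the while loop, $T$ encodes a wheeled graph with the same vertices and edges as $\gamma$, differing only in the listing of its open flags. The final step applies a permutation $\binom{\sigma}{\tau}$ to the root-most generator of $T$ via the right compatibility axiom of $\hcomp$ or of $\contra{}{}$ from \cref{Alternate biased wheeled prop}; the required permutation is uniquely determined by the discrepancy between the listings. The main subtlety to pin down carefully is the inductive invariant above, that each flag of $G_T$ is either a leg of $\gamma$ or a flag of exactly one uncontracted edge; once this is in hand, uniqueness of the edge-selection step and validity of each contraction follow immediately.
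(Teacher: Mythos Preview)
Your argument establishes only that the algorithm terminates and outputs \emph{some} shuffle tree monomial encoding $\gamma$. But \cref{UMF Algorithm for Wheeled Props} asserts that the output is the \emph{unique smallest} shuffle tree monomial encoding $\gamma$ with respect to the order of \cref{Total Admissible Order for Shuffle Tree Monomials of Wheeled Props}, and the lemma's role in \cref{Groebner Basis for Wheeled Props} requires precisely this minimality: \cref{Non Minimal Wheeled Prop Shuffle Tree Monomials Admit Rewrites} argues that any tree monomial $T\neq UMF_{\wheel}(\gamma)$ admits a rewrite, and this only yields confluence if $UMF_{\wheel}(\gamma)$ is genuinely the normal form. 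The paper's proof spends almost all of its effort on minimality, stepping through the five comparison stages of \cref{Total Admissible Order for Shuffle Tree Monomials of Wheeled Props} and checking that the algorithm's output is minimal for each: the left-combed shape maximises path lengths (condition 2), placing all contractions above all horizontal compositions is optimal for the partial order $\hcomp<\contra{}{}$, identity permutations below the root are minimal (condition 4), and the greedy choice of largest input index for each contraction is minimal for condition 5 since $\contra{j}{i}<\contra{j'}{i'}$ when $j>j'$.

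What you have written corresponds to the single sentence the paper dismisses as ``straightforward to verify''. To complete the proof you must add the minimality argument; without it, the name $UMF_{\wheel}$ is unjustified and the downstream confluence argument collapses.
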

\begin{proof}
It is straightforward to verify that this algorithm produces a (well-formed) shuffle tree monomial that produces the graph $\gamma$. We now verify that the algorithm produces the minimal shuffle tree encoding $\gamma$ by verifying each of the enumerated conditions of \cref{Total Admissible Order for Shuffle Tree Monomials of Wheeled Props}.
\begin{enumerate}
    \item Any tree monomial encoding a graph $\gamma$ with $n$ vertices must have arity $n$, so there is nothing to minimise here.
    \item The path from the $1$st input to the root is minimal as it is of maximal degree (all generators are above it and there cannot be another tree forming the same graph with a different number of generators) and all contraction generators come after horizontal compositions. Similarly, we can verify that the path from the $n$th input to the root is minimal, conditional on all paths from smaller inputs being fixed.
    \item The inputs of the shuffle tree monomials have already been minimised by the prior condition.
    \item Having all identity permutations except for the final generator is clearly minimal.
    \item The contractions needed to form the graph $\gamma$ were greedily added to the tree $T$ by the algorithm by their largest inputs first, and $\contra{j}{i}<\contra{j'}{i'}$ if $j>j'$.
\end{enumerate}
\end{proof}

\begin{lemma} \label{Non Minimal Wheeled Prop Shuffle Tree Monomials Admit Rewrites}
Let $T$ be a shuffle tree monomial for a directed graph $\gamma$ such that $T\neq UMF_{\wheel}(\gamma)$. 
Then, $T$ is rewritable by $G$.
\end{lemma}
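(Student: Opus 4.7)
The plan is a case analysis on the first condition of \cref{Total Admissible Order for Shuffle Tree Monomials of Wheeled Props} at which $T$ exceeds $UMF_{\wheel}(\gamma)$. Since both tree monomials encode the same graph $\gamma$, they have equal arity, so they must agree on condition (1) and differ first on one of conditions (2)--(5). In each case I will exhibit an explicit rewrite from $G$ applied to some subtree of $T$; by admissibility of the order (\cref{The order for shuffle tree monomials of wheeled props is admissible}) any such rewrite produces a strictly smaller tree monomial and hence a valid reduction of $T$ modulo $G$.

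For condition (2) (path words from inputs to root), any failure of minimality forces $T$ to contain a local subpattern matching the left-hand side of one of relations $1_*$, $3_*$ or $4_*$ in \cref{The Relations of the Shuffle Operad Governing Wheeled Props}: either a right-leaning pair of horizontal compositions (rewritten by $1_*$), or a contraction sitting below a horizontal composition on the left or right (rewritten by $3_*$ or $4_*$). These are precisely the local moves that the while loop of \cref{UMF Algorithm for Wheeled Props} implicitly performs by left-associating $\hcomp$ and pushing $\contra{}{}$ generators to the top. For condition (3), with the tree shape already fixed to that of $UMF_{\wheel}(\gamma)$ by the previous step, a difference in input permutations localises to two adjacent inputs attached through an $\hcomp$--$\hcomp$ pattern in the wrong shuffle order, which is exactly the left-hand side of relation $2_*$.

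For condition (4), if some non-root generator of $T$ carries a non-identity permutation, pick a bottom-most such generator $s$ and consider the internal edge directly above it: the corresponding element of $E_*^2$ is precisely the equivariance rule pushing permutations upward through that edge (since $[-]_*$, by definition of minimality in condition (4), prefers permutations at the root). This $E_*^2$ rewrite applies to $T$ at this site. Such rewrites preserve the underlying tree shape, input ordering, and generator colours, hence cannot affect conditions (1)--(3). Finally, for condition (5), a failure of minimality means two stacked contractions use indices in an order opposite to the greedy ``largest first'' policy of $UMF_{\wheel}(\gamma)$; the commutativity relations $5_*$ or $6_*$ (depending on whether $i<i'$ or $i>i'$) apply and swap the two contractions into the correct order.

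The main technical obstacle is the compatibility check across cases: a rewrite chosen to address the first non-minimal condition must not raise $T$ at any strictly earlier condition. For $E_*^2$ and for $5_*$--$6_*$ this is immediate, since they preserve shape, input listing, and colours. For $1_*$--$4_*$ the relations are oriented \emph{toward} the shape of $UMF_{\wheel}(\gamma)$, so any application strictly decreases $T$ already at condition (2), and there is no earlier condition to worry about. Combined with the well-definedness of $UMF_{\wheel}$ established in \cref{UMF alg for wheeled props is well-defined}, this exhausts all possibilities and proves that every $T \neq UMF_{\wheel}(\gamma)$ admits a rewrite by $G$.
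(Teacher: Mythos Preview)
Your overall plan mirrors the paper's case analysis, but there is a genuine gap in the order in which you handle the cases. You case on the \emph{first} condition of \cref{Total Admissible Order for Shuffle Tree Monomials of Wheeled Props} at which $T$ exceeds $UMF_{\wheel}(\gamma)$, and defer the $E_*^2$ normalisation to case~(4). The problem is that the relations $i_*\in(R^f)_*$ you invoke in cases~(2), (3), and (5) are, by construction, the groupoid-minimised representatives: each leading term has its inner (non-root) generator carrying the identity permutation $\binom{id}{id}$. If $T$ fails condition~(2) \emph{and} is not normalised, the offending two-vertex subtree (say a contraction sitting below a horizontal composition) may well have its inner generator decorated by a non-identity $\binom{\sigma'}{\tau'}$, and then it does \emph{not} match the leading term of $3_*$ or $4_*$. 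Divisibility requires an exact match of the subtree, not just of its shape. So your claim that ``any failure of minimality forces $T$ to contain a local subpattern matching the left-hand side of one of relations $1_*$, $3_*$ or $4_*$'' is false as stated.

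The paper avoids this by treating normalisation first: if any non-root generator carries a non-identity permutation, apply the corresponding $E_*^2$ rewrite at the internal edge above it, and thereafter assume $T$ is normalised before invoking any $i_*$. Your argument is easily repaired the same way: move the $E_*^2$ check to the top of the case analysis (it applies whenever \emph{any} non-root generator is non-identity, independent of which order condition fails first), and then run your cases~(2), (3), (5) under the standing hypothesis that $T$ is normalised. Your compatibility paragraph about rewrites not raising earlier conditions is then unnecessary, since admissibility of the order already guarantees every rewrite by $G$ strictly decreases $T$.
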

\begin{proof}
Suppose that $T$ is larger than $UMF_{\wheel}(\gamma)$ as a result of not being normalised with respect to the action of the groupoid. Then there exists an internal edge of $T$ which sits above a non-identity permutation. This internal edge defines a corresponding action of the groupoid which we can translate into an element of $E^2_*$. This defines a corresponding rewrite (which will push the permutation up the tree). So given we have access to this rewrite for the remainder of this proof, we suppose that $T$ is normalised with respect to the action of the groupoid.
\\\\
Suppose that $T$ is larger than $UMF_{\wheel}(\gamma)$ as a result of having a contraction below a horizontal composition. Then $T$ must have the form of the left-hand side of one of the following equations, and hence must admit the corresponding rewrite.
{\small
\begin{center}

\begin{forest}
fairly nice empty nodes,
for tree={inner sep=0, l=0}
[{$T_u$}
    [{$(\hcomp,\binom{\sigma}{\tau})$}
        [{$(\contra{j}{i},\binom{id}{id})$}
            [{$T_1$}]
        ]
        [{$T_2$}]
    ]
]
\end{forest}
$\xrightarrow{3_*}$
\begin{forest}
fairly nice empty nodes,
for tree={inner sep=0, l=0}
[{$T_u$}
    [{$(\contra{j}{i},\binom{\sigma}{\tau} )$}
        [{$(\hcomp,\binom{id}{id})$}
            [{$T_1$}]
            [{$T_2$}]
        ]
    ]
]
\end{forest}
$\quad\quad$ or$\quad \quad$
\begin{forest}
fairly nice empty nodes,
for tree={inner sep=0, l=0}
[{$T_u$}
    [{$(\hcomp,\binom{\sigma}{\tau})$}
        [{$T_1$}]
        [{$(\contra{j}{i},\binom{id}{id})$}
            [{$T_2$}]
        ]
    ]
]
\end{forest}
$\xrightarrow{4_*}$
\begin{forest}
fairly nice empty nodes,
for tree={inner sep=0, l=0}
[{$T_u$}
    [{$(\contra{j+|\uc|}{i+|\ud|},\binom{\sigma}{\tau})$}
        [{$(\hcomp,\binom{id}{id})$}
            [{$T_1$}]
            [{$T_2$}]
        ]
    ]
]
\end{forest}
\end{center}
}
Here $T_1$ and $T_2$ are subtrees of $T$, $T_u$ is the remainder of the tree $T$ (close to root), in the case the second equation we also suppose that $T_1$ outputs a graph with profile $\dc$. We note that if $T_u$ is not empty, then $\binom{\sigma}{\tau}$ must be the identity permutations.
Suppose that $T$ is larger than $UMF_{\wheel}(\gamma)$ as a result of having a horizontal composition appear in the right terminal of another horizontal composition, then $T$ must admit the following form and rewrite.
{\small
\begin{center}
\begin{forest}
fairly nice empty nodes,
for tree={inner sep=0, l=0}
[{$T_u$}
    [{$(\hcomp,\binom{\sigma}{\tau})$}
        [{$T_1$}]
        [{$(\hcomp,\binom{id}{id})$}
            [{$T_2$}]
            [{$T_3$}] 
        ]
    ]
]
\end{forest}
$\xrightarrow{1_*}$
\begin{forest}
fairly nice empty nodes,
for tree={inner sep=0, l=0}
[{$T_u$}
    [{$(\hcomp,\binom{\sigma}{\tau})$}
        [{$(\hcomp,\binom{id}{id})$}
            [{$T_1$}]
            [{$T_2$}] 
        ]
        [{$T_3$}]
    ]
]
\end{forest}
\end{center}
}
Suppose that $T$ has all generators on the path from the minimal vertex to the root, but has a different order of its inputs than $UMF_\wheel(\gamma)$. Then $T$ must admit the following form and rewrite.
{\small
\begin{center}
$T=$
\begin{forest}
fairly nice empty nodes,
for tree={inner sep=0, l=0}
[{$T_u$}
    [{$(\hcomp,\binom{\sigma}{\tau})$}
        [{$(\hcomp,\binom{id}{id})$}
            [{$T_1$}]
            [{$c_j$}] 
        ]
        [{$c_i$}]
    ]
]
\end{forest}
$\xrightarrow{2_*}$
\begin{forest}
fairly nice empty nodes,
for tree={inner sep=0, l=0}
[{$T_u$}
    [{$(\hcomp,\binom{\sigma'}{\tau'})$}
        [{$(\hcomp,\binom{id}{id})$}
            [{$T_1$}]
            [{$c_i$}] 
        ]
        [{$c_j$}]
    ]
]
\end{forest}
\end{center}
}
Here $i<j$, and the alteration of $\binom{\sigma}{\tau}$ to  $\binom{\sigma'}{\tau'}$ is needed in general with this interchange of vertices. As $T$ is a shuffle tree monomial, $T_1$ must contain $c_1$. Finally, suppose that $T$ is larger than $UMF_\wheel(\gamma)$ as a result of using a different contraction sequence in the 'topmost' vertical segment. This implies there must exist two adjacent contractions such that the lower contraction uses input index $j$, the higher contraction uses input index $j'-1$ and $j< j'$. Then $T$ must admit either of the following forms and rewrites.

{\small
\begin{center}
If $i<i'$ \quad
\begin{forest}
for tree={parent anchor=south}
[$T_u$ [{$(\contra{j'-1}{i'-1},\binom{\sigma}{\tau})$}[{$(\contra{j}{i},\binom{id}{id})$} [$T_1$]]]]
\end{forest}
$\xrightarrow{5_*}$
\begin{forest}
for tree={parent anchor=south}
[$T_u$[{$(\contra{j}{i},\binom{\sigma}{\tau})$}[{$(\contra{j'}{i'},\binom{id}{id})$} [$T_1$]]]]
\end{forest}
, if $i>i'$ \quad
\begin{forest}
for tree={parent anchor=south}
[$T_u$[{$(\contra{j'-1}{i'},\binom{\sigma}{\tau})$}[{$(\contra{j}{i},\binom{id}{id})$} [$T_1$]]]]
\end{forest}
$\xrightarrow{6_*}$
\begin{forest}
for tree={parent anchor=south}
[$T_u$[{$(\contra{j}{i-1},\binom{\sigma}{\tau})$}[{$(\contra{j'}{i'},\binom{id}{id})$} [$T_1$]]]]
\end{forest}
\end{center}
}
We conclude this proof by noting that there are no further ways for $T$ to be greater than $UMF_{\wheel}(\gamma)$, hence either $T=UMF_{\wheel}(\gamma)$ or $T$ admits a rewrite.
\end{proof}

\section{The Operad Governing Props}\label{The Operad Governing Props}

We will now mirror the methodology of the preceding section to define a groupoid coloured operad $\mathbb{P}$ governing props.
We then show this operad is Koszul (\cref{proof The Operad Governing Props is Koszul}). We will assume that the reader has read (or is reading in parallel) \cref{The Operad Governing Wheeled Props}, so that we may streamline identical proofs and focus on technical difficulties arising from the nature of props.

\subsection{Props}

We wish to define an alternate biased prop which will induce a nice definition of a quadratic groupoid coloured operad governing props. Such a definition is not as straightforward to produce as the alternate definition for wheeled props (\cref{Alternate biased wheeled prop}). As such, we first discuss the necessary features of an 'alternate biased prop', before presenting it (\cref{Alternate biased prop}) and proving it is equivalent to a standard definition of a prop (Definition 11.30 of \cite{yau2015foundation}, through \cref{definitions of props are equivalent}). Finally, we will discuss non-unital and augmented variants of this definition.
\\\\
Our alternate definition of a prop should use the same extended horizontal composition as the alternate wheeled prop (\cref{Alternate biased wheeled prop}), and then have some way to connect flags of graphs without the possibility of forming wheels. The standard vertical composition of props which connects all outputs of one graph to all inputs of another (see for instance Definition 11.30 of \cite{yau2015foundation}) is not a good candidate for at least two reasons. Firstly, a non-unital version of such a prop would not be able to form graphs like the Bow graph.
\begin{center}\label{non-unital standard prop counter example}
\begin{tikzpicture}[thick,
v/.style={circle, draw}] 
\node[v] (1) {$v_1$}; 
\node[v] (2) [below right of=1] {$v_2$};
\node[v] (3) [below left of=2] {$v_3$};
\draw (2) to (1);
\draw (3) to (1);
\draw (3) to (2);
\end{tikzpicture}  
\end{center}
This is similar to how a non-unital May operad, defined via $\gamma:O(n)\times O(k_1) \times ... O(k_n) \to O(k_1+...+k_n)$, is incapable of forming a partial operadic composition $\circ_i:O(n)\times O(k)\to O(n+k-1)$ (see \cite{markl2008operads}).
Secondly, props with the standard vertical composition satisfy the interchange axiom (11.26 of \cite{yau2015foundation}) which is a ternary relation, and hence would not induce a quadratic presentation of our operad.
As such, instead of seeking to modify vertical composition, we will instead define an extended properadic composition, which we now discuss the motivation for. 
\\\\
In \cite{yau2015foundation} two biased definitions of properads are provided. The first, Definition 11.25, uses a properadic composition of the form 
\begin{align} \label{contiguous properadic compsoition}
    P\dc \otimes P\ba \xrightarrow{\properadic{\cbprime}} P\binom{\ub\circ_{\ub'}\ud}{\uc \circ_{\uc'}\ua}
\end{align}
This composition connects two graphs via contiguous $k$-segments $\uc'\subset \uc$ and $\ub'\subset \ub$.
By $\uc'$ is a contiguous $k$-segment of $\uc$ (Definition 1.5 of \cite{yau2015foundation}), we mean that $\uc = (c_1,...,c_{i-1},\uc',c_{i+k+1},...,c_{|\uc|})$, and $\uc \circ_{\uc'}\ua$ is syntactic sugar denoting $\uc \circ_{\uc'}\ua:= (c_1,...,c_{i-1},\ua,c_{i+k+1},...,c_{|\uc|})$.
The second definition of a properad in \cite{yau2015foundation}, Definition 11.27, uses a properadic composition which is an extension of the prior.
\begin{align}\label{discontiguous properadic compsoition}
    P\dc \otimes P\ba \xrightarrow{\properadic{\cbprime,\binom{\sigma}{\tau}}{}} P\binom{\sigma\ub\circ_{\ub'}\ud}{\uc\tau \circ_{\uc'}\ua}, \quad\text{ satisfying }\quad
    \alpha \properadic{\cbprime,\binom{\sigma}{\tau}}{} \beta := (\alpha \cdot \binom{id}{\tau}) \properadic{\binom{\uc'}{\ub'}}{}{} (\beta \cdot \binom{\sigma}{id})
\end{align}
This composition connects two graphs via the contiguous $k$-segments $\uc'\subset \uc \tau$ and $\ub'\subset \sigma \ub$. 
This in effect allows the composition of two graphs via potentially dis-contiguous segments via first permuting the dis-contiguous segments into contiguous segments. 
In particular we could form any connected graph with two vertices $\gamma$, by first forming the internal edges of $\gamma$ via \cref{discontiguous properadic compsoition}, and then producing any listing of the remaining flags via an action of the bimodule.
We will use the following diagram to represent the graph $\gamma$.
\begin{center}
\begin{tikzpicture}[node distance={10mm}, thick,
v/.style={circle, draw}] 
\node[v] (1) {$v_1$}; 
\node[v] (2) [below of=1] {$v_2$};
\draw [-{Implies},double] (2) to [out=90,in=-90,looseness =3] (1);
\end{tikzpicture}   
\end{center}
Unfortunately, $\properadic{\cbprime,\binom{\sigma}{\tau}}$ satisfies inner biequivariance axioms (see Definition 11.27 of \cite{yau2015foundation}) such as
\begin{center}
\begin{tikzpicture}[thick,
v/.style={circle, draw}] 
\node[v] (1) {$v_1$}; 
\node[flag] (2) [below left =0.3cm of 1] {$c_1$};
\node[flag] (3) [below right =0.3cm of 1] {$c_2$};
\draw (1) to (2);
\draw (1) to (3);
\end{tikzpicture}
\raisebox{0.75cm}{
$\properadic{\binom{c_1,c_2}{c_1,c_2},\binom{id}{id}}{}$
}
\raisebox{0.2cm}{
\begin{tikzpicture}[thick,
v/.style={circle, draw}] 
\node[v] (1) {$v_1$}; 
\node[flag] (2) [above left =0.3cm of 1] {$c_1$};
\node[flag] (3) [above right =0.3cm of 1] {$c_2$};
\draw (1) to (2);
\draw (1) to (3);
\end{tikzpicture}
}
\raisebox{0.75cm}{$=$}
\begin{tikzpicture}[node distance={10mm}, thick,
v/.style={circle, draw}] 
\node[v] (1) {$v_1$}; 
\node[v] (2) [below of=1] {$v_2$};
\draw (2) to [bend left] (1);
\draw (2) to [bend right] (1);
\end{tikzpicture}
\raisebox{0.75cm}{$=$}
\begin{tikzpicture}[thick,
v/.style={circle, draw}] 
\node[v] (1) {$v_1$}; 
\node[flag] (2) [below left =0.3cm of 1] {$c_1$};
\node[flag] (3) [below right =0.3cm of 1] {$c_2$};
\draw (1) to (2);
\draw (1) to (3);
\end{tikzpicture}
\raisebox{0.75cm}{
$\properadic{\binom{c_2,c_1}{c_2,c_1},\binom{21}{21}}{}$
}
\raisebox{0.2cm}{
\begin{tikzpicture}[thick,
v/.style={circle, draw}] 
\node[v] (1) {$v_1$}; 
\node[flag] (2) [above left =0.3cm of 1] {$c_1$};
\node[flag] (3) [above right =0.3cm of 1] {$c_2$};
\draw (1) to (2);
\draw (1) to (3);
\end{tikzpicture}
}
\end{center}
Hence, if we were to take this as our definition of properadic composition, then the operad governing props would satisfy unary relations.
Thus, for any connected graph with two vertices $\gamma$, we wish to identify a unique way of forming $\gamma$ from these vertices (with their specific listings, \cref{Wheeled Graph}).
Or combinatorially, we wish to uniquely identify: an instance of \cref{discontiguous properadic compsoition} which forms the $k$ connected edges of $\gamma$ by connecting the potentially dis-contiguous $k$-segments $\uc'\subseteq \uc$ and $\ub'\subseteq \ub$, subject to $\uc' = \ub'$; 
and a pair of permutations which produces the specific listing of the remaining open inputs and outputs of $\gamma$.
\begin{itemize}
    \item We assume $\uc'\subseteq \uc$ admits the reduced order of $\uc$, i.e. if $c_i,c_j\in \uc'$ and $c_i<c_j$ in $\uc$, then $c_i<c_j$ in $\uc'$.
    If this was not the case, then there exists a permutation $\sigma$ such that $\uc' \sigma$ admits the reduced order, and we could proceed with this pair instead (clearly $\uc' \sigma = \ub' \sigma$).
    \item The properadic composition of \cref{contiguous properadic compsoition}, is defined whenever both $\uc'$ and $\ub'$ are contiguous, thus we could apply any pair of permutations $\tau, \sigma$ such that $\uc\tau = (\underline{c}_l, \uc', \underline{c_r})$ and $\sigma \ub = (\underline{b}_l, \ub', \underline{b_r})$.
    Of these possibilities, we choose the unique permutations $\tau^{\uc}_{\uc'},\sigma^{\ub}_{\ub'}$ such that $\uc\tau^{\uc}_{\uc'} = (\uc',\uc\setminus \uc')$ and $\sigma^{\ub}_{\ub'} \ub = (\ub',\ub\setminus \ub')$,
    i.e. we choose the unique permutations which pull the connecting segments to the left, and conserve the order of the remaining elements.
    
    \item Hence by the definition of $\circ_{\uc'}$ below \cref{contiguous properadic compsoition}, $\uc\tau^{\uc}_{\uc'} \circ_{\uc'} \ua = (\ua,\uc\setminus \uc')$ and $\sigma^{\ub}_{\ub'} \ub \circ_{\ub'} \ud = (\ud, \ub\setminus \ub')$.
    \item Finally, there exist unique permutations $\binom{\sigma}{\tau}$ producing any listing of the remaining inputs+outputs.
\end{itemize}
That is to say we identify our properadic composition as the composite,
\begin{center}
\begin{tikzcd}[column sep = 2.5cm]\label{extended properadic as a composite}
P\dc \otimes P\ba \arrow[r, "{\binom{id}{\tau^{\uc}_{\uc'}}\otimes \binom{\sigma^{\ub}_{\ub'}}{id}}"]
\arrow[rdd,"{\eproperadic{\cbprime}{\binom{\sigma}{\tau}}}"']
& P\binom{\ud}{\uc',\uc\setminus \uc'}\otimes P\binom{\ub',\ub\setminus \ub'}{\ua}
\arrow[d, "{\properadic{\cbprime}}"]\\
& P\binom{\ud, \ub\setminus \ub'}{\ua,\uc\setminus \uc'}
\arrow[d, "{\binom{\sigma}{\tau}}"]\\
& P\binom{\sigma(\ud, \ub\setminus \ub')}{(\ua,\uc\setminus \uc')\tau}
\end{tikzcd}
\end{center}

Note, in \cref{Alternate biased prop}, we do not define our composition to be equal to this composite, but rather alter the underlying biequivariance axioms so it behaves in this fashion (see \cref{restriction is an alternate definition of a properad}).
We also note that other ways of producing a representative are possible, such as giving $\ub'$ the reduced order of $\ub$, or pulling the connecting segments to the right.
We only care that there is a unique way to form each connected graph with two vertices via our defined composition.
We illustrate this with the following example.

\begin{example} \label{extended properadic composition example}
Recalling that we use the line permutation notation, we compute
{\tiny
\begin{align*}
\begin{tikzpicture}[node distance={6mm},thick] 
\node[flag] (v1o1) {$d_1$};
\node[vertex] (v1) [below of=v1o1]{$v_1$};
\node[flag] [below of=v1] (v1i2) {$c_2$};
\node[flag] [left of=v1i2] (v1i1) {$c_1$};
\node[flag] [right of=v1i2] (v1i3) {$c_3$};
\draw [black] (v1) to (v1o1);
\draw [black] (v1) to (v1i1);
\draw [black] (v1) to (v1i2);
\draw [black] (v1) to (v1i3);
\end{tikzpicture}
\eproperadic{\binom{c_1,c_3}{b_3,b_2}}{\binom{21}{id}}
\begin{tikzpicture}[node distance={6mm},thick] 
\node[flag] (v1o1) {$b_1$};
\node[flag][right of=v1o1] (v1o2) {$b_2$};
\node[flag][right of=v1o2] (v1o3) {$b_3$};
\node[vertex] (v1) [below of=v1o2]{$v_1$};
\node[flag] [below of=v1] (v1i1) {$a_1$};
\draw [black] (v1) to (v1o1);
\draw [black] (v1) to (v1o2);
\draw [black] (v1) to (v1o3);
\draw [black] (v1) to (v1i1);
\end{tikzpicture}
=
(
\begin{tikzpicture}[node distance={6mm},thick] 
\node[flag] (v1o1) {$d_1$};
\node[vertex] (v1) [below of=v1o1]{$v_1$};
\node[flag] [below of=v1] (v1i2) {$c_2$};
\node[flag] [left of=v1i2] (v1i1) {$c_1$};
\node[flag] [right of=v1i2] (v1i3) {$c_3$};
\draw [black] (v1) to (v1o1);
\draw [black] (v1) to (v1i1);
\draw [black] (v1) to (v1i2);
\draw [black] (v1) to (v1i3);
\end{tikzpicture}
\cdot\binom{id}{132})
\properadic{\binom{c_1,c_3}{b_3,b_2}}
(
\begin{tikzpicture}[node distance={6mm},thick] 
\node[flag] (v1o1) {$b_1$};
\node[flag][right of=v1o1] (v1o2) {$b_2$};
\node[flag][right of=v1o2] (v1o3) {$b_3$};
\node[vertex] (v1) [below of=v1o2]{$v_1$};
\node[flag] [below of=v1] (v1i1) {$a_1$};
\draw [black] (v1) to (v1o1);
\draw [black] (v1) to (v1o2);
\draw [black] (v1) to (v1o3);
\draw [black] (v1) to (v1i1);
\end{tikzpicture}
\cdot {\binom{321}{id}}
)
\cdot {\binom{21}{id}}
=
\begin{tikzpicture}[node distance={4.7mm},thick] 
\node[flag] (go1) {$b_1$};
\coordinate[right of=go1] (spacego);
\node[flag] (go2) [right of=spacego]{$d_1$};
\coordinate[below of=go1] (v1o1);
\coordinate[below of=go2] (go2l);
\node[vertex] (v1) [below of=v1o1]{$v_1$};
\coordinate[below of=v1] (v1i2);
\coordinate[left of=v1i2] (v1i1);
\coordinate[right of=v1i2] (v1i3);
\coordinate[below of=v1i1] (v1i1l);
\coordinate[below of=v1i2] (v1i2l);
\coordinate[below of=v1i3] (v1i3l);
\coordinate[below of=v1i1l] (v2o1u);
\coordinate[below of=v1i2l] (v2o2u);
\coordinate[below of=v1i3l] (v2o3u);
\coordinate[below of=v2o1u] (v2o1);
\coordinate[below of=v2o2u] (v2o2);
\coordinate[below of=v2o3u] (v2o3);
\node[vertex] (v2) [below of=v2o2]{$v_2$};
\node[flag] (gi1) [below of=v2]{$a_1$};
\coordinate[right of=gi1] (spacegi);
\node[flag] (gi2) [right of=spacegi]{$c_2$};
\draw [black] (go2) to (v1o1);
\draw [black] (go1) to (go2l);
\draw [black] (v1) to (v1o1);
\draw [black] (v1) to (v1i1);
\draw [black] (v1) to (v1i2);
\draw [black] (v1) to (v1i3);
\draw [black] (v1i1) to (v1i1l);
\draw [black] (v1i2) to (v1i3l);
\draw [black] (v1i3) to (v1i2l);
\draw [black] (v1i1l) to (v2o1u);
\draw [black] (v1i2l) to (v2o2u);
\draw [black] (v1i3l) to (gi2);
\draw [black] (go2l) to (v2o3u);
\draw [black] (v2o1u) to (v2o3);
\draw [black] (v2o2u) to (v2o2);
\draw [black] (v2o3u) to (v2o1);
\draw [black] (v2o1) to (v2);
\draw [black] (v2o2) to (v2);
\draw [black] (v2o3) to (v2);
\draw [black] (v2) to (gi1);
\end{tikzpicture} 
\end{align*}
}
We verify that there is a unique composite $\alpha {\eproperadic{\cbprime}{\binom{\sigma}{\tau}}}\beta$ forming this graph $\gamma$.
In particular:
\begin{itemize}
    \item $\alpha$ must be the top vertex and $\beta$ the lower (i.e inputs of $\alpha$ are connected to outputs of $\beta$); 
    \item we identify the subset of flags $\uc'=\{c_1,c_3\}=\{b_3,b_2\} = \ub'$ being connected by the properadic join;
    \item the listing of $\uc' = (c_1,c_3)$ is then determined by the listing of $\alpha$, as $\uc'$ admits the reduced order of $\uc$ (recall each vertex has specific listing data, \cref{Wheeled Graph});
    \item the listing of $\uc'$ determines the listing of $\ub'=(b_3,b_2)$;
    \item this completely determines $\properadic{\cbprime,\binom{\sigma}{\tau}}{} = \properadic{\binom{c_1,c_3}{b_3,b_2},\binom{132}{321}}{}$; and
    \item the final permutation $\binom{\sigma}{\tau}=\binom{21}{id}$ is determined by the relative listing of $\gamma$ to $\alpha \properadic{\binom{c_1,c_3}{b_3,b_2},\binom{132}{321}}{} \beta$.
\end{itemize}

\end{example}

We now present our definition.
\begin{definition}\label{Alternate biased prop}
Let $\mathcal{E}$ be a symmetric monoidal category, a \textbf{alternate prop} $P$ consists of the following.
\begin{enumerate}
    \item A bimodule $P\in \mathcal{E}^{\SC}$.
    \item An extended horizontal composition,
    \begin{align*}
        P\dc \otimes P\ba \xrightarrow{\hcomp,\binom{\sigma}{\tau}} P\binom{\sigma(\ud,\ub)}{(\uc , \ua)\tau}.
    \end{align*}
    \item An alternate properadic composition,
    \begin{align*}
        P\dc \otimes P\ba \xrightarrow{\eproperadic{\binom{\uc'}{\ub'}}{\binom{\sigma}{\tau}}} P\binom{\sigma(\ud,\ub\setminus \ub')}{(\ua,\uc\setminus \uc')\tau},
    \end{align*}
    where $\uc'\subseteq \uc$, $\ub'\subseteq \ub$, $\uc' = \ub'$ and $\uc'$ admits the reduction of the order of $\uc$.
    \item An opposing alternate properadic composition defined by $\eproperadicop{\binom{\uc'}{\ub'}}{\binom{\sigma}{\tau}}(\alpha,\beta):=\eproperadic{\binom{\uc'}{\ub'}}{\binom{\sigma}{\tau}}(\beta,\alpha)$.
    \item Units for properadic and horizontal composition.
\end{enumerate}
In situations where the permutations and indices are either arbitrary or clear, we will use $\hcomp$, $\eproperadicunspecified$ and $\eproperadicopunspecified$ to refer to these compositions. The opposing extended properadic composition $\eproperadicopunspecified$ does not increase the expressive power of this definition of a prop, but rather exists to be paired off with $\eproperadicunspecified$ under a $\Sigma_2$-action in our latter definition of the operad governing props. 
\\\\
We now present the axioms satisfied by these compositions. We will not express the various unity diagrams (as we will quickly move to using a non-unital definition) but the obvious extensions to (Definitions $11.21$ and $11.9$ of \cite{yau2015foundation}) are also present. We have the same biequivariance and associativity axioms of $\hcomp$ as for wheeled props (\cref{Alternate biased wheeled prop}). The other compositions are right, left and switch biequivariant as follows, (the axioms for $\eproperadicopunspecified$ are obvious variants of $\eproperadicunspecified$)
\begin{center}
\begin{tikzcd}
P\dc \otimes P\ba
\arrow[d,"{\eproperadic{\binom{\uc'}{\ub'}}{\binom{\sigma}{\tau}}}"']
\arrow[rd,"{\eproperadic{\binom{\uc'}{\ub'}}{\binom{\sigma'\sigma}{\tau\tau'}}}"]\\
P\binom{\sigma(\ud,\ub)}{(\uc , \ua)\tau}   \arrow[r,"{\binom{\sigma'}{\tau'}}"']& P\binom{\sigma' \sigma(\ud,\ub)}{(\uc , \ua)\tau \tau'} 
\end{tikzcd}
$\quad$
\begin{tikzcd}
P\dc \otimes P\ba
\arrow[d,"{\binom{\sigma_1}{\tau_1}\otimes\binom{\sigma_2}{\tau_2} }"']
\arrow[rd,"{\eproperadic{\binom{\uc''}{\ub''}}{\binom{\sigma'}{\tau'}}}"]\\
P\binom{\sigma_1\ud}{\uc \tau_1} \otimes P\binom{\sigma_2\ub}{\ua \tau_2}  \arrow[r,"{\eproperadic{\binom{\uc'}{\ub'}}{\binom{\sigma}{\tau}}}"']& P\binom{\sigma(\sigma_1 \ud, \sigma_2 \ub \setminus \ub')}{(\ua \tau_2,\uc \tau_1 \setminus \uc')\tau}  
\end{tikzcd}
$\quad$
\begin{tikzcd}
P\dc \otimes P\ba
\arrow[d,"switch"']
\arrow[rd,"{\eproperadic{\binom{\uc'}{\ub'}}{\binom{\sigma}{\tau}}}"]\\
P\ba \otimes P \dc \arrow[r,"{\eproperadicop{\binom{\uc'}{\ub'}}{\binom{\sigma}{\tau}}}"']& P\binom{\sigma(\ud,\ub\setminus \ub')}{(\ua,\uc\setminus \uc')\tau}
\end{tikzcd}
\end{center}
In the second diagram, if $\sigma_{\uc}$ is the unique permutation that gives $\uc' (\tau^{-1}|_{\uc'})$ the same reduced order as $\uc$, then $\uc'' := \uc' (\tau^{-1}|_{\uc'})\sigma_{\uc}$,
which by construction has the same reduced order as $\uc$. Then (in order to connect the exact same wires), we define $\ub'' := \ub' (\tau^{-1}|_{\uc'}) \sigma_{\uc}$.
Notice that in general $\ub''$ will be a dis-contiguous subset of $\ub$.
Finally, in order to match permutations, we define $\sigma',\tau'$ such that $\binom{\sigma(\sigma_1 \ud, \sigma_2 \ub \setminus \ub')}{(\ua \tau_2,\uc \tau_1 \setminus \uc')\tau} = \binom{\sigma'(\ud, \ub \setminus \ub'')}{(\ua ,\uc \setminus \uc'')\tau'}$.
To place the diagrams in context, the left and right compatibility diagram encode similar information to the biequivariance diagram of Definition 11.27 of \cite{yau2015foundation}, and the switch diagram is the definition of $\eproperadicopunspecified$ restated in diagram form. 
\\\\
With these axioms we can prove \cref{simple canonical form of alternate props}, and use it to simplify the remaining axioms. These include the four associativity diagrams of properadic composition, extended in the obvious way (see Definition 11.25 of \cite{yau2015foundation}). We shall refer to these diagrams as the Caterpillar, Lighthouse, Fireworks and Bow diagrams following the conventions of \cite{yau2015foundation} (sometimes abbreviated using the capitalised first letter where obvious). Finally, a new axiom regarding compatibility of $\hcomp$ and $\eproperadicunspecified$ is required, which we refer to as the Third-wheel. 
\\\\
These relations are encoded graphically in \cref{The Relations of the Operad Governing Props}, however the interested reader may also find them explicitly written out in \cref{axioms of the alternate prop}. 
This table uses the same notation as introduced in \cref{The Relations of the Operad Governing Wheeled Props}. 
In other words, each composite of operations is given under the implicit understanding that it is forming the graph on the left. 
As we may use the biequivariance axioms to push all permutations to the top (\cref{simple canonical form of alternate props}), this indicates that the permutations of the operations and the sub-profiles of any properadic joins are uniquely specified by the graph. As such, we may use a suppressed notation where we omit the permutations and the specific indices used by each binary operation.
\end{definition}

As was the case for wheeled props (\cref{simple canonical form of alternate wheeled props}), the equivariance axioms yield the following.
\begin{proposition} \label{simple canonical form of alternate props}
Every valid composite of operations in \cref{Alternate biased prop} admits a simple canonical form, where all non-identity permutations are pushed to the outermost operation.
\end{proposition}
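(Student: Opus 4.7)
The strategy mirrors the wheeled prop case (\cref{simple canonical form of alternate wheeled props}), but requires a bit more bookkeeping because of the two additional binary operations $\eproperadicunspecified$ and $\eproperadicopunspecified$ and the non-trivial reindexing (the $\tau^{-1}|_{\uc'} \sigma_{\uc}$ data) appearing in their left compatibility diagram. I would proceed by induction on the pair (number of binary operations in the composite, total number of non-identity permutations decorating non-outermost operations), with a straightforward base case when the composite is a single operation.

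For the inductive step, pick any inner binary operation $\mu \in \{\hcomp, \eproperadicunspecified, \eproperadicopunspecified\}$ that is decorated with non-identity permutations $\binom{\sigma}{\tau}$, and let $\nu$ be the unique operation into which the output of $\mu$ is fed. First I apply the right compatibility axiom of $\mu$ (with $\binom{\sigma'}{\tau'} := \binom{\sigma}{\tau}$ and the internal data set to identity) to rewrite $\mu$ with $\binom{\sigma}{\tau}$ decoration as the composite of $\mu$ with identity decoration followed by the bimodule action of $\binom{\sigma}{\tau}$. The permutation now sits on an input of $\nu$. Next I apply the left compatibility axiom of $\nu$ to absorb $\binom{\sigma}{\tau}$ into the decoration of $\nu$ (with the other input decoration set to identity). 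The net effect is to strip the permutation off $\mu$ and transfer it, suitably modified, into $\nu$, strictly decreasing the induction measure.

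For $\nu = \hcomp$ or $\nu = \eproperadicopunspecified$ this absorption is formally identical to the wheeled prop argument. For $\nu = \eproperadicunspecified$, the left compatibility diagram rewrites the connecting profile $\binom{\uc'}{\ub'}$ into $\binom{\uc''}{\ub''}$ via the canonical $\sigma_{\uc}$ normalising $\tau^{-1}|_{\uc'}$; since the axiom is stated as an equality, this reindexing is automatic, and $\nu$ simply ends up with different (but still canonical) properadic join data and decoration $\binom{\sigma'}{\tau'}$. Opposing properadic compositions $\eproperadicopunspecified$ can be handled either by direct analogues of the same axioms or by appealing to the switch axiom, which converts them into $\eproperadicunspecified$ with arguments swapped.

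Iterating this move clears every inner operation of non-identity permutations, leaving all permutation data concentrated at the outermost operation, which is the desired canonical form. The only genuine subtlety, and the step I expect to require the most care when spelling it out in detail, is verifying that the reindexing $\binom{\uc''}{\ub''} = \binom{\uc' \tau^{-1}|_{\uc'} \sigma_{\uc}}{\ub' \tau^{-1}|_{\uc'} \sigma_{\uc}}$ in the properadic left compatibility axiom really does keep us within the admissible class of properadic compositions (i.e.\ $\uc''$ admits the reduced order of $\uc$), but this is built into the choice of $\sigma_{\uc}$ and so is automatic.
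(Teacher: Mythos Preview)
Your proposal is correct and follows essentially the same approach as the paper: the paper's proof (which simply defers to \cref{simple canonical form of alternate wheeled props}) is the one-line argument ``use right compatibility to push the permutation off an inner operation, then left compatibility to absorb it into the next operation outward,'' iterated until done. Your version spells out the same mechanism with considerably more care about the properadic reindexing $\binom{\uc''}{\ub''}$, which is a helpful elaboration rather than a different route.
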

This result not only lets us simplify the other axioms of \cref{Alternate biased prop}, but it will also induce a simple canonical form for every groupoid coloured tree monomial in $\mathbb{P}$ (\cref{biased presentation of the operad governing props}).
We now prove our definition is indeed a prop, by establishing equivalence with a known definition.

\begin{proposition}\label{definitions of props are equivalent}
\cref{Alternate biased prop} and Definition 11.30 of \cite{yau2015foundation} are equivalent definitions of props.
\end{proposition}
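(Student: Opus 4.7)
The plan is to exploit the equivalences between biased definitions of props already established in Chapter 11 of \cite{yau2015foundation}. In particular, Definition 11.30 is equivalent to the properadic-based definition (Definition 11.27 of \cite{yau2015foundation}, phrased via $\properadic{\cbprime,\binom{\sigma}{\tau}}{}$), so it suffices to construct an axiom-preserving bijection between the alternate properadic composition $\eproperadicunspecified$ of \cref{Alternate biased prop} and the extended properadic composition $\properadic{\cbprime,\binom{\sigma}{\tau}}{}$. Morally, both parameterize the same class of structure maps, namely those forming a connected two-vertex graph by identifying a sub-profile of outputs of one vertex with a sub-profile of inputs of the other and specifying a listing of the remaining open flags. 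The alternate definition fixes a canonical representative of this parameter data by insisting that $\uc' = \ub'$ admits the reduced order of $\uc$, whereas \cite{yau2015foundation} Definition 11.27 allows arbitrary such data subject to the biequivariance axioms which render the choices redundant.

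For the forward direction, starting from a prop in the sense of \cite{yau2015foundation} Definition 11.30, I would define $\eproperadic{\cbprime}{\binom{\sigma}{\tau}}$ as the composite already displayed in the exposition above: first apply $\binom{id}{\tau^{\uc}_{\uc'}} \otimes \binom{\sigma^{\ub}_{\ub'}}{id}$ to pull the connecting subsets to the left, then apply the contiguous properadic composition $\properadic{\cbprime}$ of \cite{yau2015foundation} Definition 11.25, and finally apply the bimodule action of $\binom{\sigma}{\tau}$. The opposing composition $\eproperadicopunspecified$ is defined by swapping arguments, and horizontal composition and its biequivariance axioms carry over verbatim. Each of the remaining biequivariance axioms and each of the four associativity diagrams in \cref{Alternate biased prop} (Caterpillar, Lighthouse, Fireworks, Bow) then reduces, via \cref{simple canonical form of alternate props}, to the corresponding axiom for $\properadic{}$ together with a routine permutation identity in $\Sigma$. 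The Third-wheel axiom receives a separate verification, but it follows from the known compatibility between horizontal composition and $\properadic{\cbprime}$ encoded in \cite{yau2015foundation} Definition 11.27.

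For the backward direction, given an alternate prop I would define $\properadic{\cbprime,\binom{\sigma}{\tau}}{}$ on arbitrary (not-necessarily canonical) sub-profiles $\uc', \ub'$ by first pre-composing with the unique permutations that bring $\uc'$ into the reduced order of $\uc\tau$ together with the matching permutation on $\ub'$, and then invoking $\eproperadicunspecified$ with the canonical data. The biequivariance axioms of $\eproperadicunspecified$ guarantee this is independent of auxiliary choices and reproduce the biequivariance axioms of Definition 11.27 of \cite{yau2015foundation}. That the two constructions are mutually inverse then follows by inspection: beginning with either definition and traversing the loop leaves the structure unchanged on the canonical representative, and hence unchanged everywhere by biequivariance.

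The main obstacle will be the careful bookkeeping of permutations in the switch compatibility axiom and in the Bow associativity diagram, since these are the axioms in which the dis-contiguity of $\uc'$ inside $\uc$ is most active and where the auxiliary permutations $\sigma_{\uc}, \tau^{\uc}_{\uc'}, \sigma^{\ub}_{\ub'}$ must be threaded through the translation. Once these are settled, the Caterpillar, Lighthouse and Fireworks diagrams follow by analogous but simpler permutation calculations, and the Third-wheel verification is a direct graphical check on the three-vertex composite.
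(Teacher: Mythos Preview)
Your route through Definition 11.27 has a structural problem: Definition 11.27 in \cite{yau2015foundation} is a \emph{properad} definition, not a prop definition, and in particular it has no horizontal composition at all. There is no pre-existing ``properadic-based definition of props'' in \cite{yau2015foundation} equivalent to Definition 11.30 that you can simply invoke; constructing such an equivalence is precisely what this proposition is doing. Concretely, your claim that the Third-wheel axiom ``follows from the known compatibility between horizontal composition and $\properadic{\cbprime}$ encoded in Definition 11.27'' cannot be right, because no such compatibility is encoded there. The same issue bites the Lighthouse and Fireworks axioms, each of which mixes $\hcomp$ with $\eproperadicunspecified$ and therefore cannot reduce to an axiom purely about $\properadic{}$.

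The missing ingredient is the interchange axiom (11.26) of Definition 11.30. The paper's proof works directly between Definition 11.30 and \cref{Alternate biased prop}: it defines $\eproperadicunspecified$ from $\vcomp$ by padding with units (not via $\properadic{\cbprime}$), and then the substantive content is (i) in the forward direction, deriving the interchange axiom from the alternate axioms via an explicit chain of Lighthouse, Third-wheel, and $\hcomp$-biequivariance moves applied to the four-vertex graph $\hcomp(\vcomp(v_1,v_2),\vcomp(v_3,v_4))=\vcomp(\hcomp(v_1,v_3),\hcomp(v_2,v_4))$; and (ii) in the backward direction, deriving Third-wheel, Lighthouse, Fireworks, and Bow from interchange together with vertical associativity and units (inserting unit blocks to reshape each three-vertex graph into an interchange square). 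Your proposal never invokes interchange, so both directions of the equivalence are incomplete where $\hcomp$ and $\eproperadicunspecified$ interact.
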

\begin{proof}
The equivalence of the underling operations is easy to establish. The vertical composition (which we denote) $\vcomp$, and the horizontal composition $\hcomp$ of Definition 11.30 of \cite{yau2015foundation}, are particular instances of the extended compositions, i.e. $\vcomp: P\binom{\uc}{\ub}\otimes P \binom{\ub}{\ua}\to P \binom{\uc}{\ua}$ is equal to $\eproperadic{\binom{\ub}{\ub}}{\binom{id}{id}}$, and $\hcomp = (\hcomp,\binom{id}{id})$. In the other direction, 
the extended horizontal composition $(\hcomp,\binom{\sigma}{\tau})$ can be obtained by $\hcomp$ followed by an action of the bimodule (\cref{Bimodules}).
The extended properadic composition can be obtained through the following commutative diagram,
\begin{center}
\begin{tikzcd}[column sep = 3cm] \label{eproperadic through vertical comp}
P\dc \otimes P\ba \arrow[r, "\text{pad with units}"]
\arrow[rdddd,dotted, "{\eproperadic{\binom{\uc'}{\ub'}}{\binom{\sigma}{\tau}}}"']
& P\binom{\ud, \ub\setminus \ub'}{\uc,\ub\setminus \ub'} \otimes P\binom{\ub,\uc\setminus \uc'}{\ua,\uc\setminus \uc'} \arrow[d, "{\binom{id}{\sigma'} \otimes \binom{\tau'}{id}}"]\\
&P\binom{\ud, \ub\setminus \ub'}{\uc',\uc\setminus \uc',\ub\setminus \ub'} \otimes P\binom{\ub',\uc\setminus \uc',\ub\setminus \ub'}{\ua,\uc\setminus \uc'}
\arrow[d,"{\circ_V}"]\\
&P\binom{\ud, \ub\setminus \ub'}{\ua,\uc\setminus \uc'} \arrow[d, "\text{remove dangling units}"]\\
&P\binom{\ud, \ub\setminus \ub'}{\ua,\uc\setminus \uc'} \arrow[d, "{\binom{\sigma}{\tau}}"]\\
&P\binom{\sigma(\ud, \ub\setminus \ub')}{(\ua,\uc\setminus \uc')\tau}
\end{tikzcd}
\end{center}
where $\sigma',\tau'$ are the unique permutations (outputting the target profiles) which move the connecting segments $\uc',\ub'$ to the left (recall that $\uc'$ is ordered by $\uc$, and $\ub'$ by $\uc'$), and align the remaining flags with an identity.
\\\\
The axioms of Definition 11.30 of \cite{yau2015foundation} arise from the axioms of the alternate definition as follows
\begin{itemize}
    \item The axioms regarding the extended horizontal composition can be restricted to the standard horizontal composition (associativity, biequivariance and unity).
    \item The axioms regarding the extended properadic composition can be restricted down to the vertical compositions (associativity, biequivariance and unity).
    \item The interchange axiom (11.26) is obtained as follows. As vertical composition is an instance of extended properadic composition, the interchange axiom corresponds to the following graph. 
    \begin{center}
    \begin{tikzpicture}[node distance={10mm}, thick,
    v/.style={circle, draw}] 
    \node[v] (1) {$v_1$}; 
    \node[v] (2) [below of=1] {$v_2$};
    \node[v] (3) [right of=1] {$v_3$};
    \node[v] (4) [right of=2] {$v_4$};
    \draw [-{Implies},double] (2) to [out=90,in=-90,looseness =3] (1);
    \draw [-{Implies},double] (4) to [out=90,in=-90,looseness =3] (3);
    \end{tikzpicture}   
    \end{center}
    With this diagram as a guide we then apply a chain of axioms of our alternate definition to reproduce the interchange axiom $\hcomp(\vcomp(v_1,v_2),\vcomp(v_3,v_4))=\vcomp(\hcomp(v_1,v_3),\hcomp(v_2,v_4))$.
    To aid readability, an underline is used to group anything that is treated as a 'single variable' by an equation.
    \begin{align*}
        \vcomp(\underline{\hcomp(v_1,v_3)},\hcomp(v_2,v_4))&=\vcomp({\vcomp(\hcomp(v_1,v_3),v_2)},v_4), &(L)\\
        &=\vcomp(\hcomp(\underline{\vcomp(v_1,v_2)},v_3),v_4), &(T)\\
        &=\vcomp(\hcomp(v_3,\underline{\vcomp(v_1,v_2)}),v_4), &(\text{Bi-eq } \hcomp)\\
        &=\hcomp(\underline{\vcomp(v_3,v_4)},\underline{\vcomp(v_1,v_2)}), &(T)\\
        &=\hcomp(\vcomp(v_1,v_2),\vcomp(v_3,v_4)), &(\text{Bi-eq } \hcomp)
    \end{align*}
    We have suppressed the permutations above, but there are non-identity permutations in the second to fourth line that for instance force the output flags of $v_1$ to appear before the output flags of $v_3$ and similar. The needed permutations are reverted to identities by the final biequivariance of $\hcomp$.
\end{itemize}
The axioms of our alternate definition of a prop arise from the axioms of Definition 11.30 \cite{yau2015foundation} as follows.
\begin{itemize}
    \item The axioms which just involve horizontal composition arise as described in wheeled props (\cref{Alternate biased wheeled prop}).
    \item The biequivariance axioms of $\eproperadicunspecified,\eproperadicopunspecified$ arise from the biequivariance diagram of $\circ_V$ (Diagram $11.23$ \cite{yau2015foundation}). This is a straightforward but tedious exercise. Start with one side of the biequivariance diagram, and blow up $\eproperadicunspecified$ using its $\circ_V$ commutative diagram description, then manipulate it into the other side through unit axioms, biequivariance axioms (of $\circ_V$), and actions of the bimodule. 
    
    \item The caterpillar diagram arises from the associativity relation of vertical composition, as the extended properadic composition is defined from vertical composition (with appropriate padding with units).
    \item The third-wheel axiom arises from the interchange axiom and the vertical unit. The corresponding diagram illustrating this result is
    \begin{center}
    \begin{tikzpicture}[node distance={10mm}, thick,
    v/.style={circle, draw}] 
    \node[v] (1) {$v_1$}; 
    \node[v] (2) [below of=1] {$v_2$};
    \node[v] (4) [right of=2] {$v_3$};
    \draw [-{Implies},double] (2) to [out=90,in=-90,looseness =3] (1);
    \end{tikzpicture}   
    =
    \begin{tikzpicture}[node distance={10mm}, thick,
    v/.style={circle, draw}] 
    \node[v] (1) {$v_1$}; 
    \node[v] (2) [below of=1] {$v_2$};
    \node[v] (3) [right of=1] {$I$};
    \node[v] (4) [right of=2] {$v_3$};
    \draw [-{Implies},double] (2) to [out=90,in=-90,looseness =3] (1);
    \draw [-{Implies},double] (4) to [out=90,in=-90,looseness =3] (3);
    \end{tikzpicture}
    =
    \begin{tikzpicture}[node distance={10mm}, thick,
    v/.style={circle, draw}] 
    \node[v] (1) {$v_1$}; 
    \node[v] (2) [below of=1] {$v_2$};
    \node[v] (3) [right of=1] {$v_3$};
    \node[v] (4) [right of=2] {$I$};
    \draw [-{Implies},double] (2) to [out=90,in=-90,looseness =3] (1);
    \draw [-{Implies},double] (4) to [out=90,in=-90,looseness =3] (3);
    \end{tikzpicture}   
    \end{center}
    \item The caterpillar axiom, the interchange axiom and vertical units can collectively be used to construct the lighthouse, fireworks and bow diagrams. Here is an outline of how the lighthouse axiom arises, and the others are similar. In the diagram below, the vertices labelled $I$ denote any appropriate block of units.
    \begin{center}
    \begin{tikzpicture}[thick,
    v/.style={circle, draw}] 
    \node[v] (1) {$v_1$}; 
    \node[v] (2) [below left of=1] {$v_2$};
    \node[v] (3) [below right of=1] {$v_3$};
    \draw [-{Implies},double] (2) to (1);
    \draw [-{Implies},double] (3) to (1);
    \end{tikzpicture}
    =
    \begin{tikzpicture}[thick,
    v/.style={circle, draw}] 
    \node[v] (1) {$v_1$}; 
    \node[v] (2) [below left of=1] {$v_2$};
    \node[v] (3) [below right of=1] {$I$};
    \node[v] (4) [below of=2] {$I$};
    \node[v] (5) [below of=3] {$v_3$};
    \draw [-{Implies},double] (2) to (1);
    \draw [-{Implies},double] (3) to (1);
    \draw [-{Implies},double] (4) to (2);
    \draw [-{Implies},double] (5) to (3);
    \end{tikzpicture}
    =
    \begin{tikzpicture}[thick,
    v/.style={circle, draw}] 
    \node[v] (1) {$v_1$}; 
    \node[v] (2) [below left of=1] {$I$};
    \node[v] (3) [below right of=1] {$v_3$};
    \node[v] (4) [below of=2] {$v_2$};
    \node[v] (5) [below of=3] {$I$};
    \draw [-{Implies},double] (2) to (1);
    \draw [-{Implies},double] (3) to (1);
    \draw [-{Implies},double] (4) to (2);
    \draw [-{Implies},double] (5) to (3);
    \end{tikzpicture}
    \end{center}
    Then the first equality of diagrams corresponds to,
    \begin{align*}
        \circ_v(v_1,\hcomp(v_2,v_3)) &= \circ_v(v_1,\hcomp(\circ_v(v_2,I),\circ_v(I,v_3))) &(\text{Units})\\
        &= \circ_v(v_1, \circ_v(\hcomp(v_2,I),\hcomp(I,v_3))) &(\text{Interchange})\\
        &= \circ_v(\circ_v(v_1,\hcomp(v_2,I)),\hcomp(I,v_3)) &(\text{Caterpillar})\\
        &= \circ_v(\circ_v(v_1,v_2),v_3) &(\text{unit})
    \end{align*}
    and the third diagram can be interpreted similarly. 
\end{itemize}

\end{proof}

\begin{remark}\label{restriction is an alternate definition of a properad}
The restriction of \cref{Alternate biased prop} to just include the properadic composition and the connected axioms, also provides an equivalent definition of a coloured properad (Definition 11.7.2 of \cite{yau2015foundation} is the closet analogue).
This is a straightforward but tedious proof, so we only sketch the idea here.
The connection between the properadic composition of \cref{Alternate biased prop} and Definition 11.7.2 of \cite{yau2015foundation} is given in the preamble of \cref{extended properadic as a composite}.
By the "connected axioms" of \cref{Alternate biased prop}, we mean the the Bow+Caterpillar axioms, and the two relations of the Lighthouse+Fireworks axioms that only use the properadic compositions.
Each of these axioms have their analogue in Definition 11.7.2 of \cite{yau2015foundation}.

\end{remark}

We now define augmented and non-unital versions of this definition, mirroring the corresponding section for wheeled props.

\begin{definition}\label{Trivial Prop}
Let $\mathcal{K}$ be the trivial prop in $Vect_{\mathbb{K}}$ whose only constituents are the vertical and horizontal units.
\end{definition}

\begin{definition}\label{Augmented Prop}
An \textbf{augmentation} of a prop $P$ in $dgVect_{\mathbb{K}}$ is a morphism (of props, see Corollary 11.32 of \cite{yau2015foundation}) $\epsilon:P\to \mathcal{K}$. Props with an augmentation are called \textbf{augmented props}.
\end{definition}

\begin{definition}\label{non-unital alternate prop}
A \textbf{non-unital alternate prop} consists of all the data of an alternate prop, excluding the units and the corresponding axioms.
\end{definition}

\begin{proposition}
Augmented alternate props and non-unital alternate props are isomorphic.
\end{proposition}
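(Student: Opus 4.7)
The proof is a direct analogue of the corresponding statement for wheeled props in the preceding section, which itself mirrors the classical result for partial operads (Proposition 21 of \cite{markl2008operads}). The plan is to construct functors in both directions between augmented alternate props and non-unital alternate props and show they are mutually inverse.

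First, given an augmented alternate prop $P$ with augmentation $\epsilon : P \to \mathcal{K}$, I would set $\overline{P} := \ker(\epsilon)$ as a $\fC$-coloured bimodule. Since $\mathcal{K}$ (\cref{Trivial Prop}) is concentrated in the unital profiles and since $\epsilon$ is a morphism of alternate props (so it intertwines $\hcomp$, $\eproperadicunspecified$ and $\eproperadicopunspecified$), the restrictions of these compositions to $\overline{P}$ remain in $\overline{P}$. Together with the inherited biequivariance, this equips $\overline{P}$ with the structure of a non-unital alternate prop in the sense of \cref{non-unital alternate prop}.

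Conversely, given a non-unital alternate prop $Q$, I would define $Q_+ := Q \oplus \mathcal{K}$ and extend each of $\hcomp$, $\eproperadicunspecified$ and $\eproperadicopunspecified$ by declaring that elements of the summand $\mathcal{K}$ act as the appropriate horizontal and vertical units (with the intended augmentation being the obvious projection $Q_+ \twoheadrightarrow \mathcal{K}$). The verification amounts to a case analysis on whether each argument of a composition lies in $Q$ or in $\mathcal{K}$: in the all-$Q$ case one invokes the corresponding axiom of $Q$, while in any mixed case the composite reduces to the expected identity by the unitality convention. I expect the main obstacle to be the bookkeeping in the Third-wheel axiom and in the Lighthouse, Fireworks and Bow diagrams, since these mix $\hcomp$ with $\eproperadicunspecified$ and care is needed to track the permutations $\binom{\sigma}{\tau}$ and the connecting sub-profiles $\binom{\uc'}{\ub'}$ when some vertices are unit-valued; however, by \cref{simple canonical form of alternate props} one may first push all permutations to the outermost operation, which reduces the verification to the analogous assertions already checked in the associativity and compatibility axioms of $Q$.

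Finally, to conclude I would check that the two constructions are mutually inverse. The composite $Q \mapsto Q_+ \mapsto \overline{Q_+}$ is naturally isomorphic to $Q$ via the summand inclusion, while for an augmented alternate prop $P$ the splitting of $\epsilon$ given by the unit map $\mathcal{K} \to P$ yields a bimodule decomposition $P \cong \overline{P} \oplus \mathcal{K}$ which, by the fact that $\mathcal{K}$-valued inputs behave as units in $P$ by the unitality axioms, is an isomorphism of augmented alternate props $(\overline{P})_+ \cong P$. This yields the desired isomorphism of categories.
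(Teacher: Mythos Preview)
Your proposal is correct and follows precisely the approach intended by the paper: the paper's proof consists of the single sentence ``This is a clear analogue of the classical result for partial operads, see Proposition 21 of \cite{markl2008operads},'' and your sketch simply unpacks that classical argument (augmentation kernel in one direction, unitisation $Q\mapsto Q\oplus\mathcal{K}$ in the other) in the present setting. Nothing more is expected here.
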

\begin{proof}
This is a clear analogue of the classical result for partial operads see Proposition 21 of \cite{markl2008operads}.
\end{proof}
The last definition and isomorphism have only been presented for alternate props, and not Definition 11.30 of \cite{yau2015foundation}. 
The non-unital version of Definition 11.30 of \cite{yau2015foundation}, is not isomorphic to an augmented prop.
This follows from the bow graph counter example given in the preamble \cref{non-unital standard prop counter example}.
More explicitly, we can observe that because the standard vertical composition must connect all inputs of one graph with all outputs of another graph, the corresponding graphs of the non-unital version of Definition 11.30 of \cite{yau2015foundation} will satisfy an additional convexity condition.
This being, if two vertices $u$ and $v$ are connected by an edge then there can by no other vertex $w$ on a path between them.
Thus, for the rest of the paper when we speak of a non-unital prop we mean a non-unital alternate prop.

\subsection{A Groupoid Coloured Quadratic Presentation} \label{biased presentation of the operad governing props}

We now produce a quadratic non-unital groupoid coloured operad 
$\mathbb{P} = F_{\Sigma}^{\SC}(E)/\langle R \rangle$, whose algebras are non-unital props (mirroring \cref{biased presentation of the operad governing wheeled props}). The underlying groupoid of our operad is $\SC:= \ptwoc$. The generators $E$ correspond to the compositions as follows, let $\ba,\dc,\fe \in \ptwoc$ then 
\begin{align*}
    E(\dc,\ba; \fe):=&\{(\hcomp,\binom{\sigma}{\tau})(-,-): \binom{\sigma(\ud,\ub)}{(\uc , \ua)\tau} = \fe\} \sqcup \\
    &\{\eproperadic{\binom{\uc'}{\ub'}}{\binom{\sigma}{\tau}}(-,-): \binom{\sigma(\ud,\ub\setminus \ub')}{(\ua,\uc\setminus \uc')\tau} = \fe\} \sqcup\\
    &\{\eproperadicop{\binom{\ua'}{\ud'}}{\binom{\sigma}{\tau}}(-,-): \binom{\sigma(\ub,\ud\setminus \ud')}{(\uc,\ua\setminus \ua')\tau}= \fe\}
\end{align*}
The necessary left, right and $\Sigma$ actions are then given by the biequivariance axioms of \cref{Alternate biased prop} (note the biequivariance axioms for $\hcomp$ are found in \cref{Alternate biased wheeled prop}). These groupoid actions once again provide a simple canonical form of every groupoid coloured tree monomial (where all permutations are pushed to the top) which enables us to encode the quadratic relations $R$ graphically in \cref{The Relations of the Operad Governing Props}. By construction, it follows that,
\begin{corollary}
Algebras over $\mathbb{P}$ are non-unital props.
\end{corollary}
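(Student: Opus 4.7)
The plan is to unpack Definition~\ref{algebra over an operad}. A $\mathbb{P}$-algebra structure on a functor $X: \SC \to \mathcal{E}$ is a morphism of groupoid coloured operads $\mathbb{P} \to End_X$. Since $\mathbb{P} = F_{\Sigma}^{\SC}(E)/\langle R\rangle$, by the universal property of the free operad such a morphism is equivalent to a morphism of $\SC$-coloured modules $E \to End_X$ whose canonical extension to $F_{\Sigma}^{\SC}(E)$ annihilates $R$. First I would show that specifying such a morphism of modules is the same data as equipping the bimodule $X$ with the three extended compositions $\hcomp$, $\eproperadicunspecified$, $\eproperadicopunspecified$ of Definition~\ref{Alternate biased prop}: this is immediate from the construction of $E$ in the preceding paragraph, since the generators were indexed precisely by the admissible input/output profiles of these compositions.

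Next I would verify that the naturality of this morphism of groupoid coloured modules with respect to the action of $\SC$ on $E$ is exactly equivalent to the right, left, and $\Sigma_2$-switch biequivariance axioms of Definition~\ref{Alternate biased prop} (together with the analogous biequivariance axioms of $\hcomp$ from Definition~\ref{Alternate biased wheeled prop}). This holds on the nose because, as described in Section~\ref{biased presentation of the operad governing props}, the groupoid action on $E$ was defined so as to translate each biequivariance diagram into an action of an isomorphism of $\ptwoc$. Then I would identify the vanishing of $R$ on $End_X$ with the remaining axioms of a non-unital alternate prop, namely the associativity of $\hcomp$ together with the Caterpillar, Lighthouse, Fireworks, Bow, and Third-wheel relations. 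Here the key ingredient is Proposition~\ref{simple canonical form of alternate props}: every weight-two tree monomial admits a canonical representative in which all non-identity permutations have been pushed to the root generator, so the suppressed graphical notation of Table~\ref{The Relations of the Operad Governing Props} recovers each axiom of Definition~\ref{Alternate biased prop} unambiguously. The converse direction is symmetric: any non-unital prop structure on $X$ defines a morphism $E \to End_X$ by sending each generator to the corresponding composition, and the prop axioms guarantee that this extension descends to $\mathbb{P}$.

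The hard part is not conceptual but bookkeeping. One must carefully verify that the opposing generator $\eproperadicopunspecified$, introduced in Definition~\ref{Alternate biased prop} purely so that $\eproperadicunspecified$ and $\eproperadicopunspecified$ can be paired off under the $\Sigma_2$-action, is interpreted under the morphism to $End_X$ in a way that is consistent with the switch-biequivariance axiom. Similarly one must check that the five cases in the table (encoded graphically) recover the axioms in the correct arity, inputs, and profiles after the canonical form has been used to reinstate the indices and permutations. Once these identifications are made at the level of weight-two tree monomials, the corollary follows directly from the universal property of $\mathbb{P}$ as a quadratic quotient.
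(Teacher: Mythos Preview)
Your proposal is correct and matches the paper's approach: the paper simply states that the corollary follows ``by construction,'' since $\mathbb{P}$ was deliberately defined so that its generators encode the extended compositions, the groupoid actions encode the biequivariance axioms, and the quadratic relations $R$ encode the remaining non-unital axioms of Definition~\ref{Alternate biased prop}. Your write-up is a careful unpacking of exactly this, spelling out what ``by construction'' means via the universal property of the quadratic quotient and Definition~\ref{algebra over an operad}.
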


\begin{table}[ht]
\centering
\begin{tabular}{|c|c|c|}
\hline
Names & Graphs & Relations\\
\hline
Caterpillar 
&
\noindent\parbox[c]{0.75cm}{
\begin{tikzpicture}[thick,
v/.style={circle, draw}] 
\node[v] (1) {$v_1$}; 
\node[v] (2) [below of=1] {$v_2$};
\node[v] (3) [below of=2] {$v_3$};
\draw [-{Implies},double] (2) to (1);
\draw [-{Implies},double] (3) to (2);
\end{tikzpicture}
}
&
$\eproperadicunspecified(\eproperadicunspecified(v_1,v_2),v_3) = \eproperadicunspecified(v_1,\eproperadicunspecified(v_2,v_3))$
\\
\hline
Lighthouse 
&
\noindent\parbox[c]{2.2cm}{
\begin{tikzpicture}[thick,
v/.style={circle, draw}] 
\node[v] (1) {$v_1$}; 
\node[v] (2) [below left of=1] {$v_2$};
\node[v] (3) [below right of=1] {$v_3$};
\draw [-{Implies},double] (2) to (1);
\draw [-{Implies},double] (3) to (1);
\end{tikzpicture}   
}
&
$\eproperadicunspecified(\eproperadicunspecified(v_1,v_2),v_3) = \eproperadicunspecified(\eproperadicunspecified(v_1,v_3),v_2)= \eproperadicunspecified(v_1,\hcomp(v_2,v_3))$
\\
\hline
Fireworks
&
\noindent\parbox[c]{2.2cm}{
\begin{tikzpicture}[thick,
v/.style={circle, draw}] 
\node[v] (3) {$v_3$};
\node[v] (1) [above right of=3] {$v_1$}; 
\node[v] (2) [above left of=3] {$v_2$};
\draw [-{Implies},double] (3) to (1);
\draw [-{Implies},double] (3) to (2);
\end{tikzpicture}    
}
&
$\eproperadicunspecified(\hcomp(v_1,v_2),v_3) = \eproperadicunspecified(v_1,\eproperadicunspecified(v_2,v_3)) =  \eproperadicunspecified(v_2,\eproperadicunspecified(v_1,v_3))$
\\
\hline
Bow 
&
\noindent\parbox[c]{1.5cm}{
\begin{tikzpicture}[thick,
v/.style={circle, draw}] 
\node[v] (1) {$v_1$}; 
\node[v] (2) [below right of=1] {$v_2$};
\node[v] (3) [below left of=2] {$v_3$};
\draw [-{Implies},double] (2) to (1);
\draw [-{Implies},double] (3) to (1);
\draw [-{Implies},double] (3) to (2);
\end{tikzpicture}    
}
&
$\eproperadicunspecified(\eproperadicunspecified(v_1,v_2),v_3) = \eproperadicunspecified(v_1,\eproperadicunspecified(v_2,v_3))$
\\
\hline
Third-wheel 
&
\noindent\parbox[c]{1.5cm}{
\begin{tikzpicture}[thick,
v/.style={circle, draw}] 
\node[v] (1) {$v_1$}; 
\node[v] (2) [below of=1] {$v_2$};
\node[v] (3) [right of=2] {$v_3$};
\draw [-{Implies},double] (2) to (1);
\end{tikzpicture}     
}
&
$\hcomp(\eproperadicunspecified(v_1,v_2),v_3) = \eproperadicunspecified(v_1,\hcomp(v_2,v_3))=\eproperadicunspecified(\hcomp(v_1,v_3),v_2)$
\\
\hline
Associativity $\hcomp$ 
&
\noindent\parbox[c]{2.35cm}{
\begin{tikzpicture}[thick,
v/.style={circle, draw},node distance=0.8cm and 0cm] 
\node[v] (1) {$v_1$}; 
\node[v] (2) [right of=1] {$v_2$};
\node[v] (3) [right of=2] {$v_3$};
\end{tikzpicture}
}
&
$\hcomp(\hcomp(v_1,v_2),v_3)= \hcomp(v_1,\hcomp(v_2,v_3))$
\\
\hline
\end{tabular}
\caption{The relations of the operad governing props, and dually, the non-unital and non-biequivariance relations of an alternate prop.}
\label{The Relations of the Operad Governing Props}
\end{table}

\subsection{The Shuffle Operad}\label{shuffle operad for props}

In this section, we explicitly describe $\mathbb{P}^f=(F_{\Sigma}^{\SC}(E)/R)^f \cong F_{sh}^{\SC}(E^f)/R^f$ (mirroring \cref{shuffle operad for wheeled props}). We first observe how $-^f$ acts on the binary generators and their orbits. The generator $\hcomp$ is handled as in \cref{shuffle operad for wheeled props}, and as the other $\Sigma_2$ actions map generators to generators, i.e. $\eproperadic{\binom{\uc'}{\ub'}}{\binom{\sigma}{\tau}}(\alpha,\beta)\mapsto \eproperadicop{\binom{\uc'}{\ub'}}{\binom{\sigma}{\tau}}(\beta,\alpha)$ and $\eproperadicop{\binom{\uc'}{\ub'}}{\binom{\sigma}{\tau}}(\alpha,\beta)\mapsto \eproperadic{\binom{\uc'}{\ub'}}{\binom{\sigma}{\tau}}(\beta,\alpha)$, we have no need to introduce further generators in our shuffle operad.
\\\\
We now observe how $-^f$ acts on the relations by orienting each element of the orbit, of each relation of $\mathbb{P}$, to use shuffle compositions. As each of our families of relations corresponds to a given graph, we take the orbit of the relations corresponding to each graph.
In the table below, the line $\gamma\cdot p, e_1\leftarrow e_2,e_3$ means: we take the relations corresponding to the graph $\gamma$ and act on them via $p$ (graphically this corresponds to switching the vertex labels of the graph) after we orient these relations as shuffle tree monomials we are left with $e_1=e_2=e_3$, and $e_1$ is the smallest shuffle tree monomial under an order introduced in the next section. 
However, as was the case in \cref{shuffle operad for wheeled props}, we find that many orbit elements admit the same orientation. We will use the notation $\gamma\cdot p_1, \gamma_\cdot p_2, e_1\leftarrow e_2,e_3$ to denote the orbit elements $\gamma\cdot p_1$ and $\gamma\cdot p_2$ being oriented into the same relations.
We now list all $38$ resulting directed relations of the shuffle operad.
\\
{\small
\begin{longtable}[H]{lccc}
Caterpillar:
&
{
\noindent\parbox[c]{1cm}{
\begin{tikzpicture}[thick,
v/.style={circle, draw}] 
\node[v] (1) {$v_1$}; 
\node[v] (2) [below of=1] {$v_2$};
\node[v] (3) [below of=2] {$v_3$};
\draw [-{Implies},double] (2) to (1);
\draw [-{Implies},double] (3) to (2);
\end{tikzpicture}
}
}
&
{
$\begin{aligned}
    C\cdot (123)\\
    C\cdot (132)\\
    C\cdot (213)\\
    C\cdot (231)\\
    C\cdot (312)\\
    C\cdot (321)
\end{aligned}$
}
&
{
$\begin{aligned}
    \eproperadicunspecified(\eproperadicunspecified(v_1,v_2),v_3) &\leftarrow \eproperadicunspecified(v_1,\eproperadicunspecified(v_2,v_3)) \\
    \eproperadicunspecified(\eproperadicunspecified(v_1,v_3),v_2) &\leftarrow \eproperadicunspecified(v_1,\eproperadicopunspecified(v_2,v_3)) \\
    \eproperadicunspecified(\eproperadicopunspecified(v_1,v_2),v_3) &\leftarrow \eproperadicopunspecified(\eproperadicunspecified(v_1,v_3),v_2)\\
    \eproperadicopunspecified(\eproperadicopunspecified(v_1,v_3),v_2) &\leftarrow \eproperadicopunspecified(v_1,\eproperadicunspecified(v_2,v_3)) \\
    \eproperadicopunspecified(\eproperadicunspecified(v_1,v_2),v_3) &\leftarrow \eproperadicunspecified(\eproperadicopunspecified(v_1,v_3),v_2)\\
    \eproperadicopunspecified(\eproperadicopunspecified(v_1,v_2),v_3) &\leftarrow  \eproperadicopunspecified(v_1,\eproperadicopunspecified(v_2,v_3))
\end{aligned}$
}
\\
\\
\hline
\\
Lighthouse:
&
{
\noindent\parbox[c]{2.5cm}{
\begin{tikzpicture}[thick,
v/.style={circle, draw}] 
\node[v] (1) {$v_1$}; 
\node[v] (2) [below left of=1] {$v_2$};
\node[v] (3) [below right of=1] {$v_3$};
\draw [-{Implies},double] (2) to (1);
\draw [-{Implies},double] (3) to (1);
\end{tikzpicture}
}
}
&
{
$\begin{aligned}
    L\cdot (123) , L\cdot (132)\\
    L\cdot (213) , L\cdot (231)\\
    L\cdot (312) , L\cdot (321)
\end{aligned}$
}
&
{
$\begin{aligned}
	\eproperadicunspecified(\eproperadicunspecified(v_1,v_2),v_3) &\leftarrow  \eproperadicunspecified(\eproperadicunspecified(v_1,v_3),v_2), \eproperadicunspecified(v_1,\hcomp(v_2,v_3))\\
    \eproperadicunspecified(\eproperadicopunspecified(v_1,v_2),v_3) &\leftarrow \eproperadicopunspecified(\hcomp(v_1,v_3),v_2), \eproperadicopunspecified(v_1,\eproperadicunspecified(v_2,v_3))\\
    \eproperadicopunspecified(\hcomp(v_1,v_2),v_3) &\leftarrow \eproperadicopunspecified(\eproperadicopunspecified(v_1,v_3),v_2), \eproperadicopunspecified(v_1,\eproperadicopunspecified(v_2,v_3))
\end{aligned}$
}
\\
\\ \hline
\\
Fireworks:
&
{
\noindent\parbox[c]{2.5cm}{
\begin{tikzpicture}[thick,
v/.style={circle, draw}] 
\node[v] (3) {$v_3$};
\node[v] (1) [above right of=3] {$v_1$}; 
\node[v] (2) [above left of=3] {$v_2$};
\draw [-{Implies},double] (3) to (1);
\draw [-{Implies},double] (3) to (2);
\end{tikzpicture}   
}
}
&
{
$\begin{aligned}
    F\cdot (123) , F\cdot (213)\\
    F\cdot (132) , F\cdot (312)\\
    F\cdot (231) , F\cdot (321)
\end{aligned}$
}
&
{
$\begin{aligned}
   \eproperadicunspecified(\hcomp(v_1,v_2),v_3) &\leftarrow \eproperadicopunspecified(\eproperadicunspecified(v_1,v_3),v_2), \eproperadicunspecified(v_1,\eproperadicunspecified(v_2,v_3))\\
   \eproperadicopunspecified(\eproperadicunspecified(v_1,v_2),v_3) &\leftarrow \eproperadicopunspecified(\hcomp(v_1,v_3),v_2), \eproperadicunspecified(v_1,\eproperadicopunspecified(v_2,v_3))\\
   \eproperadicopunspecified(\eproperadicopunspecified(v_1,v_2),v_3) &\leftarrow \eproperadicopunspecified(\eproperadicopunspecified(v_1,v_3),v_2), \eproperadicopunspecified(v_1,\hcomp(v_2,v_3))
\end{aligned}$
}
\\
\\ \hline
\\
Bow:
&
{
\noindent\parbox[c]{1.8cm}{
\begin{tikzpicture}[thick,
v/.style={circle, draw}] 
\node[v] (1) {$v_1$}; 
\node[v] (2) [below right of=1] {$v_2$};
\node[v] (3) [below left of=2] {$v_3$};
\draw [-{Implies},double] (2) to (1);
\draw [-{Implies},double] (3) to (1);
\draw [-{Implies},double] (3) to (2);
\end{tikzpicture}   
}
}
&
{
$\begin{aligned}
    B\cdot(123)\\
    B\cdot(132)\\
    B\cdot(213)\\
    B\cdot(231)\\
    B\cdot(312)\\
    B\cdot(321)
\end{aligned}$
}
&
{
$\begin{aligned}
    \eproperadicunspecified(\eproperadicunspecified(v_1,v_2),v_3) &\leftarrow \eproperadicunspecified(v_1,\eproperadicunspecified(v_2,v_3)) \\
    \eproperadicunspecified(\eproperadicunspecified(v_1,v_3),v_2) &\leftarrow  \eproperadicunspecified(v_1,\eproperadicopunspecified(v_2,v_3))\\
    \eproperadicunspecified(\eproperadicopunspecified(v_1,v_2),v_3) &\leftarrow  \eproperadicopunspecified(\eproperadicunspecified(v_1,v_3),v_2)\\
    \eproperadicopunspecified(\eproperadicopunspecified(v_1,v_3),v_2) &\leftarrow\eproperadicopunspecified(v_1,\eproperadicunspecified(v_2,v_3)) \\
    \eproperadicopunspecified(\eproperadicunspecified(v_1,v_2),v_3) &\leftarrow \eproperadicunspecified(\eproperadicopunspecified(v_1,v_3),v_2)\\
    \eproperadicopunspecified(\eproperadicopunspecified(v_1,v_2),v_3)  &\leftarrow \eproperadicopunspecified(v_1,\eproperadicopunspecified(v_2,v_3))
\end{aligned}$
}
\\
\\ \hline
\\
Third-wheel:
&
{
\noindent\parbox[c]{2cm}{
\begin{tikzpicture}[thick,
v/.style={circle, draw}] 
\node[v] (1) {$v_1$}; 
\node[v] (2) [below of=1] {$v_2$};
\node[v] (3) [right of=2] {$v_3$};
\draw [-{Implies},double] (2) to (1);
\end{tikzpicture}  
}
}
&
{
$\begin{aligned}
    T\cdot(123)\\
    T\cdot(132)\\
    T\cdot(213)\\
    T\cdot(231)\\
    T\cdot(312)\\
    T\cdot(321)
\end{aligned}$
}
&
{
$\begin{aligned}
    \hcomp(\eproperadicunspecified(v_1,v_2),v_3) &\leftarrow \eproperadicunspecified(v_1,\hcomp(v_2,v_3)),\eproperadicunspecified(\hcomp(v_1,v_3),v_2)\\
    \eproperadicunspecified(\hcomp(v_1,v_2),v_3)&\leftarrow \hcomp(\eproperadicunspecified(v_1,v_3),v_2),\eproperadicunspecified(v_1,\hcomp(v_2,v_3)) \\
    \hcomp(\eproperadicopunspecified(v_1,v_2),v_3)&\leftarrow \eproperadicopunspecified(v_1,\hcomp(v_2,v_3)),\eproperadicopunspecified(\hcomp(v_1,v_3),v_2)\\ 
    \eproperadicunspecified(\hcomp(v_1,v_2),v_3) &\leftarrow \hcomp(v_1,\eproperadicunspecified(v_2,v_3)),\eproperadicopunspecified(\hcomp(v_1,v_3),v_2)\\
    \eproperadicopunspecified(\hcomp(v_1,v_2),v_3)&\leftarrow \hcomp(\eproperadicopunspecified(v_1,v_3),v_2),\eproperadicopunspecified(v_1,\hcomp(v_2,v_3)) \\
    \eproperadicopunspecified(\hcomp(v_1,v_2),v_3) &\leftarrow \hcomp(v_1,\eproperadicopunspecified(v_2,v_3)),\eproperadicunspecified(\hcomp(v_1,v_3),v_2)
\end{aligned}$
}
\\
\\ \hline
\\
Ass. of $\hcomp$:
&
{
\noindent\parbox[c]{2.5cm}{
\begin{tikzpicture}[thick,
v/.style={circle, draw},node distance=0.8cm and 0cm] 
\node[v] (1) {$v_1$}; 
\node[v] (2) [right of=1] {$v_2$};
\node[v] (3) [right of=2] {$v_3$};
\end{tikzpicture}  
}
}
&
$A\cdot(123),...,A\cdot(321)$
&
$\hcomp(\hcomp(v_1,v_2),v_3)\leftarrow \hcomp(\hcomp(v_1,v_3),v_2), \hcomp(v_1,\hcomp(v_2,v_3))$
\\
\\ \hline
\caption{The relations of the shuffle operad governing props}
\label{The relations of the shuffle operad governing props}
\end{longtable}
}

This completes the presentation of the relations of the shuffle operad $\mathbb{P}^f$, these relations encode all the equivalent ways to form directed wheel-free graphs with $3$ vertices via shuffle tree monomials.

\subsection{Ordering the Object Coloured Tree Monomials}\label{Ordering the Object Coloured Tree Monomials for Props}

We now define a total admissible order on the underlying $ob(\SC)$-coloured tree monomials of $F_{sh}(E)$. The motivation of this order as in \cref{Ordering the Object Coloured Tree Monomials for Wheeled Props} is to provide a simple unique minimal shuffle tree monomial encoding any wheel-free graph (\cref{UMF Algorithm for Props}). 

\begin{definition} \label{Total Admissible Order for Shuffle Tree Monomials of Props}
Let $\alpha,\beta$ be two tree monomials of $F_{sh}(E)$, we define $\alpha\leq \beta$ if:
\begin{enumerate}
    \item The arity of $\alpha < \beta$
    \item Or, if all the prior are equal, then compare $P^{\alpha}<P^{\beta}$ where
    \begin{itemize}
        \item if $\alpha$ and $\beta$ have arity $n$ then $P^{\alpha} := (P^{\alpha}_1,...,P^{\alpha}_n)$, where $P^{\alpha}_i$ is the word formed out of the generators when stepping from the $i$th input to the root in the tree monomial $\alpha$.
        \item $P^{\alpha}$ and $P^{\beta}$ are compared lexicographically, and two paths are compared only be degree (symbols ignored).
    \end{itemize}
    \item Or, if all the prior are equal, then compare the \textbf{input permutations} (not the permutations of the generators) of the shuffle tree monomials via lexicographic order.
    \item Or, if all the prior are equal, then compare the permutations of the generators with condition $4$ of \cref{Total Admissible Order for Shuffle Tree Monomials of Wheeled Props}.
    \item Or, if all the prior are equal, then we again compare $P^{\alpha}<P^{\beta}$ lexicographically, this time with a total order on generators.
    \begin{itemize}
        \item $\hcomp<\eproperadicunspecified<\eproperadicopunspecified$
        \item if the composition type is the same then the permutations are compared with, $\binom{\sigma}{\tau}\leq \binom{\sigma'}{\tau'}$ if $\sigma < \sigma'$, or $\sigma=\sigma'$ and $\tau<\tau'$
        \item if $s$ and $s'$ are identical generators symbol wise then first compare their input colours left to right, then their output colour. Input and output colours are profiles $\dc,\ba \in \ptwoc$, we say $\dc\leq \ba$ if $\ud< \ub$, or $\ud=\ub$ and $\uc\leq \ua$. We compare two sequences of colours using a degree lexicographic order induced by a total order on $\mathfrak{C}$.
    \end{itemize}
\end{enumerate}
\end{definition}

\begin{lemma}
The order of \cref{Total Admissible Order for Shuffle Tree Monomials of Props} is total and admissible.
\end{lemma}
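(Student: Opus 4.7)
The plan is to mirror closely the proof of \cref{The order for shuffle tree monomials of wheeled props is admissible}, since the order of \cref{Total Admissible Order for Shuffle Tree Monomials of Props} differs from the wheeled prop order only in its set of generators ($\hcomp$, $\eproperadicunspecified$, $\eproperadicopunspecified$ instead of $\hcomp, \contra{}{}$) and in the simplification at step 2 (paths are compared purely by degree, since without contractions every generator is binary and so the underlying shape is already constrained by arity and path structure). Totality will be immediate: steps 1 through 5 progressively compare every piece of data attached to a shuffle tree monomial (arity, underlying shape via paths, input permutation, permutation labels on every vertex in a canonical depth-left order, and finally the generator symbols themselves together with their colours), so two tree monomials that are equal at every step must be literally equal.

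For admissibility, I would decompose the five-step order as an iterated composite of simpler admissible orders and invoke \cref{Nested Admissible is Admissible}. First, steps 1, 2, 3 together form a path-lexicographic order in the sense of \cite{dotsenko2010grobner}, and such orders are known to be admissible on discrete coloured shuffle operads (cf.\ \cite{kharitonov2021grobner}). I would then append step 4 exactly as in the wheeled prop case: using the equivalent characterisation of admissibility that allows varying one input of $\circ_\varphi$ at a time, I observe that when $\beta \leq \beta'$ is decided by step 4, the trees $\beta$ and $\beta'$ have the same underlying shape; consequently $\alpha\circ_\varphi \beta$ and $\alpha \circ_\varphi \beta'$ have the same shape, the canonical depth-left vertex ordering on $\beta,\beta'$ injects into that of the composites, and the same pair of vertex permutations that witnessed $\beta<\beta'$ witnesses $\alpha\circ_\varphi \beta<\alpha\circ_\varphi \beta'$. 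The symmetric argument varying $\alpha$ works identically.

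Finally, to incorporate step 5, I would note that performing steps 1, 5, 3 in that sequence is itself a path-lexicographic order (now with a total order on generators), hence admissible. Performing steps 1--4 followed by the admissible order 1, 5, 3 is the same relation as performing steps 1--5, because once steps 1 and 3 have been applied they contribute nothing on a second application. Thus the full order is a composite of two admissible orders and \cref{Nested Admissible is Admissible} gives admissibility.

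The one subtlety worth flagging as the main potential obstacle is the verification that the total order on generators used in step 5 (which orders by generator symbol, then by permutation, then by input/output profiles) genuinely induces an admissible path-lexicographic order. This requires checking that the partial lexicographic comparison of generators respects shuffle composition, which in turn needs the input-colour comparison to be consistent with how a shuffle composition $\alpha\circ_\varphi \beta$ grafts the output colour of $\beta$ onto an input slot of $\alpha$; once one fixes the colour comparison as a degree-lexicographic order on profiles induced by a total order on $\fC$, this compatibility is automatic, but it is the place where a careful bookkeeping argument is needed. All other steps are routine and follow the wheeled prop template verbatim.
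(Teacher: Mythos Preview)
Your proposal is correct and takes essentially the same approach as the paper: the paper's own proof is simply ``A similar argument to \cref{The order for shuffle tree monomials of wheeled props is admissible}'', and you have faithfully reconstructed precisely that argument, decomposing the five-step order into path-lexicographic pieces and invoking \cref{Nested Admissible is Admissible} exactly as in the wheeled prop case. The subtlety you flag about step~5 and colour comparison is real but routine, and is already implicitly handled by the same appeal to the path-lexicographic framework of \cite{dotsenko2010grobner} and \cite{kharitonov2021grobner} used for wheeled props.
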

\begin{proof}
A similar argument to \cref{The order for shuffle tree monomials of wheeled props is admissible}.
\end{proof}

We close this section by noting that we may use this total order on the underlying $ob(\SC)$-coloured tree monomials to order the relations of $\mathbb{P}^f$ by the minimal element in each groupoid coloured tree monomial, and this is precisely the order used in the prior section.

\subsection{The Operad Governing Props is Koszul} \label{proof The Operad Governing Props is Koszul}

This section proves that the groupoid coloured operad governing props $\mathbb{P}$ is Koszul, before outlining how this construction can be modified to yield Koszul operads governing other similar operadic families. We shall use the same proof method as used for $\mathbb{W}$ (in \cref{proof The Operad Governing Wheeled Props is Koszul}) with some minor combinatorial complications thrown in by the nature of props. 
In the prior section, we calculated an explicit presentation of the groupoid coloured shuffle operad $\mathbb{P}^f= F_{sh}^{\SC}(E) / \langle R^f \rangle$.
We now apply \cref{Koszul if corresponding shuffle operad has QGB}, to show that $\mathbb{P}$ is Koszul, if $G$ is a quadratic Groebner basis for the $ob(\SC)$-coloured shuffle operad $F_{sh}^{ob(\SC)}(E^{f_{\SC}}) / \langle G \rangle$, where $G:=E_*^2 \sqcup (R^f)_*$.

\begin{lemma} \label{Groebner Basis for Props}
Under the order of \cref{Total Admissible Order for Shuffle Tree Monomials of Props}, $G$ is a quadratic Groebner basis for $F_{sh}^{ob(\SC)}(E^{f_{\SC}}) / \langle G \rangle$.
\end{lemma}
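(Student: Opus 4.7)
The plan is to mirror the proof of \cref{Groebner Basis for Wheeled Props} exactly, leveraging the characterisation of Groebner bases via confluent rewriting systems (\cref{RS confluence iff GB}). Specifically, I would associate to every (connected) wheel-free directed graph $\gamma$ a distinguished shuffle tree monomial $UMF_{\mathbb{P}}(\gamma) \in F_{sh}^{ob(\SC)}(E^{f_{\SC}})$, show it is the unique minimum (under \cref{Total Admissible Order for Shuffle Tree Monomials of Props}) among all shuffle tree monomials that evaluate to the equivalence class of $\gamma$, and finally prove that every other shuffle tree monomial representing $\gamma$ admits a non-trivial reduction by some element of $G = E_*^2 \sqcup (R^f)_*$. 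Confluence on small common multiples then follows, and \cref{RS confluence iff GB} delivers the Groebner basis property.

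First I would construct $UMF_{\mathbb{P}}$. Since all generators of $\mathbb{P}$ are binary and the order first prefers long leftmost paths and then prefers $\hcomp < \eproperadicunspecified < \eproperadicopunspecified$, the algorithm should produce a strictly left-combed shuffle tree whose bottom-most generators are $\hcomp$ wherever no edge of $\gamma$ is being formed. Concretely: pick a topological order $v_1, \dots, v_n$ of the vertices of $\gamma$ (compatible with the shuffle tree's input ordering), build the left-combed tree on these vertices, then at each binary node $(v_1, \dots, v_k) \star v_{k+1}$ choose the generator as $\hcomp$ if $v_{k+1}$ is disconnected from $\{v_1, \dots, v_k\}$ in $\gamma$, as $\eproperadicunspecified$ if $v_{k+1}$ sits below the already assembled subgraph, and as $\eproperadicopunspecified$ otherwise. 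The wheel-freeness of $\gamma$ ensures a topological order exists, and the greedy choice of generator, together with groupoid normalisation pushing all non-identity permutations to the root, makes $UMF_{\mathbb{P}}(\gamma)$ unique and minimal by verifying each numbered criterion of \cref{Total Admissible Order for Shuffle Tree Monomials of Props} in turn (just as in \cref{UMF alg for wheeled props is well-defined}).

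Next I would show that any shuffle tree monomial $T$ representing $\gamma$ with $T \neq UMF_{\mathbb{P}}(\gamma)$ admits a rewrite in $RS(G)$. As in \cref{Non Minimal Wheeled Prop Shuffle Tree Monomials Admit Rewrites}, I split into cases by which clause of the order is first violated. If $T$ is not groupoid-normalised, a relation of $E_*^2$ pushes a permutation upward. If $T$ has a non-left-combed shape in the root segment, one of the caterpillar, lighthouse, fireworks, bow, or horizontal-associativity relations applies, with its oriented form moving the offending generator leftwards/upwards. If $T$ has a generator that is larger than necessary in the generator ordering (e.g.\ an $\eproperadicopunspecified$ where an $\eproperadicunspecified$ would do, or an $\eproperadicunspecified$ where $\hcomp$ would do), one of the multi-way relations in \cref{The relations of the shuffle operad governing props} rewrites it to a smaller generator. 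Finally, if the input permutation or vertex ordering is non-minimal, the third-wheel relations or the appropriate lighthouse/fireworks orientations provide the needed reduction.

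The main obstacle will be the case analysis in step three: the six non-symmetric-group-acted relation families yield $38$ oriented rewrite rules in \cref{The relations of the shuffle operad governing props}, and one must check that for every deviation of $T$ from $UMF_{\mathbb{P}}(\gamma)$, the exact local pattern that appears in some subtree of $T$ matches the left-hand side of one of these rules. Unlike the wheeled case, where contractions and horizontal compositions could be cleanly separated, here the interplay between $\hcomp$ and the two properadic compositions (governed by the third-wheel, lighthouse, and fireworks relations) forces a more intricate bookkeeping, and care is needed to ensure every suboptimal local configuration really is covered. Once this combinatorial exhaustion is complete, confluence of $RS(G)$ on all shuffle tree monomials, and hence in particular on small common multiples, follows, and \cref{RS confluence iff GB} concludes the proof.
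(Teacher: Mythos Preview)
Your overall strategy matches the paper's exactly: construct a canonical minimal form $UMF$ for each wheel-free graph, then show every other shuffle tree monomial admits a rewrite via $G$. However, your construction of $UMF_{\mathbb{P}}$ contains a genuine gap.

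You propose to assemble the vertices in a topological order $v_1,\dots,v_n$ ``compatible with the shuffle tree's input ordering'', and at each step attach $v_{k+1}$ to the left-combed tree built so far. But the labels $v_1,\dots,v_n$ are already fixed (they index the inputs of the shuffle tree monomial), and there is no reason for this labelling to be a topological order of $\gamma$. For the caterpillar graph with $v_1$ in the middle, $v_2$ on top, $v_3$ on the bottom (the orbit element $C\cdot(213)$ in \cref{The relations of the shuffle operad governing props}), no topological order begins with $v_1$, yet the minimal form is $\eproperadicunspecified(\eproperadicopunspecified(v_1,v_2),v_3)$. More tellingly, \cref{UMF Example} in the paper exhibits a graph for which the minimal form assembles vertices in the order $v_1,v_3,v_2,v_4$: one \emph{cannot} attach $v_2$ immediately after $v_1$ there, because the resulting subgraph $\hcomp(v_1,v_2)$ could never be joined to $v_3$ by any single binary generator (edges of $\gamma$ run both into and out of $v_3$ from that subgraph). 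Your trichotomy ``disconnected / below / above'' at each node silently presupposes that the next vertex stands in exactly one of these relations to the current subgraph, but your selection rule does not guarantee this.

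The paper's fix (\cref{UMF Algorithm for Props}) is a sharper greedy criterion: at each step pick the smallest remaining vertex $v$ such that no directed path (in either direction) between $v$ and the already-assembled set $V$ passes through another remaining vertex. This is precisely the condition ensuring that $v$'s relation to $V$ is one of disconnected, purely below, or purely above, so that a single $\hcomp$, $\eproperadicunspecified$, or $\eproperadicopunspecified$ suffices. Your rewriting half is broadly correct in spirit, though the paper organises it a bit differently: once a tree is left-combed and groupoid-normalised, the only remaining deviation from $UMF$ is a wrong assembly order of the leaves, and the key observation (\cref{Non Minimal Prop Shuffle Tree Monomials Admit Rewrites}) is that the pattern $s_2(s_1(v_1,v_3),v_2)$ is a leading term of some relation for every three-vertex graph \emph{except} the four ($C\cdot(132)$, $C\cdot(231)$, $B\cdot(132)$, $B\cdot(231)$) in which $v_3$ sits strictly between $v_1$ and $v_2$---exactly the configurations the corrected selection rule has already ruled out at this stage.
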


\begin{proof}
We prove this lemma as follows.
\begin{itemize}
    \item In \cref{UMF Algorithm for Props}, we describe an algorithm that produces a unique minimal shuffle tree monomial encoding every wheel-free graph. We then show this algorithm is well-defined in \cref{UMF alg for props well-defined}.
    \item We then prove every shuffle tree monomial which is not the unique minimal shuffle tree monomial admits a rewrite using $RS(G)$ (see \cref{Rewriting Systems and Groebner Bases}), establishing the confluence of the rewriting system on all shuffle tree monomials. This will be accomplished by \cref{Non Minimal Prop Shuffle Tree Monomials Admit Rewrites}.
    \item Hence, through \cref{RS confluence iff GB}, this proves $G$ is a Groebner basis.
\end{itemize}
\end{proof}
The main complication in this proof compared to wheeled props is the more complicated unique minimal form, which we now detail, before giving an illustrative example (\cref{UMF Example}).

\begin{construction} \label{UMF Algorithm for Props}
The following algorithm produces the unique smallest shuffle tree monomial encoding a directed cycle free graph $\gamma$ with at least one vertex, we denote the outputted tree $UMF_\uparrow(\gamma)$.
\begin{itemize}
    \item Initiate $T=*_v,V = \{v\}$ and $V_R = Vertices(\gamma)\setminus \{v\}$ where $v$ is the smallest vertex in $\gamma$.
    \item While there is a vertex in $V_R$:
    \begin{itemize}
        \item Let $v$ be the smallest vertex in $V_R$ such that there does not exist a directed path (in either direction) in $\gamma$ between $v$ and a vertex in $V$ which passes through another vertex in $V_R$. This can be partitioned into three cases,
        \begin{enumerate}
            \item the vertex $v$ is disconnected from $V$, so update $T = (\hcomp,\binom{id}{id})(T,*_v)$
            \item the input flags of a vertex in $V$ are connected to the output flags of $v$, so $T = (\eproperadicunspecified,\binom{id}{id})(T,*_v)$
            \item the output flags of a vertex in $V$ are connected to the input flags of $v$, so $T = (\eproperadicopunspecified,\binom{id}{id})(T,*_v)$
        \end{enumerate}
        The necessary segments of $\eproperadicunspecified$ and $\eproperadicopunspecified$ may be identified from the graph $\gamma$ and the graph outputted by $T$.
        \item Update $V = V \cup \{v\}$ and $V_R = V_R\setminus \{v\}$.
    \end{itemize}
    \item Update the permutations of the root-most operation of $T$ (if $T$ is not a vertex) so that the corresponding graph outputted by $T$ has the same ordering on its open flags as $\gamma$.
    \item Return $T$.
\end{itemize}
\end{construction}

\begin{example} \label{UMF Example}
The algorithm applied to the following graph $\gamma$ yields,
\begin{center}
\begin{tikzpicture}[thick,
v/.style={circle, draw}] 
\node[v] (1) {$v_1$}; 
\node[v] (2) [below of=1] {$v_3$};
\node[v] (3) [below of=2] {$v_2$};
\node[v] (4) [right of=3] {$v_4$};
\draw [-{Implies},double] (2) to (1);
\draw [-{Implies},double] (3) to (2);
\end{tikzpicture}
$\quad\quad\quad$
\begin{forest}
fairly nice empty nodes,
for tree={inner sep=0, l=0}
[{$\hcomp$}
    [{$\eproperadicunspecified$}
        [{$\eproperadicunspecified$}
            [{$v_1$}]
            [{$v_3$}] 
        ]
        [{$v_2$}]
    ]
    [{$v_4$}]
]
\end{forest}
\end{center}
We note that there does not exist a shuffle tree monomial forming $\gamma$ which contains $\hcomp(v_1,v_2)$ as a subtree (as we would need access to a composition $\circ'(\hcomp(v_1,v_2),v_3)$ which connects both inputs and outputs of both of its arguments). This illustrates why the restriction of the main loop of the algorithm is needed.
\end{example}

\begin{lemma} \label{UMF alg for props well-defined}
The unique minimal form algorithm of \cref{UMF Algorithm for Props} is well-defined.
\end{lemma}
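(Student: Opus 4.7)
The plan is to mirror the proof of \cref{UMF alg for wheeled props is well-defined}, establishing two things: first, that the algorithm terminates and produces a valid shuffle tree monomial encoding $\gamma$; second, that the resulting tree monomial is minimal under \cref{Total Admissible Order for Shuffle Tree Monomials of Props}.

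For well-definedness, I first need to verify that at each iteration of the main loop, some vertex $v \in V_R$ satisfying the path restriction exists. Since $\gamma$ is wheel-free, the induced subgraph on $V_R$ is a DAG, and one can show using this acyclicity that a valid $v$ always exists: if every vertex of $V_R$ adjacent to $V$ had an offending path, a careful chasing of such paths combined with the DAG structure of $V_R$ would either eventually yield a valid vertex or contradict wheel-freeness of $\gamma$ itself. The three composition cases are exhaustive (disconnected, outputs-to-inputs, inputs-to-outputs) and mutually exclusive by wheel-freeness, since $v$ cannot simultaneously be connected both ways to $V$ directly without creating a cycle. The resulting $T$ is left-combed by construction, and satisfies the shuffle condition because vertices are added in smallest-first order, ensuring each new right subtree $*_v$ has minimum input label strictly greater than that of the existing left subtree.

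For minimality, I check each clause of \cref{Total Admissible Order for Shuffle Tree Monomials of Props} in turn. The arity is forced by the graph. For clause (2), the left-combed shape assigns the first (smallest) input a path of maximal length $|Vt(\gamma)|$ to the root; inductively the later paths are lexicographically maximized in degree. Clause (3) is satisfied because adding the smallest remaining vertex at each step produces the identity input permutation. For clause (4), every internal generator is constructed with identity permutations, leaving any required permutation to the root generator, which is determined uniquely by the external listing of $\gamma$. For clause (5), at each step the algorithm tries cases in the order $\hcomp < \eproperadicunspecified < \eproperadicopunspecified$, which is exactly the generator order used by the admissible order.

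The main obstacle is the existence argument at each iteration, together with rigorously ruling out that some alternative tree monomial encoding $\gamma$ could be strictly smaller. Unlike the wheeled prop setting, where contractions may be uniformly deferred to the top of the tree, the three properadic-type compositions for props must be interleaved according to the DAG structure of $\gamma$, and one cannot freely substitute one for another. The path restriction in the algorithm is precisely what makes each successive composition well-typed. The subsequent \cref{Non Minimal Prop Shuffle Tree Monomials Admit Rewrites} provides the complementary half of the story: every shuffle tree monomial encoding $\gamma$ other than $UMF_\uparrow(\gamma)$ admits a strict rewrite via $G$, which, combined with total admissibility of the order, closes the loop on minimality.
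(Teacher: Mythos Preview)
Your argument for clause (3) of \cref{Total Admissible Order for Shuffle Tree Monomials of Props} is incorrect, and this is where the real content of the lemma lies. You claim that ``adding the smallest remaining vertex at each step produces the identity input permutation'', but the algorithm does \emph{not} always add the smallest remaining vertex: it adds the smallest vertex in $V_R$ satisfying the path restriction. \cref{UMF Example} already shows this explicitly --- the vertices are added in the order $v_1, v_3, v_2, v_4$, so the input permutation of $UMF_\uparrow(\gamma)$ is $(1324)$, not the identity. The paper's actual argument is that this is nonetheless the \emph{minimal achievable} input permutation: any attempt to place a smaller vertex earlier would require ignoring the restriction, and the resulting partial subtree could not be completed to a tree monomial forming $\gamma$ (as \cref{UMF Example} illustrates: no tree monomial forming that $\gamma$ contains $\hcomp(v_1,v_2)$ as a subtree). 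You never make this point, so your minimality argument has a genuine gap.

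Your treatment of clause (5) is also off. The algorithm does not ``try cases in the order $\hcomp < \eproperadicunspecified < \eproperadicopunspecified$''; the three cases are a partition, so the generator type at each step is forced, not chosen. (Your justification for mutual exclusivity is slightly wrong too: $u_1 \to v \to u_2$ with $u_1,u_2 \in V$ is not a cycle in $\gamma$; rather it is ruled out because whichever of $u_1, u_2$ was added to $V$ later would have violated the path restriction via $v \in V_R$.) The paper's point is that once the left-aligned shape and the minimal input permutation are fixed, the generator at each node --- its type, its segments, and (after pushing permutations to the root) its permutation --- is uniquely determined by $\gamma$. So clause (5) is satisfied because there is no freedom left, not because of a greedy choice among generator types.
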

\begin{proof}
It is straightforward to verify that this algorithm produces a shuffle tree monomial that produces the graph $\gamma$ (note that $T_*$ is given the smallest vertex of the graph at initialisation, so all compositions used in the construction of $T_*$ are valid shuffle operadic compositions). It remains to verify that the algorithm produces the minimal shuffle tree encoding $\gamma$.
\\\\
Firstly, we observe that the algorithm constructs a maximal length path from the minimal vertex to the root, i.e. our shuffle tree is 'left aligned'. So given our shuffle tree monomial has this shape, we observe that maximising the length of the path from the next smallest vertex (and so on) is equivalent to minimising the shuffle tree monomial permutation (ordering of the vertices of the graph). Our algorithm does this by greedily picking the smallest 'valid' vertex. If the algorithm at any stage picked a smaller vertex by ignoring the restriction, then it would be impossible to form the graph $\gamma$ from this subgraph $\gamma_*$, as illustrated in \cref{UMF Example}.
\\\\
Finally, given our algorithm produces a left aligned shuffle tree monomial with a minimal shuffle tree permutation, this information uniquely determines the contents of each of the generators. Indeed, every generator of the shuffle tree monomial must perform a specific horizontal composition or properadic join over particular segments, and any permutations can be passed up and down the tree via the action of the groupoid.
\end{proof}

\begin{lemma} \label{Non Minimal Prop Shuffle Tree Monomials Admit Rewrites}
Let $T$ be a shuffle tree monomial for a direct cycle free graph $\gamma$ such that $T\neq UMF_\uparrow(\gamma)$, then $T$ is rewritable by $G$.
\end{lemma}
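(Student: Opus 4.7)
The plan is to mirror the strategy of \cref{Non Minimal Wheeled Prop Shuffle Tree Monomials Admit Rewrites} while accounting for the richer set of binary generators ($\hcomp$, $\eproperadicunspecified$, $\eproperadicopunspecified$) and the enlarged relation table \cref{The relations of the shuffle operad governing props}. First I would dispose of the groupoid action: if $T$ contains an internal edge sitting above a non-identity permutation, then the corresponding rewrite from $E_*^2$ pushes that permutation strictly up the tree and lowers $T$. Thus I may assume throughout the remainder of the argument that $T$ is already normalised with respect to the groupoid action, so any comparison with $UMF_\uparrow(\gamma)$ is governed by the discrete data of shape, input ordering, and choice of generator types.

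Next I would compare $T$ against $UMF_\uparrow(\gamma)$ following the layers of \cref{Total Admissible Order for Shuffle Tree Monomials of Props} in order, exhibiting at each discrepancy a local subtree which matches the left-hand side of one of the oriented relations in the table. Concretely, if $T$ is not left-aligned (i.e.\ the path from the smallest input to the root is not of maximal degree), then there is a binary generator $g$ sitting in the right branch of a binary generator $g'$ above it; the six combinations of $g,g' \in \{\hcomp, \eproperadicunspecified, \eproperadicopunspecified\}$ that can arise in a wheel-free graph correspond exactly to rewrites coming from Caterpillar, Lighthouse, Fireworks, Bow, Third-wheel, and associativity of $\hcomp$ (together with their $\circ_v$-swapped variants), and each such local pattern matches the leading term of some relation in $G$. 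If the underlying shape matches $UMF_\uparrow(\gamma)$ but the input permutation is not minimal, I would identify an adjacent pair of inputs at the same horizontal composition or properadic composition that can be swapped by the second copy of Lighthouse/Fireworks/Bow/associativity in the table (those relations with the $(132)$-style shuffle), producing a strictly smaller tree. Finally, if the shape and input order match but at some node $T$ uses $\eproperadicopunspecified$ in place of $\eproperadicunspecified$, then locally $T$ fails the validity restriction enforced by the main loop of \cref{UMF Algorithm for Props} — the greedy choice in the algorithm always produces the smaller-symbol generator — and the oriented relations from Caterpillar, Lighthouse, Fireworks, Bow, and Third-wheel on the $\eproperadicopunspecified$ side of the table provide the required rewrite.

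In each case the subtree of $T$ being rewritten is, by construction, the leading term of the corresponding element of $G$ under the order of \cref{Total Admissible Order for Shuffle Tree Monomials of Props}, so by \cref{divisibility} the tree monomial $T$ is divisible by this leading term and hence admits a reduction. Combined with the uniqueness established in \cref{UMF alg for props well-defined}, this shows that every non-minimal $T$ admits at least one rewrite, which is exactly what is required.

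The main obstacle will be verifying exhaustivity of the case analysis: for wheeled props there are only a handful of structural deviations from the canonical form, but for props the combinatorics of three generator types together with the Bow/Fireworks/Lighthouse/Third-wheel distinctions make the bookkeeping considerably heavier. The cleanest way to handle this is to organise the argument by the first place (reading from the lowest depth) at which $T$ and $UMF_\uparrow(\gamma)$ disagree, and to check one by one that each such disagreement triggers exactly one row of \cref{The relations of the shuffle operad governing props}. The wheel-free hypothesis is used implicitly throughout to rule out patterns that would otherwise force the appearance of a contraction generator, ensuring that every local configuration that arises is genuinely one of the propic relations already listed.
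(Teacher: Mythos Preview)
Your overall strategy matches the paper's, but there is a genuine gap in your treatment of the case where $T$ is left-aligned yet has a non-minimal input permutation. You assert that any adjacent out-of-order pair $v_j,v_i$ (with $i<j$) can be swapped via a relation whose leading term has the form $s_2(s_1(v_1,v_3),v_2)$, citing Lighthouse, Fireworks, Bow, and associativity. This is false for Bow (and for Caterpillar, which you omit): in the rows $C\cdot(132)$, $C\cdot(231)$, $B\cdot(132)$, $B\cdot(231)$ of \cref{The relations of the shuffle operad governing props} the pattern $s_2(s_1(v_1,v_3),v_2)$ is the \emph{target} of the rewrite, not a leading term. So merely locating an out-of-order adjacent pair does not guarantee a rewrite.

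The paper closes this gap with a structural observation you are missing: if the input permutation of $T$ differs from that of $UMF_\uparrow(\gamma)$, then one can always find a decomposition $s_2(s_1(T_1,v_j),v_i)$ with $i<j$ \emph{and} no directed path in $\gamma$ between $v_i$ and a vertex of $T_1$ passing through $v_j$. This path condition is exactly what rules out the four obstructed configurations above; for every remaining three-vertex configuration the pattern $s_2(s_1(v_1,v_3),v_2)$ does occur as a leading term, yielding the needed rewrite. Your argument needs this refinement to go through. Separately, your fourth case---same shape and permutation but a wrong generator symbol---is vacuous: once the left-aligned shape and the vertex-insertion order are fixed, the connectivity of each new vertex to the accumulated subgraph determines the generator type uniquely (this is the content of the final paragraph of \cref{UMF alg for props well-defined}).
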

\begin{proof}
Suppose that $T$ is larger than $UMF_{\uparrow}(\gamma)$ as a result of not being normalised with respect to the action of the groupoid. Then there exists an internal edge of $T$ which sits above a non-identity permutation. This internal edge defines a corresponding action of the groupoid which we can translate into an element of $E^2_*$. This defines a corresponding rewrite (which will push the permutation up the tree). So given we have access to this rewrite for the remainder of this proof, we suppose that $T$ is normalised with respect to the action of the groupoid. 
\\\\
Suppose that $T$ is larger than $UMF_{\uparrow}(\gamma)$ as a result of not having all generators on the path from the minimal vertex to the root (not being left aligned). Then $T$ must have the following form
\begin{center}
\begin{forest}
fairly nice empty nodes,
for tree={inner sep=0, l=0}
[{$T_u$}
    [{$s_1$}
        [{$T_1$}]
        [{$s_2$}
            [{$T_2$}]
            [{$T_3$}] 
        ]
    ]
]
\end{forest}
\end{center}
where $T_1,T_2,T_3$ are subtrees of $T$, $T_u$ is the remainder of the tree $T$ (close to the root), $s_1,s_2$ are arbitrary generators, and due to $T$ being a shuffle tree monomial, the minimal vertices of $T_1<T_2<T_3$. Let $T' = s_1(T_1,s_2(T_2,T_3))$ then the corresponding shuffle tree monomial of $T'$ is $s_1(v_1,s_2(v_2,v_3))$. This shuffle tree monomial describes how to form a graph with $3$ vertices, and as $T$ is normalised $s_1(v_1,s_2(v_2,v_3))$ must be normalised. Observe that in the relations of $\mathbb{P}^f$ whenever we have a shuffle tree monomial of the form $s_1(v_1,s_2(v_2,v_3))$ forming a particular graph with $3$ vertices, that it appears as the leading term of a directed equation. So let $g$ be the corresponding directed equation with $lt(g)=s_1(v_1,s_2(v_2,v_3))$, then $T$ will admit a rewrite through the subtree $T'$ to $r_g(T)$.
\\\\
Suppose that $T$ has all generators on the path from the minimal vertex to the root, but has a different shuffle permutation to $UMF_\uparrow(\gamma)$. It must be the case that $T$ admits a decomposition of the form
\begin{center}
$T=$
\begin{forest}
fairly nice empty nodes,
for tree={inner sep=0, l=0}
[{$T_u$}
    [{$s_2$}
        [{$s_1$}
            [{$T_1$}]
            [{$v_j$}] 
        ]
        [{$v_i$}]
    ]
]
\end{forest}
\end{center}
where $i<j$ and there does not exist a directed path from a vertex of $T_1$ to $v_i$ that passes through $v_j$, or a directed path from $v_i$ to a vertex of $T_1$ that passes through $v_j$ (if such a decomposition didn't exist then $T=UMF_\uparrow(\gamma)$). Here $s_1,s_2$ are arbitrary generators and $T_1$ and $T_u$ are the rest of the tree $T$. Let $T'= s_2(s_1(T_1,v_j),v_i)$ then the corresponding shuffle tree monomial of $T'$ is $s_2(s_1(v_1,v_3),v_2)$. This (normalised) shuffle tree monomial describes how to form a graph with three vertices subject to the condition above. The only graphs with 3 vertices that do not meet the required condition are
\begin{center}
\begin{tikzpicture}[thick,
v/.style={circle, draw}] 
\node[v] (1) {$v_1$}; 
\node[v] (2) [below of=1] {$v_3$};
\node[v] (3) [below of=2] {$v_2$};
\draw [-{Implies},double] (2) to (1);
\draw [-{Implies},double] (3) to (2);
\end{tikzpicture}
\quad
\begin{tikzpicture}[thick,
v/.style={circle, draw}] 
\node[v] (1) {$v_2$}; 
\node[v] (2) [below of=1] {$v_3$};
\node[v] (3) [below of=2] {$v_1$};
\draw [-{Implies},double] (2) to (1);
\draw [-{Implies},double] (3) to (2);
\end{tikzpicture}
\quad 
\begin{tikzpicture}[thick,
v/.style={circle, draw}] 
\node[v] (1) {$v_1$}; 
\node[v] (2) [below right of=1] {$v_3$};
\node[v] (3) [below left of=2] {$v_2$};
\draw [-{Implies},double] (2) to (1);
\draw [-{Implies},double] (3) to (1);
\draw [-{Implies},double] (3) to (2);
\end{tikzpicture}   
\quad 
\begin{tikzpicture}[thick,
v/.style={circle, draw}] 
\node[v] (1) {$v_2$}; 
\node[v] (2) [below right of=1] {$v_3$};
\node[v] (3) [below left of=2] {$v_1$};
\draw [-{Implies},double] (2) to (1);
\draw [-{Implies},double] (3) to (1);
\draw [-{Implies},double] (3) to (2);
\end{tikzpicture}   
\end{center}
which correspond to $C\cdot(132),C\cdot(231),B\cdot(132)$ and $B\cdot(231)$ in \cref{The relations of the shuffle operad governing props}. 
Manually inspecting the other relations of $\mathbb{P}^f$, we observe that for every other graph with $3$ vertices, that $s_2(s_1(v_1,v_3),v_2)$ only appears as the leading term of directed equations. So for the corresponding $g$ with leading term $s_2(s_1(v_1,v_3),v_2)$, the tree $T$ will admit a rewrite through the subtree $T'$ to $r_g(T)$.

\end{proof}

\subsection{Applying These Techniques to Other Operadic Structures}
\label{Koszul operads for other operadic}

The construction of this section can be easily specialised to provide Koszul operads governing di-ops (dioperadic composition + horizontal composition) or ops (operadic composition + horizontal composition). This follows from observing that dioperadic composition is an obvious restriction of properadic composition, and hence we can restrict the alternate biased definition of a prop \cref{Alternate biased prop} into alternate biased definitions of these structures (with the axioms restricted in the obvious ways). These new biased definitions then induce quadratic presentations of their operads. The restriction of the total order of \cref{Total Admissible Order for Shuffle Tree Monomials of Props} then provides the exact same unique minimal shuffle tree monomials governing these (restricted) directed cycle free graphs as that provided by \cref{UMF Algorithm for Props}. 
Then as non-minimal trees admit rewrites (\cref{Non Minimal Prop Shuffle Tree Monomials Admit Rewrites}) this proves the restricted operads are Koszul.
\\\\
A less straightforward specialisation is the construction of a Koszul operad governing properads, here is a brief outline. The alternate biased definition of a prop with its horizontal composition removed provides an alternate definition of a properad and hence a quadratic presentation of the operad governing properads (\cref{restriction is an alternate definition of a properad}). The corresponding shuffle operad is an obvious restriction of the shuffle operad for props, with no third wheel, no associativity of $\hcomp$ and no ways to form fireworks/lighthouse with $\hcomp$. If we apply the restriction of the total order of \cref{Total Admissible Order for Shuffle Tree Monomials of Props}, then we obtain a new unique minimal (properadic) shuffle tree monomial forming any connected directed cycle free graph. Then, proving that any non-minimal shuffle tree monomial admits a rewrite reproves (see \cref{introdution}) that the operad governing properads is Koszul. Dioperads and operads can then be obtained as a further specialisation of this construction.
\\\\
We note that there is no easy way to specialise the construction of a Koszul operad governing wheeled props into Koszul operads governing connected wheeled operadic structures, e.g. wheeled properads. This is because wheeled props form dioperadic joins by horizontal composition then contractions. The overall methodology of this paper however can still be applied starting from an alternate biased definition of a wheeled properad that uses extended dioperadic compositions and extended contractions.
We don't repeat this construction, as it is a known result
(see \cref{introdution}). 
Other potential targets for the methods of this paper include disconnected/Schwarz modular operads \cite{kaufmann2024schwarz}, and multi-oriented props \cite{merkulov2020multi}.
We note that \cite{kaufmann2024schwarz} subsequently developed distributive rewriting techniques for groupoid coloured operads to prove the operad governing disconnected modular operads is Koszul.
This distributive specialisation can also be used to study the operad governing wheeled props, but not the operad governing props, as $\eproperadicunspecified$ and $\hcomp$ are non-distributive (see for instance the lighthouse relation of \cref{The Relations of the Operad Governing Props}).

\section{Homotopy (Wheeled) Props}
\label{homotopy (wheeled) props}

Given the operads $\mathbb{W}$ and $\mathbb{P}$ are Koszul (\cref{def:koszul}), we immediately obtain the following corollary.
\begin{corollary}
\quad
\begin{enumerate}
    \item $\mathbb{W}_\infty:= \Omega(\mathbb{W}^{\antishriek})$ is a (quadratic) minimal model for $\mathbb{W}$.
    \item $\mathbb{P}_\infty:=\Omega(\mathbb{P}^{\antishriek})$ is a (quadratic) minimal model for $\mathbb{P}$.
\end{enumerate}
\end{corollary}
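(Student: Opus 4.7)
The plan is to deduce this corollary essentially directly from \cref{The operads governing (wheeled) props are Koszul} together with the second equivalent characterisation of Koszulness recorded in \cref{def:koszul}. Since the argument is identical in the two cases, I would prove both statements simultaneously by treating $P$ as a generic quadratic groupoid coloured operad (taking $P = \mathbb{W}$ or $P = \mathbb{P}$), and extract the two claims at the end. The corollary has two ingredients to verify: (a) $\Omega(P^{\antishriek})$ is a model for $P$, i.e. quasi-isomorphic to $P$, and (b) this model is (quadratic and) minimal.

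For (a), I would invoke the fact, established in Sections~\ref{proof The Operad Governing Wheeled Props is Koszul} and \ref{proof The Operad Governing Props is Koszul}, that $\mathbb{W}$ and $\mathbb{P}$ are Koszul in the sense of \cref{def:koszul}. By the equivalence of characterisations (1) and (2) there (which follows from the bar--cobar adjunction, cf.\ Lemma 2.44 of \cite{ward2022massey}), Koszulness is precisely the statement that the composite $\Omega(P^{\antishriek}) \to \Omega(B(P)) \to P$ is a quasi-isomorphism of groupoid coloured operads. Thus (a) is immediate.

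For (b), I would observe that by construction (\cref{Cobar Construction}) $\Omega(P^{\antishriek}) = (F(s^{-1} P^{\antishriek}), \partial + d_{P^{\antishriek}})$ is quasi-free over the desuspension of the Koszul dual cooperad. Since $P$ is concentrated in degree $0$ with trivial internal differential, the internal piece $d_{P^{\antishriek}}$ vanishes, and the remaining part $\partial$ is induced by the (quadratic) cooperadic decomposition of $P^{\antishriek} = Q(sE, s^2R)$ (\cref{Koszul Dual Cooperad}). In particular $\partial$ takes a generator of weight two in $P^{\antishriek}$ to a sum of two-vertex tree monomials, i.e.\ the differential is quadratic and lands in the ideal generated by decomposables. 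This yields minimality in the usual operadic sense (see, e.g., the groupoid coloured analogue of \S 6.3 of \cite{loday2012algebraic} via \cite{ward2022massey}), and in particular $\Omega(P^{\antishriek})$ is a minimal model for $P$.

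There is no real obstacle here beyond what has already been done: the Koszul theorem carries all the content, and the minimal quadratic nature of $\Omega(P^{\antishriek})$ is formal from its cobar presentation over a quadratic cooperad. Specialising to $P = \mathbb{W}$ gives $\mathbb{W}_\infty$, and to $P = \mathbb{P}$ gives $\mathbb{P}_\infty$, completing the corollary.
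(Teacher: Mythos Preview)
Your proposal is correct and matches the paper's approach exactly: the paper treats this as an immediate consequence of \cref{The operads governing (wheeled) props are Koszul} together with characterisation (2) of \cref{def:koszul}, noting (as you do) that $P_\infty = \Omega(P^{\antishriek})$ is a model for $P$ and is quadratic by construction, hence minimal. Your additional unpacking of why the cobar differential is quadratic is a harmless elaboration of what the paper states in one line.
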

As such, we define a \textbf{homotopy (wheeled) prop} to be a algebra over $\mathbb{P}_\infty$ (respectively, $\mathbb{W}_\infty$).
This includes non-trivial examples such as,
\begin{itemize}
    \item The homology of (wheeled) props, as shown in \cref{homotopy transfer theory}.
    \item The weakly vertically associative (wheeled) props of \cite{wahl2016hochschild}.
    \item The completions of virtual and welded tangles are examples of homotopy wheeled props (\cite{dancso2021circuit}, \cite{dancso2023topological}). These structures permit a topological characterisation of the Kashiwara-Vergne groups.
\end{itemize}
In this section, we unpack what it means to be a homotopy (wheeled) prop.
We also show in \cref{Nesting Models and Polytope Based Techniuqes}, that we cannot use polytopes to describe these homotopy (wheeled) props, in a clear departure from the theory of connected operadic structures.
Finally, we close this paper by exploring consequences of homotopy transfer theory on these structures (\cref{homotopy transfer theory}).
\\\\
We first seek to understand the minimal models. The cobar construction (\cref{Cobar Construction}) yields,
\begin{align*}
    \Omega(\mathbb{W}^{\antishriek}) = (F(s^{-1}\mathbb{W}^{\antishriek}), d),\quad \Omega(\mathbb{P}^{\antishriek}) = (F(s^{-1}\mathbb{P}^{\antishriek}), d) 
\end{align*}
where the differentials are given by the cooperadic expansion of the Koszul dual cooperads $\mathbb{W}^{\antishriek}$ and $\mathbb{P}^{\antishriek}$.
We now unpack what this entails for $\mathbb{W}_\infty$, and similar reasoning works for $\mathbb{P}_\infty$.
The differential $d$ of $\Omega(\mathbb{W}^{\antishriek})$ will be induced by understanding how the cooperadic expansion into two pieces $\triangle_{(1)}:\mathbb{W}^{\antishriek} \to \mathbb{W}^{\antishriek}\otimes \mathbb{W}^{\antishriek}$, also known as the \textbf{infinitesimal decomposition}, acts on the basis elements of $\mathbb{W}^{\antishriek}$.
If $\mu \in \mathbb{W}^{\antishriek}$, then we will denote the image of $\mu$ under the infinitesimal decomposition map by $\triangle_{(1)}(\mu) = \sum (\mu_{(1)} \circ_i \mu_{(2)})\sigma $.
Recall from \cref{Koszul Dual Cooperad}, that the Koszul dual cooperad $\mathbb{W}^{\antishriek}$ is universal amongst the sub-cooperads $C$ of $F^c(sE)$ such that following composite is $0$.
\begin{center}
\begin{tikzcd}
C \arrow[r,hookrightarrow] & F^c(sE) = F(sE) \arrow[r,twoheadrightarrow] & F(sE)^{(2)}/s^2R
\end{tikzcd}
\end{center} 
That is to say, ${\mathbb{W}^{\antishriek}}^{(1)} = E$, ${\mathbb{W}^{\antishriek}}^{(2)} = R$ and higher degree elements of $\mathbb{W}^{\antishriek}$ are all in the ideal of $R$. 
A algebra over $\mathbb{W}_\infty$ is a morphism of groupoid coloured operads $\varphi:W_\infty \to End_A$. 
As such, a homotopy wheeled prop will be a family of graded vector spaces $A\dc_{\dc \in \ptwoc}$ together with operations,
\begin{align*}
    m_{\gamma}: A\binom{\ud_1}{\uc_1}\otimes...\otimes A\binom{\ud_k}{\uc_k} \to A\dc
\end{align*}
where $m_\gamma = \varphi(\mu_\gamma)$, and $\mu_\gamma$ is a basis element of  $\mathbb{W}^{\antishriek}$.
This tells us, each operation will be compatible with the groupoid $\ptwoc$, i.e. they will admit left, right and $\Sigma_{k}$ actions by translating across the actions of $\mathbb{W}$.
The decomposition $\triangle_{(1)}(\mu_\gamma) = \sum (\mu_{\gamma_1}\circ_i \mu_{\gamma_2})\sigma $, will induce relations of the form
\begin{align*}
    \partial_A(m_\gamma)=\sum \pm (m_{\gamma_1} \circ_{i} m_{\gamma_2}) \sigma
\end{align*}
In particular, we know that the following operations and relations will be present,
\begin{itemize}
    \item There will be differentials, i.e. degree $-1$ operations $d_1:A\dc \to A\dc$ such that $d_1^2 = 0$.
    \item There will be degree $0$ operations for each generator of the operad $\mathbb{W}$ that satisfy $\partial_A(m_\gamma)=0$.
    \item There will be operations of degree $1$, witnessing that each relation holds up to homotopy. Here is an example that illustrates a potential subtlety.
    For the operad $\mathbb{P}$, this particular lighthouse graph witnesses the following relations.
    \begin{center}
    \noindent\parbox[c]{2.2cm}{
    \begin{tikzpicture}[thick,
    v/.style={circle, draw}] 
    \node[v] (1) {$v_1$}; 
    \node[v] (2) [below left of=1] {$v_2$};
    \node[v] (3) [below right of=1] {$v_3$};
    \draw [thick] (2) to (1);
    \draw [thick] (3) to (1);
    \end{tikzpicture}   
    }    $\eproperadicunspecified(\eproperadicunspecified(v_1,v_2),v_3) = \eproperadicunspecified(\eproperadicunspecified(v_1,v_3),v_2)= \eproperadicunspecified(v_1,\hcomp(v_2,v_3))$
    \end{center}
    A basis for these relations are any two distinct pairs, for instance
    \begin{align}\label{eq:Lighthouse basis}
        \eproperadicunspecified(\eproperadicunspecified(v_1,v_2),v_3) \stackrel{r_1}{=} \eproperadicunspecified(\eproperadicunspecified(v_1,v_3),v_2), \quad \quad \eproperadicunspecified(\eproperadicunspecified(v_1,v_2),v_3) \stackrel{r_2}{=}\eproperadicunspecified(v_1,\hcomp(v_2,v_3))
    \end{align}
    Under this choice of basis, the relations yield operators $m_{r_1}, m_{r_2}:A\binom{\ud_1}{\uc_1}\otimes A\binom{\ud_2}{\uc_2}\otimes A\binom{\ud_3}{\uc_3} \to A\dc$. In this example, 
    \begin{align*}
        \partial_A(m_{r_2}) = m_{e_1}\circ_1 m_{e_2} - m_{e_3} \circ_2 m_{e_4}
    \end{align*}
    where $e_2$ is the generator that performs $\circ_v(v_1,v_2)$ in the context of $\gamma$, and $e_1$ is the generator that takes the output of $e_2$ together with $v_3$ and forms $\gamma$. 
    Note that individually each $m_{r_i}$ witness a homotopy between two terms, but together they witness homotopies between three terms.
    
\end{itemize}

We also stress that in these definitions of homotopy (wheeled) props, the action of the symmetric group remains strict. 
The effect this has on the definition is best understood through a simpler suboperad.
Let $\mathfrak{1}$ denote the discrete category with a single object.
Let $Com$ be the commutative operad (see for instance Section 13.1 of \cite{loday2012algebraic}), which can either be thought of as an uncoloured operad, or a $\mathfrak{1}$-coloured operad.
In addition, $Com$ is known to admit the quadratic presentation,
\begin{center}
$Com = F($ 
{\tiny
\begin{forest}
fairly nice empty nodes,
for tree={inner sep=0, l=0}
[
    [{$1$},]
    [{$2$},] 
]
\end{forest}
}
$=$
{\tiny
\begin{forest}
fairly nice empty nodes,
for tree={inner sep=0, l=0}
[
    [{$2$},]
    [{$1$},] 
]
\end{forest}
}
$) / \langle$
{\tiny
\begin{forest}
fairly nice empty nodes,
for tree={inner sep=0, l=0}
[,
    [
        [{$1$},]
        [{$2$},] 
    ]
    [{$3$},] 
]
\end{forest}
}
$-$
{\tiny
\begin{forest}
fairly nice empty nodes,
for tree={inner sep=0, l=0}
[,
    [{$1$},] 
    [
        [{$2$},]
        [{$3$},] 
    ]
]
\end{forest}
}
$\rangle$.
\end{center}
\begin{proposition}\label{inclusion of com}
The commutative operad is isomorphic to suboperads of both $\mathbb{W}$ and $\mathbb{P}$. 
\end{proposition}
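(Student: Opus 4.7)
The plan is to construct explicit morphisms of groupoid coloured operads $\iota_{\mathbb{W}}: Com \to \mathbb{W}$ and $\iota_{\mathbb{P}}: Com \to \mathbb{P}$, and to verify that each is faithful. In both cases the underlying groupoid map $\mathfrak{1} \to \ptwoc$ sends the unique object to the empty profile $\emptyprof = \binom{\varnothing}{\varnothing}$, and the commutative binary generator $\mu_2 \in Com(2)$ is sent to $(\hcomp, \binom{id}{id}) \in E(\emptyprof, \emptyprof; \emptyprof)$. The suboperads realised by the two embeddings are then by definition the images of $\iota_{\mathbb{W}}$ and $\iota_{\mathbb{P}}$.

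To confirm the morphisms are well-defined, I would check that the two defining relations of $Com$ are preserved. Associativity $\mu_2 \circ_1 \mu_2 = \mu_2 \circ_2 \mu_2$ maps to the associativity of $\hcomp$, which is the first relation in both \cref{The Relations of the Operad Governing Wheeled Props} and \cref{The Relations of the Operad Governing Props}. The strict symmetry $\mu_2 \cdot (12) = \mu_2$ requires unpacking the switch compatibility axiom of \cref{Alternate biased wheeled prop} in the case $\ba = \dc = \emptyprof$: the matching output permutation $\binom{\sigma'}{\tau'}$ lives on empty sequences and is therefore forced to be the identity, so the $\Sigma_2$ action on $(\hcomp, \binom{id}{id})$ is trivial at the empty profile. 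Essentially the same unpacking works in $\mathbb{P}$, since the horizontal composition axioms there coincide with those of $\mathbb{W}$.

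To establish faithfulness I would show that the image of $Com(n) \cong \mathbb{K}$ is one-dimensional and nonzero for every $n \geq 1$. Associativity and the commutativity just verified imply that every iterated composite of $(\hcomp, \binom{id}{id})$ at $\emptyprof$ collapses to a single class, so the image is at most one-dimensional. For nonvanishing I would invoke the unique minimal form algorithms (\cref{UMF Algorithm for Wheeled Props}, \cref{UMF Algorithm for Props}) together with the Groebner basis results (\cref{Groebner Basis for Wheeled Props}, \cref{Groebner Basis for Props}), which single out the left-leaning iterated $\hcomp$ at $\emptyprof$ as a distinguished nonzero normal form. Alternatively, evaluating on the endomorphism operad $End_X$ for some functor $X: \SC \to dgVect_{\mathbb{K}}$ with $X\emptyprof$ nonzero exhibits a concrete non-unital (wheeled) prop on which the iterated $\hcomp$ acts nontrivially.

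The main obstacle is the commutativity verification, since the switch axiom does not directly assert $\Sigma_2$-invariance of $\hcomp$ but rather trades the swap for a groupoid action whose shape is determined by $\binom{\sigma'}{\tau'}$; only at $\emptyprof$ does this groupoid action degenerate to the identity, and this is precisely why the commutative operad embeds here and not at an arbitrary colour. A secondary subtlety, relevant only in the $\mathbb{P}$ case, is that other binary generators such as the empty-segment properadic joins $\eproperadic{\binom{\varnothing}{\varnothing}}{\binom{id}{id}}$ also live at the empty profile, so $\mathbb{P}(\emptyprof, \emptyprof; \emptyprof)$ may be larger than one-dimensional; however, none of the quadratic relations in \cref{The Relations of the Operad Governing Props} equate pure $\hcomp$-composites to zero or to one another beyond associativity, so these extra generators sit outside the image of $\iota_{\mathbb{P}}$ without shrinking it.
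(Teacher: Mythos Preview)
Your approach is essentially the same as the paper's. Both send the unique colour to $\emptyprof$, send $\mu_2$ to $(\hcomp,\binom{id}{id})$, verify $\Sigma_2$-triviality via the switch/biequivariance axiom at the empty profile, and check associativity against the horizontal associativity relation. The paper frames this as identifying the suboperad $\mathbb{H}$ generated by $(\hcomp,\binom{id_\emptyset}{id_\emptyset})$ and observing it satisfies exactly the $Com$ presentation, whereas you phrase it as a morphism plus faithfulness; these are the same argument.

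Two minor remarks. First, your worry about empty-segment properadic joins is unfounded: properadic composition in \cref{Alternate biased prop} connects a nonempty segment (it produces a \emph{connected} two-vertex graph), so $E(\emptyprof,\emptyprof;\emptyprof)$ in $\mathbb{P}$ genuinely consists of the single element $(\hcomp,\binom{id}{id})$, just as in $\mathbb{W}$. Second, for nonvanishing in arity $n$ the paper simply observes that $\mathbb{H}$ inherits only the horizontal associativity relation (no contraction or properadic relation involves a pure $\hcomp$ composite), so the presentation of $\mathbb{H}$ is literally that of $Com$; your appeal to the Groebner basis normal forms also works but is heavier than needed.
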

\begin{proof}
Observe that $\mathbb{W}(\binom{\emptyset}{\emptyset},\binom{\emptyset}{\emptyset};\binom{\emptyset}{\emptyset})$ and $\mathbb{P}(\binom{\emptyset}{\emptyset},\binom{\emptyset}{\emptyset};\binom{\emptyset}{\emptyset})$ only contain a single basis element $(\hcomp, \binom{id_\emptyset}{id_{\emptyset}})$.
We can graphically interpret this basis element as the graph with two vertices, neither of which has any inputs or outputs, and consequently the graph has no inputs or outputs.
Furthermore, we can observe either directly from the biequivariance axiom of horizontal composition (\cref{Alternate biased wheeled prop}), or from the following graphical argument, that the $\Sigma_2$ action on $(\hcomp, \binom{id_\emptyset}{id_{\emptyset}})$ is trivial.
\begin{center}
$(\hcomp, \binom{id_\emptyset}{id_{\emptyset}}) \cdot (21)=$
\begin{tikzpicture}[thick,
v/.style={circle, draw}] 
\node[v] (1) {$v_1$}; 
\node[v] (2) [right of=1] {$v_2$};
\end{tikzpicture}
$ \cdot (21)=$
\begin{tikzpicture}[thick,
v/.style={circle, draw}] 
\node[v] (1) {$v_2$}; 
\node[v] (2) [right of=1] {$v_1$};
\end{tikzpicture}
$=$
\begin{tikzpicture}[thick,
v/.style={circle, draw}] 
\node[v] (1) {$v_1$}; 
\node[v] (2) [right of=1] {$v_2$};
\end{tikzpicture}
$=(\hcomp, \binom{id_\emptyset}{id_{\emptyset}}) $.
\end{center}
Equality two follows as the action of the symmetric group on the graph switches the label of the two vertices, and equality three follows from the definition of a graph (\cref{def:graph}), see also \cref{Example of Action of the Groupoid}. 
\\\\
Let $\mathbb{H}$ be the suboperad of both $\mathbb{W}$ and $\mathbb{P}$ generated by $(\hcomp, \binom{id_\emptyset}{id_{\emptyset}})$.
We observe that $\mathbb{H}$ will satisfy the horizontal associativity relation (Row 1 of \cref{The Relations of the Operad Governing Wheeled Props}, or equivalently the horizontal associativity axiom of \cref{Alternate biased wheeled prop}), and will satisfy no further relations, as all other relations of $\mathbb{W}$ and $\mathbb{P}$ involve either contractions or properadic compositions.
In addition, we observe that we no longer need to use the full groupoid $\SC:=\ptwoc$ to describe $\mathbb{H}$ but rather only $\emptyset^{op}\times \emptyset$ which is a single object discrete category, and hence isomorphic to $\mathfrak{1}$.
Thus, the generating groupoid coloured modules of $Com$ and $\mathbb{H}$ are isomorphic (\cref{def:morphism of groupoid coloured bimodules}), through the isomorphism of groupoids $f_0:\mathfrak{1}\to \emptyset^{op}\times \emptyset$, and the compatible bijective map of generators  $f_1({\tiny
\begin{forest}
fairly nice empty nodes,
for tree={inner sep=0, l=0}
[
    [{},]
    [{},] 
]
\end{forest}
}):=(\hcomp, \binom{id_\emptyset}{id_{\emptyset}})$.
Hence, $Com$ and $\mathbb{H}$ are isomorphic as groupoid coloured operads.
\\\\
We note that the graphical interpretation of the sole basis element in each $\mathbb{H}(\binom{\emptyset}{\emptyset}^n;\binom{\emptyset}{\emptyset})$ is the graph with $n$ vertices, none of which have any inputs or outputs.
This basis element has a trivial $\Sigma_n$ action, as permuting the labels of these vertices will not produce a new graph (\cref{def:graph}).

\end{proof}

It is a classical result that the commutative operad is Koszul, and that its Koszul dual is Lie, i.e. $Com^\antishriek$ is the co-Lie cooperad.
When we construct the minimal model of $Com$ using the (uncoloured) Koszul machine $C_\infty:=\Omega(Com^\antishriek)$, the associativity relation is relaxed up to homotopy, and commutativity continues to hold strictly.
As such, a $C_\infty$ algebra can be regarded as an $A_\infty$ algebra with the additional property that the operations are trivial on skew-symmetric shuffle products (see Proposition 13.1.6 of \cite{loday2012algebraic}, and Example 3.134 of \cite{markl2002operads}). 
Explicitly, they satisfy relations such as:
\begin{align*}
    m_2(\alpha,\beta) - (-1)^{|\alpha||\beta|}m_2(\beta,\alpha) &= 0\\
    m_3(\alpha,\beta,\gamma) - (-1)^{|\alpha||\beta|}m_3(\beta,\alpha,\gamma)
    +(-1)^{|\alpha|(|\beta|+|\gamma|)}m_3(\beta,\gamma,\alpha)&=0\\
    m_3(\alpha,\beta,\gamma) - (-1)^{|\beta||\gamma|}m_3(\alpha,\gamma,\beta)
    +(-1)^{(|\alpha|+|\beta|)|\gamma|}m_3(\gamma,\alpha,\beta)&=0.
\end{align*}
Analogous relations hold in higher degree.
The inclusion of $Com$ into both $\mathbb{W}$ and $\mathbb{P}$ tells us these relations will hold (up to inclusion) in algebras over $\mathbb{W}_\infty$ and $\mathbb{P}_\infty$.
Consequently, these structure will satisfy appropriately generalised versions of the prior relations, once one accounts for the presence of the contraction operators for $\mathbb{W}$, and the properadic compositions for $\mathbb{P}$.
\\\\
A 'simple combinatorial model' for homotopy (wheeled) props is beyond the scope of this paper.
However, it is straightforward to perform low dimensional computations, as the following examples show.
These calculations will also act as counter examples in the next section.

\begin{example}\label{wheeled prop polytope counter example}
Recall from \cref{The Relations of the Operad Governing Wheeled Props}, that in the operad $\mathbb{W}$, the followings graphs encode relations in $R = {\mathbb{W}^{\antishriek}}^{(2)}$, and thus the corresponding infinitesimal decompositions.
\begin{table}[!htbp]
\centering
\begin{tabular}{ |c|c|c| } 
\hline
Graph & Encodes Relation $r$ & $\triangle_{(1)}(r)$\\
\hline
\noindent\parbox[c]{2cm}{
\vspace{-0.25cm}
\begin{tikzpicture}[node distance={10mm}, thick,
v/.style={circle, draw}] 
\node[v] (1) {$v_1$}; 
\node[v] (2) [right of=1] {$v_2$};
\draw [thick] (1) to [out=120,in=240,looseness =3] (1);
\end{tikzpicture}
\vspace{-0.75cm}
}
& 
\noindent\parbox[c]{1.2cm}{
\begin{forest}
fairly nice empty nodes,
for tree={inner sep=0, l=0}
[
[[{$v_1$}]]
[{$v_2$}] 
]
\end{forest}
}
$-$
\noindent\parbox[c]{1.2cm}{
\begin{forest}
fairly nice empty nodes,
for tree={inner sep=0, l=0}
[
[
    [{$v_1$}]
    [{$v_2$}] 
]
]
\end{forest}
}
$=0$
&
{\tiny
\begin{forest}
fairly nice empty nodes,
for tree={inner sep=0, l=0}
[
    [, ]
    [, ] 
]
\end{forest}
}
$\circ_1$
{\tiny
\begin{forest}
fairly nice empty nodes,
for tree={inner sep=0, l=0}
[[,]]
\end{forest}
}
$-$
{\tiny
\begin{forest}
fairly nice empty nodes,
for tree={inner sep=0, l=0}
[[,]]
\end{forest}
}
$\circ_1$
{\tiny
\begin{forest}
fairly nice empty nodes,
for tree={inner sep=0, l=0}
[
    [,]
    [,] 
]
\end{forest}
}
\\
\hline
\noindent\parbox[c]{1cm}{
\vspace{-0.3cm}
\begin{tikzpicture}[node distance={10mm}, thick,
v/.style={circle, draw}] 
\node[v] (1) {$v_1$};
\draw [thick,red] (1) to [out=120,in=240,looseness =3] (1);
\draw [thick,blue] (1) to [out=60,in=-60,looseness =3] (1);
\end{tikzpicture}
\vspace{-0.5cm}
}
& 
\noindent\parbox[c]{0.6cm}{
\begin{forest}
fairly nice empty nodes,
for tree={inner sep=0, l=0}
[ 
[ , edge={thick,blue}
[{$v_1$}, edge={thick,red}
]
]
]
\end{forest}
}
$-\ \ $
\noindent\parbox[c]{0.6cm}{
\begin{forest}
fairly nice empty nodes,
for tree={inner sep=0, l=0}
[ 
[ , edge={thick,red}
[{$v_1$}, edge={thick,blue}
]
]
]
\end{forest}
}
$=0$
&
{\tiny
\begin{forest}
fairly nice empty nodes,
for tree={inner sep=0, l=0}
[[,edge={thick,blue}]]
\end{forest}
}
$\circ_1$
{\tiny
\begin{forest}
fairly nice empty nodes,
for tree={inner sep=0, l=0}
[[,edge={thick,red}]]
\end{forest}
}
-
{\tiny
\begin{forest}
fairly nice empty nodes,
for tree={inner sep=0, l=0}
[[,edge={thick,red}]]
\end{forest}
}
$\circ_1$
{\tiny
\begin{forest}
fairly nice empty nodes,
for tree={inner sep=0, l=0}
[[,edge={thick,blue}]]
\end{forest}
}
\\
\hline
\end{tabular}    
\end{table}

\raisebox{1.7em}{We use the graph
$\gamma = $}
{\tiny
\begin{tikzpicture}[thick,
v/.style={circle, draw}] 
\node[v] (1) {$v_1$}; 
\node[v] (2) [right of=1] {$v_2$};
\draw (1) [red, thick, in =120, out=-120,looseness=3] to (1);
\draw (1) [blue, thick,in =-120, out=60] to  (2);
\end{tikzpicture}
}
\raisebox{1.7em}{,
to encode $\alpha-\beta \in \mathbb{W}^\antishriek$, where $\alpha,\beta$ are any two elements of
}
\begin{center}
{\tiny
\begin{forest}
fairly nice empty nodes,
for tree={inner sep=0, l=0}
[
[, edge={thick,blue}
    [[{$v_1$}, edge={thick,red}]]
    [{$v_2$}] 
]
]
\end{forest}
}
$=$
{\tiny
\begin{forest}
fairly nice empty nodes,
for tree={inner sep=0, l=0}
[
[, edge={thick,blue}
[, edge={thick,red}
    [{$v_1$}]
    [{$v_2$}] 
]
]
]
\end{forest}
}
$=$
{\tiny
\begin{forest}
fairly nice empty nodes,
for tree={inner sep=0, l=0}
[
[, edge={thick,red}
[, edge={thick,blue}
    [{$v_1$}]
    [{$v_2$}] 
]
]
]
\end{forest}
}
\end{center}

We calculate,
\begin{center}
\raisebox{1.7em}{$
\triangle_{(1)}(
$}
{\tiny
\begin{tikzpicture}[thick,
v/.style={circle, draw}] 
\node[v] (1) {$v_1$}; 
\node[v] (2) [right of=1] {$v_2$};
\draw (1) [red, thick, in =120, out=-120,looseness=3] to (1);
\draw (1) [blue, thick,in =-120, out=60] to  (2);
\end{tikzpicture}
}
\raisebox{1.7em}{$
)
= \pm
$}
{\tiny
\begin{tikzpicture}[thick,
v/.style={circle, draw}] 
\node[v] (1) {$v_1$}; 
\draw (1) [blue, thick, in =60, out=-60,looseness=3] to (1);
\end{tikzpicture}
}
\raisebox{1.7em}{$
\circ_1
$}
{\tiny
\begin{tikzpicture}[thick,
v/.style={circle, draw}] 
\node[v] (1) {$v_1$}; 
\node[v] (2) [right of=1] {$v_2$};
\coordinate[above right =0.3cm of 1] (3); 
\coordinate[below left =0.3cm of 2] (4); 
\draw (1) [red, thick, in =120, out=-120,looseness=3] to (1);
\draw (1) to (3);
\draw (2) to (4);
\end{tikzpicture}
}
\raisebox{1.7em}{$
\pm
$}
{\tiny
\begin{tikzpicture}[thick,
v/.style={circle, draw}] 
\node[v] (1) {$v_1$}; 
\draw (1) [red, thick, in =120, out=-120,looseness=3] to (1);
\draw (1) [blue, thick,in =-60, out=60,looseness=3] to  (1);
\end{tikzpicture}
}
\raisebox{1.7em}{$
\circ_1
$}
\raisebox{0.8em}{
{\tiny
\begin{tikzpicture}[thick,
v/.style={circle, draw}] 
\node[v] (1) {$v_1$}; 
\node[v] (2) [right of=1] {$v_2$};
\coordinate[above right =0.3cm of 1] (3); 
\coordinate[above left =0.3cm of 1] (4); 
\coordinate[below left =0.3cm of 1] (5);
\coordinate[below left =0.3cm of 2] (6);
\draw (1) to (3);
\draw (1) to (4);
\draw (1) to (5);
\draw (2) to (6);
\end{tikzpicture}
}
}
\end{center}
The first term arises from cutting the first two tree monomials under the blue unary contraction, the second term arises from cutting the last two terms under the contractions/(above the binary fork).
Encoding successive calculations via diagram, we compute the subcomplex generated by $\alpha-\beta$,
\begin{center}
\begin{tikzcd}
&
{\tiny
\begin{tikzpicture}[thick,
v/.style={circle, draw}] 
\node[v] (1) {$v_1$}; 
\node[v] (2) [right of=1] {$v_2$};
\draw (1) [red, thick, in =120, out=-120,looseness=3] to (1);
\draw (1) [blue, thick,in =-120, out=60] to  (2);
\end{tikzpicture}
}
\arrow[dl]
\arrow[dr]
&\\
{\tiny
\begin{tikzpicture}[thick,
v/.style={circle, draw}] 
\node[v] (1) {$v_1$}; 
\draw (1) [red, thick, in =120, out=-120,looseness=3] to (1);
\draw (1) [blue, thick,in =-60, out=60,looseness=3] to  (1);
\end{tikzpicture}
}
\circ_1
{\tiny
\begin{tikzpicture}[thick,
v/.style={circle, draw}] 
\node[v] (1) {$v_1$}; 
\node[v] (2) [right of=1] {$v_2$};
\coordinate[above right =0.3cm of 1] (3); 
\coordinate[above left =0.3cm of 1] (4); 
\coordinate[below left =0.3cm of 1] (5);
\coordinate[below left =0.3cm of 2] (6);
\draw (1) to (3);
\draw (1) to (4);
\draw (1) to (5);
\draw (2) to (6);
\end{tikzpicture}
}
\arrow[d]
\arrow[dr]
&
&
{\tiny
\begin{tikzpicture}[thick,
v/.style={circle, draw}] 
\node[v] (1) {$v_1$}; 
\draw (1) [blue, thick, in =60, out=-60,looseness=3] to (1);
\end{tikzpicture}
}
\circ_1
{\tiny
\begin{tikzpicture}[thick,
v/.style={circle, draw}] 
\node[v] (1) {$v_1$}; 
\node[v] (2) [right of=1] {$v_2$};
\coordinate[above right =0.3cm of 1] (3); 
\coordinate[below left =0.3cm of 2] (4); 
\draw (1) [red, thick, in =120, out=-120,looseness=3] to (1);
\draw (1) to (3);
\draw (2) to (4);
\end{tikzpicture}
}
\arrow[d]
\arrow[dl]
\\
{\tiny
\begin{tikzpicture}[thick,
v/.style={circle, draw}] 
\node[v] (1) {$v_1$}; 
\draw (1) [red, thick, in =120, out=-120,looseness=3] to (1);
\end{tikzpicture}
}
\circ_1
(
{\tiny
\begin{tikzpicture}[thick,
v/.style={circle, draw}] 
\node[v] (1) {$v_1$};
\coordinate[above left =0.3cm of 1] (3); 
\coordinate[below left =0.3cm of 1] (4);
\draw (1) to (3);
\draw (1) to (4);
\draw (1) [blue, thick,in =-60, out=60,looseness=3] to  (1);
\end{tikzpicture}
}
\circ_1
{\tiny
\begin{tikzpicture}[thick,
v/.style={circle, draw}] 
\node[v] (1) {$v_1$}; 
\node[v] (2) [right of=1] {$v_2$};
\coordinate[above right =0.3cm of 1] (3); 
\coordinate[above left =0.3cm of 1] (4); 
\coordinate[below left =0.3cm of 1] (5);
\coordinate[below left =0.3cm of 2] (6);
\draw (1) to (3);
\draw (1) to (4);
\draw (1) to (5);
\draw (2) to (6);
\end{tikzpicture}
}
)
\arrow[dr]
&

{\tiny
\begin{tikzpicture}[thick,
v/.style={circle, draw}] 
\node[v] (1) {$v_1$}; 
\draw (1) [blue, thick,in =-60, out=60,looseness=3] to  (1);
\end{tikzpicture}
}
\circ_1
(
{\tiny
\begin{tikzpicture}[thick,
v/.style={circle, draw}] 
\node[v] (1) {$v_1$};
\coordinate[above right =0.3cm of 1] (5);
\coordinate[below right =0.3cm of 1] (6);
\draw (1) to (5);
\draw (1) to (6);
\draw (1) [red, thick, in =120, out=-120,looseness=3] to (1);
\end{tikzpicture}
}
\circ_1
{\tiny
\begin{tikzpicture}[thick,
v/.style={circle, draw}] 
\node[v] (1) {$v_1$}; 
\node[v] (2) [right of=1] {$v_2$};
\coordinate[above right =0.3cm of 1] (3); 
\coordinate[above left =0.3cm of 1] (4); 
\coordinate[below left =0.3cm of 1] (5);
\coordinate[below left =0.3cm of 2] (6);
\draw (1) to (3);
\draw (1) to (4);
\draw (1) to (5);
\draw (2) to (6);
\end{tikzpicture}
}
)
\arrow[d]
&
{\tiny
\begin{tikzpicture}[thick,
v/.style={circle, draw}] 
\node[v] (1) {$v_1$}; 
\draw (1) [blue, thick,in =-60, out=60,looseness=3] to  (1);
\end{tikzpicture}
}
\circ_1
(
{\tiny
\begin{tikzpicture}[thick,
v/.style={circle, draw}] 
\node[v] (1) {$v_1$}; 
\node[v] (2) [right of=1] {$v_2$};
\coordinate[above right =0.3cm of 1] (3); 
\coordinate[below left =0.3cm of 2] (6);
\draw (1) to (3);
\draw (2) to (6);
\end{tikzpicture}
}
\circ_1
{\tiny
\begin{tikzpicture}[thick,
v/.style={circle, draw}] 
\node[v] (1) {$v_1$};
\draw (1) [red, thick, in =120, out=-120,looseness=3] to (1);
\end{tikzpicture}
}
)
\arrow[dl]
\\
&
0
&
\end{tikzcd}    
\end{center}
Where the image of the infinitesimal decomposition of a particular term (up to signs), is given by the target of all arrows whose source is that term.
Note the bracketing on the third line can be rearranged using the associativity of operadic composition.
We emphasise:
\begin{center}
\raisebox{1em}{The bargraph }
{\tiny
\begin{tikzpicture}[node distance={10mm}, thick,
v/.style={circle, draw}] 
\node[v] (1) {$v_1$}; 
\node[v] (2) [right of=1] {$v_2$};
\draw [thick,blue] (1) to [out=60,in=-120] (2);
\end{tikzpicture}
}
\raisebox{1em}{
never appears in the diagram, as it encodes no relation in $\mathbb{W}^\antishriek$.
}
\end{center}

\end{example}

\begin{example}\label{prop polytope counter example}
Let red and blue lighthouse graphs, provide a graphical encoding of our choice of basis for the lighthouse relations,

\begin{center}
\begin{tabular}{cl}
{\tiny
\begin{tikzpicture}[thick, red,
v/.style={circle, draw}] 
\node[v] (1) {$v_1$}; 
\node[v] (2) [below left of=1] {$v_2$};
\node[v] (3) [below right of=1] {$v_3$};
\draw [-{Implies},double,red] (2) to (1);
\draw [-{Implies},double, red] (3) to (1);
\end{tikzpicture}
}
\quad\quad
&
{\tiny
\begin{tikzpicture}[thick, red,
v/.style={circle, draw}] 
\node[v] (1) {$v_1$}; 
\node[v] (3) [below right of=1] {$v_2$};
\draw [-{Implies},double,red] (3) to (1);
\end{tikzpicture}
}
$\color{red}{\circ_1}$
{\tiny
\begin{tikzpicture}[thick, red,
v/.style={circle, draw}] 
\node[v] (1) {$v_1$}; 
\node[v] (2) [below left of=1] {$v_2$};
\draw [-{Implies},double,red] (2) to (1);
\end{tikzpicture}
}
$\color{red}{=(}$
{\tiny
\begin{tikzpicture}[thick, red,
v/.style={circle, draw}] 
\node[v] (1) {$v_1$}; 
\node[v] (2) [below left of=1] {$v_2$};
\draw [-{Implies},double,red] (2) to (1);
\end{tikzpicture}
}
$\color{red}{\circ_1}$
{\tiny
\begin{tikzpicture}[thick, red,
v/.style={circle, draw}] 
\node[v] (1) {$v_1$}; 
\node[v] (3) [below right of=1] {$v_2$};
\draw [-{Implies},double,red] (3) to (1);
\end{tikzpicture}
}
$\color{red}{)\cdot(132)}$\\
{\tiny
\begin{tikzpicture}[thick, blue,
v/.style={circle, draw}] 
\node[v] (1) {$v_1$}; 
\node[v] (2) [below left of=1] {$v_2$};
\node[v] (3) [below right of=1] {$v_3$};
\draw [-{Implies},double,blue] (2) to (1);
\draw [-{Implies},double, blue] (3) to (1);
\end{tikzpicture}
}
\quad\quad
&
{\tiny
\begin{tikzpicture}[thick, blue,
v/.style={circle, draw}] 
\node[v] (1) {$v_1$}; 
\node[v] (3) [below right of=1] {$v_2$};
\draw [-{Implies},double,blue] (3) to (1);
\end{tikzpicture}
}
$\color{blue}{\circ_1}$
{\tiny
\begin{tikzpicture}[thick, blue,
v/.style={circle, draw}] 
\node[v] (1) {$v_1$}; 
\node[v] (2) [below left of=1] {$v_2$};
\draw [-{Implies},double,blue] (2) to (1);
\end{tikzpicture}
}
$\color{blue}{=}$
{\tiny
\begin{tikzpicture}[thick, blue,
v/.style={circle, draw}] 
\node[v] (1) {$v_1$}; 
\node[v] (2) [below of=1] {$v_2$};
\draw [-{Implies},double,blue, bend left] (2) to (1);
\draw [-{Implies},double,blue, bend right] (2) to (1);
\end{tikzpicture}
}
$\color{blue}{\circ_2}$
{\tiny
\begin{tikzpicture}[thick, blue,
v/.style={circle, draw}] 
\node[thick, white,
v/.style={circle, draw}] (1) {$\phantom{v_1}$};
\node[v] (2) [below left of=1] {$v_1$};
\node[v] (3) [below right of=1] {$v_2$};
\draw [-{Implies},double,blue] (2) to (1);
\draw [-{Implies},double,blue] (3) to (1);
\end{tikzpicture}
}
\end{tabular}
\end{center}

In terms of \cref{eq:Lighthouse basis}, the red relation is $r_1$, and the blue relation is $r_2$.
Then, let $\alpha-\beta \in {\mathbb{P}^\antishriek}^{(3)}$, be any two terms of $\mathbb{P}$ which generate the following graph.
\begin{center}
{\tiny
\begin{tikzpicture}[thick, 
v/.style={circle, draw}] 
\node[v] (1) {$v_1$}; 
\node[v] (2) [below of=1]{$v_2$}; 
\node[v] (3) [below left of=2] {$v_3$};
\node[v] (4) [below right of=2] {$v_4$};
\draw (2) to (1);
\draw (3) to (1);
\draw (4) to (1);
\draw (3) to (2);
\draw (4) to (2);
\end{tikzpicture}
}
\end{center}

We partially compute a portion of the subcomplex generated by $\alpha-\beta$, using the notation of the prior example.
We have omitted many elements of the subcomplex for brevity.
\begin{center}
\begin{tikzcd}
&
{\tiny
\begin{tikzpicture}[thick, 
v/.style={circle, draw}] 
\node[v] (1) {$v_1$}; 
\node[v] (2) [below of=1]{$v_2$}; 
\node[v] (3) [below left of=2] {$v_3$};
\node[v] (4) [below right of=2] {$v_4$};
\draw (2) to (1);
\draw (3) to (1);
\draw (4) to (1);
\draw (3) to (2);
\draw (4) to (2);
\end{tikzpicture}
}
\arrow[ld]
\arrow[d]
\arrow[rd]
\\
{\tiny
\noindent\parbox[c]{0.75cm}{
\begin{tikzpicture}[thick, v/.style={circle, draw}] 
\node[v] (1) {$v_1$}; 
\node[v] (2) [below of=1] {$v_2$};
\draw (2) to (1);
\draw[bend left] (2) to (1);
\draw[bend right]  (2) to (1);
\end{tikzpicture}
}
}
\noindent\parbox[c]{0.5cm}{
$\circ_2$
}
{\tiny
\noindent\parbox[c]{2cm}{
\vspace{-0.5cm}
\begin{tikzpicture}[thick, red, v/.style={circle, draw}] 
\node[v] (1) {$v_1$}; 
\node[v] (2) [below left of=1] {$v_2$};
\node[v] (3) [below right of=1] {$v_3$};
\node[thick, white, v/.style={circle, draw}] (4) [above of=2] {$\phantom{v_2}$};
\node[thick, white, v/.style={circle, draw}] (5) [above of=1]{$\phantom{v_2}$};
\node[thick, white, v/.style={circle, draw}] (6) [above of=3]{$\phantom{v_2}$};
\draw [red] (2) to (1);
\draw [red] (3) to (1);
\draw [red] (2) to (4);
\draw [red] (1) to (5);
\draw [red] (3) to (6);
\end{tikzpicture}
}
}
\arrow[rd]
&
{\tiny
\noindent\parbox[c]{1cm}{
\begin{tikzpicture}[thick, v/.style={circle, draw}] 
\node[v] (1) {$v_1$}; 
\node[v] (2) [below of=1] {$v_2$};
\draw (2) to (1);
\draw[bend left] (2) to (1);
\draw[bend right]  (2) to (1);
\end{tikzpicture}
}
}
\noindent\parbox[c]{0.5cm}{
$\circ_2$
}
{\tiny
\noindent\parbox[c]{2cm}{
\vspace{-0.5cm}
\begin{tikzpicture}[thick, blue, v/.style={circle, draw}] 
\node[v] (1) {$v_1$}; 
\node[v] (2) [below left of=1] {$v_2$};
\node[v] (3) [below right of=1] {$v_3$};
\node[thick, white, v/.style={circle, draw}] (4) [above of=2] {$\phantom{v_2}$};
\node[thick, white, v/.style={circle, draw}] (5) [above of=1]{$\phantom{v_2}$};
\node[thick, white, v/.style={circle, draw}] (6) [above of=3]{$\phantom{v_2}$};
\draw [blue] (2) to (1);
\draw [blue] (3) to (1);
\draw [blue] (2) to (4);
\draw [blue] (1) to (5);
\draw [blue] (3) to (6);
\end{tikzpicture}
}
}
\arrow[d]
&
{\tiny
\noindent\parbox[c]{1.3cm}{
\vspace{0.0cm}
\begin{tikzpicture}[thick, 
v/.style={circle, draw}] 
\node[v] (1) {$v_1$}; 
\node[v] (2) [below of=1]{$v_2$};
\node[v] (4) [below right of=2] {$v_3$};
\draw (2) to (1);
\draw [bend left] (2) to (1);
\draw (4) to (1);
\draw (4) to (2);
\end{tikzpicture}
}
}
\noindent\parbox[c]{0.5cm}{
$\circ_2$
}
{\tiny
\noindent\parbox[c]{2cm}{
\vspace{0.2cm}
\begin{tikzpicture}[thick, v/.style={circle, draw}] 
\node[v] (1) {$v_1$}; 
\node[v] (2) [below left of=1] {$v_2$};
\node[thick, white, v/.style={circle, draw}] (3) [below right of=1] {$\phantom{v_3}$};
\node[thick, white, v/.style={circle, draw}] (4) [above of=2] {$\phantom{v_2}$};
\node[thick, white, v/.style={circle, draw}] (5) [above of=1]{$\phantom{v_2}$};
\draw (2) to (1);
\draw (3) to (1);
\draw (2) to (4);
\draw (1) to (5);
\end{tikzpicture}
}
}
\arrow[ld]
\\
&
{\tiny
\noindent\parbox[c]{0.75cm}{
\vspace{0.1cm}
\begin{tikzpicture}[thick, v/.style={circle, draw}] 
\node[v] (1) {$v_1$}; 
\node[v] (2) [below of=1] {$v_2$};
\draw (2) to (1);
\draw[bend left] (2) to (1);
\draw[bend right]  (2) to (1);
\end{tikzpicture}
}
}
\noindent\parbox[c]{0.5cm}{
$\circ_2 (\ $
}
{\tiny
\noindent\parbox[c]{1.3cm}{
\vspace{-0.35cm}
\begin{tikzpicture}[thick, v/.style={circle, draw}] 
\node[v] (1) {$v_1$}; 
\node[v] (2) [below right of=1] {$v_2$};
\node[thick, white, v/.style={circle, draw}] (3) [above of=1] {$\phantom{v_2}$};
\node[thick, white, v/.style={circle, draw}] (4) [above of=2]{$\phantom{v_2}$};
\draw (2) to (1);
\draw [bend left] (1) to (3);
\draw (1) to (3);
\draw (2) to (4);
\end{tikzpicture}
}
}
\noindent\parbox[c]{0.5cm}{
$\circ_1$
}
{\tiny
\noindent\parbox[c]{1.3cm}{
\vspace{-0.35cm}
\begin{tikzpicture}[thick, v/.style={circle, draw}] 
\node[v] (1) {$v_1$}; 
\node[v] (2) [below left of=1] {$v_2$};
\node[thick, white, v/.style={circle, draw}] (3) [below right of=1] {$\phantom{v_3}$};
\node[thick, white, v/.style={circle, draw}] (4) [above of=2] {$\phantom{v_2}$};
\node[thick, white, v/.style={circle, draw}] (5) [above of=1]{$\phantom{v_2}$};
\draw (2) to (1);
\draw (3) to (1);
\draw (2) to (4);
\draw (1) to (5);
\end{tikzpicture}
}
}
\noindent\parbox[c]{0.1cm}{
$\ )$
}
\arrow[d]
\\
&
0
\end{tikzcd}
\end{center}

\end{example}

\subsection{Nesting Models and Polytope Based Techniques}\label{Nesting Models and Polytope Based Techniuqes}

The minimal models of the Koszul operads governing connected operadic structures all have polytope based interpretations (\cite{batanin2021koszul}, \cite{kaufmann2021koszul}, \cite{laplante2022diagonal}, \cite{ward2022massey}), however this no longer holds true for (wheeled) props.

\begin{theorem} \label{nesting complexes of props not polytopes}
There exist subcomplexes of $\mathbb{W}_\infty$ and $\mathbb{P}_\infty$ which are \textbf{not} isomorphic as lattices, to the face poset of convex polytopes.
\end{theorem}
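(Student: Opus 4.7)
My plan is to promote the subcomplexes of \cref{wheeled prop polytope counter example} and \cref{prop polytope counter example} into explicit counter-examples by endowing each with its natural poset structure and then exhibiting a rank-two interval that fails the diamond condition. Recall that the face lattice of a convex polytope is Eulerian: every rank-two interval must contain exactly four elements. Hence producing even one interval with only three elements rules out any polytope realisation.

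I would first make precise the poset structure on each subcomplex, following \cite{batanin2021koszul}, \cite{kaufmann2021koszul} and \cite{ward2022massey}. On the tree-monomial basis of the quadratic resolutions $\mathbb{W}_\infty = \Omega(\mathbb{W}^{\antishriek})$ and $\mathbb{P}_\infty = \Omega(\mathbb{P}^{\antishriek})$, declare $\alpha \succ \beta$ whenever $\beta$ appears with nonzero coefficient in $d(\alpha)$, where $d$ is induced by the infinitesimal decomposition $\triangle_{(1)}$ of the Koszul dual cooperad, and take the reflexive transitive closure. Under any polytope-based model, the resulting poset on a basis-element-generated subcomplex would have to agree, as a lattice, with the face lattice of a convex polytope, and in particular would have to satisfy the diamond condition.

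The main verification is purely combinatorial. For the wheeled case, the Hasse diagram is essentially already displayed in \cref{wheeled prop polytope counter example}: the two extremal basis elements of the middle layer are each covered by only one of the two rank-two elements directly below the top. Choosing such an extremal element $a$, the interval from $a$ to the top contains only three elements, namely $a$, the unique coatom covering it, and the top, violating the diamond condition. For props, the subcomplex depicted in \cref{prop polytope counter example} contains, after completing the portion omitted for brevity, an analogous rank-two interval of cardinality three; this is the observation credited to Andrew Sack in the acknowledgements, which I would make fully explicit by writing out the missing infinitesimal decompositions and recording the covering relations.

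The main obstacle I anticipate is the combinatorial bookkeeping for the prop case, since $\mathbb{P}_\infty$ carries substantially more relations at each syzygy degree than $\mathbb{W}_\infty$: completing the partial Hasse diagram requires systematically enumerating the lighthouse, fireworks, bow, caterpillar, third-wheel, and horizontal-associativity contributions at syzygy degree two. The wheeled prop verification, by contrast, is essentially already in hand, and so the theorem will follow once the prop-side Hasse diagram is fully drawn and the offending interval is highlighted.
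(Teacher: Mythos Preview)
Your approach for $\mathbb{W}_\infty$ is correct and matches the paper exactly: the interval of length two in \cref{wheeled prop polytope counter example} with only three elements violates the diamond condition.

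For $\mathbb{P}_\infty$, however, you have misidentified both the nature of the violation and the amount of work required. You propose to complete the omitted portion of the Hasse diagram in \cref{prop polytope counter example} and then locate a rank-two interval with only \emph{three} elements, analogous to the wheeled case. But the diamond condition fails in the opposite direction here, and the violation is already fully visible in the displayed portion of the example: the three basis elements shown in the second row (the two coloured lighthouse terms and the bow-type term) all lie in the image of $\triangle_{(1)}$ applied to the top element, and all three map under $\triangle_{(1)}$ to the single element shown in the third row. Hence the interval from the top element down to that third-row element has length two but contains at least \emph{five} elements, not four. No further enumeration of caterpillar, fireworks, third-wheel, or horizontal-associativity contributions is needed; the partial diagram already suffices. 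The paper also notes that this phenomenon is robust under the choice of basis for the lighthouse relations, since any two of the three equivalent expressions share a common term, so some pair of distinct degree-two basis elements will always decompose through a common degree-one element.

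In short: keep your argument for $\mathbb{W}_\infty$ as is, but for $\mathbb{P}_\infty$ drop the plan to search for a three-element interval and instead observe directly that the displayed diagram in \cref{prop polytope counter example} already exhibits a length-two interval with at least five elements. This removes what you flagged as your main obstacle.
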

\begin{proof}
A necessary condition for a lattice to be the face poset of a convex polytope is that it meets the diamond condition (Theorem 2.7 \cite{ziegler2012lectures}).
That is to say, every interval of length two has precisely four elements, and thus looks like a diamond.
Consequently, the diagrams of Examples \ref{wheeled prop polytope counter example} and \ref{prop polytope counter example}, cannot be isomorphic as lattices, to the face posets of convex polytopes.
This follows for $\mathbb{W}_\infty$, as
\cref{wheeled prop polytope counter example} has an interval of length two which contain only three elements.
For $\mathbb{P}_\infty$, we observe that \cref{prop polytope counter example} contains an interval of length $2$ which has at least five elements.
If another of the $\binom{3}{2,1}$ bases for the lighthouse relation is chosen (instead of \cref{eq:Lighthouse basis}), then a similar counter example will be obtained.
This is because every possible basis pair contains an overlapping term.
\end{proof}

As $\mathbb{W}_\infty$ and $\mathbb{P}_\infty$ are the minimal models for $\mathbb{W}$ and $\mathbb{P}$, these obstructions in homology will be present in all models for these operads.
Consequently, there does not exist a polytope based nesting model for the operads governing (wheeled) props (see Section 4 of \cite{laplante2022diagonal}, for what this means for the operad governing operads).

\begin{remark} \label{Nesting complex not convex obstructions}
The fact that these nesting complexes are not isomorphic to the face poset of convex polytopes can be interpreted as obstructions to using the techniques of \cite{batanin2021koszul} and \cite{kaufmann2021koszul} to prove that the operads governing (wheeled) props are Koszul.
These papers use polytopes to prove these results, explicitly though the hypergraph polytopes of \cite{batanin2023minimal} in the first case, and implicitly through cubical Feynman categories in the second case.
\\\\
Another way to see how this obstruction manifests, specifically for cubical Feynman categories, is as follows.
By Definition 7.4 of \cite{kaufmann2017feynman}, given the definition of $\mathbb{W}$, the bar graph should have degree $2$, and as such, should admit a free $\Sigma_2$ action on all the ways to form the graph.
However, there is only one way to form the bar graph, a horizontal composition followed by a contraction, so no such action can exist.
Similarly, given the definition of $\mathbb{P}$, the lighthouse graph should have degree $2$ for this operad, and have exactly two ways to form it, when we know there are three.
The lighthouse graph was previously identified as an impediment to the existence of a cubical Feynman category for props in Figure 10. of \cite{kaufmann2017feynman}.
\end{remark}

\begin{remark}
\label{acyclic tubings provide minimal model of operad governing properads}
Properads may be seen as props restricted to connected graphs.
As outlined in \cref{restriction is an alternate definition of a properad} and \cref{Koszul operads for other operadic}, the Koszul groupoid coloured quadratic operad governing properads $\mathbb{P}^c$ ($c$ for connected), consists of just the properadic generators of $\mathbb{P}$ and its associated connected axioms.
Given the work of \cite{batanin2021koszul},  \cite{batanin2023minimal} and \cite{kaufmann2021koszul}, it is known that $\mathbb{P}^c$ has a polytope based nesting model, as a result of more general theory.
However, it is possible to show that the specific polytope based nesting model for properads corresponds to the poset associahedra of Galashin \cite{galashin2021poset}.
This is a useful observation, as given the recent realisations of poset associahedra as convex polytopes, performed in \cite{chiara2023cyclonestohedra} and \cite{sack2023realization}, it is now possible to carry out the program of \cite{laplante2022diagonal}, at the level of properads.
That is to say, one can define a minimal model for the operad governing properads whose groupoid coloured module consists of connected wheel-free graphs with an acyclic tubing, and whose operadic composition is given by substitution of nested/tubed graphs, e.g.
\begin{center}
{\scriptsize
\begin{tikzpicture}[thick,
v/.style={circle, draw}] 
\node[v] (1) {$v_1$}; 
\node[v] (2) [below of=1] {$v_2$};
\node[v] (3) [below of=2] {$v_3$};
\node[v] (5) [right of=2] {$v_4$};
\draw [bend left] (2) to (1);
\draw [bend right] (2) to (1);
\draw (3) to (2);
\draw (5) to (1);
\myroundpoly[blue, very thick]{2,3}{0.38cm}
\end{tikzpicture}
}
\raisebox{3.7em}{$\circ_4 $}
{\scriptsize
\begin{tikzpicture}[thick, v/.style={circle, draw}]
\node[thick, white, v/.style={circle, draw}] (1) {$v_1$}; 
\node[thick, white, v/.style={circle, draw}] (2) [below of=1] {$v_2$};
\node[thick, white, v/.style={circle, draw}] (3) [below of=2] {$v_3$};
\node[v] (5) [right of=2] {$v_2$}; 
\node[v] (6) [right of=3] {$v_3$};
\node[v] (4) [right of=6] {$v_1$};
\draw (4) to (5);
\draw (5) to (1);
\draw (5) to (6);
\myroundpoly[blue, very thick]{5,6}{0.38cm}
\end{tikzpicture}
}
\raisebox{3.7em}{$= $}
{\scriptsize
\raisebox{-0.25em}
{
\begin{tikzpicture}[thick,
v/.style={circle, draw}] 
\node[v] (1) {$v_1$}; 
\node[v] (2) [below of=1] {$v_2$};
\node[v] (3) [below of=2] {$v_3$};
\node[v] (5) [right of=2] {$v_5$}; 
\node[v] (6) [right of=3] {$v_6$};
\node[v] (4) [right of=6] {$v_4$};
\draw [bend left] (2) to (1);
\draw [bend right] (2) to (1);
\draw (3) to (2);
\draw (4) to (5);
\draw (5) to (1);
\draw (5) to (6);
\myroundpoly[blue, very thick]{2,3}{0.38cm}
\myroundpoly[blue, very thick]{5,6}{0.38cm}
\myroundpoly[blue, very thick]{5,4,6}{0.46cm}
\end{tikzpicture}
}
}
\end{center}
Then, by mirroring Section 4 of \cite{laplante2022diagonal}, it is possible to use this minimal model, and the realisation of the polytopes, to provide an explicit functorial tensor product of homotopy properads.
This is a subject for future research.
\end{remark}

\subsection{Homotopy Transfer Theory}\label{homotopy transfer theory}

One of the main motivations for constructing Koszul operads governing (wheeled) props is to extend homotopy transfer theory (HTT) to these structures.
We recall the necessary theory developed by Ward \cite{ward2022massey} in the groupoid coloured case, before deriving consequences in formality theory, and recovering a theorem of Mac Lane \cite{maclane1965categorical} as an instance of HTT.

\begin{definition}[Definition 2.56 \cite{ward2022massey}]
Let $A,B$ be $\mathbb{V}$-modules (in $dgVect$). We say $B$ is a \textbf{homotopy (deformation) retract} of $A$, if there are a family of homotopy (deformation) retracts indexed by $ob(\mathbb{V})$ such that $h_v,i_v,p_v$ are $Aut(v)$-equivariant.
\begin{center}
\begin{tikzcd}
A(v) 
\arrow[r,shift left=1ex,"{p_v}"]
\arrow[loop left,looseness=2,"{h_v}"]&
B(v)
\arrow[l,shift left=1ex,"{i_v}"] 
\end{tikzcd}
\end{center}
See for instance \cite{loday2012algebraic} Section 1.5.5, for the definition of a homotopy/deformation retract.
\end{definition}
As we are operating over a field of characteristic $0$, a fundamental example is

\begin{proposition}
[Lemma 2.57 \cite{ward2022massey}] \label{homology as deformation retract}
For a given $\mathbb{V}$-module $A$, the homology $H(A)$ is a deformation retract of $A$.
\end{proposition}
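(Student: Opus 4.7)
The plan is to build the data $(h_v, i_v, p_v)$ object-by-object over $v \in ob(\mathbb{V})$, and then promote it to $Aut(v)$-equivariant data using the semisimplicity of $\mathbb{K}[Aut(v)]$, which holds because $Aut(v)$ is finite (one of our standing assumptions) and $\mathbb{K}$ has characteristic zero.

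First, I would fix $v \in ob(\mathbb{V})$ and forget the $Aut(v)$-action, viewing $A(v)$ merely as a chain complex over $\mathbb{K}$. Since $\mathbb{K}$ is a field, each short exact sequence $0 \to Z_n \to A(v)_n \to B_{n-1} \to 0$ of boundaries and cycles splits, yielding a vector space decomposition $A(v)_n \cong B_n \oplus H_n(A(v)) \oplus B_{n-1}$ in which the differential becomes the identity on the $B_{n-1}$ summand under the induced identification. From such a splitting one extracts the standard deformation retract data: $i_v$ as the inclusion of a chosen representative for each homology class, $p_v$ as projection onto $H_n(A(v))$, and $h_v$ as the operator sending $B_{n-1} \subset A(v)_n$ to its lift in $B_{n-1} \subset A(v)_{n+1}$ (and zero on the other summands). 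The side conditions $p_v i_v = \mathrm{id}$, $i_v p_v - \mathrm{id} = dh_v + h_v d$, and optionally $h_v^2 = 0$, $h_v i_v = 0$, $p_v h_v = 0$, follow by direct inspection.

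Next I would make the triple $(h_v, i_v, p_v)$ equivariant under $Aut(v)$. The differential $d$ is $Aut(v)$-equivariant by definition of a $\mathbb{V}$-module, so cycles and boundaries are $Aut(v)$-stable subspaces, and $H_n(A(v))$ inherits a natural $Aut(v)$-action. Because $\mathbb{K}[Aut(v)]$ is semisimple (Maschke), the chain-level splittings above can be chosen $Aut(v)$-equivariantly; equivalently, one averages any non-equivariant splitting: $\tilde{i}_v := \tfrac{1}{|Aut(v)|} \sum_{g \in Aut(v)} g \cdot i_v \cdot g^{-1}$, and analogously for $p_v$ and $h_v$. The defining relations of a deformation retract are $\mathbb{K}$-linear in $(h, i, p)$ and each term is preserved by conjugation, since $d$ commutes with $g$; hence the averaged operators still satisfy them, and they are equivariant by construction.

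The only real subtlety is ensuring that all three pieces of data can be averaged compatibly so that the side conditions still close up; I would handle this by averaging the splitting itself (a single choice of complement to $B_n$ in $Z_n$ and of $Z_n$ in $A(v)_n$) rather than averaging $h$, $i$, $p$ independently, which guarantees consistency for free. Carrying out this construction for every $v \in ob(\mathbb{V})$ and assembling the collections $\{h_v\}, \{i_v\}, \{p_v\}$ yields the required homotopy retract of $\mathbb{V}$-modules, identifying $H(A)$ with a deformation retract of $A$.
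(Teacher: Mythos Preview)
Your argument is correct and is the standard one: split each $A(v)$ as a chain complex over the field $\mathbb{K}$, then use Maschke's theorem (finite $Aut(v)$, characteristic zero) to upgrade the splittings to $Aut(v)$-equivariant ones. The paper does not supply its own proof here; it simply cites Ward's Lemma 2.57, whose content is exactly this averaging argument, so your proposal matches the intended approach. Your caution about averaging the splitting rather than $h,i,p$ independently is well-placed, since the relation $p_v i_v = \mathrm{id}$ is bilinear rather than linear in the data.
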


From here Ward specialises HTT to this particular deformation retract, but as he points out in his proof of Theorem 2.58, the only new requirement of his extension over the uncoloured case of \cite{loday2012algebraic} is the equivariance of the retract.

\begin{proposition} [Generalising Theorem 2.58 \cite{ward2022massey}, and Theorem 10.3.1 of \cite{loday2012algebraic}] Let $P$ be a Koszul groupoid coloured operad and $A,B$ be $\mathbb{V}$-modules such that $B$ is a homotopy retract of $A$. Then any $P_\infty$-algebra structure on $A$ can be transferred into a $P_\infty$-algebra structure on $B$ such that $i$ extends into a $\infty$-quasi-isomorphism.
\end{proposition}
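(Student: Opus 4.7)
The plan is to follow the classical homotopy transfer argument of Theorem 10.3.1 of \cite{loday2012algebraic}, using Ward's refinement in the groupoid coloured case (Theorem 2.58 of \cite{ward2022massey}) as a template, and to observe that his argument for $B = H(A)$ works verbatim for an arbitrary homotopy retract because the only place the specific retract enters is in the equivariance hypothesis. First, I would recast a $P_\infty$-structure on $A$ as a Maurer--Cartan element in the convolution ``algebra'' $Hom(P^{\antishriek}, End_A)$. Here the Koszulness of $P$ (\cref{def:koszul}) is what allows us to use $P^{\antishriek}$ in place of the bar construction $B(P)$, via the quasi-isomorphism $\Omega(P^{\antishriek}) \to P$. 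Equivalently, a $P_\infty$-structure on $A$ is a twisting morphism $P^{\antishriek} \to End_A$.

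Next, I would define the transferred structure via the Van der Laan tree-summation formula. Given a basis element $\mu \in P^{\antishriek}$ presented by a decorated tree $T$ (whose vertices carry operations of $A$ arising from the given $P_\infty$-structure), assign $i$ to the leaves, $p$ to the root, and $h$ to each internal edge, then contract. This produces an operation on $B$ indexed by $\mu$. The verification that these new operations assemble into a $P_\infty$-algebra structure, i.e. again define a twisting morphism $P^{\antishriek} \to End_B$, is a formal consequence of the side conditions $ph = 0 = hi$, $h^2 = 0$, $\mathrm{id} - ip = dh + hd$, together with the cooperadic coproduct of $P^{\antishriek}$; this step is ambient-category-independent and can be copied from the uncoloured proof. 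The $\infty$-quasi-isomorphism extending $i$ is constructed by the same tree-summation recipe with one of the two boundary maps replaced by $h$, and the usual side-conditions argument shows it is a morphism of $P_\infty$-algebras whose linear part is $i$.

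The only genuinely new ingredient in the groupoid coloured setting is that the transferred operations must be compatible with the $\mathbb{V}$-action on $B$, so that they really define an element of $Hom(P^{\antishriek}, End_B)$ rather than merely an $ob(\mathbb{V})$-indexed collection of maps. This is where the $Aut(v)$-equivariance assumption on the retract data $(h_v, i_v, p_v)$ enters, exactly as in Ward's proof: each tree summand of the transferred operation labelled by $\mu \in P^{\antishriek}(\vec v)$ is built from $\mathbb{V}$-equivariant tensor factors, and the equivariance of $h, i, p$ ensures that the groupoid acts on internal edges compatibly with the cooperadic decomposition of $\mu$, so the output lies in $End_B$ as a groupoid coloured operad. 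Since Ward's formulas make no use of the specific feature that $h$ is the homotopy onto homology, replacing $H(A)$ by a general homotopy retract $B$ does not alter the bookkeeping.

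The hard part, as usual, is not the construction but the bookkeeping of signs, gradings, and equivariance across the cooperadic coproduct of $P^{\antishriek}$; however this is identical to Ward's case of $B = H(A)$. Thus the proof reduces to remarking that his argument only uses the homotopy-retract axioms and the equivariance of $h, i, p$, and never the specific fact that he took $B = H(A)$. I would therefore present the proof as ``the argument of Theorem 2.58 of \cite{ward2022massey} applies mutatis mutandis, with the equivariance assumption on $(h_v, i_v, p_v)$ replacing the automatic equivariance available when $B = H(A)$''.
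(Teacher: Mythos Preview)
Your proposal is correct and takes essentially the same approach as the paper: both argue that Ward's proof for the special case $B=H(A)$ goes through unchanged for a general homotopy retract, since the only extra ingredient beyond the uncoloured case of \cite{loday2012algebraic} is the $Aut(v)$-equivariance of $(h_v,i_v,p_v)$. Your write-up is simply a more detailed unpacking of what the paper states in two sentences.
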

\begin{proof}
Ward's specialised proof works with these slightly more general assumptions. The structure maps are then obvious alterations of those in the uncoloured case see Section 10.3 of \cite{loday2012algebraic}, i.e. the formulae work for groupoid coloured trees given the equivariance of the retract.
\end{proof}
Ward's specialisation to homology is then summarised in the following corollary.
\begin{corollary}[Theorem 2.58 \cite{ward2022massey}]
Let $P$ be a Koszul groupoid coloured operad, and $A$ a dg algebra over $P$. The homology $H(A)$ admits the structure of a $P_\infty$ algebra, and the inclusion $i:A\to H(A)$ extends to a $\infty$-quasi-isomorphism. 
\end{corollary}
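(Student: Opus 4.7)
The plan is to simply combine the two immediately preceding results: the generalized homotopy transfer proposition and \cref{homology as deformation retract}. Since the corollary is presented as a specialisation of the general theorem to the particular retract given by homology, the proof should be essentially a one-line invocation, followed by a short verification that the hypotheses are met.

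First, I would invoke \cref{homology as deformation retract}, which provides, for each $v \in \mathrm{ob}(\mathbb{V})$, an $\mathrm{Aut}(v)$-equivariant deformation retract diagram
\begin{equation*}
\begin{tikzcd}
A(v)
\arrow[r,shift left=1ex,"{p_v}"]
\arrow[loop left,looseness=2,"{h_v}"]
&
H(A)(v)
\arrow[l,shift left=1ex,"{i_v}"]
\end{tikzcd}
\end{equation*}
using the standing assumption that we work over a field of characteristic zero (which, as noted in the assumptions subsection, is precisely what allows one to pick equivariant choices of retraction data without resolving automorphisms). In particular, $H(A)$ is a homotopy retract of $A$ as $\mathbb{V}$-modules.

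Next, since $P$ is Koszul by hypothesis and $A$ carries a dg $P$-algebra structure, which can equivalently be viewed as a $P_\infty$-algebra structure on $A$ via the morphism $P_\infty = \Omega(P^{\antishriek}) \to P$, the generalised proposition applies: any $P_\infty$-algebra structure on $A$ transfers along a homotopy retract to a $P_\infty$-algebra structure on $B = H(A)$, and the inclusion $i$ extends to an $\infty$-quasi-isomorphism. Applying this with $B = H(A)$ and the retract data above yields precisely the desired $P_\infty$-structure on $H(A)$ together with the $\infty$-quasi-isomorphism extending $i$.

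There is no real obstacle here — the work has been done in the two preceding results. The only mild subtlety worth flagging explicitly is that the transfer formulas, which in the uncoloured setting are sums over decorated trees (cf.\ Section~10.3 of \cite{loday2012algebraic}), make sense in the groupoid coloured setting precisely because the retraction data $(h_v, i_v, p_v)$ are $\mathrm{Aut}(v)$-equivariant, so that the tree-sum expressions descend to the coinvariants appearing in the definition of $A(T)$ (see \cref{Trees decorated with groupoid coloured modules}). This equivariance is exactly what characteristic zero guarantees via \cref{homology as deformation retract}, closing the loop.
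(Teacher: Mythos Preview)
Your proposal is correct and is exactly the intended argument: the paper presents this statement as an immediate corollary of the preceding general transfer proposition together with \cref{homology as deformation retract}, and gives no further proof. Your additional remark about equivariance of the retraction data being what makes the tree-sum formulas descend is accurate and already implicit in Ward's proof of the general proposition.
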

The higher $P_\infty$ operations on the homology are known as \textbf{Massey products}. The existence of Massey products has applications in formality.
\begin{definition} [Section 11.46  \cite{loday2012algebraic}]
A dg-algebra $A$ over a Koszul groupoid coloured operad $P$ is said to be formal if there exists a $\infty$-quasi-isomorphism of dg-$P$-algebras between it and its homology $H(A)$.
\end{definition}

From this definition, it follows that a necessary condition for a dg-algebra to be formal, is for its Massey products to vanish uniformly (\cite{deligne1975real}). (If the Massey products do not vanish uniformly, then the best we could hope for would be a $\infty$-quasi-isomorphism of dg-$P_\infty$-algebras.) 
In addition, given our assumptions, we also have the following result.

\begin{proposition} [Proposition 11.4.10 \cite{loday2012algebraic}]
\label{Massey products characterise formality}
Let $P$ be a Koszul groupoid coloured operad and $A$ a dg-$P$-algebra. If the Massey products on $H(A)$ vanish, then $A$ is formal. 

\end{proposition}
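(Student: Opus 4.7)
The plan is to derive the statement as a direct consequence of the homotopy transfer machinery established earlier in this section, following the same pattern as the uncoloured Proposition 11.4.10 of \cite{loday2012algebraic}. First, since $P$ is Koszul and $H(A)$ is an equivariant deformation retract of $A$ (\cref{homology as deformation retract}), I would apply homotopy transfer theory to obtain a transferred $P_\infty$-algebra structure on $H(A)$, together with an $\infty$-quasi-isomorphism $i_\infty : H(A) \to A$ extending the natural inclusion $i : H(A) \hookrightarrow A$. Concretely, the higher components of the transferred structure and of $i_\infty$ are given by the usual sum over (groupoid coloured) trees, with vertices decorated by the structure maps of $A$, leaves by $i$, internal edges by the homotopy $h$, and root by the projection $p$; the equivariance demanded of the retract is exactly what is needed for these formulae to descend well to $\mathcal{E}^{\mathcal{S}(\mathbb{V})}$.

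Next, I would unpack the hypothesis. By definition, the generalised Massey products on $H(A)$ are precisely the non-quadratic components of the transferred $P_\infty$-algebra structure, namely the operations $m_\mu$ indexed by basis elements $\mu \in P^{\antishriek}$ of syzygy degree at least one. Assuming they all vanish, the transferred $P_\infty$-structure on $H(A)$ collapses to a strict dg-$P$-algebra structure: only the components indexed by $P^{\antishriek\,(1)} = sE$ and $P^{\antishriek\,(2)} = s^2R$ survive, the former being zero (since $H(A)$ carries the trivial differential) and the latter recording the induced $P$-algebra operations on homology.

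Viewing both $A$ and $H(A)$ as $P_\infty$-algebras whose higher operations are trivial, the transferred map $i_\infty$ is then an $\infty$-quasi-isomorphism of dg-$P$-algebras from $H(A)$ to $A$, which is precisely the definition of formality recalled before \cref{Massey products characterise formality}. The main subtlety to nail down is the second step: one must verify that ``the Massey products vanish'' as appearing in the statement really refers to the full tower of transferred higher operations (and not merely to some indecomposable subfamily), so that their vanishing genuinely forces every higher component of the transferred $P_\infty$-structure to be zero. This is a direct unwinding of the HTT formulae applied to the groupoid coloured bar complex of \cref{def:koszul}.(3), and no new technical ingredient beyond the material of \cref{homotopy transfer theory} is needed.
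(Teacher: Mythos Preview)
Your overall strategy is exactly the paper's: the paper's proof is a one-liner stating that, with Ward's groupoid coloured extensions in hand, the argument of Proposition 11.4.10 of \cite{loday2012algebraic} applies verbatim, and you have correctly unfolded what that argument is (transfer the $P_\infty$-structure to $H(A)$ via HTT, observe that vanishing of the higher operations leaves a strict $P$-algebra, and use the extended $\infty$-quasi-isomorphism $i_\infty$ to witness formality).

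There is, however, a bookkeeping slip in your second paragraph. The operations indexed by $P^{\antishriek\,(1)}=sE$ are the \emph{generating $P$-algebra operations} on $H(A)$, not the differential; they are precisely what must survive. The operations indexed by $P^{\antishriek\,(2)}=s^2R$ are already the first layer of higher homotopies (the ones witnessing the quadratic relations up to homotopy), and these are among the Massey products assumed to vanish. So the correct statement is: when the Massey products vanish, only the weight-one components (those indexed by $sE$) remain, and these constitute the strict $P$-algebra structure on $H(A)$. With this correction the argument goes through as you intend.
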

\begin{proof}
Using the extensions of Ward, the proof of \cite{loday2012algebraic} applies verbatim to the groupoid coloured case.
\end{proof}

So, as a consequence of these results and the constructions of this paper, we have the following two results for dg (wheeled) props over a field of characteristic $0$.
\begin{corollary}
The homology of a (wheeled) prop admits Massey products.
\end{corollary}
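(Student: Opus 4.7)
The plan is to observe that this corollary is essentially an immediate invocation of the homotopy transfer theorem recalled just above, applied to the operads $\mathbb{W}$ and $\mathbb{P}$. The work has already been done: the Koszulness of $\mathbb{W}$ and $\mathbb{P}$ was established in \cref{The operads governing (wheeled) props are Koszul}, and Ward's version of HTT (stated as the preceding corollary, extending Theorem 2.58 of \cite{ward2022massey}) provides the transfer mechanism in the groupoid coloured setting over a field of characteristic $0$.

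First I would unpack what a dg (wheeled) prop is in this context: by the constructions of Sections \ref{The Operad Governing Wheeled Props} and \ref{The Operad Governing Props}, a dg wheeled prop is precisely a dg-algebra over $\mathbb{W}$, and a dg prop is precisely a dg-algebra over $\mathbb{P}$ (cf.\ \cref{algebra over an operad}). This identifies the input required by HTT. Next, I would invoke \cref{homology as deformation retract} to exhibit the homology of such an algebra as a deformation retract of the underlying groupoid coloured module; the equivariance of this retract is automatic in characteristic $0$, which is exactly the reason for the standing assumption made in the preliminaries.

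With these two ingredients in hand, I would then directly apply the previous corollary (the groupoid coloured HTT theorem) to $P = \mathbb{W}$ (resp.\ $P = \mathbb{P}$): the homology $H(A)$ inherits the structure of a $\mathbb{W}_\infty$-algebra (resp.\ $\mathbb{P}_\infty$-algebra), and the inclusion $i \colon H(A) \to A$ extends to an $\infty$-quasi-isomorphism. By definition, the higher $\mathbb{W}_\infty$ (resp.\ $\mathbb{P}_\infty$) operations produced by the transfer formula are the Massey products on $H(A)$.

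There is no real obstacle, since all the heavy lifting — proving Koszulness of $\mathbb{W}$ and $\mathbb{P}$, and extending HTT to the groupoid coloured setting — has been carried out earlier in the paper and in \cite{ward2022massey}. The only minor point worth flagging in the write-up is that the statement should be read as applying to both $\mathbb{W}$ and $\mathbb{P}$ simultaneously (the proof being identical), and that the resulting Massey products are indexed by the generators of the Koszul dual cooperads $\mathbb{W}^{\antishriek}$ and $\mathbb{P}^{\antishriek}$, whose explicit combinatorics were discussed in the examples of \cref{wheeled prop polytope counter example} and \cref{prop polytope counter example}.
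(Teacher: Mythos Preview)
Your proposal is correct and matches the paper's approach exactly: the corollary is stated without separate proof in the paper, simply as an immediate consequence of the preceding HTT theorem applied to the Koszul operads $\mathbb{W}$ and $\mathbb{P}$, together with \cref{homology as deformation retract}. Your write-up just makes explicit the two or three invocations the paper leaves implicit.
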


\begin{corollary}
Thus, if $P$ is a $dg$ (wheeled) prop and $A$ a $dg$-$P$-algebra, then
\begin{itemize}
    \item if the Massey products on $H(A)$ vanish then $A$ is formal, and
    \item if $A$ is formal, then the Massey products on $H(A)$ vanish uniformly.
\end{itemize}

\end{corollary}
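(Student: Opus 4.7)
The plan is to derive both assertions as direct consequences of the Koszulness of $\mathbb{W}$ and $\mathbb{P}$ (\cref{The operads governing (wheeled) props are Koszul}) combined with \cref{Massey products characterise formality} and the standard formal-implies-Massey-vanishing argument, all reinterpreted in the groupoid coloured setting. The conceptual observation is that a dg (wheeled) prop is by definition a dg algebra over one of the Koszul groupoid coloured operads $\mathbb{W}$ or $\mathbb{P}$, so both bullet points are instances of general statements about Koszul operads applied to these specific operads.

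For the first bullet point I would simply specialize \cref{Massey products characterise formality} to the case where the ambient Koszul groupoid coloured operad is $\mathbb{W}$ (respectively $\mathbb{P}$). By \cref{The operads governing (wheeled) props are Koszul} these operads satisfy the hypothesis of that proposition, and by construction a dg algebra over $\mathbb{W}$ (resp.\ $\mathbb{P}$) is precisely a dg wheeled prop (resp.\ dg prop). The conclusion of the proposition, specialized here, is exactly the desired implication.

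For the second bullet point I would run the classical Deligne--Griffiths--Morgan--Sullivan argument inside Ward's groupoid coloured HTT. Formality of $A$ produces an $\infty$-quasi-isomorphism $A \xrightarrow{\sim} H(A)$ in which the target carries the trivially extended strict $P_\infty$-structure, all higher operations being zero. Given any homotopy retract of $A$ onto $H(A)$, Ward's equivariant HTT (as used in the proof of \cref{Massey products characterise formality}) produces a transferred $P_\infty$-structure on $H(A)$ together with an $\infty$-quasi-isomorphism back to $A$; composing with the formality map yields an $\infty$-quasi-isomorphism between this transferred structure and the strict one, and the standard minimal-model uniqueness argument forces every transferred higher operation to vanish. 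Since the argument is independent of the retract chosen, the Massey products on $H(A)$ vanish uniformly.

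The substantive content lies entirely upstream, in \cref{The operads governing (wheeled) props are Koszul} and in Ward's equivariant HTT machinery \cite{ward2022massey}; the proof of the corollary itself presents no real obstacle and is a genuinely formal consequence of those results. If anything, the only point that requires care is bookkeeping the $\ptwoc$-equivariance along the retract when running the transfer formulas, but this is already built into the groupoid coloured HTT used in the proof of \cref{Massey products characterise formality} and so needs no further verification.
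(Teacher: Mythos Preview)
Your proposal is correct and matches the paper's approach: the paper presents this corollary with no proof at all, simply stating it ``as a consequence of these results and the constructions of this paper,'' meaning precisely the specialization of \cref{Massey products characterise formality} (first bullet) and the preceding DGMS-style remark (second bullet) to the Koszul operads $\mathbb{W}$ and $\mathbb{P}$. Your more detailed unpacking of the second bullet via transferred structures and minimal-model uniqueness is exactly the content behind the paper's earlier one-line citation of \cite{deligne1975real}, so there is no substantive difference in approach.
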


Thus we have a new tool in the study of the formality of (wheeled) props.
One wheeled prop, which is likely formal, is the category of arrow diagrams (\cite{dancso2023topological}).
However, in this case, it might be easier to demonstrate formality by constructing an explicit $GT$ action (see \cite{petersen2014minimal} and \cite{andersson2022deformation}).
Alternatively, it would be interesting if the non-formality of a particular (wheeled) prop could be demonstrated using these Massey products, perhaps using a similar approach to \cite{livernet2015non}.
\\\\
It also turns out that one of the first theorems regarding (homotopy) props can be seen as a consequence of HTT.
In \cite{maclane1965categorical}, Mac Lane defines both PROPs and PACTs (an early form of a homotopy prop).
He then expounds the existence of higher homotopies for the following particular prop.
Let $K(H_{fc})$ be the prop governing dg-Hopf algebras with commutative products.
Let $K(H_{fcc})$ be the prop governing dg-Hopf algebras with commutative products and co-commutative co-products.
\begin{theorem*}[25.1 of \cite{maclane1965categorical}]\label{Mac Lane theorem}
If $U$ is a $K(H_{fcc})$ algebra, then there is a PACT $P\supset K(H_{fc})$, which acts on the bar construction $B^\cdot(U)$ and on the reduced bar construction $\overline{B^\cdot}(U)$, and a map $\theta:P\to K(H_{fcc})$ of PACTs such that the induced homology map 
\begin{align*}
    \theta_*:H(P) \cong H(K(H_{fcc}))
\end{align*}
is an isomorphism. 
\end{theorem*}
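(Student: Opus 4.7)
The plan is to recover Mac Lane's theorem as a direct instance of the homotopy transfer theory (HTT) now available for the Koszul groupoid coloured operad $\mathbb{P}$ (\cref{The operads governing (wheeled) props are Koszul}, \cref{homotopy transfer theory}). A PACT in Mac Lane's sense is precisely a $\mathbb{P}_\infty$-algebra, so the theorem amounts to producing, from a $K(H_{fcc})$-algebra $U$, a $\mathbb{P}_\infty$-algebra $P$ equipped with an $\infty$-quasi-isomorphism $\theta:P\rightsquigarrow K(H_{fcc})$ together with a $P$-action on $B^\cdot(U)$ and on $\overline{B^\cdot}(U)$.

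The first step is to view $K(H_{fcc})$ as a $\mathbb{P}$-algebra and to exhibit a natural homotopy deformation retract of the underlying bimodule that is dictated by the bar construction of $U$. The shuffle product on $B^\cdot(U)$ provides a strictly commutative multiplication together with a deconcatenation coproduct that is only homotopy co-commutative, via the standard Eilenberg--Zilber homotopy. Pulling back through the $K(H_{fcc})$-action on $U$ produces the required homotopy retract, which is genuine since $\mathrm{char}(\mathbb{K}) = 0$ and every $\mathrm{Aut}(v)$ is finite (\cref{homology as deformation retract}). Applying the groupoid coloured HTT of \cref{homotopy transfer theory} then transfers the $\mathbb{P}$-structure into a $\mathbb{P}_\infty$-structure on the retract, producing the PACT $P$ together with the $\infty$-quasi-isomorphism $\theta$.

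The inclusion $P \supset K(H_{fc})$ is then automatic: the strict commutative operations of $K(H_{fc})$ factor through the embedding $Com \hookrightarrow \mathbb{P}$ (\cref{inclusion of com}) and act strictly on $B^\cdot(U)$ via the shuffle product, hence survive untouched under the HTT formulae; only the operations encoding co-commutativity acquire higher homotopies, producing the genuinely $\mathbb{P}_\infty$-part of $P$. The required isomorphism $\theta_* : H(P) \cong H(K(H_{fcc}))$ follows immediately from $\theta$ being an $\infty$-quasi-isomorphism.

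The main obstacle will be matching the transferred $\mathbb{P}_\infty$-structure with Mac Lane's explicit PACT from Chapter 25 of \cite{maclane1965categorical}. His higher homotopies are built by hand from the Eilenberg--Zilber map, and showing that they coincide with (or are gauge-equivalent to) those produced by the groupoid coloured transfer formulae of Section 10.3 of \cite{loday2012algebraic} will require an explicit unwinding of the transfer, organized around the Eilenberg--Zilber contraction of the bar complex.
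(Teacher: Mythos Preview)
The paper does not prove this theorem. It is quoted verbatim from Mac Lane's 1965 paper and then \emph{reinterpreted}: the paper observes that, since $\mathbb{P}$ is now known to be Koszul, HTT guarantees that the homology of any dg-prop carries a $\mathbb{P}_\infty$-structure (Massey products), and that the higher homotopies Mac Lane built by hand must be $\infty$-isomorphic to these Massey products (via Theorem 10.3.10 of \cite{loday2012algebraic}). That is the full extent of the paper's treatment---a two-sentence remark, not a proof.

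Your proposal attempts an actual proof, but it has real gaps. First, the retract you invoke is never specified: you write ``a natural homotopy deformation retract of the underlying bimodule that is dictated by the bar construction of $U$'' and ``pulling back through the $K(H_{fcc})$-action on $U$ produces the required homotopy retract,'' but there is no retract here between two $\SC$-modules to which HTT (as in \cref{homotopy transfer theory}) could be applied. HTT transfers a $\mathbb{P}_\infty$-structure from one bimodule to a retract of it; you have not identified either side. Second, you conflate two different objects: in Mac Lane's statement $P$ is a PACT that \emph{acts on} $B^\cdot(U)$, i.e.\ $P$ is a homotopy prop and $B^\cdot(U)$ is a $P$-algebra; your sketch instead tries to produce $P$ as a transferred structure sitting on some retract related to $B^\cdot(U)$. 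Third, the appeal to \cref{inclusion of com} for the inclusion $K(H_{fc})\subset P$ is misplaced: that proposition concerns the suboperad of $\mathbb{P}$ on the empty profile $\binom{\varnothing}{\varnothing}$, which has nothing to do with the prop $K(H_{fc})$ for commutative Hopf algebras.

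If you want to align with the paper, drop the attempt at a self-contained proof and instead note that the theorem is Mac Lane's, and that the paper's point is only that HTT for $\mathbb{P}$ now supplies such higher homotopies generically, with Mac Lane's hand-built ones being $\infty$-isomorphic to the Massey products.
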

He described this result as "a covert statement on the existence of higher homotopies", then used an action of $K(H_{fcc})$ on $U$ to induce an action on $\overline{B^{\cdot}}(U)$, which then induced an explicit example of a higher homotopy.
However, by the constructions of this section we know that the homology of $K(H_{fcc})$ is a homotopy prop, with higher homotopies corresponding to the Massey products. Furthermore, the higher homotopies identified by Mac Lane must at least be $\infty$-isomorphic to the Massey products (see Theorem 10.3.10 of \cite{loday2012algebraic}).

\begin{remark} \label{deformation theory link}
Having access to homotopy transfer theory for (wheeled) props provides an alternate pathway to the deformation theory of these structures (for instance, for props see \cite{vallette2009deformationI}, \cite{vallette2009deformationII}, and for wheeled props see \cite{andersson2022deformation}, \cite{markl2009wheeled}, \cite{merkulov2010wheeled}).
In particular, given that $\mathbb{W}$ and $\mathbb{P}$ are now known to be Koszul, the HTT techniques of Section 12.2 of \cite{loday2012algebraic} can also be applied. 
We note that this potential approach to defining homotopy props, and studying their deformations, was already suggested in a remark in Section 4.1 of \cite{vallette2009deformationI}.
We note that some care should be taken when comparing the results of this paper to theirs, as their graphical definition of a homotopy prop, introduced in Section 4.3, strictifies to a different notion of a prop than the one employed in this paper.
This can be seen as their notion of an admissible subgraph of a connected graph provides no means of forming a connected graph via disconnected subgraphs (i.e. through use of horizontal composition). 
\end{remark}

\section{Appendix: Axioms of an Alternate Prop}\label{axioms of the alternate prop}

In presenting \cref{Alternate biased prop}, we chose to encode the non-unital and non-biequivariance axioms of the definition graphically in \cref{The Relations of the Operad Governing Props}. We now formally write out these axioms for the interested reader. We will first introduce some simplifying terminology.
\\\\
Let $\uc',\uc''$ be two disjoint subsequences of $\uc$, which both admit the reduction of the order of $\uc$. Then there exists a unique way to merge them into a subsequence of $\uc$ (which also admits the reduction of the order of $\uc$), which we denote $m(\uc',\uc'')$. In other words, there exists a unique permutation $\sigma_m$ such that $m(\uc',\uc'') = (\uc',\uc'')\sigma_m$. 
Then, if in addition we have disjoint subsequences of $\ub$ say $\ub',\ub''$ such that $(\uc',\uc'') = (\ub',\ub'')$ then we also reorder these sequences by the \textbf{same} permutation $\sigma_m$, and in an abuse of notation, denote this reordering
\begin{align*}
    \binom{m(\uc',\uc'')}{m(\ub',\ub'')}:= \binom{(\uc',\uc'')\sigma_m}{(\ub',\ub'')\sigma_m}
\end{align*}
To ease readability, we make use of the following conventions. For each commutative diagram, we shall include the corresponding graph behind the axiom. These graphs each have $3$ vertices $v_1,v_2,v_3$ with respective profiles $\fe,\dc$ and $\ba$. We shall use primes to indicate dis-contiguous sub-profiles, if a profile $\ue$ has multiple sub-profiles then we will use $\ue'$ to refer to the sub-profile associated to the join to the smallest neighbouring vertex, and $\ue''$ for the other sub-profile.
Any equalities satisfied by these sub-profiles are informed by the diagrams. We will also suppose that all input sub-profiles have the same respective order as their enveloping profile. For instance, the caterpillar diagram has sub-profiles $\ub',\uc',\ud',\ue'$, they satisfy $\uc'=\ub'$, and $\ue'=\ud'$, and the orders on $\uc',\ue'$ are the restriction of the orders on $\uc,\ue$ respectively. Any permutations present on the commutative diagram are assumed to be such that the concrete assignments of the final profile $\binom{\underline{g}}{\underline{h}}$ (as defined by the arrows) are all equal. 
\begin{center}
\begin{longtable}{ccc}
Caterpillar:
&
\noindent\parbox[c]{1cm}{
\begin{tikzpicture}[thick,
v/.style={circle, draw}] 
\node[v] (1) {$v_1$}; 
\node[v] (2) [below of=1] {$v_2$};
\node[v] (3) [below of=2] {$v_3$};
\draw [-{Implies},double] (2) to (1);
\draw [-{Implies},double] (3) to (2);
\end{tikzpicture}
}
&
\begin{tikzcd} [column sep=0.75in]
P\fe \otimes P\dc \otimes P\ba \arrow[d,"{\eproperadic{\binom{\ue'}{\ud'}}{\binom{id}{id}}\otimes id}"]
\arrow[r,"{\eproperadic{\binom{\uc'}{\ub'}}{\binom{id}{id}}\otimes id}"]
&P \fe \otimes P\binom{\ud,\ub\setminus \ub'}{\ua,\uc\setminus \uc'}
\arrow[d,"{\eproperadic{\binom{\ue'}{\ud'}}{\binom{\sigma_1}{\tau_1}}\otimes id}"]
\\
P\binom{\uf, \ud\setminus \ud'}{\uc,\ue\setminus \ue'} \otimes P\ba 
\arrow[r,"{\eproperadic{\binom{\uc'}{\ub'}}{\binom{\sigma_2}{\tau_2}}}"]
&
P\binom{\underline{g}}{\underline{h}}
\end{tikzcd}
\\
Lighthouse:
&
\noindent\parbox[c]{2.5cm}{
\begin{tikzpicture}[thick,
v/.style={circle, draw}] 
\node[v] (1) {$v_1$};
\node[v] (2) [below left of=1] {$v_2$};
\node[v] (3) [below right of=1] {$v_3$};
\draw [-{Implies},double] (2) to (1);
\draw [-{Implies},double] (3) to (1);
\end{tikzpicture}   
}
&
\begin{tikzcd} [column sep=0.75in]
P\fe \otimes P\dc \otimes P\ba \arrow[d,"{\eproperadic{\binom{\ue'}{\ud'}}{\binom{id}{id}}\otimes id}"]
\arrow[r,"{id\otimes (\hcomp , \binom{id}{id})}"]
\arrow[dd, bend right=70,"switch"']
&P \fe \otimes P\binom{\ud,\ub}{\uc,\ua}
\arrow[d,"{\eproperadic{\binom{m(\ue',\ue'')}{m(\ud',\ub')}}{\binom{\sigma_1}{\tau_1}}}"]
\\
P\binom{\uf, \ud\setminus \ud'}{\uc,\ue\setminus \ue'} \otimes P\ba 
\arrow[r,"{\eproperadic{\binom{\ue''}{\ub'}}{\binom{\sigma_2}{\tau_2}}}"]
&
P\binom{\underline{g}}{\underline{h}}\\
P\fe \otimes P\ba \otimes P\dc
\arrow[r,"{\eproperadic{\binom{\ue''}{\ub'}}{\binom{id}{id}}}\otimes id"]
&
P\binom{\uf,\ub\setminus \ub'}{\ua,\ue\setminus \ue''} \otimes P\dc
\arrow[u,"{\eproperadic{\binom{\ue'}{\ud'}}{\binom{\sigma_3}{\tau_3}}}"']
\end{tikzcd}\\
Fireworks
&
\noindent\parbox[c]{2.5cm}{
\begin{tikzpicture}[thick,
v/.style={circle, draw}] 
\node[v] (3) {$v_3$};
\node[v] (1) [above right of=3] {$v_1$}; 
\node[v] (2) [above left of=3] {$v_2$};
\draw [-{Implies},double] (3) to (1);
\draw [-{Implies},double] (3) to (2);
\end{tikzpicture}    
}
&
\begin{tikzcd} [column sep=0.75in]
P\fe \otimes P\dc \otimes P\ba \arrow[d,"{id\otimes\eproperadic{\binom{\uc'}{\ub''}}{\binom{id}{id}}}"]
\arrow[r,"{(\hcomp , \binom{id}{id})\otimes id}"]
\arrow[dd, bend right=70,"switch"']
&P \binom{\uf,\ud}{\ue,\uc} \otimes P\binom{\ub}{\ua}
\arrow[d,"{\eproperadic{\binom{m(\ue',\uc')}{m(\ub',\ub'')}}{\binom{\sigma_1}{\tau_1}}}"]
\\
P\fe \otimes P\binom{\ud,\ub\setminus \ub''}{\ua,\uc\setminus \uc'}
\arrow[r,"{\eproperadic{\binom{\ue'}{\ub'}}{\binom{\sigma_2}{\tau_2}}}"]
&
P\binom{\underline{g}}{\underline{h}}\\
P\dc \otimes P\fe \otimes P\ba
\arrow[r,"{id\otimes\eproperadic{\binom{\ue'}{\ub'}}{\binom{id}{id}}}"]
&
P\dc \otimes P\binom{\uf,\ub\setminus \ub'}{\ua,\ue\setminus \ue'}
\arrow[u,"{\eproperadic{\binom{\uc'}{\ub''}}{\binom{\sigma_3}{\tau_3}}}"']
\end{tikzcd}\\
Bow 
&
\noindent\parbox[c]{2.5cm}{
\begin{tikzpicture}[thick,
v/.style={circle, draw}] 
\node[v] (1) {$v_1$}; 
\node[v] (2) [below right of=1] {$v_2$};
\node[v] (3) [below left of=2] {$v_3$};
\draw [-{Implies},double] (2) to (1);
\draw [-{Implies},double] (3) to (1);
\draw [-{Implies},double] (3) to (2);
\end{tikzpicture}    
}
&
\begin{tikzcd} [column sep=0.75in]
P\fe \otimes P\dc \otimes P\ba \arrow[d,"{\eproperadic{\binom{\ue'}{\ud'}}{\binom{id}{id}}\otimes id}"]
\arrow[r,"{\eproperadic{\binom{\uc'}{\ub''}}{\binom{id}{id}}\otimes id}"]
&P \fe \otimes P\binom{\ud,\ub\setminus \ub''}{\ua,\uc\setminus \uc'}
\arrow[d,"{\eproperadic{\binom{m(\ue',\ue'')}{m(\ud',\ub')}}{\binom{\sigma_1}{\tau_1}}}"]
\\
P\binom{\uf,\ud\setminus \ud'}{\uc,\ue\setminus \ue'} \otimes P\ba 
\arrow[r,"{\eproperadic{\binom{m(\ue'',\uc')}{m(\ub',\ub)}}{\binom{\sigma_2}{\tau_2}}}"]
&
P\binom{\underline{g}}{\underline{h}}
\end{tikzcd}\\
Third-wheel
&
\noindent\parbox[c]{2.5cm}{
\begin{tikzpicture}[thick,
v/.style={circle, draw}] 
\node[v] (1) {$v_1$}; 
\node[v] (2) [below of=1] {$v_2$};
\node[v] (3) [right of=2] {$v_3$};
\draw [-{Implies},double] (2) to (1);
\end{tikzpicture}     
}
&
\begin{tikzcd} [column sep=0.75in]
P\fe \otimes P\dc \otimes P\ba \arrow[d,"{\eproperadic{\binom{\ue'}{\ud'}}{\binom{id}{id}}\otimes id}"]
\arrow[r,"{id\otimes (\hcomp , \binom{id}{id})}"]
\arrow[dd, bend right=70,"switch"']
&P \fe \otimes P\binom{\ud,\ub}{\uc,\ua}
\arrow[d,"{\eproperadic{\binom{\ue'}{\ud'}}{\binom{\sigma_1}{\tau_1}}}"]
\\
P\binom{\uf,\ud\setminus \ud'}{\uc,\ue\setminus \ue'} \otimes P\ba 
\arrow[r,"{(\hcomp,{\binom{\sigma_2}{\tau_2}})}"]
&
P\binom{\underline{g}}{\underline{h}}\\
P\fe \otimes P\ba \otimes P\dc
\arrow[r,"{(\hcomp,{\binom{id}{id}})\otimes id}"]
&
P\binom{\uf,\ub}{\ue,\ua} \otimes P\dc
\arrow[u,"{\eproperadic{\binom{\ue'}{\ud'}}{\binom{\sigma_3}{\tau_3}}}"']
\end{tikzcd}
\end{longtable}
\end{center}

\bibliographystyle{alpha}
\addtocontents{toc}{\SkipTocEntry}
\bibliography{sources}

\end{document}